\let\proof\@undefined
\let\endproof\@undefined
 \DeclareMathOperator*{\argmin}{argmin}
  \DeclareMathOperator*{\tot}{total}
 \DeclareMathOperator*{\worst}{worst}
  \DeclareMathOperator*{\switch}{switch}
  \DeclareMathOperator*{\best}{best}
\newcommand{\Nat}{I\!\!N} 
\newcommand{\Real}{I\!\!R} 
\newcommand{\Expectation}{I\!\!E} 
\newcommand{\transpose}{{\!\scriptscriptstyle\mathrm T}}  
 \newtheorem{theorem}{\bf Theorem}
 \newtheorem{lemma}{\bf Lemma}
 \newtheorem{example}{\bf Example}
  \newtheorem*{conditionw}{\bf Condition (W)}
  \newtheorem*{conditiong}{\bf Condition (G)}
  \newtheorem*{conditionb}{\bf Condition (B)}
  \newtheorem*{conditionb2}{\bf Condition (B2)}
  \newtheorem*{conditione}{\bf Condition (E)}
\begin{document}

\title{Energy-Efficient Transmission Scheduling 
with Strict
Underflow Constraints}
%
%
\author{David I Shuman, Mingyan Liu, and Owen Q. Wu
\thanks{David I Shuman and Mingyan Liu are with the Electrical
Engineering and Computer Science Department, University of Michigan, Ann Arbor, MI.
Email: \{dishuman, mingyan\}@umich.edu}
\thanks{Owen Q. Wu is with the Ross School of Business, University of Michigan, Ann Arbor, MI.
Email: owenwu@bus.umich.edu}
\thanks{Part of the work reported here was presented at the \emph{International Symposium on Modeling and Optimization in Mobile, Ad Hoc, and Wireless Networks (WiOpt), April 2008, Berlin, Germany}.}}
\maketitle
\thispagestyle{empty}
\begin{abstract}
We consider a single source transmitting data to one or more receivers/users over a shared wireless channel. Due to random fading, the wireless channel conditions vary with time and from user to user.
Each user has a buffer to store received packets before they are drained. At each time step, the source determines how much power to use for transmission to each user. The source's objective is to allocate power in a manner that minimizes an expected cost measure, while satisfying strict buffer underflow constraints and a total power constraint in each slot. The expected cost measure is composed of costs associated with power consumption from transmission and packet holding costs. The primary application motivating this problem is wireless media streaming. For this application, the buffer underflow constraints prevent the user buffers from emptying, so as to maintain playout quality. In the case of a single user with linear power-rate curves, we show that a modified base-stock policy is optimal 
under the finite horizon, infinite horizon discounted, and infinite horizon average expected cost criteria. For a single user 
with piecewise-linear convex power-rate curves,
we show that a finite generalized base-stock policy is optimal under all three 
expected cost criteria. We also present the sequences of critical numbers that complete the characterization of the optimal control laws in each of these cases when some additional technical conditions are satisfied. We then analyze the structure of the optimal policy for the case of two users. We conclude with a discussion of methods to identify implementable near-optimal policies for the most general case of M users.

\end{abstract}


\begin{keywords}
Wireless media streaming, underflow constraints, opportunistic scheduling, energy-delay tradeoff, resource allocation, dynamic programming, inventory theory, base-stock policy. 
\end{keywords}



\doublespacing
\section{Introduction}\label{Se:introduction}
In this paper, we examine the problem of energy-efficient transmission scheduling over a wireless channel, subject to underflow constraints. We consider a single source transmitting to one or more receivers/users over a shared wireless channel. Each 
user has a buffer to store received packets before they are drained at a certain rate. The available data rate of the channel varies
with time and from user to user, due to random fading. The
transmitter's goal is to minimize total power consumption by
exploiting the temporal and spatial variation of the channel,
while preventing any user's buffer from emptying.

\subsection{Opportunistic Scheduling and Related Work}
This problem falls into the general class of opportunistic scheduling problems, where the common theme is to exploit the temporal and spatial variation of the channel.\footnote{Opportunistic scheduling problems are also referred to as multi-user variable channel scheduling problems \cite{andrews1}.} At a high level, the 
idea of exploiting the \emph{temporal diversity} of the channel via opportunistic scheduling can be explained as follows. Consider the case of a single sender transmitting to a single receiver with different linear power-rate curves for each possible channel condition. Consider one scheduling policy that transmits data in a just-in-time fashion, without regard to the condition of the time-varying channel. Over the long run, the total power consumption tends toward the power consumption per data packet under the average channel condition times the number of packets sent. If instead, the scheduler aims to send more data when the channel is in a ``good'' state (requiring less power per data packet), and less data when the channel is in a ``bad'' state, the total power consumption should be lower. Much of the challenge for the scheduler lies in determining how good or bad a channel condition is, and how much data to send accordingly.

Similarly, in the case of multiple receivers, the scheduler can exploit the \emph{spatial diversity} of the channel by transmitting only to those receivers who have the best channel conditions in each time slot.
The benefit of increasing system throughput and reducing total power
consumption through such a joint resource allocation policy is
commonly referred to as the \emph{multiuser diversity gain}
\cite{viswanath}. It was introduced in the context of the
analogous \emph{uplink} problem where multiple sources transmit to
a single destination (e.g., the base station) \cite{knopp}. Since,
there has been a wide range of literature on opportunistic
scheduling problems in wireless networks.

Sending more data when the channel is in a good state can increase system throughput and/or reduce total energy consumption; however, in opportunistic scheduling problems, it is often the case that the transmission scheduler has competing quality of service (QoS) interests. For instance,
one QoS interest commonly considered is \emph{fairness}. If, when a singe source is transmitting to multiple receivers, the scheduler only considers total throughput and energy consumption across all users, it may often be the case that it ends up transmitting to a single user or the same small group of users in every slot. This can happen, for instance, if a base station requires less power to send data to a nearby receiver, even when the nearby receiver's channel is in its worst possible
condition and a farther away receiver's channel is in its best possible condition. Thus, fairness constraints are often imposed to ensure that the transmitter sends packets to all receivers.

A number of different fairness conditions have been examined in the literature.
%
For example, \cite{berggren} and
\cite{shroff2} consider \emph{temporal fairness}, where the scheduler must transmit to each receiver for some minimum fraction of the time over the long run. Under the \emph{proportional fairness} considered by \cite{viswanath}
and \cite{holtzman}, the scheduler considers the current channel conditions relative to the average channel condition of each receiver.
Reference \cite{shroff2} considers a more general \emph{utilitarian
fairness}, where the focus is on system performance from the receiver's perspective, rather than on resources consumed by each user. The authors of  \cite{borst} incorporate fairness directly into the objective function by setting relative throughput target values for each receiver and maximizing the minimum relative long-run average throughput.

Another QoS consideration that is important in many applications is delay. Different notions of delay have been incorporated into opportunistic scheduling problems. 
One proxy for delay 
is the \emph{stability} of all of the sender's queues for arriving packets awaiting transmission. The motivation for this criterion is that if none of these queues blows up, then the delay is not ``too bad.'' With stability as an objective, it is common to restrict attention to \emph{throughput optimal} policies, which are scheduling policies that ensure the sender's queues are stable, as long as this is possible for the given arrival process and channel model. References
\cite{andrews2}\nocite{tassiulas}\nocite{shakkottai}-\cite{neely_stable} present such throughput optimal scheduling algorithms, and examine conditions guaranteeing stabilizability in different settings.



When an arriving packet model is used for the data, one can also define \emph{end-to-end delay} as the time between a packet's arrival at the sender's buffer and its decoding by the receiver. A number of opportunistic scheduling studies have considered the \emph{average} end-to-end delay of all packets over a long horizon. For instance, \cite{cruz3}-\nocite{berry}\nocite{goyal}\nocite{wang_thesis}\nocite{rajan}\nocite{berry_yeh}\nocite{djonin}\nocite{bhorkar}\nocite{kittipiyakul1}\nocite{agarwal}\nocite{goyal08}\cite{kittipiyakul2} all consider average delay, either as a constraint or by incorporating it directly into the objective function to be minimized.
However, the average delay criterion allows for the possibility of long delays (albeit with small probability); thus,
for many delay-sensitive applications, \emph{strict} end-to-end delay is often a more appropriate consideration for studies with arriving packet models. In \cite{chen_neely2} and \cite{chen_neely}, Chen, Mitra, and Neely place strict constraints on the end-to-end delay of each packet in a point-to-point system, examine the optimal scheduling policy assuming all future channel conditions are known, and suggest heuristics based on this optimal \emph{offline} scheduling policy for the more realistic \emph{online} case where the scheduler only learns the channel conditions in a causal fashion. Rajan, Sabharwal, and Aazhang also consider strict constraints on the end-to-end delay in an arriving packet model in \cite[Section IV]{rajan}.



A strict constraint on the end-to-end delay of each packet is one particular form of a \emph{deadline} constraint, as each packet has a deadline by which it must be transmitted. This notion can be generalized to impose individual deadlines on each packet, whether 
the packets are arriving over time or are all in the sender's buffer from the beginning. 
References \cite{fu_conf}-\nocite{fu}\nocite{jindal3}\nocite{jindal_gen}\nocite{jindal}\cite{jindal4} consider point-to-point communication when a fixed amount of data is in the sender's buffer at the start of the time horizon and the individual deadlines coincide, so that all packets must be transmitted and received by a common deadline, the end of the time horizon under consideration. In \cite[Section III-D]{fu_conf} and \cite[Section III-D]{fu}, Fu, Modiano, and Tsitsiklis specify the optimal transmission policy when the power-rate curves under each channel condition are linear and the transmitter is subject to a per slot peak power constraint. In \cite{jindal3}-\nocite{jindal_gen}\nocite{jindal}\cite{jindal4}, Lee and Jindal model the power-rate curve under each channel condition as convex, first of the form of the so-called Shannon cost function based on the capacity of the additive white Gaussian noise channel, and then as a convex monomial function.\footnote{In our notation of Section \ref{Se:Sys_Model}, these two cases correspond to power-rate curves of the form $c(z,s)=\frac{2^z-1}{{g}_1(s)}$ and $c(z,s)=\frac{z^\zeta}{g_2(s)}$, respectively, where $c(z,s)$ is the power required to transmit $z$ bits under channel condition $s$, $g_1(\cdot)$ and $g_2(\cdot)$ are known functions, and $\zeta$ is a fixed parameter.}

References \cite{uysal_04} and \cite{tarello} consider opportunistic scheduling problems with multiple receivers and a single deadline constraint at the end of a finite horizon. Packets arrive over time and the emphasis is on offline scheduling policies in \cite{uysal_04}, whereas \cite{tarello} considers a fixed amount of data destined for each receiver, and assumes the data is already in the sender's buffers at the beginning of the horizon. The model of \cite{tarello} is perhaps the closest to our general model for $M$ receivers; however, two key differences are (i) the transmitter is not subject to a power constraint in \cite{tarello}; and (ii) the transmitter can transmit to at most one receiver in each time slot in \cite{tarello}.

In our model, the strict underflow constraints serve as a notion of both fairness and delay. The notion of fairness is that none of the receivers' buffers are allowed to empty, guaranteeing the required level of service to all users. The underflow constraints also serve as a notion of delay, and can be seen as multiple deadline constraints - certain packets must arrive by the end of the first slot, another group by the end of the second slot, and so forth. Therefore, Sections \ref{Se:lowSNR} and \ref{Se:highSNR} of this paper aim to generalize the works of \cite{fu_conf}-\cite{fu} and \cite{jindal3}-\nocite{jindal_gen}\nocite{jindal}\cite{jindal4}, respectively, by considering multiple deadlines in the point-to-point communication problem, rather than a single deadline at the end of the horizon. In addition to better representing some delay-sensitive applications, this extension of the model also allows us to consider infinite horizon problems. We compare related work in opportunistic scheduling problems with deadline constraints further in \cite{chapter}. For more complete surveys of opportunistic scheduling studies in wireless networks, see \cite{shroff1} and \cite{shroff3}.

%




\subsection{Wireless Media Streaming and Related Work}
The primary application we have in mind to motivate this problem is wireless media streaming. For this application, the data are audio/video sequences, and the packets are drained from the receivers' buffers in order to be decoded and played. Enforcing the underflow constraints reduces playout interruptions to the end users. In order to make the presentation concrete, we use the above wireless media streaming terminology throughout the paper.

Transporting multimedia over wireless networks is a promising
application that has seen recent advances \cite{girod}.  At the
same time, a number of resource allocation issues need to be
addressed in order to provide high quality and efficient media
over wireless. First, streaming is in general bandwidth-demanding.
Second, streaming applications tend to have
stringent 
QoS requirements (e.g., they can be
delay and jitter intolerant). Third, it is desirable to operate
the wireless system in an energy-efficient manner. This is obvious
when the source of the media streaming (the sender) is a mobile.
When the media comes from a base station that is not
power-constrained, it is still desirable to conserve power in
order to (i) limit potential interference to other base stations and
their associated mobiles, and (ii) maximize the number of receivers the sender can support.
Of the related work in wireless media streaming, \cite{bambos2} has the
closest setup to our model. The main differences are that
\cite{bambos2} features a loose constraint on underflow (i.e., it
is allowed, but at a cost), as opposed to our tight constraint,
and the two studies adopt different wireless channel models. In
the extension \cite{bambos1}, the receiver may slow down its
playout rate (at some cost) to avoid underflow. In this setting,
the authors investigate the tradeoffs between power consumption
and playout quality, and examine joint power/playout rate control
policies. In our model, the receiver does not have the option to
adjust the playout speeds. Our model also bears resemblance to
\cite{luna}. The first difference here is that \cite{luna} aims to
minimize transmission energy subject to a constant end-to-end
delay constraint on each video frame. A second difference is that
the controller in \cite{luna} must assign various source coding
parameters such as quantization step size and coding mode, whereas
our model assumes a fixed encoding/decoding scheme.

\subsection{Summary of Contribution}
In this paper, we formulate the task of energy-efficient transmission scheduling subject to strict underflow constraints as three different Markov decision problems (MDPs), with the finite horizon discounted expected cost, infinite horizon discounted expected cost, and infinite horizon average expected cost criteria, respectively. These three MDPs feature a continuous component of the state space and a continuous action space at each state. Therefore, unlike \emph{finite} MDPs, they cannot in general be solved exactly via dynamic programming, and suffer from the well-known \emph{curse of dimensionality} \cite{rust,chow}. Our aim in this paper is to analyze the dynamic programming equations in order to (i) determine if there are circumstances under which we can analytically derive optimal solutions to the three problems; 
and (ii) leverage our mathematical analysis and results on the structures of the optimal scheduling policies 
to improve our intuitive understanding of the problems.

We begin by showing that in the case of a single receiver under linear power-rate curves, the optimal policy is an easily-implementable \emph{modified base-stock} policy. In each time slot, it is optimal for the sender to transmit so as to bring the number of packets in the receiver's buffer level after transmission as close as possible to a target level or critical number.\footnote{We use the terms target level and critical number interchangeably throughout the paper.} The target level depends on the current channel condition, with a better channel condition corresponding to a higher target level.
We also show that the strict underflow constraints may cause the scheduler to be less opportunistic than it otherwise would be, and transmit more packets under ``medium'' channel conditions in anticipation of deadline constraints in future time slots.

We then generalize this result in two different directions. First, we relax the assumption that the power-rate curves under each channel condition are linear, and model them as piecewise-linear convex to better approximate more realistic convex power-rate curves. Under piecewise-linear power-rate curves, we show the optimal policy is a \emph{finite generalized base-stock} policy, and provide an intuitive explanation of this structure in terms of multiple target levels in each time slot.
In addition to the structural results on the optimal policy for the case of a single receiver under either linear or piecewise-linear convex power-rate curves, we provide an efficient method to calculate the critical numbers that complete the characterization of the optimal policy when  certain technical conditions are satisfied.

The second generalization of the single receiver model under linear power-rate curves is to a single user transmitting to two receivers over a shared wireless channel. In this case, we state and prove the structure of the optimal policy, and show how the peak power constraint in each slot couples the optimal scheduling of the two receivers' packet streams.

In all three setups, we show the structure of the optimal policy in the finite horizon discounted expected cost problem extends to the infinite horizon discounted and average expected cost problems.

Throughout the analysis, we make a novel connection 
with inventory models that may prove useful in other wireless transmission scheduling problems. Because the inventory models corresponding to our wireless communication models have not been previously examined, our results also represent a contribution to the inventory theory literature.

%

The remainder of this paper is organized as follows. 
In the next
section, we describe the system model, formulate finite and infinite horizon MDPs, 
and relate our model to
models in inventory theory. In Section \ref{Se:lowSNR}, we consider the case of a single receiver under linear power-rate curves. While this case can be considered a special case of the models of Sections \ref{Se:highSNR} and \ref{Se:two_users}, we present it first in order to (i) state additional structural properties of the optimal transmission policy to a single user under linear power-rate curves that are not true in general for the 
 cases discussed in Sections \ref{Se:highSNR} and \ref{Se:two_users}; (ii) highlight some intuitive takeaways that carry over to the generalized models, but are more transparent in the simpler model; and (iii) compare it to related problems in the wireless communications literature.
We analyze the structure of the optimal scheduling policy for the finite horizon 
problem, provide a method to compute the critical numbers that complete the characterization of the optimal policy when some additional technical conditions are met, and provide sufficient conditions for this problem to be equivalent to a previously-studied single deadline problem.
Section \ref{Se:highSNR} generalizes the analysis of Section \ref{Se:lowSNR} to the case of a single receiver under piecewise-linear convex power-rate curves, and also addresses the infinite horizon problems for the case of a single receiver. In Section \ref{Se:two_users}, we analyze the structure of the optimal policy when there are two receivers with linear power-rate curves. We discuss the 
relaxation of the strict underflow constraints and the extension to the general case of $M$ receivers in Section \ref{Se:discussion}. Section \ref{Se:conclusion} concludes the paper.


\section{Problem Description}\label{Se:problem}

In this section, we present an abstraction of the transmission
scheduling problem outlined in the previous section and formulate
three optimization problems.
While most of 
this paper focuses on
the cases of one and two users, the formulation in this section is
for the more general multi-user (multi-receiver) case, so that we can discuss this more general case in Section \ref{Se:m_users}.

\subsection{System Model and Assumptions} \label{Se:Sys_Model}

We consider a single source transmitting media sequences to
$M$ users/receivers over a shared wireless channel. The sender
maintains a separate buffer for each receiver, 
and is assumed to
always have data to transmit to each receiver.\footnote{This assumption is commonly referred to as the infinite backlog assumption.}  We consider a \emph{fluid} packet model that allows packet to be split, with the receiver reassembling fractional packets.
Each receiver has a playout
buffer at the receiving end, assumed to be infinite.  While in
reality this cannot be the case, it is nevertheless a reasonable
assumption considering the decreasing cost and size of memory, and
the fact that our system model allows holding costs to be assessed on packets in the
receiver buffers.
See Figure \ref{Fig:pro:system_diagram} for a diagram of the system.
\begin{figure}[htbp]
\centerline {
\includegraphics[width=4.0in]{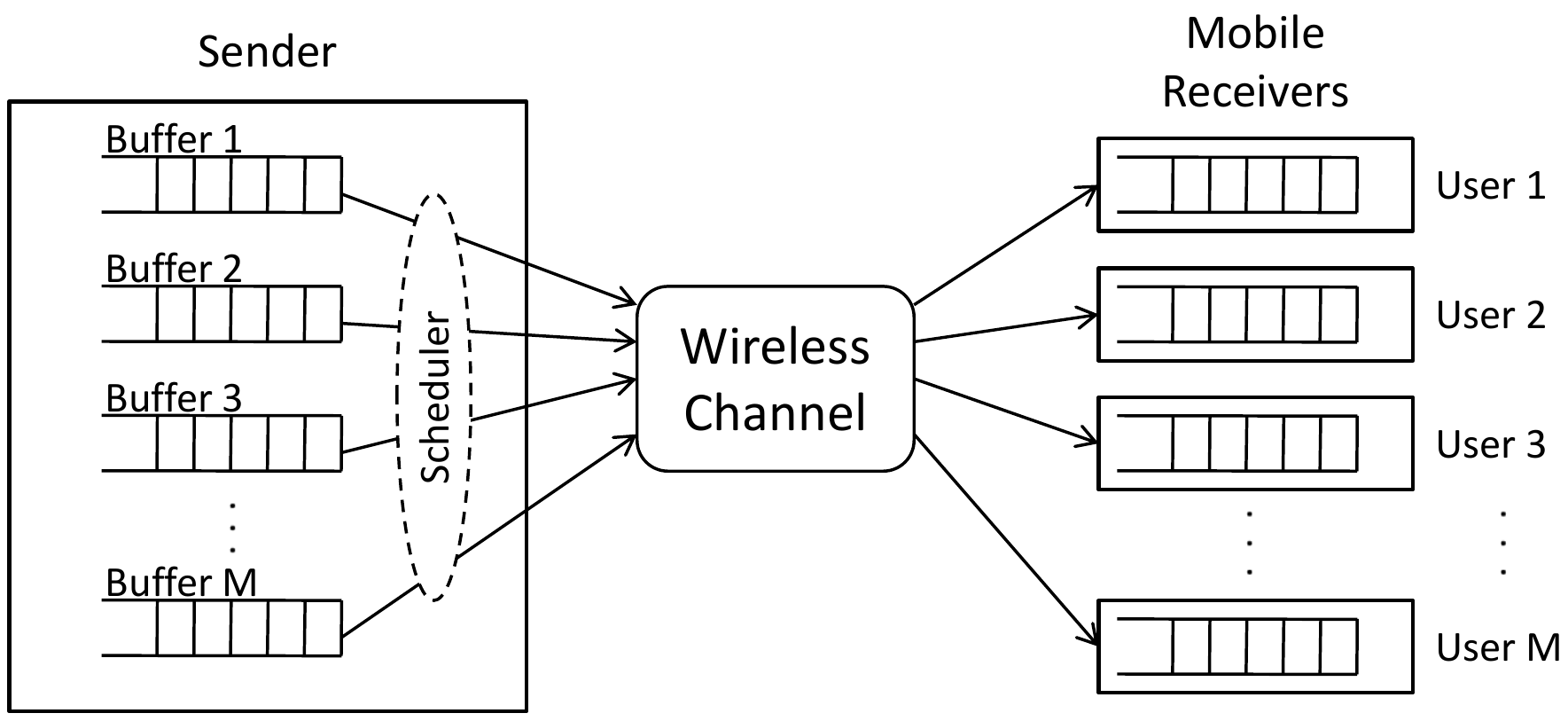}
} \caption{System model.}\label{Fig:pro:system_diagram}
\end{figure}

We consider time evolution in discrete steps, indexed backwards by
$n = N,N-1,\ldots,1$, with $n$ representing the number of slots
remaining in the time horizon. $N$ is the length of the time
horizon, and slot $n$ refers to the time interval $[n,n-1)$.

At the beginning of each time slot, the scheduler allocates some
amount of power (possibly zero) for transmission to each user. 
The
total power consumed in any one slot must not exceed the
fixed power constraint, $P$.
Following transmission and
reception in each slot, a certain number of packets are
removed/purged from each receiver buffer for playing.  The transmitter (or scheduler) knows precisely the
packet requirements of each receiver (i.e., the number of packets removed from the buffer) in each time slot.  This is justified
by the assumption that the transmitter knows the encoding
and decoding schemes used. We assume
that packets transmitted in slot $n$ arrive in time to be used for
playing in slot $n$, and that the users' consumption of packets in each
slot is constant, denoted by $\textbf{d}=\left(d^1,d^2,\ldots,d^M\right)$. 
This
latter assumption is less realistic, but may be justified if the
receiving buffers are drained at a constant rate at the MAC layer,
before packets are decoded by the media players at the application
layer. It is also worth noting that the same techniques we use in
this paper to analyze the 
constant drainage rate
case can be used to
examine the 
case
of time-varying drainage rates. We discuss the extension to the case of time-varying drainage rates 
further in Section \ref{Se:str:finite}. We also assume the receiver buffers are empty at the beginning of the time horizon, and that even when the channels are in their worst possible condition, the maximum power constraint
$P$ is sufficient to transmit enough packets to satisfy one time
slot's packet requirements
for every user. We discuss the relaxation of this assumption in Section \ref{Se:relaxation}.


%


In general, wireless channel conditions are time-varying. Adopting a block fading model, we assume
that the slot duration is within the channel coherence time such
that the channel conditions within a single slot are constant.  User $m$'s
channel condition in slot $n$ is modeled as a random
variable, $S_n^m$. 
We assume that the evolution of a given user's channel condition is independent of all other users' channel conditions and the transmitter's scheduling decisions. We also assume that the transmitter
learns all the channel states through a feedback channel at the beginning
of each time slot, prior to making the scheduling decisions.

We begin by modeling the evolution of
each user's channel condition 
as a finite-state ergodic homogeneous Markov process, $\left\{S_n^m\right\}_{n=N,N-1,\ldots,1}$ with state space ${\cal{S}}^m$.\footnote{Theorems \ref{Th:str:finite},  \ref{Th:pwl:fin}, \ref{Th:one:infinite}, 
\ref{Th:two_users:properties}, \ref{Th:two_users:structure}, and \ref{Th:two:infd} and their proofs remain valid as stated when each user's channel condition is given by a more general homogeneous Markov process that is not necessarily finite-state and ergodic.}
Namely, conditioned on the channel state, $S_n^m$, at time $n$, user $m$'s channel states at \emph{future} times ($n-1, n-2,\ldots$) are independent
of the channel states at \emph{past} times ($n+1,n+2,\ldots$). Note the somewhat unconventional notation that future times are indexed by lower epoch numbers, as
$n$ represents the number of slots
remaining in the time horizon. Modeling time backwards facilitates the analysis of the infinite horizon problems, as will be seen for example in Section \ref{Se:highSNR:infinite}.
It may also be the case that each user's channel condition is independent and identically distributed (IID) from slot to slot. When this is the case, we can often say more about the optimal transmission policy, as will be seen for example in Sections \ref{Se:calculation_low} and \ref{Se:highSNR:computation}.

Associated with each channel condition for a given user is a power-rate function.
If user $m$'s channel is in condition $s^m$, then the transmission of
$r$ units of data to user $m$ incurs a power consumption of $c^m(r,s^m)$.
This power-rate function $c^m(\cdot,s^m)$ is commonly assumed to be linear (in the low SNR regime) or
convex (in the high SNR regime).
In this paper, we consider power-rate functions that are linear or piecewise-linear convex, the latter of which can be used to approximate more general convex power-rate functions. We assume that sending data consumes a strictly positive amount of power, and therefore take the power-rate functions to be strictly increasing under all channel conditions.

The goal of this
study is to characterize the control laws that minimize the
transmission power and packet holding costs over a finite or infinite time horizon,
subject to tight underflow constraints and a maximum power
constraint in each time slot.

\subsection{Notation} \label{Se:notation}
Before proceeding, we introduce some notation. We define $\Real_+ := [0,\infty)$ and $\Nat := \left\{1,2,\ldots\right\}$. A single dot, as in $a \cdot b$, represents scalar multiplication. We use bold font to denote column vectors, such as $\textbf{w} = (w^1,w^2,\ldots,w^M)$. We include a transpose superscript whenever a vector is meant to be a row vector, such as $\textbf{w}^{\transpose}$. The notations $\textbf{w} \preceq \tilde{\textbf{w}}$ and $\textbf{w} \succeq \tilde{\textbf{w}}$ denote component-wise inequalities; i.e., $w^m \leq (\hbox{respectively,} \geq) ~\tilde{w}^m,~\forall m$.  Finally, we use the standard definitions of the meet and join of two vectors. Namely,
\begin{eqnarray*}
&\textbf{w} \wedge \tilde{\textbf{w}} &= \left(w^1,w^2,\ldots,w^M\right) \wedge \left(\tilde{w}^1,\tilde{w}^2,\ldots,\tilde{w}^M\right) \\
&&:=
\Bigl( \min\left\{w^1,\tilde{w}^1\right\},\min\left\{w^2,\tilde{w}^2\right\},\ldots,\min\left\{w^M,\tilde{w}^M\right\} \Bigr)~, \\
\hbox{and  }
&\textbf{w} \vee \tilde{\textbf{w}} &= \left(w^1,w^2,\ldots,w^M\right) \vee \left(\tilde{w}^1,\tilde{w}^2,\ldots,\tilde{w}^M\right) \\ &&:=
\Bigl( \max\left\{w^1,\tilde{w}^1\right\},\max\left\{w^2,\tilde{w}^2\right\},\ldots,\max\left\{w^M,\tilde{w}^M\right\} \Bigr)~.
\end{eqnarray*}

\subsection{Problem Formulation} \label{Se:Prob_Form}
We consider three problems.  Problem
(\textbf{P1}) is the finite horizon discounted expected cost
problem; 
Problem (\textbf{P2}) is the infinite
horizon discounted expected cost problem; and
Problem (\textbf{P3}) is the infinite
horizon average expected cost problem.
The three problems feature
the same information state, action space, system dynamics, and
cost structure, but different optimization criteria.

The information state at time $n$ is the pair
$(\textbf{X}_n,\textbf{S}_n)$, where the random vector \\ 
$\textbf{X}_n=(X^1_n, X^2_n, \cdots, X^M_n)$ 
denotes
the current receiver buffer queue lengths, and \\
$\textbf{S}_n=(S^1_n, S^2_n, \cdots, S^M_n)$ 
denotes
the channel conditions in slot $n$ (recall that $n$ is the number
of steps remaining until the end of the horizon).
%
The dynamics for the receivers' queues are governed by the simple
equation $\textbf{X}_{n-1}=\textbf{X}_n+\textbf{Z}_n-\textbf{d}$ at all times $n = N, N-1, \ldots,
1$, where $\textbf{Z}_n$ is a controlled random vector chosen by the scheduler at each time $n$ that represents the number of packets transmitted to each user in the $n^{th}$ slot.
%
At each time $n$, $\textbf{Z}_n$ must be chosen
to meet the peak power constraint:
\begin{eqnarray*} 
\sum\limits_{m=1}^M c^m(Z_n^m,S_n^m) \leq P~, 
\end{eqnarray*}
and the underflow constraints:  
\begin{eqnarray*}
X_n^m+Z_n^m \geq d^m~,~\forall m \in \{1,2,\ldots,M\}~. 
\end{eqnarray*}
Clearly, the scheduler cannot transmit a negative number of
packets to any user, so it must also be true that $Z_n^m \geq 0$ for all $m$.

We now present the optimization criterion for each problem. In
addition to the cost associated with power consumption from
transmission, we introduce holding costs on
packets stored in each user's playout buffer at the end of a
time slot. The holding costs associated with user $m$
in each slot are described by a convex, nonnegative, nondecreasing function, $h^m(\cdot)$, of the packets remaining in user $m$'s buffer following playout, with $\lim_{x \rightarrow \infty}h^m(x) = \infty$. We assume without loss of generality that $h^m(0)=0$. Possible holding cost models 
include a linear model, $h^m(x)=\hat{h}^m \cdot x$ for some positive constant $\hat{h}^m$, or a barrier-type function such as:
\begin{eqnarray*}
h^m(x)&:=&\left\{
\begin{array}{ll}
   0 , & \mbox{if } ~x \leq \mu \\
   \kappa \cdot (x-\mu) , & \mbox{if } ~x > \mu ~~(\kappa\mbox{ very large}) 
\end{array} \right.~, 
\end{eqnarray*}
\noindent which could represent a finite receiver buffer of length $\mu$.\footnote{Taking $\mu$ to be greater than the time horizon $N$ in the finite horizon expected cost problem is equivalent to not assessing any holding costs in Problem
(\textbf{P1}).} 

In Problem
(\textbf{P1}), we wish to find a transmission policy
$\boldsymbol{\pi}$ that minimizes ${J}_{N,\alpha}^\pi$, the finite horizon discounted expected cost
under policy $\boldsymbol{\pi}$,
defined as:
\begin{eqnarray*} 
{J}_{N,\alpha}^\pi:={\Expectation}^\pi
\Biggl\{\sum\limits_{t=1}^{N}
\sum\limits_{m=1}^M
\alpha^{N-t}\cdot \Bigl\{c^m
\bigl(Z_t^m,S_t^m \bigr)+
h^m\bigl(X_t^m+Z_t^m-d^m\bigr)\Bigr\}\mid{\cal F}_N \Biggr\} ~,
\end{eqnarray*}
where $0\leq\alpha \leq 1$ is the discount factor and ${\cal F}_N$
denotes all information available at the beginning of the time
horizon. For Problem (\textbf{P2}), the discount factor must satisfy $0\leq\alpha < 1$, and the infinite horizon discounted expected cost function for
minimization is defined as:
\begin{eqnarray*}
J_{\infty,\alpha}^\pi:=\lim\limits_{N\rightarrow\infty} {J}_{N,\alpha}^\pi~,
\end{eqnarray*}
For Problem (\textbf{P3}), the average expected cost function for
minimization is defined as:
\begin{eqnarray*}
J_{\infty,1}^\pi:=\limsup\limits_{N\rightarrow\infty} \frac{1}{N}{J}_{N,1}^\pi~.
\end{eqnarray*}

In 
all three cases, we allow the transmission policy $\boldsymbol{\pi}$
to be chosen from the set of all history-dependent randomized and deterministic
control laws, $\boldsymbol{\Pi}$ (see, e.g., \cite[Definition 2.2.3, pg. 15]{lerma2}).

Combining the constraints and criteria, we present the
optimization formulations for Problem (\textbf{P1}) (or
(\textbf{P2}) or
(\textbf{P3})):
\begin{eqnarray*}
&& \inf\limits_{\pi \in \Pi} {J}_{N,\alpha}^\pi ~~~ \Bigl(\mbox{or} ~~ \inf\limits_{\pi \in\Pi}{J}_{\infty,\alpha}^\pi \mbox{ or} ~~ \inf\limits_{\pi \in\Pi}{J}_{\infty,1}^\pi \Bigr) \\
\mbox{s.t.} && \sum\limits_{m=1}^M c^m\left(Z_n^m,S_n^m\right) \leq P,~w.p.1,~\forall n \\ 
&& Z_n^m \geq \max\left\{0,d^m-X_n^m\right\},~w.p.1,~\forall n 
,~\forall m \in \{1,2,\ldots,M\}.
%
\end{eqnarray*}

%
%

Problem (\textbf{P1}) may be solved using standard dynamic programming
(see, e.g., \cite{lerma2, shreve}).
The recursive dynamic programming equations
are given by:\footnote{As will be shown in the proofs of Theorems \ref{Th:pwl:infa} and \ref{Th:two:infa}, our model satisfies the measurable selection condition 3.3.3 of \cite[pg. 28]{lerma2}, justifying the use of $\min$ rather than $\inf$ in the dynamic programming equations.}
\begin{eqnarray} \label{Eq:gen:finDP}
V_n(\textbf{x},\textbf{s}) &=&
\min_{\textbf{z} \in {\cal A}^{\textbf{d}}(\textbf{x},\textbf{s})}
\left\{
\begin{array}{l}
\sum\limits_{m=1}^M \left\{ {c}^m\left(z^m,s^m\right)+{h}^m\left(x^m+z^m-{d}^m\right)\right\} \\
 +\alpha \cdot \Expectation \bigl[V_{n-1}(\textbf{x}+\textbf{z}-\textbf{d},\textbf{S}_{n-1})\bigm| \textbf{S}_n=\textbf{s}\bigr]
\end{array}
\right\} \nonumber \\
 &&~~~~~~~~~~~~~~~~~~~~~~~~~~~~~~~~~~~~~~
n=N,N-1,\ldots,1 \\
%
V_0(\textbf{x},\textbf{s}) &=& 0, ~~\forall \textbf{x} \in \Real_+^M,\forall
\textbf{s} \in {\cal{S}}:={\cal{S}}^1 \times {\cal{S}}^2 \times \ldots \times {\cal{S}}^M, \nonumber
\end{eqnarray}
%
%
where $V(\cdot,\cdot)$ is the value function or expected
cost-to-go,
and the action space is defined as:
\begin{eqnarray}\label{Eq:gen:actionspace}
{\cal A}^{\textbf{d}}(\textbf{x},\textbf{s}) := \biggl\{\textbf{z} \in \Real_{+}^M :
\begin{array}{l}
\textbf{z} \succeq \max\left\{\textbf{0}, \textbf{d}-\textbf{x} \right\} \hbox{ and} \\
\sum\limits_{m=1}^M c^m\left(z^m,s^m\right) \leq P
\end{array}
\biggr\},~~\forall \textbf{x} \in \Real_+^M,\forall
\textbf{s} \in {\cal{S}},
\end{eqnarray}
where the maximum in \eqref{Eq:gen:actionspace} is taken element-by-element (i.e., $z^m \geq \max\left\{0,d^m-z^m\right\}~\forall m$).
Note that our assumption that the maximum power constraint
$P$ is always sufficient to transmit enough packets to satisfy one time
slot's packet requirements
for every user (i.e., $\sum_{m=1}^M c^m\left(d^m,s^m\right) \leq P,~\forall \textbf{s} \in {\cal{S}}$) ensures that the action space ${\cal A}^{\textbf{d}}(\textbf{x},\textbf{s})$ is always non-empty. 



\subsection{Relation to Inventory Theory} \label{Se:Inv_Th}
The model outlined in Section \ref{Se:Sys_Model} corresponds closely to models used in
inventory theory. Borrowing that field's terminology, our
abstraction is a multi-period, single-echelon, multi-item, discrete-time inventory
model with random (linear or piecewise-linear convex) ordering costs, a budget constraint, and
deterministic demands. The items correspond to the streams of data packets, the random ordering costs to the random channel
conditions, the budget constraint to the power available in each
time slot, and the deterministic demands to the packet
requirements for playout.

To the best of our knowledge, this particular problem has not been
studied in the context of inventory theory, but similar problems
have been examined, and some of the techniques from the inventory theory literature are useful in analyzing our model. References \cite{fabian}-\nocite{kalymon}\nocite{kingsman}\nocite{kingsmanThesis}\nocite{magirou}\nocite{magirouLet}\nocite{golabi82}\cite{golabi85} all consider single-item
inventory models with linear ordering costs and random prices. The key result for
the case of deterministic demand of a single item with no resource
constraint is that the optimal policy is a base-stock
policy with different target stock levels for each price. Specifically, for each possible ordering price (translates
into channel condition in our context), there exists a critical
number such that the optimal policy is to fill the inventory
(receiver buffer) up to that critical number if the current level is
lower than the critical number, and not to order (transmit)
anything if the current level is above the critical number. Of the
prior work, Kingsman \cite{kingsman}, \cite{kingsmanThesis} is the
only author to consider a resource constraint, and he imposes a
maximum on the number of items that may be ordered in each slot.
The resource constraint we consider is of a different nature in
that we limit the amount of power available in each slot.
This is equivalent to a limit on the per slot budget
(regardless of the stochastic price realization), rather than a
limit on the number of items that can be ordered.

Of the related work on single-item inventory models with deterministic linear ordering costs and stochastic demand, \cite{fedzipII} and \cite{tayur} are the most relevant; in those studies, however, the resource constraint also amounts to a limit on the number of items that can be ordered in each slot, and is constant over time. References \cite{sobel_70}-\nocite{bensoussan_book}\cite{zahrn} consider single-item inventory models with deterministic piecewise-linear convex ordering costs and stochastic demand. The key result in this setup is that the optimal inventory level after ordering is a piecewise-linear nondecreasing function of the current inventory level (i.e., there are a finite number of target stock levels), and the optimal ordering quantity is a piecewise-linear nonincreasing function of the current inventory level.
Porteus \cite{porteus_90} refers to policies of this form as \emph{finite generalized base-stock policies}, to distinguish them from the superclass of \emph{generalized base-stock policies}, which are optimal when the deterministic ordering costs are convex (but not necessarily piecewise-linear), as first studied in \cite{karlin58}. Under a generalized base-stock policy, the optimal inventory level after ordering is a nondecreasing function of the current inventory level, and the optimal ordering quantity is a nonincreasing function of the current inventory level.

References \cite{evans}-\nocite{decroix}\nocite{shaoxiang}\cite{janakiraman} consider multi-item inventory systems under deterministic ordering costs, stochastic demand, and resource constraints. We discuss related results from these studies in more detail in Section \ref{Se:two_users}. 

We are not aware of any prior work on (i) single-item inventory models with random piecewise-linear convex ordering costs; (ii) exact computation of the critical numbers in any sort of finite generalized base-stock policy; or (iii) multi-item inventory models with random ordering costs and joint resource constraints. Therefore, not only is this connection between wireless transmission scheduling problems and inventory models novel, but the results we present in this paper also represent a contribution to the inventory theory literature.


\section{Single Receiver with Linear Power-Rate Curves}\label{Se:lowSNR}
In this section, we analyze the finite 
horizon discounted expected cost problem 
when there is only a single receiver ($M=1$), and the power-rate functions under different channel conditions are linear. One such family of power-rate functions is shown in Figure \ref{Fig:low:power-rate}, where there are three possible channel conditions, and a different linear power-rate function associated with each channel condition. Note that due to the power constraint $P$ in each slot, the effective power-rate function is a two-segment piecewise-linear convex function under all channel conditions. We subsequently simplify our notation and use $c_s$ to denote the power consumption per unit of data transmitted when the channel condition is in state $s$. Because there is just a single receiver, we also drop the dependence of the functions and random variables on $m$. We defer the infinite horizon expected cost problems for this case until Section \ref{Se:highSNR:infinite}.
\begin{figure}[htbp]
\centerline {
\includegraphics[width=3.4in]{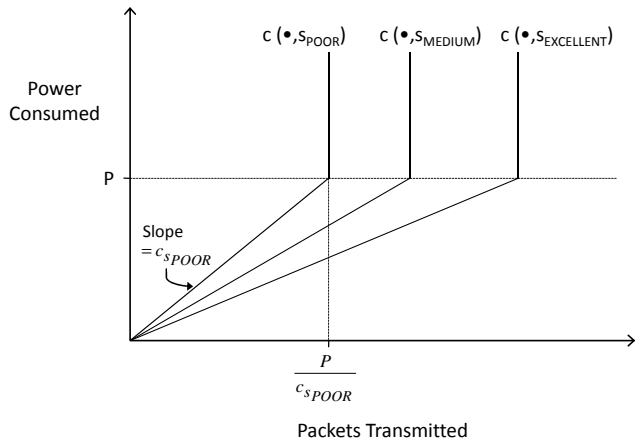}
} \caption{A family of linear power-rate functions. Due to the power constraint, the effective power-rate function, shown above for each of the three channel conditions, is a two-segment piecewise-linear convex function. When the channel condition is $s$, the slope of the first segment is $c_s$.}  \label{Fig:low:power-rate}
\end{figure}


We denote the ``best'' and ``worst'' channel
conditions by $s_{\best}$ and $s_{\worst}$,
respectively, and denote the slopes of the power-rate functions under these respective conditions by $c_{\min}$ and $c_{\max}$. That is,
  \begin{eqnarray*}
  0<c_{s_{\best}}=c_{\min}
  :=\min_{s \in {\cal{S}}}\{c_s\} \leq \max_{s \in {\cal{S}}}\{c_s\}
  =:c_{\max}=
  c_{s_{\worst}}  \leq \frac{P}{d}.
  \end{eqnarray*}
%
%

With these notations in place, the dynamic program \eqref{Eq:gen:finDP} for Problem (\textbf{P1}) becomes:
\begin{eqnarray}\label{Eq:low:DP}
V_n(x,s)
&=&
\min\limits_{\max(0,d-x)\leq z \leq \frac{P}{c_s}}
\left\{
\begin{array}{l}
c_s \cdot z + h(x+z-d) \\
+ \alpha \cdot \Expectation \bigl[V_{n-1}(x+z-d,S_{n-1}) \bigm| S_n =s\bigr]
\end{array}
\right\} \label{Eq:z_act} \\
&=&
\min\limits_{\max(x,d)\leq y \leq x + \frac{P}{c_s}}
\left\{
\begin{array}{l}
c_s \cdot (y-x) + h(y-d) \\
+ \alpha \cdot \Expectation \bigl[V_{n-1}(y-d,S_{n-1}) \bigm| S_n = s \bigr]
\end{array}
\right\}\label{Eq:y_act} \\
&=& -c_s \cdot x + \min\limits_{\max(x,d)\leq y \leq x + \frac{P}{c_s}}
\Bigl\{ g_n(y,s)
\Bigr\},
~~~n=N,N-1,\ldots,1~, \nonumber \\
V_0(x,s) &=& 0,~\forall x \in \Real_+, \forall s \in {\cal{S}}, \nonumber
\end{eqnarray}
where $g_n(y,s):=c_s \cdot y + h(y-d) + \alpha \cdot \Expectation
\bigl[V_{n-1}(y-d,S_{n-1})\mid S_n=s\bigr]$.
Here, the transition from (\ref{Eq:z_act}) to (\ref{Eq:y_act}) is done by a change of variable in the action space from $Z_n$ to $Y_n$, where $Y_n = X_n + Z_n$. The controlled random variable $Y_n$ represents the queue length of the receiver buffer \emph{after} transmission takes place in the $n^{th}$ slot, but \emph{before} playout takes place (i.e., before $d$ packets are removed from the buffer). The restrictions on the action space, $\max(x,d)\leq y \leq x + \frac{P}{c_s}$, ensure: (i) a nonnegative number of packets is transmitted; (ii) there are at least $d$ packets in the receiver buffer following transmission, in order to satisfy the underflow constraint; and (iii) the power constraint is satisfied.

\subsection{Structure of Optimal Policy} 
\label{Se:str:finite}
With the above change of variable in the the action space,
%
%
the expected cost-to-go at time $n$, $V_n(x,s)$, depends on the current buffer level, $x$, only through the fixed term $-c_s \cdot x$ and the action space
; i.e., 
the function $g_n$ does not depend on $x$.
This separation allows us to leverage the inventory theory techniques of showing ``single critical number'' or ``base-stock'' policies, which date as far back as \cite{glicksberg}. The following theorem gives the structure of the optimal transmission policy for the finite horizon discounted expected cost problem.

%

\begin{theorem} \label{Th:str:finite}
For every $n \in
\{1,2,\ldots,N\}$ and $s \in \cal{S}$, define the critical number 
\begin{eqnarray*}
b_n(s) := \min \left\{\hat{y} \in
[d,\infty):~g_n(\hat{y},s)=\min\limits_{y\in [d,\infty)}
g_n(y,s) \right\}~.
\end{eqnarray*}
Then, for Problem (\textbf{P1}) in the case of a single receiver with linear power-rate curves,  the optimal buffer level after transmission 
with $n$ slots remaining is given by:
\begin{eqnarray} \label{Eq:str:y_star}
y_n^*(x,s):=\left\{
\begin{array}{ll}
   x , & \mbox{if } ~x \geq b_n(s) \\
   b_n(s) , & \mbox{if } ~b_n(s)-\frac{P}{c_s}\leq x < b_n(s) \\
    x + \frac{P}{c_s} , & \mbox{if } ~x < b_n(s) - \frac{P}{c_s}  \\
\end{array} \right.,
\end{eqnarray}
or, equivalently, the optimal number of packets to transmit in slot $n$ is given by:
\begin{eqnarray} \label{Eq:str:z_star}
z_n^*(x,s):=\left\{
\begin{array}{ll}
   0 , & \mbox{if } ~x \geq b_n(s) \\
   b_n(s)-x , & \mbox{if } ~b_n(s)-\frac{P}{c_s}\leq x < b_n(s) \\
   \frac{P}{c_s} , & \mbox{if } ~x < b_n(s) - \frac{P}{c_s}  \\
\end{array} \right..
\end{eqnarray}
\noindent Furthermore, for a fixed $s$, $b_n(s)$ is nondecreasing 
in $n$:
\begin{eqnarray} \label{Eq:str:inc_n}
N \cdot d
\geq b_N(s) \geq b_{N-1}(s) \geq \ldots \geq b_1(s)=d~.
\end{eqnarray}
If, in addition, the channel condition is independent and identically distributed from slot to slot, then for a fixed $n$, $b_n(s)$ is nonincreasing 
in $c_s$; i.e., for arbitrary $s^1,s^2 \in {\cal{S}}$ with $c_{s^1} \leq c_{s^2}$, we have:
\begin{eqnarray} \label{Eq:str:inc_c}
n \cdot d \geq b_n(s_{\best}) \geq b_n({s^1}) \geq b_n(s^2) \geq b_n(s_{\worst}) =
d~.
\end{eqnarray}
\end{theorem}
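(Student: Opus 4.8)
The plan is to prove all three parts by a single induction on the number of remaining slots $n$, whose engine is the convexity of each value function $V_n(\cdot,s)$. Assuming $V_{n-1}(\cdot,s)$ is convex for every $s$ (the base case $V_0\equiv 0$ is trivially convex), convexity is preserved by expectation, and since $c_s\cdot y$ and $h(y-d)$ are convex, $g_n(\cdot,s)$ is convex; because $h(y-d)\to\infty$, it attains its minimum on $[d,\infty)$, so $b_n(s)$ is well defined and finite. The base-stock formulas \eqref{Eq:str:y_star}--\eqref{Eq:str:z_star} then follow by minimizing the convex $g_n(\cdot,s)$ over $[\max(x,d),\,x+\frac{P}{c_s}]$: for $x\ge b_n(s)$ the whole interval lies to the right of the minimizer, so $y=x$ is optimal; for $b_n(s)-\frac{P}{c_s}\le x<b_n(s)$ the minimizer $b_n(s)$ lies inside the interval; and for $x<b_n(s)-\frac{P}{c_s}$ the interval lies strictly left of $b_n(s)$, so the right endpoint $x+\frac{P}{c_s}$ is optimal (here I use $\frac{P}{c_s}\ge d$ to keep $b_n(s)\ge\max(x,d)$). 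To close the induction I would verify that $V_n(x,s)=-c_s\cdot x+g_n\bigl(y_n^*(x,s),s\bigr)$ is again convex: it is convex on each of the three regions, and a direct computation shows it is continuous at the breakpoints $b_n(s)-\frac{P}{c_s}$ and $b_n(s)$, with one-sided slopes nondecreasing (passing from a value $\le-c_s$ on the transmit region, through $-c_s$ on the target region, to a value $\ge-c_s$ on the no-transmit region), which gives global convexity.

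For the endpoints of \eqref{Eq:str:inc_n}: $b_1(s)=d$ is immediate, since $g_1(y,s)=c_s\cdot y+h(y-d)$ is strictly increasing on $[d,\infty)$. For the upper bound $b_n(s)\le nd$ (hence $b_N(s)\le Nd$) I would avoid derivative estimates and argue by dominance: from any state $x\ge nd$, zero transmission is feasible (the buffer stays at least $x-(n-k)d\ge 0$) and is optimal, because any policy transmits $Z_t\ge 0$ and so leaves the buffer pointwise at least as large in every slot, incurring weakly larger (nonnegative, nondecreasing) holding costs and weakly larger transmission costs. Optimality of not transmitting at $x=nd$ forces $nd\ge b_n(s)$ through the base-stock characterization.

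The main obstacle is the monotonicity $b_n(s)\ge b_{n-1}(s)$. I would use the elementary comparative-statics fact that if $g_n(\cdot,s)-g_{n-1}(\cdot,s)$ is nonincreasing on $[d,b_{n-1}(s)]$ then, by convexity, the left derivative of $g_n(\cdot,s)$ at $b_{n-1}(s)$ is $\le 0$, so its smallest minimizer satisfies $b_n(s)\ge b_{n-1}(s)$. Using time-homogeneity of the channel, $g_n(y,s)-g_{n-1}(y,s)=\alpha\,\Expectation\bigl[(V_{n-1}-V_{n-2})(y-d,S_{n-1})\mid S_n=s\bigr]$, and since $b_{n-1}(s)-d\le(n-2)d$, the required monotonicity reduces to the auxiliary claim $H(n-1)$: \emph{$V_{n-1}(\cdot,s')-V_{n-2}(\cdot,s')$ is nonincreasing on $[0,(n-2)d]$ for every $s'$.} The heart of the argument is a parallel induction establishing $H(n)$ on $[0,(n-1)d]$. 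Writing $\partial_x$ for a one-sided derivative, I would use the envelope identity $\partial_x V_n(x,s)=h'(y_n^*(x,s)-d)+\alpha\,\Expectation\bigl[\partial_x V_{n-1}(y_n^*(x,s)-d,S_{n-1})\mid S_n=s\bigr]$ in the transmit regions and $\partial_x V_n=-c_s$ in the target region, and compare $\partial_x V_n$ with $\partial_x V_{n-1}$ region by region; each case reduces either to an equality, to the trivial comparison of $-c_s$ against a slope already known to be $\ge-c_s$ or $\le-c_s$, or to an application of $H(n-1)$ at a shifted argument. The delicate point, and where I expect the real work, is the bookkeeping that every shifted argument stays inside $[0,(n-2)d]$ where $H(n-1)$ is available; this relies on $b_{n-1}(s')\le(n-1)d$ from the previous paragraph together with the already-established ordering $b_{n-1}\le b_n$, which is precisely what rules out the single bad overlap (target region for $V_n$ meeting the transmit region for $V_{n-1}$).

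Finally, for the IID case $\Expectation[\,\cdot\mid S_n=s]$ no longer depends on $s$, so $g_n(y,s)=c_s\cdot y+q_n(y)$ with $q_n$ independent of $s$. For $c_{s^1}\le c_{s^2}$ the difference $g_n(\cdot,s^2)-g_n(\cdot,s^1)=(c_{s^2}-c_{s^1})\,y$ is nondecreasing, so by the same comparative-statics fact the smallest minimizer is nonincreasing in $c_s$, giving $b_n(s^1)\ge b_n(s^2)$ and hence the chain \eqref{Eq:str:inc_c}. The endpoint $b_n(s_{\worst})=d$ I would obtain from a separate one-line induction showing $\partial_x V_n\ge-c_{\max}$ everywhere (in the target/no-transmit regions $\partial_x V_n\ge-c_s\ge-c_{\max}$, and in the transmit region $\partial_x V_n\ge\alpha(-c_{\max})\ge-c_{\max}$); then $\partial_y g_n(y,s_{\worst})\ge c_{\max}-\alpha c_{\max}=(1-\alpha)c_{\max}\ge 0$, so $g_n(\cdot,s_{\worst})$ is nondecreasing on $[d,\infty)$ and its smallest minimizer is $d$. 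The remaining bound $b_n(s_{\best})\le nd$ reuses the dominance argument of the second paragraph.
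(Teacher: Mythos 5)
Your proposal is correct, and for the two hard parts it takes a genuinely different route from the paper's own proof. For the base-stock structure the two arguments are the same convexity induction; the paper gets convexity of $V_n$ and the three-region policy simultaneously from Karush's lemma (minimizing a convex function over $[v,w]$ splits as $F_1(v)+F_2(w)$ with $F_1$ convex nondecreasing, $F_2$ convex nonincreasing), whereas you verify convexity of $V_n$ by hand from the one-sided slopes across the three regions --- equivalent content, slightly more bookkeeping. The real divergence is \eqref{Eq:str:inc_n}: the paper proves that $g_n(y,s)$ is submodular in $(y,n)$ by an induction whose step compares the minimizers $\beta_1,\beta_2$ of $g_{m-1}$ and $g_m$ over the two feasible intervals and exchanges them through $\beta_1\vee\beta_2$ and $\beta_1\wedge\beta_2$, then invokes Topkis' theorem; you instead run a joint induction on the pointwise statement $H(n)$ (that $V_n-V_{n-1}$ is nonincreasing on $[0,(n-1)d]$ for each channel state) together with the ordering $b_{n-1}\le b_n$, comparing envelope derivatives region by region. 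I checked that this closes: given $b_{n-1}(s)\le b_n(s)$, every region overlap yields either equality of slopes, a one-sided comparison against $-c_s$ with the right sign, or an appeal to $H(n-1)$ at the shifted argument $x-d$ or $x+P/c_s-d$, and these stay inside $[0,(n-2)d]$ precisely because $b_{n-1}(s)\le(n-1)d$; the one dangerous overlap ($V_n$ in its target region while $V_{n-1}$ transmits at full power) is excluded by the ordering, and there is no circularity, since at stage $n$ you derive the ordering from $H(n-1)$ before proving $H(n)$. What each approach buys: the paper's induction never needs the policy structure, the bound $b_n\le nd$, or the ordering inside the induction step, so it is leaner; yours produces a stronger and more interpretable statement (the marginal value of a buffered packet decreases, channel state by channel state, as the horizon grows) and, via the auxiliary bound $\partial_x V_n\ge-c_{\max}$, delivers $b_n(s_{\worst})=d$ in three lines, for any $\alpha\le 1$ and even without the IID hypothesis, where the paper uses a two-policy coupling/interchange argument. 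One repair you should make: the step ``left derivative of $g_n$ at $b_{n-1}(s)$ is $\le 0$, hence $b_n(s)\ge b_{n-1}(s)$'' is loose when $g_n$ has a flat stretch, since the \emph{smallest} minimizer could then sit strictly to the left; argue instead that $b_n(s)<b_{n-1}(s)$ would give $g_{n-1}\bigl(b_{n-1}(s),s\bigr)<g_{n-1}\bigl(b_n(s),s\bigr)$ (minimality of the smallest minimizer $b_{n-1}(s)$), while the nonincreasing difference forces $g_n\bigl(b_{n-1}(s),s\bigr)-g_n\bigl(b_n(s),s\bigr)\le g_{n-1}\bigl(b_{n-1}(s),s\bigr)-g_{n-1}\bigl(b_n(s),s\bigr)<0$, contradicting minimality of $b_n(s)$; the same patch makes your IID comparative-statics step airtight, at which point it coincides with Kalymon's argument that the paper follows in its Part III.
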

\medskip

The optimal transmission policy in Theorem \ref{Th:str:finite} is
a modified base-stock policy. At time $n$, for each possible
channel condition realization $s$, the critical number $b_n(s)$
describes the target 
number of packets to have in the user's buffer
after transmission in the $n^{th}$ slot. If that number of packets
is already in the buffer, then it is optimal to not transmit any
packets; if there are fewer than the target 
and the available power is
enough to transmit the difference, then it is optimal to do so;
and if there are fewer than the target 
and the available power is not
enough to transmit the difference, then the sender should use the
maximum power to transmit. See Figure \ref{Fig:str:opt_policy} for
diagrams of the optimal policy.

\begin{figure}[htbp]
\centerline {
\includegraphics[width=4.5in]{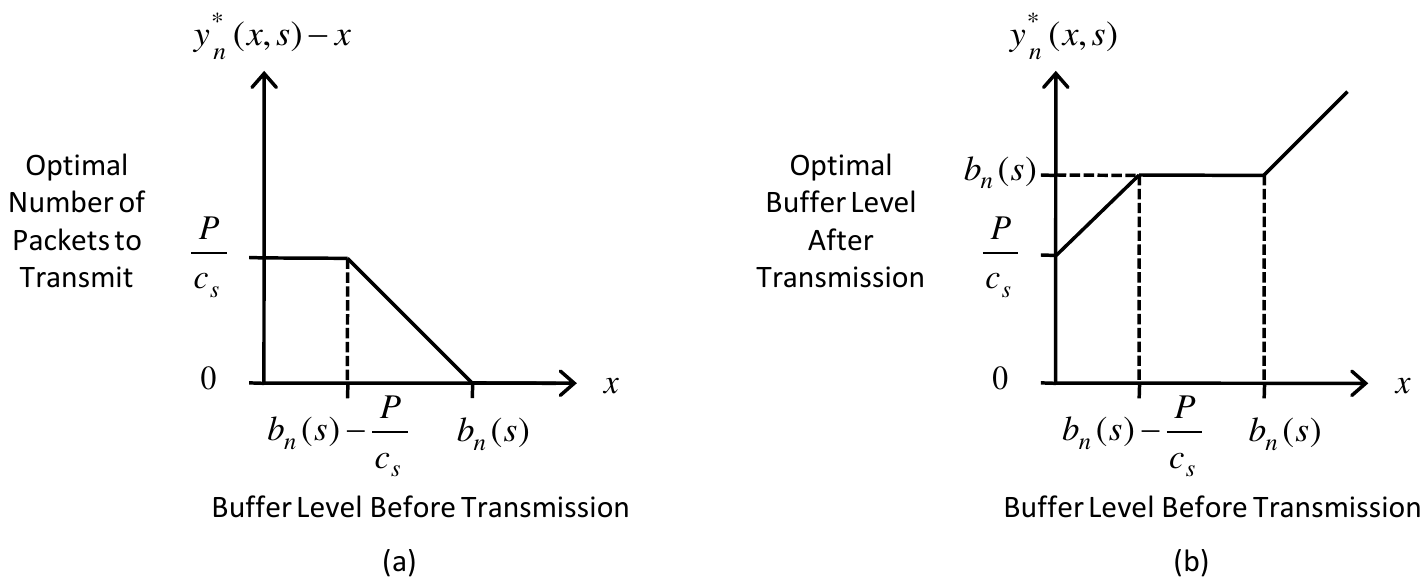}
} \caption{Optimal policy in slot $n$ when the state is $(x,s)$.
(a) depicts the optimal transmission quantity, and (b) depicts the
resulting number of packets available for playout in slot
$n$.}\label{Fig:str:opt_policy}
\end{figure}

Details of the proof of Theorem \ref{Th:str:finite} are included in Appendix A.
The key realization is that for all $n$ and all $s$, $g_n(\cdot,s):\left[d,\infty\right) \rightarrow \Real_+$ is a convex function in $y$, with $\lim_{y\rightarrow\infty}{g_n(y,s)} = \infty $. Thus, for all $n$ and all $s$, $g_n(\cdot,s)$ has a global minimum $b_n(s)$, the target 
number of packets to have in the buffer following transmission in the $n^{th}$ slot. The key idea to show (\ref{Eq:str:inc_n}) is to fix $s \in {\cal{S}}$, view $g_n(y,s)$ as a function of $y$ and $n$, say $f(y,n)$, and show that the function $f(\cdot,\cdot)$ is submodular. From the proof, one can also see that if we relax the stationary (time-invariant) deterministic demand assumption to a nonstationary (time-varying) deterministic demand sequence, $\left\{d_N,d_{N-1},\ldots,d_1\right\}$ (with $d_n \leq \frac{P}{c_{\max}}$ for all $n$), then the structure of the optimal policy is still as stated in (\ref{Eq:str:y_star}). If 
the channel is IID, then the following statement, analogous to (\ref{Eq:str:inc_c}), is true for arbitrary $s^1,s^2 \in {\cal{S}}$ with $c_{s^1} \leq c_{s^2}$:
\begin{eqnarray}  \label{Eq:str:inc_n_nonstationary}
\sum_{i=1}^n d_i \geq b_n(s_{\best}) \geq b_n(c_{s^1}) \geq b_n(c_{s^2}) \geq b_n(s_{\worst}) =
d_n~,~\forall n \in \left\{1,2,\ldots,N\right\}~.
\end{eqnarray}
\noindent However, (\ref{Eq:str:inc_n}), the monotonicity of critical numbers over time for a fixed channel condition, is not true in general under nonstationary deterministic demand. As one counterexample, (\ref{Eq:str:inc_n_nonstationary}) says that under an IID channel, the critical numbers for the worst possible channel condition are equal to the single period demands. Therefore, if the demand sequence is not monotonic, the sequence of critical numbers, $\left\{b_n\left(s_{\worst}\right) \right\}_{n=1,2,\ldots,N}$, is not monotonic.

\subsection{Computation of the Critical Numbers}\label{Se:calculation_low}
In this section, we consider the special case where the channel condition is independent and identically distributed from slot to slot,
the holding cost function is linear (i.e., $h(x)=h \cdot x$ for some $h \geq 0$), and the following technical condition is satisfied: for each possible channel condition 
$s$, $\frac{P}{c_s}=l \cdot d$ for some $l \in \Nat$; i.e., the maximum number of packets that can be transmitted in any slot covers exactly the playout requirements of some integer number of slots. Under these 
three assumptions, we can completely characterize the optimal transmission policy.

\begin{theorem} \label{Th:one:finite}
Define the threshold $\gamma_{n,j}$ for $n\in\{1,2,\ldots,N\}$
and $j\in\Nat$ recursively, as follows:
\begin{itemize}
\item[(i)] If $j=1$, $\gamma_{n,j}=\infty$; \item[(ii)] If $j>n$,
$\gamma_{n,j}=0$; \item[(iii)] If $2\leq j \leq n$,
\end{itemize}
\begin{eqnarray}\label{Eq:one:iii}
 \gamma_{n,j} = -h + \alpha\cdot\left(
 \begin{array}{l}
    \sum\limits_{s:~c_s\geq \gamma_{n-1,j-1}} p(s)\cdot\gamma_{n-1,j-1}
    + \sum\limits_{s:~c_s< \gamma_{n-1,j-1}} p(s)\cdot c_s \\
~~~~~+ \sum\limits_{s:~c_s < \gamma_{n-1,j-1+L(s)}}
p(s)\cdot\left[\gamma_{n-1,j-1+L(s)}-c_s\right]
\end{array}
\right),
\end{eqnarray}
\medskip

\noindent where $p(s)$ is the probability of the channel being in
state $s$ in a time slot, and $L(s):=\frac{P}{d \cdot c_s}$.
%
%
%
%
%
%
%
%
%
%
%
%
%
%
%
For each $n\in\{1,2,\ldots,N\}$ and $s\in{\cal{S}}$, if
$\gamma_{n,j+1}\leq c_s< \gamma_{n,j}$, define
$b_n(s) := j \cdot d$.
%
%
%
%
%
%
%
%
The optimal control strategy for Problem (\textbf{P1}) is then given by
$\boldsymbol{\pi}^*=\bigl\{y_N^*,y_{N-1}^*,\ldots,y_1^*\bigr\}$, where
\begin{eqnarray}\label{Eq:one:optimal_control}
y_n^*(x,s):=\left\{
\begin{array}{ll}
   x , & \mbox{if } ~x \geq b_n(s) \\
   b_n(s) , & \mbox{if } ~b_n(s)-\frac{P}{c_s}\leq x < b_n(s) \\
    x + \frac{P}{c_s} , & \mbox{if } ~x < b_n(s) - \frac{P}{c_s}  \\
\end{array} \right. .
\end{eqnarray}
\end{theorem}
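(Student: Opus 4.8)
The plan is to induct on the horizon length $n$, taking the modified base-stock characterization of Theorem \ref{Th:str:finite} as the starting point. Since that theorem already establishes that the optimal policy has the form \eqref{Eq:one:optimal_control} with critical numbers $b_n(s)$, the entire content here reduces to showing that, under the three extra assumptions (IID channel, linear holding cost $h(x)=h\cdot x$, and $\frac{P}{c_s}=L(s)\cdot d$ with $L(s)\in\Nat$), each $b_n(s)$ takes values only on the grid $\{d,2d,\ldots,nd\}$ and that $b_n(s)=j\cdot d$ precisely when $\gamma_{n,j+1}\le c_s<\gamma_{n,j}$. Because the channel is IID, the conditional expectation in $g_n$ does not depend on $s$; writing $G_{n-1}(w):=\Expectation[V_{n-1}(w,S_{n-1})]$ gives $g_n(y,s)=c_s\cdot y+h\cdot(y-d)+\alpha\cdot G_{n-1}(y-d)$. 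I would carry a strengthened induction hypothesis: for each $n$, (a) $G_{n-1}$ is convex and piecewise-linear with all breakpoints at integer multiples of $d$; (b) every $b_{n-1}(s)$ lies on this grid; and (c) the threshold characterization with the stated $\gamma$-recursion holds at stage $n-1$.

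The grid structure propagates cleanly. Given (a) at stage $n-1$, the function $g_n(\cdot,s)$ is convex piecewise-linear with breakpoints on the grid, so its minimizer over $[d,\infty)$ (finite by the coercivity $\lim_{y\to\infty}g_n(y,s)=\infty$ noted after Theorem \ref{Th:str:finite}) is attained at a grid point; combined with the bound $b_n(s)\le n\cdot d$ from \eqref{Eq:str:inc_n}, this yields $b_n(s)\in\{d,\ldots,nd\}$, which is (b) at stage $n$. To re-establish (a) at stage $n$, I substitute the base-stock policy into \eqref{Eq:low:DP} to express $V_n(\cdot,s)$ explicitly on its three regions: each region is either linear with slope $-c_s$ (transmit-to-target) or inherits $h+\alpha G_{n-1}'$ (idle / full-power), and every breakpoint (at $b_n(s)$, at $b_n(s)-\frac{P}{c_s}$, and the images of $G_{n-1}$'s breakpoints shifted by $d$ or by $\frac{P}{c_s}-d=(L(s)-1)d$) is again a multiple of $d$; averaging over $s$ preserves convexity and the grid. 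The base case $n=1$ is immediate: $V_0\equiv 0$ makes $g_1(y,s)=c_s\cdot y+h\cdot(y-d)$ strictly increasing, so $b_1(s)=d$ for all $s$, consistent with $\gamma_{1,1}=\infty$ and $\gamma_{1,2}=0$.

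The core step is to identify $\gamma_{n,j}$ with a marginal cost and then expand it. Since $g_n(\cdot,s)$ is convex and grid-valued, $b_n(s)\ge j\cdot d$ holds exactly when the grid-increment $g_n(jd,s)-g_n((j-1)d,s)$ is negative, i.e. when $c_s<-h-\alpha\cdot\sigma_{n-1}(j-1)$, where $\sigma_{n-1}(k)$ denotes the constant slope of $G_{n-1}$ on $((k-1)d,kd)$. This identifies $\gamma_{n,j}=-h-\alpha\cdot\sigma_{n-1}(j-1)$, with the tie $c_s=\gamma_{n,j}$ assigned $b_n(s)=(j-1)d$ by the minimal-minimizer convention in the definition of $b_n(s)$, matching the half-open intervals $[\gamma_{n,j+1},\gamma_{n,j})$. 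It then remains to compute $\sigma_{n-1}(j-1)$, the slope of $G_{n-1}=\sum_s p(s)\cdot V_{n-1}(\cdot,s)$ on $((j-2)d,(j-1)d)$. Here I differentiate the explicit three-region form of $V_{n-1}(\cdot,s)$: on that segment a state $s$ contributes slope $h+\alpha\cdot\sigma_{n-2}(j-2)$ if it sits in its idle region, slope $-c_s$ if in its transmit-to-target region, and slope $h+\alpha\cdot\sigma_{n-2}(j+L(s)-2)$ if in its full-power region, with membership decided by comparing $b_{n-1}(s)$ against $(j-1)d$ and against $(j-1+L(s))d$. Translating these comparisons through hypothesis (c) converts them into the conditions $c_s\ge\gamma_{n-1,j-1}$, then $\gamma_{n-1,j-1+L(s)}\le c_s<\gamma_{n-1,j-1}$, then $c_s<\gamma_{n-1,j-1+L(s)}$, and applying the inductive identity $h+\alpha\cdot\sigma_{n-2}(k-1)=-\gamma_{n-1,k}$ to rewrite the idle and full-power slopes collapses the expression into exactly \eqref{Eq:one:iii}. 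The boundary conventions $\gamma_{n,1}=\infty$ (forced by the underflow floor $y\ge d$) and $\gamma_{n,j}=0$ for $j>n$ (forced by $b_n(s)\le nd$) supply the edge cases.

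I expect the main obstacle to be the bookkeeping in this last step: correctly partitioning the channel states into the three base-stock regions relative to the fixed segment $((j-2)d,(j-1)d)$, tracking the several index shifts (the arguments $j-1$, $j-1+L(s)$, and those of $\sigma_{n-2}$), and handling the half-open boundaries so the strict/non-strict inequalities defining the $\gamma$-intervals line up with the minimal-minimizer tie-breaking. The one genuinely substantive point, as opposed to routine bookkeeping, is the self-similar reduction by which the marginal value of buffer inventory in \emph{both} the idle and full-power regions is itself a stage-$(n-1)$ threshold $-\gamma_{n-1,\cdot}$; verifying that the full-power shift by $(L(s)-1)d$ lands the $\sigma_{n-2}$ argument on precisely the segment whose slope equals $-\gamma_{n-1,j-1+L(s)}$ is what produces the third sum in \eqref{Eq:one:iii}, and pinning down that index exactly is the crux of the argument.
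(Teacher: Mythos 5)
Your proposal is correct, but it is not the route the paper takes. The paper omits a dedicated proof of Theorem \ref{Th:one:finite}, deriving it instead as a special case of Theorem \ref{Th:pwl_calc}, which is proved by a policy-exchange argument: two lemmas (Lemmas \ref{Le:app:gol_1} and \ref{Le:app:gol_2}) bound the marginal expected costs-to-go $\alpha \cdot \Expectation\left[V_{l-1}((r-1)d\pm\eta,S)-V_{l-1}((r-1)d,S)\right]$ in terms of the thresholds, and a backwards induction then shows that transmitting more (Part I) or less (Part II) than the candidate policy $\bar{\boldsymbol{\pi}}$ can only increase cost, followed by a closure argument that the buffer level stays on the grid $\{0,d,2d,\ldots\}$ when started from $x=0$. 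You instead work with the value function itself: piecewise linearity of $V_n(\cdot,s)$ with grid breakpoints, thresholds identified as negative slopes of $G_{n-1}$, and the recursion obtained by differentiating the three-region form of $V_{n-1}(\cdot,s)$. This is essentially a rigorous version of the paper's own ``intuitive explanation'' \eqref{Eq:thresh_intuition0}--\eqref{Eq:thresh_intuition3}, which the paper sketches (``after a fair bit of algebra'') but does not use as its proof. What your route buys: it is shorter, it reuses Theorem \ref{Th:str:finite} rather than re-establishing optimality of the base-stock structure from scratch, it yields the stronger structural fact that $V_n(\cdot,s)$ is convex piecewise-linear on all of $\Real_+$ with breakpoints on the grid, and it makes transparent exactly where the technical condition $\frac{P}{c_s}=L(s)\cdot d$ enters. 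What the paper's route buys: the exchange argument never needs the off-grid structure of the value function (it compares policies at grid states only), and it is written at the level of generality of piecewise-linear convex power-rate curves, so Theorem \ref{Th:one:finite} falls out as a corollary of Theorem \ref{Th:pwl_calc}.

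One bookkeeping point you should make explicit, since it is the only place your index gymnastics could genuinely fail: the identity $h+\alpha\,\sigma_{n-2}(k-1)=-\gamma_{n-1,k}$ is \emph{false} for $k>n-1$ when $h>0$ --- the true marginal slope beyond the horizon is $h(1+\alpha+\cdots+\alpha^{n-2})$, while the convention sets $\gamma_{n-1,k}=0$. Your derivation survives because the only out-of-range index that can arise, $j-1+L(s)>n-1$ in the full-power branch, corresponds to an empty set of states (full-power membership there would require $b_{n-1}(s)>(j-2+L(s))d\geq(n-1)d$, contradicting $b_{n-1}(s)\leq(n-1)d$), and the convention $\gamma_{n-1,j-1+L(s)}=0$ reproduces exactly that emptiness in the threshold comparison $c_s<\gamma_{n-1,j-1+L(s)}$. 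Stating this explicitly closes the argument.
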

\vspace{5 mm}
\noindent Note that with $n$ slots remaining, $0= \gamma_{n,n+1} \leq
\gamma_{n,n} \leq \gamma_{n,n-1} \leq~\ldots~\leq \gamma_{n,2}
\leq \gamma_{n,1} = \infty$, so $b_n(s)$ is well-defined.

Compared to using standard numerical techniques to \emph{approximately} solve the dynamic program and find a near-optimal
policy, the above result not only sheds more insight on the
structural properties of the problem and its \emph{exactly}-optimal solution, but
also offers a computationally simpler method. In particular, the
optimal policy is completely characterized by the thresholds $\left\{\gamma_{n,j}\right\}_{{n\in \{1,2,\ldots,N\},~j\in \Nat}}$.
Calculating these thresholds recursively, as
described in Theorem \ref{Th:one:finite}, requires $O(N^2 \left|{\cal S}\right|)$ operations, which is considerably simpler
from a computational standpoint than approximately solving the dynamic
program \cite{rust,chow}.



To prove Theorem \ref{Th:one:finite}, we show by backwards induction that it is worse to transmit either
fewer or more packets than the number suggested by the policy
$\boldsymbol{\pi}^*$.
The detailed proof 
is omitted, as Theorem \ref{Th:one:finite} is a special case of Theorem \ref{Th:pwl_calc};
however, we discuss some intuition behind the proof and the thresholds here. 

The reason for the technical condition
regarding the maximum number of packets that can be transmitted in any slot is as follows. The optimal action at all times (in general, without the technical condition) is either to transmit enough packets to fill the buffer up to a level satisfying the playout requirements of some number of future slots, or to transmit at maximum power. When the technical condition is satisfied, transmitting at maximum power also results in filling the buffer up to a level satisfying the playout requirements of some number of future slots. Thus, under the optimal policy, all realizations 
result in the buffer level at the end of every time slot being some integer multiple of the demand, $d$. This fact makes it easier to compute the thresholds $\left\{\gamma_{n,j}\right\}_{{n\in \{1,2,\ldots,N\},~j\in \Nat}}$.

An intuitive explanation of the recursion \eqref{Eq:one:iii} is as follows. The threshold
$\gamma_{n,j}$ may be interpreted as the per packet power cost at
which, with $n$ slots remaining in the horizon, the expected
cost-to-go of transmitting packets to cover the user's playout
requirements for the next $j-1$ slots is the same as the expected
cost-to-go of transmitting packets to cover the user's
requirements for the next $j$ slots. That is, $\gamma_{n,j}$ should satisfy:
\begin{eqnarray*}
\alpha \cdot \Expectation\left[V_{n-1}\Bigl((j-1)\cdot d,S_{n-1}\Bigr) \right] + \gamma_{n,j} \cdot d + h \cdot d =  \alpha \cdot \Expectation\left[V_{n-1}\Bigl((j-2)\cdot d,S_{n-1}\Bigr) \right],
\end{eqnarray*}
which is equivalent to:
\begin{align}
&\gamma_{n,j} \nonumber \\
&= -h + \frac{\alpha}{d} \cdot \Expectation\Biggl[
V_{n-1}\Bigl((j-2)\cdot d,S_{n-1}\Bigr) - V_{n-1}\Bigl((j-1)\cdot d,S_{n-1}\Bigr)
\Biggr] \label{Eq:thresh_intuition0} \\
&= -h + \frac{\alpha}{d} \cdot \sum\limits_{s \in {\cal{S}}} p(s) \cdot \Biggl[V_{n-1}\Bigl((j-2)\cdot d,s\Bigr)-V_{n-1}\Bigl((j-1)\cdot d,s\Bigr)\Biggr] \nonumber 
\end{align}
\begin{align}
&= -h + \frac{\alpha}{d} \cdot \left\{
\begin{array}{l}
\smashoperator[r]{\sum\limits_{s:~b_{n-1}(s) \leq (j-2) \cdot d}}~~~p(s) \cdot \left\{-h \cdot d + \alpha \cdot \Expectation\left[
\begin{array}{l}
V_{n-2}\Bigl((j-3)\cdot d,S_{n-2}\Bigr) \\
- V_{n-2}\Bigl((j-2)\cdot d,S_{n-2}\Bigr)
\end{array}
\right] \right\} \\
+{}\smashoperator[r]{\sum\limits_{s:~(j-2) \cdot d < b_{n-1}(s) \leq \bigl(j-2+L(s)\bigr) \cdot d}}~~~p(s) \cdot c_s \cdot d  \\
+{}\smashoperator[r]{\sum\limits_{s:~b_{n-1}(s) > \bigl(j-2+L(s)\bigr) \cdot d}}~~~p(s) \cdot \left\{
-h \cdot d
+ \alpha \cdot \Expectation\left[
\begin{array}{l}
V_{n-2}\Bigl(\bigl(j-3+L(s)\bigr)\cdot d,S_{n-2}\Bigr) \\
- V_{n-2}\Bigl(\bigl(j-2+L(s)\bigr)\cdot d,S_{n-2}\Bigr)
\end{array}
\right]\right\}
\end{array}
\right\} \label{Eq:thresh_intuition1} \\
&= -h + \alpha \cdot \left\{
\begin{array}{l}
{\sum\limits_{s:~b_{n-1}(s) \leq (j-2) \cdot d}}~p(s) \cdot \gamma_{n-1,j-1}\\
+
{\sum\limits_{s:~(j-2) \cdot d < b_{n-1}(s) \leq \bigl(j-2+L(s)\bigr) \cdot d}}~p(s) \cdot c_s  \\
+~~~~{\sum\limits_{s:~b_{n-1}(s) > \bigl(j-2+L(s)\bigr) \cdot d}}~~~~~p(s) \cdot \gamma_{n-1,j-1+L(s)}
\end{array}
\right\} \label{Eq:thresh_intuition2} \\
&= -h + \alpha\cdot\left\{
 \begin{array}{l}
    \sum\limits_{s:~c_s\geq \gamma_{n-1,j-1}} p(s)\cdot\gamma_{n-1,j-1} \\
    + \sum\limits_{s:~\gamma_{n-1,j-1+L(s)} \leq c_s< \gamma_{n-1,j-1}} p(s)\cdot c_s \\
~~~~~+ \sum\limits_{s:~c_s < \gamma_{n-1,j-1+L(s)}}
p(s)\cdot \gamma_{n-1,j-1+L(s)}
\end{array}
\right\}. \label{Eq:thresh_intuition3}
\end{align}
Here, \eqref{Eq:thresh_intuition1} follows from the structure of the optimal control action \eqref{Eq:str:y_star}.
If the channel condition $s$ in the $(n-1)^{st}$ slot is such that $b_{n-1}(s) \leq (j-2) \cdot d$, then no packets are transmitted when the starting buffer level is either $(j-2) \cdot d$ or $(j-1) \cdot d$, and the respective buffer levels at the beginning of slot $n-2$ are $(j-3) \cdot d$ and $(j-2) \cdot d$. The instantaneous costs resulting from the two starting buffer levels differ by $-h \cdot d$. When $(j-2) \cdot d < b_{n-1}(s) \leq \bigl(j-2+L(s)\bigr) \cdot d$, the power constraint is not tight starting from $(j-1) \cdot d$, so the buffer level after transmission is the same starting from $(j-2) \cdot d$ or $(j-1) \cdot d$. The instantaneous costs resulting from the two starting buffer levels differ by $c_s \cdot d$, as an extra $d$ packets are transmitted if the starting buffer is $(j-2) \cdot d$. Finally, when $b_{n-1}(s) > \bigl(j-2+L(s)\bigr) \cdot d$, the power constraint is tight starting from both $(j-2) \cdot d$ and $(j-1) \cdot d$. Therefore, the instantaneous cost difference is $-h \cdot d$, and the respective buffer levels at the beginning of slot $n-2$ are $(j-3+L(s)) \cdot d$ and $(j-2+L(s)) \cdot d$.
Equation \eqref{Eq:thresh_intuition2} follows from \eqref{Eq:thresh_intuition0}, with $n-1,j-1$ substituted for $n,j$, and \eqref{Eq:thresh_intuition3} follows from the definition that $b_n(s)=j \cdot d$ if $\gamma_{n,j+1}\leq c_s < \gamma_{n,j}$.


Comparing the threshold $\gamma_{n,j}$ defined in (\ref{Eq:one:iii}) to the corresponding threshold in the
unrestricted (no power constraint) single user problem
\cite{kingsman, golabi85}, the only difference is the
third term of the right-hand side of (\ref{Eq:one:iii}):
\begin{eqnarray*}
\alpha \cdot \sum_{\{s:~c_s < \gamma_{n-1,j-1+L(s)}\}} p(s) \cdot
\left[\gamma_{n-1,j-1+L(s)}-c_s\right],
\end{eqnarray*}
which is absent in the unrestricted case. For all
$n\in\{1,2,\ldots,N\}$ and $j\in\Nat$, this term is nonnegative.
Thus, for a fixed $n$ and $j$, the threshold in the restricted
case is at least as high as the corresponding threshold in the
unrestricted case. It follows that the optimal stock-up level
$b_n(s)$ is also at least as high in the restricted case for all
$n\in\{1,2,\ldots,N\}$ and $s\in{\cal{S}}$. The intuition behind
this difference is that the sender should transmit more packets
under the same (medium) conditions, because it is not able to take
advantage of the best channel conditions to the same extent due to
the power constraint.
\subsection{Sufficient Conditions for Equivalence with the Single Deadline Problem}
In 
\cite[Section III-D]{fu}, Fu, Modiano, and Tsitsiklis consider the related single user problem of transmitting a given amount of data with minimum energy by a fixed deadline. They also represent the fading channel
by a linear power-rate function with a different slope in each channel condition, and consider a power constraint $P$ in each slot.
%
There is just a single explicit underflow constraint (the deadline) in their problem; 
however, because the terminal cost is set to $\infty$ if all the data is not transmitted by the deadline, the scheduler must transmit enough data in each slot so that it can still complete the job if the channel is in the worst possible condition in all subsequent slots. Thus, if $d_{\tot}$ is the total amount of data that must be sent by the deadline and $d_{\worst}$ is the amount that can be sent in a slot under the worst channel condition, the transmitter must have sent at least $d_{\tot}-d_{\worst}$ packets by the beginning of the last slot, at least $d_{\tot}-2 \cdot d_{\worst}$ packets by the beginning of the second to last slot, and so forth.\footnote{An unstated assumption in the formulation in \cite[Section III-D]{fu} is that $d_{\worst}$ times the horizon length must be at least as large as $d_{\tot}$.} So there are in fact implicit constraints on how much data must be transmitted by the end of slots $N-\left\lceil \frac{d_{\tot}}{d_{\worst}} \right\rceil + 1$, $N-\left\lceil \frac{d_{\tot}}{d_{\worst}} \right\rceil + 2$, $\ldots$, $N-2$, $N-1$. With this interpretation, we believe that our Theorem \ref{Th:one:finite} is equivalent to Theorem 3 and its corollary in \cite{fu} in the special case that, in addition to the hypotheses of our Theorem \ref{Th:one:finite}, $\alpha=1$, $h=0$, and $L\left(s_{\worst}\right)=1$. For, when these conditions are met, the implicit constraints in \cite{fu} coincide exactly with the explicit underflow constraints in our problem. Of course, when these three conditions are not satisfied, the two problems are quite different. For a more detailed comparison of these two problems, see \cite{chapter}.

\subsection{Intuitive Takeaways on the Role of the Strict Underflow Constraints}
As mentioned earlier, the main idea of energy-efficient communication over a fading channel via opportunistic scheduling is to minimize power consumption by transmitting more data when the channel is in a ``good'' state, and less data when the channel is in a ``bad'' state.
However, in order to comply with the underflow or deadline constraints, the transmitter may be forced to send data under poor channel conditions.

One intuitive takeaway from the analysis is that it is better to anticipate the need to comply with these constraints in future slots by sending more packets (than one would without the deadlines) under ``medium'' channel conditions in earlier slots. Doing so is a way to manage the risk of being stuck sending a large amount of data over a poor channel to meet an imminent deadline constraint. Another intuitive takeaway is that the closer the deadlines and the more deadlines it faces, the less ``opportunistic'' the scheduler can afford to be. In summary, both the underflow constraints and the power constraints shift the definition of what constitutes a ``good'' channel, and how much data to send accordingly. For more detailed comparisons of single-receiver opportunistic scheduling problems highlighting the role of the deadline constraints, see \cite{chapter}.

\section{Single Receiver with Piecewise-Linear Convex Power-Rate Curves}\label{Se:highSNR}
In this section, we analyze Problems (\textbf{P1}), (\textbf{P2}), and (\textbf{P3}) 
when there is only a single receiver ($M=1$), and the power-rate functions under different channel conditions are piecewise-linear convex. Note that this is a generalization of the case considered in Section \ref{Se:lowSNR}. 
%

We assume without loss of generality that under each channel condition $s$, the power-rate function has $K+1$ segments, and thus the power consumed in transmitting $z$ packets under channel condition $s$ can be represented as follows:
\begin{eqnarray*}
&&c(z,s) = z \cdot \tilde{c}_0(s) + \sum\limits_{k=0}^{K-1} \Bigl\{\bigl(\tilde{c}_{k+1}(s)-\tilde{c}_k(s) \bigr)\cdot \max\bigl\{z-\tilde{z}_k(s), 0\bigr\} \Bigr\}~, \hbox{ where } \\
&&0 < \tilde{c}_0(s) \leq \tilde{c}_1(s) \leq \cdots \leq \tilde{c}_K(s)~, \hbox{ and } \\
&&0 =\tilde{z}_{-1}(s) < \tilde{z}_0(s) < \tilde{z}_1(s) < \cdots < \tilde{z}_{K-1}(s)<\tilde{z}_{K}(s)=\infty~.
\end{eqnarray*}
The terms $\left\{\tilde{c}_k(s)\right\}_{k \in \left\{0,1,\ldots,K \right\}}$ represent the slopes of the segments of $c(\cdot,s)$, and the terms $\left\{\tilde{z}_k(s)\right\}_{k \in \left\{0,1,\ldots,K-1 \right\}}$ represent the points at which the slopes of $c(\cdot,s)$ change. An example of a family of such power-rate functions is shown in Figure \ref{Fig:power-rate}. For each channel condition $s \in {\cal S}$, we define the maximum number of packets that can be transmitted without exceeding the per slot power constraint $P$ as:
\begin{eqnarray*}
\tilde{z}_{\max}(s):=\{z: c(z,s)=P\}~.
\end{eqnarray*}
Note that $\tilde{z}_{\max}(s)$ is well-defined due to the strictly increasing nature of $c(\cdot,s)$. Recall that we assume $\tilde{z}_{\max}(s)\geq d,~\forall s \in {\cal S}$. We also assume without loss of generality that $\tilde{z}_{\max}(s)>\tilde{z}_{K-1}(s),~\forall s \in {\cal S}$.
\begin{figure}[htbp]
\centerline {
\includegraphics[width=3.4in]{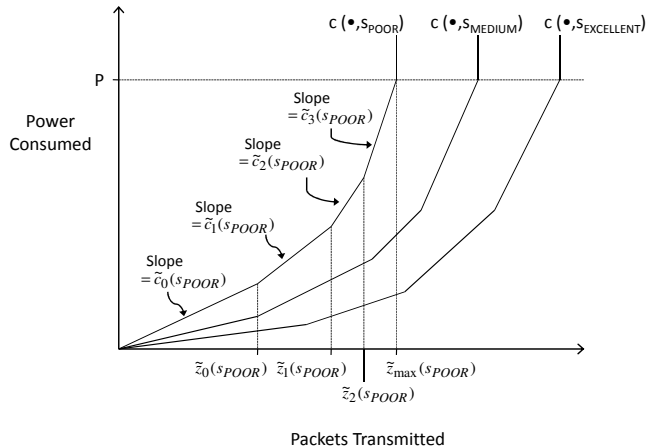}
} \caption{A family of piecewise-linear convex power-rate functions. Like Figure \ref{Fig:low:power-rate}, we incorporate the power constraint into each curve to show the effective power-rate curve. As an example, the power-rate function $c(\cdot,s_{POOR})$ is completely characterized by the sequence of slopes $\left\{\tilde{c}_k(s_{POOR})\right\}_{k \in \left\{0,1,2,3\right\}}$ and the sequence of points where the slopes change $\left\{\tilde{z}_k(s_{POOR})\right\}_{k \in \left\{0,1,2 \right\}}$. The maximum number of packets that can be transmitted in a slot when the channel condition is $s_{POOR}$ is $\tilde{z}_{\max}(s_{POOR})$.}  \label{Fig:power-rate}
\end{figure}

In this case, the dynamic program \eqref{Eq:gen:finDP} for Problem (\textbf{P1}) becomes:
\begin{eqnarray}\label{Eq:high:DP}
V_n(x,s)
&=&
\min\limits_{\bigl\{\max(0,d-x) \leq z \leq \tilde{z}_{\max}(s)\bigr\}}
\left\{
\begin{array}{l}
c(z,s) + h(x+z-d) \\
+ \alpha \cdot \Expectation \bigl[V_{n-1}(x+z-d,S_{n-1}) \bigm| S_n =s\bigr]
\end{array}
\right\} \nonumber \\
&=& \min\limits_{\bigl\{\max(0,d-x) \leq z \leq \tilde{z}_{\max}(s)\bigr\}}
\Bigl\{
c(z,s) + \tilde{g}_n(x+z,s) 
\Bigr\},~~
n=N,N-1,\ldots,1  \\
V_0(x,s) &=& 0,~\forall x \in \Real_+, \forall s \in {\cal{S}}~, \nonumber
\end{eqnarray}
where $\tilde{g}_n(y,s):=h(y-d) + \alpha \cdot \Expectation \left[V_{n-1}(y-d,S_{n-1}) \middle|S_n=s\right]$.

\subsection{Structure of Optimal Policy for the Finite Horizon Discounted Expected Cost Problem}
We showed in Theorem \ref{Th:str:finite} that the the optimal transmission policy to a single receiver in the case of linear power-rate curves  is
a modified base-stock policy characterized by a single critical level for each channel condition.
In this section, we generalize this result to the case of piecewise-linear power-rate curves, and show that
the optimal receiver buffer level after transmission (respectively, optimal number of packets to transmit) is no longer a three-segment piecewise-linear nondecreasing (respectively, nonincreasing) function of the starting buffer level as in Figure \ref{Fig:str:opt_policy}, but a more general piecewise-linear nondecreasing (respectively, nonincreasing) function.

%

\label{Se:highSNR:finite}
\medskip

\begin{theorem} \label{Th:pwl:fin}
In Problem (\textbf{P1}) with a single receiver under piecewise-linear convex power-rate curves, for every $n \in \{1,2,\ldots,N\}$ and $s \in \cal{S}$, there exists a nonincreasing sequence of critical numbers $\bigl\{b_{n,k}(s)\bigr\}_{k \in \left\{0,1,\ldots,K\right\}}$ such that the optimal 
number of packets to transmit with $n$ slots remaining is given by:
\begin{eqnarray}\label{Eq:pwl:opt_structure}
z_n^*(x,s) := \left\{
\begin{array}{ll}
   \tilde{z}_{k-1}(s) , & \mbox{if } ~b_{n,k}(s)-\tilde{z}_{k-1}(s) < x \leq b_{n,k-1}(s)-\tilde{z}_{k-1}(s)~, \\
      &~~~~~~~~~~~~~~~~~~~~~~~~~~~~~~~~~~~~~~~~~~~k \in \{0,1,\ldots,K\} \\
   b_{n,k}(s)-x , & \mbox{if } ~b_{n,k}(s)-\tilde{z}_k(s) < x \leq b_{n,k}(s)-\tilde{z}_{k-1}(s)~, \\
      &~~~~~~~~~~~~~~~~~~~~~~~~~~~~~~~~~~~~~~~k \in \{0,1,\ldots,K-1\} \\
   b_{n,K}(s)-x , & \mbox{if } ~b_{n,K}(s)-\tilde{z}_{\max}(s) < x \leq b_{n,K}(s)-\tilde{z}_{K-1}(s) \\
   \tilde{z}_{\max}(s) , & \mbox{if } ~0 \leq x \leq b_{n,K}(s)-\tilde{z}_{\max}(s)
   \end{array} \right. ,
\end{eqnarray}
where $b_{n,-1}(s):= \infty,~\forall s \in {\cal S}$. The optimal receiver buffer level after transmission is given by $y_n^*(x,s)=x+z_n^*(x,s)$.
\end{theorem}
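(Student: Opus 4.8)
The plan is to proceed by backward induction on $n$, carrying the hypothesis that $V_{n-1}(\cdot,s)$ is convex on $\Real_+$ for every $s\in{\cal S}$, and to deduce the stated policy structure at stage $n$ from the convexity of the one-step-ahead function $\tilde{g}_n(\cdot,s)$. The base case $V_0\equiv 0$ is trivially convex. For the inductive step, I would first note that $\tilde{g}_n(y,s)=h(y-d)+\alpha\cdot\Expectation[V_{n-1}(y-d,S_{n-1})\mid S_n=s]$ is convex in $y$: $h$ is convex, $V_{n-1}(\cdot,s)$ is convex by hypothesis, $y\mapsto y-d$ is affine, and both the conditional expectation and nonnegative scaling preserve convexity. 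To propagate convexity to $V_n$, I would write $V_n(x,s)=\min\{c(y-x,s)+\tilde{g}_n(y,s):\ \max(x,d)\le y\le x+\tilde{z}_{\max}(s)\}$ and observe that $F_n(x,y,s):=c(y-x,s)+\tilde{g}_n(y,s)$ is \emph{jointly} convex in $(x,y)$ (since $c(\cdot,s)$ is convex and $(x,y)\mapsto y-x$ is affine), while the feasible region $\{(x,y):\ y\ge x,\ y\ge d,\ y\le x+\tilde{z}_{\max}(s)\}$ is convex. Convexity of $V_n(\cdot,s)$ then follows from the standard fact that partial minimization of a jointly convex function over a convex set is convex; coercivity of $\tilde{g}_n$ (from $h(y-d)\to\infty$) together with compactness of the feasible $y$-interval ensures the minimum is attained and finite for every $x\in\Real_+$ (using $\tilde{z}_{\max}(s)\ge d$ for nonemptiness).

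With convexity of $\tilde{g}_n(\cdot,s)$ in hand, I would define, for each $k\in\{0,1,\ldots,K\}$, the critical number $b_{n,k}(s):=\min\{\hat{y}\in[d,\infty):\ \tilde{c}_k(s)\cdot\hat{y}+\tilde{g}_n(\hat{y},s)=\min_{y\in[d,\infty)}[\tilde{c}_k(s)\cdot y+\tilde{g}_n(y,s)]\}$, i.e.\ the smallest minimizer over $[d,\infty)$ of $\tilde{g}_n$ penalized by the $k$-th segment slope, exactly paralleling the single-slope definition in Theorem~\ref{Th:str:finite}. Each minimizer exists because $\tilde{c}_k(s)\cdot y+\tilde{g}_n(y,s)$ is continuous, convex, and coercive on the closed half-line $[d,\infty)$, and the minimizing set is a closed interval whose left endpoint is well defined. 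The claim that $\{b_{n,k}(s)\}_k$ is nonincreasing is a one-line comparative-statics argument: for $\tilde{c}_k(s)\le\tilde{c}_{k'}(s)$, adding the two optimality inequalities for $b_{n,k}(s)$ and $b_{n,k'}(s)$ cancels the $\tilde{g}_n$ terms and yields $(\tilde{c}_{k'}(s)-\tilde{c}_k(s))(b_{n,k}(s)-b_{n,k'}(s))\ge 0$, hence $b_{n,k}(s)\ge b_{n,k'}(s)$. Since every $b_{n,k}(s)\ge d$ by construction, the underflow constraint $y\ge d$ is automatically met whenever the target is to stock up to some $b_{n,k}(s)$.

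The structure \eqref{Eq:pwl:opt_structure} would then come out of a marginal analysis of the convex objective $\phi_x(z):=c(z,s)+\tilde{g}_n(x+z,s)$ on the feasible interval $z\in[\max(0,d-x),\tilde{z}_{\max}(s)]$. On the $k$-th segment of $c(\cdot,s)$, i.e.\ for $\tilde{z}_{k-1}(s)<z<\tilde{z}_k(s)$, the marginal transmission cost is $\tilde{c}_k(s)+\tilde{g}_n'(x+z,s)$, which by the definition of $b_{n,k}(s)$ is negative exactly while $y=x+z<b_{n,k}(s)$. Raising $z$ from $0$, it is therefore optimal to fill toward the target $b_{n,k}(s)$ while transmitting on segment $k$ --- producing the ``ramp'' pieces $z=b_{n,k}(s)-x$ on $b_{n,k}(s)-\tilde{z}_k(s)<x\le b_{n,k}(s)-\tilde{z}_{k-1}(s)$ --- and, upon reaching a breakpoint $\tilde{z}_{k-1}(s)$ with $y=x+\tilde{z}_{k-1}(s)$ still at or below $b_{n,k-1}(s)$ but already at or above $b_{n,k}(s)$, to stop there --- producing the ``flat'' pieces $z=\tilde{z}_{k-1}(s)$ on $b_{n,k}(s)-\tilde{z}_{k-1}(s)<x\le b_{n,k-1}(s)-\tilde{z}_{k-1}(s)$. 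The ordering $b_{n,0}(s)\ge b_{n,1}(s)\ge\cdots\ge b_{n,K}(s)$ is precisely what lets these intervals abut and cover all $x\ge 0$; the convention $b_{n,-1}(s)=\infty$ (recalling $\tilde{z}_{-1}(s)=0$) makes the $k=0$ flat piece reproduce the no-transmission region $x>b_{n,0}(s)$. Finally, when $x$ is so small that $x+\tilde{z}_{\max}(s)<b_{n,K}(s)$, the marginal cost at $z=\tilde{z}_{\max}(s)$ (which lies on segment $K$ since $\tilde{z}_{\max}(s)>\tilde{z}_{K-1}(s)$) is still negative, so the power constraint binds and $z_n^*=\tilde{z}_{\max}(s)$, giving the last case; the underflow constraint is met throughout because $b_{n,K}(s)\ge d$ and $\tilde{z}_{\max}(s)\ge d\ge d-x$.

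I expect the main obstacle to be the region-by-region bookkeeping of the last step: verifying that the alternating ramp/flat intervals abut correctly at the points $b_{n,k}(s)-\tilde{z}_{k-1}(s)$ and $b_{n,k}(s)-\tilde{z}_k(s)$ for every $k$, and that the whole family exhausts $\Real_+$ without gaps or overlaps. Because $\tilde{g}_n$ is only convex (not necessarily differentiable or piecewise linear), this analysis must be phrased with one-sided derivatives or subdifferentials of $\tilde{g}_n$ rather than a genuine derivative $\tilde{g}_n'$, and the ``smallest minimizer'' convention in the definition of $b_{n,k}(s)$ is exactly what pins down on which side of each kink the half-open region boundaries fall.
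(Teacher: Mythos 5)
Your proposal is correct and follows essentially the same route as the paper's proof: backward induction carrying convexity of $V_{n-1}(\cdot,s)$, hence convexity of $\tilde{g}_n(\cdot,s)$; critical numbers that coincide with the paper's right-derivative thresholds $b_{n,k}(s)=\max\bigl\{d,\ \inf\{b \mid \tilde{g}_n^{\prime+}(b,s)\ge-\tilde{c}_k(s)\}\bigr\}$ (your smallest-minimizer definition of the slope-penalized cost-to-go is an equivalent characterization); and a case-by-case one-sided marginal analysis of $c(z,s)+\tilde{g}_n(x+z,s)$ over the feasible interval. The only cosmetic differences are that the paper establishes convexity of $V_n(\cdot,s)$ by an explicit convex-combination feasibility argument rather than citing partial minimization of a jointly convex function, and obtains the monotonicity of $\{b_{n,k}(s)\}_{k}$ from the nestedness of the right-derivative threshold sets rather than your comparative-statics cancellation.
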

\medskip

The optimal transmission policy in Theorem \ref{Th:pwl:fin} is a finite generalized base-stock policy.
It can be interpreted as follows.
Under each channel condition $s$, there is a target level or critical number associated with each segment of the associated piecewise-linear convex power-rate curve shown in Figure \ref{Fig:power-rate}. If the starting buffer level is below the critical number associated with the first segment, $b_{n,0}(s)$, 
the scheduler should try
to bring the buffer level as close as possible to the target, $b_{n,0}(s)$. 
If the maximum number of packets sent at this per packet power cost, $\tilde{z}_{0}(s)$, does not suffice to reach the critical number $b_{n,0}(s)$, then 
those $\tilde{z}_{0}(s)$ packets are scheduled, and the next segment of the power-rate curve is considered. This second segment has a slope of $\tilde{c}_1(s)$ and an associated critical number $b_{n,1}(s)$, which is no higher than $b_{n,0}(s)$, the first critical number. If the starting buffer level plus the $\tilde{z}_{0}(s)$ already-scheduled packets brings the buffer level above $b_{n,1}(s)$, then no more packets are scheduled for transmission. Otherwise, it is optimal to transmit so as to bring the buffer level as close as possible to $b_{n,1}(s)$, by transmitting up to $\tilde{z}_{1}(s)-\tilde{z}_{0}(s)$ additional packets at a cost of $\tilde{c}_1(s)$ power units per packet. This process continues with the sequential consideration of each segment of the power-rate curve. At each successive iteration, the target level is lower and the starting buffer level, updated to include already-scheduled packets, is higher. The process continues until the buffer level reaches or exceeds a critical number, or the full power $P$ is consumed. Note that this sequential consideration is not actually done online, but only meant to provide an intuitive explanation of the optimal policy.
See Figure \ref{Fig:fgbs} for diagrams of the structure of the optimal finite generalized base-stock policy.
\begin{figure}[htbp]
\centerline {
\includegraphics[width=6in]{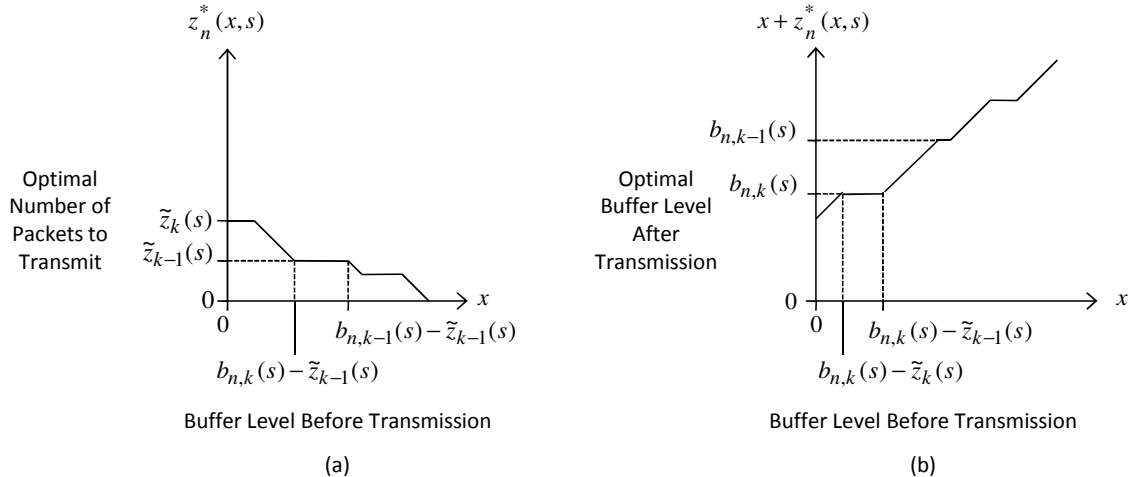}
} \caption{Optimal transmission policy in slot $n$ when the state is $(x,s)$. (a) depicts the optimal transmission quantity, and (b) depicts the resulting number of packets available for playout in slot $n$.}  \label{Fig:fgbs}
\end{figure}

\subsection{Computation of Critical Numbers} \label{Se:highSNR:computation}
\medskip

While finite generalized base-stock policies have been considered in the inventory literature for almost three decades, 
we are not aware of any previous studies that explicitly compute the critical numbers for any model where such a policy is optimal. In this section, we compute the critical numbers under each channel condition when technical conditions similar to those of Section \ref{Se:calculation_low} are satisfied.
We consider the special case when the channel condition is independent and identically distributed from slot to slot;
the holding cost function is linear (i.e., $h(x)=h \cdot x$);
and the following technical condition on the power-rate functions is satisfied for each possible channel condition $s \in {\cal S}$: $\tilde{z}_{\max}(s) = \tilde{l}_{\max} \cdot d$ for some  $\tilde{l}_{\max} \in \Nat$, and for every $k \in \{0,1,\ldots,K-1\}$, $\tilde{z}_k(s) = \tilde{l}_k \cdot d$ for some $\tilde{l}_k \in \Nat$; i.e., the slopes of the effective power-rate functions only change at integer multiples of the 
drainage rate $d$. Under these conditions, we can completely characterize the optimal transmission policy.

As in Theorem \ref{Th:one:finite}, we recursively define a set of thresholds, and use them to determine
the
critical numbers, $\left\{b_{n,k}(s)\right\}_{k \in \{-1,0,\ldots,K\}}$, for each channel condition, at each time.
\begin{theorem} \label{Th:pwl_calc}
Define the thresholds $\tilde{\gamma}_{n,j}$ for $n\in\{1,2,\ldots,N\}$
and $j\in\Nat$ recursively, as follows:
\begin{itemize}
\item[(i)] If $j=1$, $\tilde{\gamma}_{n,j}=\infty$; \item[(ii)] If $j>n$,
$\tilde{\gamma}_{n,j}=0$; \item[(iii)] If $2\leq j \leq n$,
\end{itemize}
\begin{eqnarray}\label{Eq:high:iii}
 \tilde{\gamma}_{n,j} = -h + \alpha\cdot\left(
 \begin{array}{l}
    \sum\limits_{s:~\tilde{c}_0(s)\geq \tilde{\gamma}_{n-1,j-1}} p(s)\cdot\tilde{\gamma}_{n-1,j-1} \\
    ~~~~~+
    \sum\limits_{k=0}^{K-1}
    \left\{
    \begin{array}{l}
    \sum\limits_{s:~\tilde{\gamma}_{n-1,j-1+\tilde{L}_k(s)}\leq \tilde{c}_k(s)< \tilde{\gamma}_{n-1,j-1+\tilde{L}_{k-1}(s)}} p(s)\cdot \tilde{c}_k(s) \\
~~~~~+ \sum\limits_{s:~\tilde{c}_k(s) < \tilde{\gamma}_{n-1,j-1+\tilde{L}_k(s)} \leq \tilde{c}_{k+1}(s)}
p(s) \cdot \tilde{\gamma}_{n-1,j-1+\tilde{L}_k(s)}
\end{array}
\right\}
\\
~~~~~+ \sum\limits_{s:~\tilde{\gamma}_{n-1,j-1+\tilde{L}_{\max}(s)} \leq \tilde{c}_K(s) < \tilde{\gamma}_{n-1,j-1+\tilde{L}_{K-1}(s)}}
p(s) \cdot \tilde{c}_K(s) \\
~~~~~+ \sum\limits_{s:~\tilde{c}_K(s) < \tilde{\gamma}_{n-1,j-1+\tilde{L}_{\max}(s)}}
p(s) \cdot \tilde{\gamma}_{n-1,j-1+\tilde{L}_{\max}(s)} \\
\end{array}
\right),
\end{eqnarray}
\medskip

\noindent where $p(s)$ is the probability of the channel being in
state $s$ in a time slot, $\tilde{L}_k(s):=\frac{\tilde{z}_k(s)}{d}$ for all $s \in {\cal S}$ and $k \in \{0,1,\ldots,K-1\}$, and $\tilde{L}_{\max}(s):=\frac{\tilde{z}_{\max}(s)}{d}$ for all $s \in {\cal S}$.
%
%
%
%
%
%
%
%
%
%
%
%
%
%
%
For each $n\in\{1,2,\ldots,N\}$ and $s\in{\cal{S}}$, define $b_{n,-1}(s):=\infty$ and for all $k \in \{0,1,\ldots,K\}$, if
$\tilde{\gamma}_{n,j+1}\leq \tilde{c}_k(s)< \tilde{\gamma}_{n,j}$, define
$b_{n,k}(s) := j \cdot d$.
%
%
%
%
%
%
%
%
The optimal control strategy for Problem (\textbf{P1}) is then given by
$\boldsymbol{\pi}^*=\bigl\{z_N^*,z_{N-1}^*,\ldots,z_1^*\bigr\}$, where for all $n \in \{N,N-1,\ldots,1\}$,
$z_n^*(x,s)$ is given by \eqref{Eq:pwl:opt_structure}.
\end{theorem}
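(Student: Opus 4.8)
The plan is to prove Theorem \ref{Th:pwl_calc} by backwards induction on $n$, the number of slots remaining, establishing simultaneously that (a) the optimal policy has the finite generalized base-stock structure \eqref{Eq:pwl:opt_structure} guaranteed by Theorem \ref{Th:pwl:fin}, (b) the critical numbers are integer multiples of $d$ determined by the thresholds $\tilde{\gamma}_{n,j}$ as defined in the statement, and (c) the value function difference $\Expectation\bigl[V_{n}\bigl((j-2)d,S_{n}\bigr)-V_{n}\bigl((j-1)d,S_{n}\bigr)\bigr]$ collapses into the recursion \eqref{Eq:high:iii}. Since Theorem \ref{Th:one:finite} is the special case $K=0$, the structure of this argument mirrors the intuitive derivation \eqref{Eq:thresh_intuition0}--\eqref{Eq:thresh_intuition3} already sketched in Section \ref{Se:calculation_low}, but now carried out rigorously across all $K+1$ segments of the power-rate curve.

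The key structural observation driving the induction is the discretization lemma: under the technical condition that $\tilde{z}_{\max}(s)=\tilde{l}_{\max}\cdot d$ and each breakpoint $\tilde{z}_k(s)=\tilde{l}_k\cdot d$ is an integer multiple of $d$, together with empty initial buffers, every reachable buffer level under the optimal policy is an integer multiple of $d$. First I would verify this invariant: if the starting level $x=i\cdot d$, then by \eqref{Eq:pwl:opt_structure} the post-transmission level $y_n^*$ is either a critical number $b_{n,k}(s)=j\cdot d$ or $x+\tilde{z}_{k}(s)=(i+\tilde{l}_k)\cdot d$ or $x+\tilde{z}_{\max}(s)=(i+\tilde{l}_{\max})\cdot d$, and after draining $d$ the next level $y_n^*-d$ remains an integer multiple of $d$. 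Consequently it suffices to track $V_{n}(\cdot,s)$ only at the grid points $\{i\cdot d\}_{i\in\Nat}$, and the relevant object is the marginal value $\Delta_{n}(i,s):=V_{n}\bigl((i-1)d,s\bigr)-V_{n}\bigl(i\cdot d,s\bigr)$, whose expectation over $S_n$ I would show equals $d\cdot(\tilde{\gamma}_{n+1,i+1}+h)/\alpha$ by comparing with \eqref{Eq:thresh_intuition0}.

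The inductive step is then a case analysis on where each channel condition's per-packet marginal cost $\tilde{c}_k(s)$ falls relative to the thresholds $\tilde{\gamma}_{n-1,\cdot}$, exactly as in \eqref{Eq:thresh_intuition1}. For a fixed $s$, starting from two adjacent grid levels $(j-2)d$ and $(j-1)d$, the optimal transmission amounts differ according to which segment of the piecewise-linear power-rate curve is ``active'': if the first slope $\tilde{c}_0(s)\geq\tilde{\gamma}_{n-1,j-1}$ the buffer is already stocked high enough that no extra packets are sent and the cost difference is purely the holding-cost term $-h\cdot d$ plus a discounted future difference; if some intermediate slope $\tilde{c}_k(s)$ lies strictly between consecutive thresholds $\tilde{\gamma}_{n-1,j-1+\tilde{L}_k(s)}$ and $\tilde{\gamma}_{n-1,j-1+\tilde{L}_{k-1}(s)}$, the power constraint is slack on that segment and the difference is the transmission cost $\tilde{c}_k(s)\cdot d$; and if the last slope $\tilde{c}_K(s)$ is below $\tilde{\gamma}_{n-1,j-1+\tilde{L}_{\max}(s)}$ the power constraint binds and one recurses on the future difference at the shifted levels. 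Summing these mutually exclusive cases against the channel distribution $p(s)$, telescoping the future marginal-value differences via the induction hypothesis, and collecting terms reproduces \eqref{Eq:high:iii} precisely.

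The main obstacle I anticipate is the bookkeeping of the segment boundaries: with $K+1$ segments the active-segment condition for a given $s$ must partition the real line into the disjoint intervals appearing in \eqref{Eq:high:iii}, and I must confirm that the ordering $\tilde{\gamma}_{n-1,j-1}\geq\tilde{\gamma}_{n-1,j}\geq\cdots$ together with the convexity $\tilde{c}_0(s)\leq\tilde{c}_1(s)\leq\cdots\leq\tilde{c}_K(s)$ guarantees these intervals nest correctly and cover all cases without overlap or gap. In particular, the nesting $b_{n,0}(s)\geq b_{n,1}(s)\geq\cdots\geq b_{n,K}(s)$ from Theorem \ref{Th:pwl:fin} must be shown consistent with the threshold assignment $b_{n,k}(s)=j\cdot d$ whenever $\tilde{\gamma}_{n,j+1}\leq\tilde{c}_k(s)<\tilde{\gamma}_{n,j}$, which follows because $\tilde{c}_k(s)$ is nondecreasing in $k$ while the threshold sequence $\{\tilde{\gamma}_{n,j}\}_j$ is nonincreasing in $j$. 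Verifying monotonicity of the thresholds in $j$ (so that the intervals $[\tilde{\gamma}_{n,j+1},\tilde{\gamma}_{n,j})$ are well-defined and the critical numbers are well-ordered) is itself an induction I would fold into the main argument, and establishing it cleanly—rather than the per-case cost accounting, which is mechanical once the partition is fixed—is where the real care is required.
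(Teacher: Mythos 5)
Your route is not the one the paper takes, and it is worth being clear about the difference before naming the gaps. The paper's Appendix A proof is a policy-exchange argument: it fixes the candidate policy $\bar{\boldsymbol{\pi}}$, proves by induction two \emph{one-sided} marginal-value inequalities (its Lemmas \ref{Le:app:gol_1} and \ref{Le:app:gol_2}, namely $\alpha \cdot \Expectation \bigl[V_{l-1}((r-1)d+\eta,S)-V_{l-1}((r-1)d,S)\bigr] \geq -\eta(\tilde{\gamma}_{l,r+1}+h)$ and $\alpha \cdot \Expectation \bigl[V_{l-1}((r-1)d-\eta,S)-V_{l-1}((r-1)d,S)\bigr] \geq \eta(\tilde{\gamma}_{l,r}+h)$, valid for \emph{fractional} $\eta \in [0,d]$), and then shows that any strategy transmitting more or less than $\bar{\boldsymbol{\pi}}$ at a given stage is dominated, via chains of intermediate policies $\boldsymbol{\pi}^q$. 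You instead propose to turn the heuristic identity derivation \eqref{Eq:thresh_intuition0}--\eqref{Eq:thresh_intuition3} into the proof, i.e., to establish \emph{exact} equality between expected grid marginal values and thresholds. That route is viable in principle, but as written it has two genuine gaps.

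First, your reduction ``it suffices to track $V_n(\cdot,s)$ only at the grid points'' does not follow from the reachability invariant you verify. Reachable states lying on the grid is not enough: the dynamic programming minimization at a grid state is over a \emph{continuum} of transmission quantities, and the critical numbers of Theorem \ref{Th:pwl:fin} are defined by right-derivative crossings of $\tilde{g}_n(\cdot,s)$ over the continuum; a convex function agreeing with prescribed values at grid points can attain its minimum strictly between them, so grid values alone cannot locate $b_{n,k}(s)$ nor justify that the optimal (possibly fractional) action from a grid state lands on the grid. What your induction must additionally carry is that $V_n(\cdot,s)$ is \emph{affine between consecutive grid points} (true under the technical condition because $c(\cdot,s)$ has breakpoints only at multiples of $d$, but it requires proof); only then do the grid differences $\Delta_n(i,s)$ determine $\tilde{g}_{n+1}^{\prime+}$ and hence the threshold characterization of the critical numbers. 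Second, your identity $\Expectation[\Delta_n(i,S)] = d\cdot(\tilde{\gamma}_{n+1,i+1}+h)/\alpha$ is false without a range restriction: for $i \geq n+1$ and $h>0$, both levels $(i-1)d$ and $id$ exceed the total remaining demand $nd$, no packets are ever transmitted from either, so the left-hand side equals $-hd\sum_{t=1}^{n}\alpha^{t-1} \leq 0$, whereas the right-hand side equals $dh/\alpha > 0$ (since $\tilde{\gamma}_{n+1,i+1}=0$). You must restrict the identity to $1 \leq i \leq n$ and separately verify that every term of \eqref{Eq:high:iii} whose threshold index exceeds the horizon is vacuous (the conditions $\tilde{c}_k(s) < \tilde{\gamma}_{n-1,\cdot}=0$ are never met, and the power-constrained cases force in-range indices because $b_{n-1,k}(s) \leq (n-1)d$). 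Both gaps are fixable, but note that the paper's one-sided inequalities are engineered precisely to avoid them: they remain valid at over-stocked levels and for fractional perturbations, which is why the paper never needs piecewise linearity of the value function or exact identities at all.
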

\medskip
\medskip

It is straightforward to check that Theorem \ref{Th:pwl_calc} is in fact a generalization of Theorem \ref{Th:one:finite}. To see this, set $K=0$ so that the summation from $k=0$ to $k=K-1$ on the right-hand side of \eqref{Eq:high:iii} drops out. Then $\tilde{\gamma}_{n,j}$ in \eqref{Eq:high:iii} is the same as ${\gamma}_{n,j}$ in \eqref{Eq:one:iii}, $\tilde{c}_0(s)$ corresponds to $c_s$ in \eqref{Eq:one:iii}, $b_{n,0}(s)$ corresponds to $b_n(s)$, $\tilde{z}_{\max}(s)$ corresponds to $\frac{P}{c_s}$, $\tilde{L}_{\max}(s)$ corresponds to $L(s)$, and $\tilde{L}_{K-1}(s)=0$. The resulting optimal transmission policies are also the same.

In Theorem \ref{Th:pwl_calc}, the threshold
$\tilde{\gamma}_{n,j}$ may again be interpreted as the per packet power cost at
which, with $n$ slots remaining in the horizon, the expected
cost-to-go of transmitting packets to cover the user's playout
requirements for the next $j-1$ slots is the same as the expected
cost-to-go of transmitting packets to cover the user's
requirements for the next $j$ slots. The intuition behind the recursion \eqref{Eq:high:iii} is similar to the detailed explanation given in Section
\ref{Se:calculation_low}. Namely, we can start with equation \eqref{Eq:thresh_intuition0} and expand out the right-hand side based on the known structure of the optimal policy, until, after a fair bit of algebra, the result is \eqref{Eq:high:iii}. A detailed proof of Theorem \ref{Th:pwl_calc} is included in Appendix A.



\subsection{Structure of the Optimal Policy for the Infinite Horizon Discounted Expected Cost Problems} \label{Se:highSNR:infinite}
\medskip

In this section, we show that the optimal policy for the infinite horizon discounted expected cost problem 
is the natural extension of the optimal policy for the finite horizon discounted expected cost problem;
namely, it is a finite generalized base-stock policy 
characterized by \emph{time-invariant} sequences of critical numbers for each channel condition. These time-invariant sequences of critical numbers for the infinite horizon discounted expected cost problem are equal to the limit of the finite horizon sequences of critical numbers as the time horizon $N$ goes to infinity.
\medskip

\begin{theorem} \label{Th:one:infinite}
~~~~~~~~~~~~~~~~~~~~
\begin{itemize}
\item[(a)] For a fixed $x \in\Real_+$ and $s\in{\cal{S}}$, $V_n(x,s)$ is nondecreasing in $n$. Moreoever, $\lim\limits_{n\rightarrow\infty} V_n(x,s)$ exists and
is finite, $\forall x \in\Real_+, \forall s\in{\cal{S}}$.
\medskip

\item[(b)] Define $V_{\infty}(x,s):= \lim\limits_{n\rightarrow\infty}
V_n(x,s).$ Then 
$V_{\infty}(x,s)$ is convex in $x$ for any
fixed $s\in{\cal{S}}$.

\item[(c)] Define $\tilde{g}_{\infty}(y,s):=
h(y-d) + \alpha \cdot \Expectation \left[V_{\infty}\left(y-d,S^{\prime}\right) \mid S=s \right]
$, where $S^{\prime}$ is the channel condition in the subsequent slot.
Then $\tilde{g}_{n}(y,s)$ converges monotonically to $\tilde{g}_{\infty}(y,s),\forall y\in [d,\infty),\forall s\in{\cal{S}}$;  
$\tilde{g}_{\infty}(y,s)$ is convex in $y$ for any
fixed $s\in{\cal{S}}$; and $\lim\limits_{y\rightarrow\infty} \tilde{g}_{\infty}(y,s) = \infty,
\forall s\in{\cal{S}}$.

\item[(d)] Define $b_{\infty,-1}(s) := \infty$ and
\begin{eqnarray*}
b_{\infty,k}(s) :=
\max\Bigl\{d, \inf\bigl\{b \bigm| \tilde{g}_{\infty}^{\prime+}(b,s) \geq -\tilde{c}_k(s) \bigr\} \Bigr\}~,~\forall k \in \{0,1,\ldots,K\}~,
\end{eqnarray*}
where $\tilde{g}_{\infty}^{\prime+}(b,s)$ represents the right derivative: 
\begin{eqnarray*}
\tilde{g}_{\infty}^{\prime+}(b,s) := \lim\limits_{y \downarrow b} \frac{\tilde{g}_{\infty}(y,s)-\tilde{g}_{\infty}(b,s)}{y-b}~.
\end{eqnarray*}
Then $b_{\infty,k}(s)=\lim\limits_{n\rightarrow\infty}
b_{n,k}(s)$ for all $k \in \{-1,0,1,\ldots,K\}$.

\item[(e)] $V_{\infty}(x,s)$ satisfies the
$\alpha$-discounted cost optimality equation ($\alpha$-DCOE):
\begin{eqnarray}\label{Eq:pwl:inf_functional}
V_{\infty}(x,s)
&=&
\min\limits_{\bigl\{\max(0,d-x) \leq z \leq \tilde{z}_{\max}(s)\bigr\}}
\left\{
\begin{array}{l}
c(z,s) + h(x+z-d) \\
+ \alpha \cdot \Expectation \bigl[V_{\infty}(x+z-d,S^{\prime}) \bigm| S =s\bigr]
\end{array}
\right\} \nonumber \\
&=& \min\limits_{\bigl\{\max(0,d-x) \leq z \leq \tilde{z}_{\max}(s)\bigr\}}
\Bigl\{
c(z,s) + \tilde{g}_{\infty}(x+z,s)
\Bigr\},~\forall x \in \Real_+,\forall s \in {\cal S},
\end{eqnarray}
and the minimum on the right hand side of (\ref{Eq:pwl:inf_functional}) is achieved by:
\begin{eqnarray*}
z_{\infty}^*(x,s) := \left\{
\begin{array}{ll}
   \tilde{z}_{k-1}(s) , & \mbox{if } ~b_{\infty,k}(s)-\tilde{z}_{k-1}(s) < x \leq b_{\infty,k-1}(s)-\tilde{z}_{k-1}(s)~, \\
   &~~~~~~~~~~~~~~~~~~~~~~~~~~~~~~~~~~~~~~~~~~~k \in \{0,1,\ldots,K\} \\
   b_{\infty,k}(s)-x , & \mbox{if } ~b_{\infty,k}(s)-\tilde{z}_k(s) < x \leq b_{\infty,k}(s)-\tilde{z}_{k-1}(s)~,\\
   &~~~~~~~~~~~~~~~~~~~~~~~~~~~~~~~~~~~~~~k \in \{0,1,\ldots,K-1\} \\
   b_{\infty,K}(s)-x , & \mbox{if } ~b_{\infty,K}(s)-\tilde{z}_{\max}(s) < x \leq b_{\infty,K}(s)-\tilde{z}_{K-1}(s) \\
   \tilde{z}_{\max}(s) , & \mbox{if } ~0 \leq x \leq b_{\infty,K}(s)-\tilde{z}_{\max}(s)
   \end{array} \right.
\end{eqnarray*}

%
\item[(f)] The optimal stationary policy for Problem (\textbf{P2}) in the case of a single receiver with piecewise-linear convex power-rate curves is given by
$\boldsymbol{{\pi_{\infty}^*}}=(z_{\infty}^*,z_{\infty}^*,\ldots)$.
\end{itemize}
\end{theorem}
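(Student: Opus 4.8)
The plan is to establish parts (a)--(f) in the stated order, since each relies on the preceding ones; the overall skeleton is the value-iteration argument for discounted MDPs (as in \cite{lerma2}), but the convexity and finite generalized base-stock structure already proved in Theorem \ref{Th:pwl:fin} are what let me carry the optimal-policy form to the limit. For part (a), I would write the recursion \eqref{Eq:high:DP} as $V_n = T V_{n-1}$ for a value-iteration operator $T$ and note that $T$ is monotone (the per-stage cost is fixed in $n$, and $V_{n-1}$ enters only through the monotone conditional expectation term). Since costs are nonnegative, $V_1 = T V_0 \geq 0 = V_0$, so induction gives $V_n$ nondecreasing in $n$. For finiteness I would exhibit a feasible stationary policy with bounded per-slot cost, namely the just-in-time policy that transmits exactly $d$ packets each slot (feasible as $\tilde{z}_{\max}(s) \geq d$); this keeps the buffer at its initial level $x$ and incurs per-slot cost at most $\max_{s} c(d,s) + h(x) =: C$, whence $V_n(x,s) \leq C\sum_{t=0}^{n-1}\alpha^t \leq C/(1-\alpha)$. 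A bounded nondecreasing sequence converges to a finite limit. Part (b) is then immediate: each $V_n(\cdot,s)$ is convex (from the proof of Theorem \ref{Th:pwl:fin}), and a pointwise limit of convex functions is convex. For part (c), since $V_{n-1}\uparrow V_{\infty}$ and the channel state space is finite, the expectation is a finite sum, so $\tilde{g}_n(y,s) = h(y-d) + \alpha\,\Expectation[V_{n-1}(y-d,S')\mid S=s]$ increases to $\tilde{g}_{\infty}(y,s)$; convexity of $\tilde{g}_{\infty}$ follows from convexity of $h$ and of $V_{\infty}$, and $\tilde{g}_{\infty}(y,s) \geq h(y-d) \to \infty$.

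For part (d), I would use that the finite-horizon critical number $b_{n,k}(s)$ is the smallest buffer level at which the right derivative of $\tilde{g}_n(\cdot,s)$ reaches $-\tilde{c}_k(s)$, which is precisely how the thresholds of Theorem \ref{Th:pwl:fin} arise from convexity. Because $\tilde{g}_n(\cdot,s)\uparrow\tilde{g}_{\infty}(\cdot,s)$ with all functions convex, pointwise convergence of convex functions yields convergence of one-sided derivatives at points of differentiability of the limit, and the threshold-crossing levels $b_{n,k}(s)$ therefore converge to $b_{\infty,k}(s)$ defined through $\tilde{g}_{\infty}^{\prime+}$; the outer $\max$ with $d$ and the $\inf$ in the definition absorb the boundary and flat-region cases. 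Part (e) is the crux: I must pass the limit through the minimization. Fixing $(x,s)$ and writing the minimand as $\phi_n(z) := c(z,s) + \tilde{g}_n(x+z,s)$ on the compact action set $\mathcal{A} := [\max(0,d-x),\,\tilde{z}_{\max}(s)]$, part (c) gives $\phi_n\uparrow\phi_{\infty}$ pointwise with each $\phi_n$ continuous. A Dini-type argument then gives $\min_{z\in\mathcal{A}}\phi_n(z)\uparrow\min_{z\in\mathcal{A}}\phi_{\infty}(z)$: the bound $\min\phi_n\leq\min\phi_{\infty}$ is clear, while taking minimizers $z_n$, extracting $z_{n_j}\to z^*$ by compactness, and using $\phi_{n_j}(z_{n_j})\geq\phi_m(z_{n_j})$ for $n_j\geq m$ with continuity of $\phi_m$ gives the reverse inequality upon sending $j\to\infty$ then $m\to\infty$. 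This yields $V_{\infty}=T V_{\infty}$, which is the $\alpha$-DCOE \eqref{Eq:pwl:inf_functional}; since $\tilde{g}_{\infty}(\cdot,s)$ is convex and qualitatively shaped like the $\tilde{g}_n$, the minimizer has exactly the finite generalized base-stock form of \eqref{Eq:pwl:opt_structure} with the time-invariant critical numbers $b_{\infty,k}(s)$, giving the stated $z_{\infty}^*$.

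For part (f), I would invoke the standard discounted-MDP theory \cite{lerma2}: because value iteration from $V_0 \equiv 0$ converges monotonically to $V_{\infty}$, and $V_{\infty}$ solves the DCOE with a measurable minimizing selector $z_{\infty}^*$ (the measurable selection condition being verified as promised in the proofs of Theorems \ref{Th:pwl:infa} and \ref{Th:two:infa}), $V_{\infty}$ coincides with the optimal value $\inf_{\pi} J_{\infty,\alpha}^{\pi}$ and the stationary deterministic policy $\boldsymbol{\pi_{\infty}^*} = (z_{\infty}^*,z_{\infty}^*,\ldots)$ attains it.

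The main obstacle is part (e): rigorously interchanging the limit with the minimization to obtain the DCOE, and then in (f) arguing that the fixed point produced by value iteration is the genuine optimal value rather than merely some solution of the optimality equation. Both steps hinge on the finiteness secured in (a) and on compactness of $\mathcal{A}$, which drive the Dini argument and the verification of the regularity conditions of \cite{lerma2}; it is precisely the finite-state channel model together with the existence of a feasible policy of bounded per-slot cost that rules out the usual pathologies of discounted MDPs with unbounded costs.
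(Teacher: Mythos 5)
Your parts (a)--(c) and (f) track the paper's own proof almost step for step: the same monotone value-iteration induction, the same transmit-$d$-every-slot policy for the uniform bound $\bigl(\max_s c(d,s)+h(x)\bigr)/(1-\alpha)$, pointwise limits of convex functions, and the same appeal to standard discounted-MDP results for optimality of the stationary policy (the paper cites Bertsekas--Shreve; you cite \cite{lerma2}; the paper passes the expectation to the limit by the Monotone Convergence Theorem where you use finiteness of ${\cal S}$ -- both fine). Your part (e), however, is a genuinely different and cleaner route than the paper's: the paper first establishes convergence of the critical numbers (part (d)) and of the optimal actions $z_n^*\rightarrow z_\infty^*$, and only then verifies the $\alpha$-DCOE by passing these through the recursion; your Dini-type interchange of limit and minimum over the compact action set $[\max(0,d-x),\tilde{z}_{\max}(s)]$ delivers the DCOE directly and never uses (d) at all, after which the form of the minimizer follows, exactly as in the paper, by re-running the Case 1--4 analysis from the proof of Theorem \ref{Th:pwl:fin} with $\tilde{g}_{\infty}(\cdot,s)$ in place of $\tilde{g}_m(\cdot,s)$. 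That decoupling of (e)--(f) from (d) is a real structural improvement.

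The genuine gap is part (d). The fact you invoke -- one-sided derivatives of convex functions converge at points where the limit is differentiable -- does yield $\liminf_n b_{n,k}(s)\geq b_{\infty,k}(s)$ (take differentiability points $x^*$ increasing to the threshold, where $\tilde{g}_{\infty}^{\prime}(x^*,s)<-\tilde{c}_k(s)$ strictly), but it does \emph{not} yield $\limsup_n b_{n,k}(s)\leq b_{\infty,k}(s)$ when $\tilde{g}_{\infty}(\cdot,s)$ has an affine piece of slope exactly $-\tilde{c}_k(s)$ immediately to the right of $b_{\infty,k}(s)$: on such a piece every differentiability point satisfies $\tilde{g}_{\infty}^{\prime}(x^*,s)=-\tilde{c}_k(s)$ exactly, and convergence of $\tilde{g}_n^{\prime}(x^*,s)$ up to this value is perfectly consistent with $\tilde{g}_n^{\prime}(x^*,s)<-\tilde{c}_k(s)$ for \emph{every} $n$, i.e., with all the $b_{n,k}(s)$ pinned at or beyond the right end of the flat piece. (Model situation: $g_n(y)=g(y)-\epsilon_n\min\{y-d,M\}$ with $\epsilon_n\downarrow 0$ and $g$ affine of slope $-\tilde{c}_k(s)$ on an interval; the thresholds of $g_n$ sit at the right end of that interval, the threshold of $g$ at its left end.) So "the outer $\max$ with $d$ and the $\inf$ absorb the boundary and flat-region cases" is not an argument -- the flat-region case \emph{is} the difficulty, and it is precisely where the paper concentrates its effort: it imports Sobel's lemma (Lemma \ref{Le:inf:sobel}), which sandwiches one-sided derivatives under monotone convex convergence, and runs a two-sided contradiction argument on the $\liminf$ and $\limsup$ of the thresholds. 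To repair your (d) you need that machinery together with an explicit treatment (or exclusion) of the degenerate case where $\tilde{g}_{\infty}^{\prime+}(\cdot,s)$ equals $-\tilde{c}_k(s)$ on an interval; alternatively, state (d) under that nondegeneracy condition and note, as your own proof structure already permits, that (e) and (f) do not depend on it.
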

\medskip
\noindent A detailed proof, 
 which follows the
logic conveyed in the statement of the theorem, is included in Appendix B. As a special case of Theorem \ref{Th:one:infinite}, the optimal policy in Problem (\textbf{P2}) for the case discussed in Section \ref{Se:lowSNR} of a single receiver with linear power-rate curves is given by $\boldsymbol{{\pi_{\infty}^*}}=(z_{\infty}^*,z_{\infty}^*,\ldots)$, where:
\begin{eqnarray*}
z_{\infty}^*(x,s):=\left\{
\begin{array}{ll}
   0 , & \mbox{if } ~x \geq b_{\infty}(s) \\
   b_{\infty}(s)-x , & \mbox{if } ~b_{\infty}(s)-\frac{P}{c_s}\leq x < b_{\infty}(s) \\
   \frac{P}{c_s} , & \mbox{if } ~x < b_{\infty}(s) - \frac{P}{c_s}  \\
\end{array} \right.~,
\end{eqnarray*}
and $b_{\infty}(s):=\lim\limits_{n\rightarrow\infty} b_n(s)$.

\subsection{Structure of the Optimal Policy for the Infinite Horizon Average Expected Cost Problems} \label{Se:pwl:average}
\medskip
In this section we use the \emph{vanishing discount approach} to show that the finite generalized base-stock structure is also optimal for the infinite horizon average expected cost problem, \textbf{(P3)}. We show that an optimal policy for the infinite horizon average expected cost problem exists and can be represented as the limit as the discount factor increases to one of optimal policies identified in Section \ref{Se:highSNR:infinite} for the infinite horizon discounted expected cost problem. 

In Section \ref{Se:highSNR:infinite}, we suppressed the dependence of the value functions and optimal policies on the discount factor, $\alpha$. Here, we make this dependence explicit by including the discount factor in the subscript labeling of the value functions and optimal policies for the infinite horizon discounted expected cost problem. For example, the value function defined in (b) of Theorem \ref{Th:one:infinite} is now denoted by $V_{\infty,\alpha}(x,s)$.

\begin{theorem} \label{Th:pwl:infa}
For all $\alpha \in [0,1)$, define:
\begin{eqnarray*}
m_{\infty,\alpha}&:=&\inf\limits_{\substack{x \in \Real_+ \\ s \in {\cal S}}} V_{\infty,\alpha}(x,s), \\ 
\rho^*&:=&\lim\limits_{\alpha \nearrow 1} (1-\alpha) \cdot m_{\infty,\alpha},~\hbox{and} \\ 
w_{\infty,\alpha}(x,s)&:=& V_{\infty,\alpha}(x,s) - m_{\infty,\alpha},~\forall x \in \Real_+,~\forall s \in {\cal S}.
\end{eqnarray*}
Then:
\begin{itemize}
\item[(a)] There exists a continuous function $w_{\infty,1}(\cdot,\cdot)$ and a selector $z_{\infty,1}^*(\cdot,\cdot)$ that satisfy the ACOE:
\begin{align*} 
\rho^* + w_{\infty,1}(x,s) 
&=
\min\limits_{\bigl\{\max(0,d-x) \leq z \leq \tilde{z}_{\max}(s)\bigr\}}
\left\{
\begin{array}{l}
c(z,s) + h(x+z-d) \\
+\Expectation \bigl[w_{\infty,1}(x+z-d,S^{\prime}) \bigm| S =s\bigr]
\end{array}
\right\} \\
&=c\Bigl(z_{\infty,1}^*(x,s),s\Bigr) + h\Bigl(x+z_{\infty,1}^*(x,s)-d\Bigr) \\
&~~+\Expectation \left[w_{\infty,1}\Bigl(x+z_{\infty,1}^*(x,s)-d,S^{\prime}\Bigr) \middle| S =s\right],~\forall x \in \Real_+,~\forall s \in {\cal S}.
\end{align*}
\item[(b)] The stationary policy $\boldsymbol{{\pi_{\infty,1}^*}}=(z_{\infty,1}^*,z_{\infty,1}^*,\ldots)$ is optimal for Problem (\textbf{P3}) in the case of a single receiver with piecewise-linear convex power-rate curves.
\item[(c)] The resulting optimal average cost beginning from any initial state $({x},{s}) \in \Real_+ \times {\cal S}$ is $\rho^*$.
\item[(d)] For every 
increasing sequence of discount factors $\{\alpha(l)\}_{l=1,2,\ldots}$ approaching 1, there exists a subsequence $\{\alpha(l_i)\}_{i=1,2,\ldots}$ approaching 1
such that:
\begin{eqnarray*}
w_{\infty,1}({x},{s})&=\lim\limits_{i \rightarrow \infty} w_{\infty,\alpha(l_i)}({x},{s}),~\forall {x} \in \Real_+,~\forall {s} \in {\cal S}.
\end{eqnarray*}
Therefore, for every ${s} \in {\cal S}$, $w_{\infty,1}({x},{s})$ is convex in ${x}$.
\item[(e)] For every $({x},{s}) \in \Real_+ \times {\cal S}$ and increasing sequence of discount factors $\{\alpha(l)\}_{l=1,2,\ldots}$ approaching 1, there exists a subsequence $\{\alpha(l_i)\}_{i=1,2,\ldots}$ approaching 1 and a sequence $\{{x}(i)\}_{i=1,2,\ldots}$ approaching ${x}$ such that:
\begin{eqnarray*}
    {z}_{\infty,1}^*({x},{s})&=\lim\limits_{i \rightarrow \infty} {z}_{\infty,\alpha(l_i)}^*({x}(i),{s})~.
\end{eqnarray*}
\item[(f)] A stationary finite generalized base-stock policy is average cost optimal in the case of piecewise-linear convex power-rate curves, and a stationary modified base-stock policy is average cost optimal in the case of linear power-rate curves. 
\end{itemize}

\end{theorem}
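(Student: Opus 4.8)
The plan is to execute the \emph{vanishing discount approach}, transferring the structure of the infinite horizon discounted problems of Theorem \ref{Th:one:infinite} to the average cost problem by letting $\alpha \nearrow 1$. The starting point is the $\alpha$-DCOE \eqref{Eq:pwl:inf_functional}. Subtracting $m_{\infty,\alpha}$ from both sides and writing $V_{\infty,\alpha} = w_{\infty,\alpha} + m_{\infty,\alpha}$ recasts it as
\begin{eqnarray*}
(1-\alpha)\, m_{\infty,\alpha} + w_{\infty,\alpha}(x,s) = \min\limits_{\bigl\{\max(0,d-x) \leq z \leq \tilde{z}_{\max}(s)\bigr\}} \Bigl\{ c(z,s) + h(x+z-d) + \alpha \cdot \Expectation\bigl[ w_{\infty,\alpha}(x+z-d,S^{\prime}) \bigm| S = s\bigr] \Bigr\}.
\end{eqnarray*}
If I can show that $(1-\alpha)\,m_{\infty,\alpha}$ stays bounded and that the relative value functions $\{w_{\infty,\alpha}\}$ admit a subsequential uniform-on-compacts limit $w_{\infty,1}$ as $\alpha \nearrow 1$, then passing to the limit in this equation will produce the ACOE of part (a), with the limiting constant equal to $\rho^*$.

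First I would bound $(1-\alpha)\,m_{\infty,\alpha}$. A lower bound of zero is immediate from nonnegativity of the costs. For the upper bound I would invoke the \emph{just-in-time} policy that transmits exactly $d$ packets per slot, which is feasible by the standing assumption $\tilde{z}_{\max}(s) \geq d$ and, started from an empty buffer, keeps the buffer at zero with per-slot cost $c(d,s) \leq \max_{s} c(d,s)$. Hence $m_{\infty,\alpha} \leq V_{\infty,\alpha}(0,s) \leq \frac{\max_{s} c(d,s)}{1-\alpha}$, so $(1-\alpha)\,m_{\infty,\alpha}$ is uniformly bounded and $\rho^* = \lim_{\alpha \nearrow 1}(1-\alpha)\,m_{\infty,\alpha}$ exists at least along a subsequence.

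The crux is the second requirement: uniform boundedness and equicontinuity of $\{w_{\infty,\alpha}\}$. Here I would lean on the convexity of $V_{\infty,\alpha}(\cdot,s)$ established in Theorem \ref{Th:one:infinite}(b), which passes directly to $w_{\infty,\alpha}(\cdot,s)$. I would first bound, uniformly in $\alpha$, the discounted cost of steering the system from an arbitrary state $(x,s)$ with $x$ in a fixed compact interval to a bounded reference region — for instance by transmitting nothing until the buffer drains below $d$ and thereafter following the optimal discounted policy. Since this takes at most $\lceil x/d \rceil$ slots and incurs only bounded holding costs along the way, it yields $V_{\infty,\alpha}(x,s) - m_{\infty,\alpha} \leq \mathrm{const}$ uniformly in $\alpha$ on each compact interval; as $w_{\infty,\alpha} \geq 0$ by construction, this is exactly uniform boundedness on compacts. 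Equicontinuity is then essentially free, because a family of convex functions uniformly bounded on a neighborhood of a compact interval is automatically equi-Lipschitz there. A diagonal Arzel\`a--Ascoli argument over an increasing sequence of compact buffer intervals, together with the finiteness of ${\cal S}$, extracts a subsequence $\{\alpha(l_i)\}$ along which $w_{\infty,\alpha(l_i)} \to w_{\infty,1}$ uniformly on compacts, with $w_{\infty,1}$ continuous and convex in $x$; this gives part (d).

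With these two ingredients the remaining parts follow along standard lines. Passing to the limit in the recast equation yields the ACOE (a), the interchange of limit and minimization being justified by the uniform convergence of $w_{\infty,\alpha}$ and the fact that the model satisfies the measurable selection condition 3.3.3 of \cite{lerma2}, which also furnishes the selector $z_{\infty,1}^*$. For the convergence of minimizers in part (e), I would use that the discounted minimizers are finite generalized base-stock actions whose critical numbers $b_{\infty,k}(s)$ are determined by the right derivatives of $\tilde{g}_{\infty,\alpha}$; convergence of $\tilde{g}_{\infty,\alpha}$ then gives convergence of the critical numbers and hence of the actions, with the auxiliary sequence $x(i) \to x$ absorbing the possible discontinuity of the minimizer of a convex function at a breakpoint. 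Consequently $z_{\infty,1}^*$ inherits the finite generalized base-stock form of \eqref{Eq:pwl:opt_structure}, collapsing to a modified base-stock policy when $K=0$, which is part (f); a standard ACOE verification argument (see \cite{lerma2}) then gives that $\boldsymbol{{\pi_{\infty,1}^*}}$ is average-cost optimal with optimal value $\rho^*$ from every initial state, establishing (b) and (c). I expect the main obstacle to be the uniform span bound on $\{w_{\infty,\alpha}\}$ in the third paragraph, since it is what makes the vanishing discount limit well-behaved; once convexity is exploited to reduce equicontinuity to boundedness, it reduces to controlling, uniformly in $\alpha$, the discounted cost of returning the buffer to the reference region.
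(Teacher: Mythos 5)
Your plan is the same vanishing-discount route the paper takes (the paper verifies Conditions (G), (W), (B), (B2), (E) and then combines Sch\"{a}l's Theorem 3.8 with an Arzel\`{a}--Ascoli/dominated-convergence argument), and two of your steps --- the just-in-time bound on $(1-\alpha)m_{\infty,\alpha}$ and the reduction of equicontinuity to boundedness via convexity --- match the paper's verification of Conditions (G) and (E). However, there is a genuine gap exactly at what you call the crux: the uniform-in-$\alpha$ bound on $w_{\infty,\alpha}$. Your steering policy (transmit nothing until the buffer drains below $d$, then follow the optimal discounted policy) gives $V_{\infty,\alpha}(x,s) \leq H(x) + \Expectation\bigl[\alpha^{\tau}V_{\infty,\alpha}(X_{\tau},S_{\tau})\bigr]$, where $H(x)$ collects the holding costs along the way. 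The last term is the discounted value at the random state you land in, \emph{not} $m_{\infty,\alpha}$; subtracting $m_{\infty,\alpha}$ from both sides only yields $w_{\infty,\alpha}(x,s) \leq H(x) + \sup_{x' \in [0,d),\,s' \in {\cal S}} w_{\infty,\alpha}(x',s')$, which reduces uniform boundedness on one compact set to uniform boundedness on another that you never establish --- the argument is circular. Two ingredients are missing. First, $m_{\infty,\alpha}$ is an infimum over \emph{both} components of the state, and the minimizing buffer level need not lie in $[0,d)$: with a barrier-type holding cost ($h=0$ up to $\mu$), a stocked buffer is strictly cheaper than an empty one, so draining moves you away from the minimizer, and the right construction may have to fill the buffer rather than empty it. Second, and more fundamentally, the channel component cannot be steered at all: to compare the value at an arbitrary $(x',s')$ with $m_{\infty,\alpha}=V_{\infty,\alpha}\bigl(x^{*}_{\alpha},s^{*}_{\alpha}\bigr)$ you must hold the buffer near $x^{*}_{\alpha}$ and \emph{wait} for the exogenous channel chain to hit $s^{*}_{\alpha}$, bounding the expected waiting cost uniformly in $\alpha$. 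This is precisely what the paper's verification of Conditions (B)/(B2) supplies: the policy $\breve{\boldsymbol{\pi}}$ targeting the minimizing buffer level, Wald's lemma plus ergodicity of the finite-state channel chain to bound $\Expectation[\tau_{\switch}]$, and the resulting $\alpha$-free dominating function $\bar{\kappa}$. Without this, your Arzel\`{a}--Ascoli extraction, and hence parts (a), (d), (e), have no foundation.

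A secondary soft spot is your appeal to ``a standard ACOE verification argument'' for (b) and (c). The easy direction (the candidate stationary policy has average cost at most $\rho^{*}$) does follow from iterating the ACOE and $w_{\infty,1}\geq 0$; but the converse direction --- every admissible policy has average cost at least $\rho^{*}$ --- requires controlling $\tfrac{1}{N}\Expectation^{\boldsymbol{\pi}}\bigl[w_{\infty,1}(X_{N},S_{N})\bigr]$ under arbitrary, possibly buffer-inflating, policies, which is not automatic since $w_{\infty,1}$ is unbounded in $x$. The paper avoids this entirely by quoting optimality, the identification of $\rho^{*}$ with the optimal average cost, and the convergence of minimizers (its parts (b), (c), (e)) directly from Sch\"{a}l's theorem, so that only the reverse ACOE inequality needs a hand-made limit argument. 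If you insist on the self-contained verification route, you must state and check the additional condition that makes this lower bound go through.
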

Thus, the structure of the optimal policy is the same for all three problems, (\textbf{P1}), (\textbf{P2}), and (\textbf{P3}).
The proof of Theorem \ref{Th:pwl:infa} is discussed in Appendix C.


\subsection{General Convex Power-Rate Curves}
As mentioned in Section \ref{Se:Sys_Model}, in general, the power-rate curve under each possible channel condition is convex. It can be shown that under convex power-rate curves at each time, the optimal number of packets to send is a nonincreasing function of the starting buffer level. However, without any further structure on the power-rate curves, it is not computationally tractable to compute such optimal policies, known as generalized base-stock policies (a superclass of the \emph{finite} generalized base-stock policies discussed above). This is why we have chosen to analyze piecewise-linear convex power-rate curves, which can be used to approximate general convex power-rate curves. More specifically, our analysis suggests approximating the general convex power-rate curves by piecewise-linear convex power-rate curves where the slopes change at integer multiples of the demand $d$, in order to be able to apply Theorem \ref{Th:pwl_calc} to compute the critical numbers in an extremely efficient manner. Doing so represents an approximation at the modeling stage followed by an exact solution, as compared to modeling the power-rate curves as more general convex functions and having to approximate the solution. Finally, we note that increasing the number of segments used to model the piecewise-linear convex functions leads to a better approximation, but comes at the cost of some extra complexity in implementing the optimal policy, as the scheduler needs to store at least one critical number for each segment of each power-rate curve.
\section{Two Receivers with Linear Power-Rate Curves}\label{Se:two_users}
In this section, we analyze the finite and infinite horizon discounted expected cost problems when there are two receivers ($M=2$), and the power-rate functions under different channel conditions are linear for each user. Each user $m$'s channel condition evolves as a homogeneous 
Markov process, $\left\{S_n^m\right\}_{n=N,N-1,\ldots,1}$. As discussed in Sections \ref{Se:introduction} and \ref{Se:problem}, the time-varying channel conditions of the two users are independent of each other, and the transmission scheduler can exploit this spatial diversity. Like Section \ref{Se:lowSNR}, we denote the power consumption per unit of data transmitted to receiver $m$ under channel condition $s^m$ by $c_s^m$. The row vector of these per unit power consumptions is given by $\textbf{c}_{\textbf{s}}^{\transpose}$, so that the total power consumption in slot $n$ is given by $\sum_{m=1}^2 c^m(Z_n^m,S_n^m) = \textbf{c}_{\textbf{s}}^{\transpose} \textbf{Z}_n$. We denote the total holding costs $\sum_{m=1}^2 h^m(X_n^m+Z_n^m-d^m)$ by ${h}(\textbf{X}_n+\textbf{Z}_n-\textbf{d})$.


With these notations, the dynamic program \eqref{Eq:gen:finDP} for Problem (\textbf{P1}) becomes:
\begin{eqnarray}
V_n(\textbf{x},\textbf{s}) &=&
\min_{\textbf{z} \in {\cal A}^{\textbf{d}}(\textbf{x},\textbf{s})}
\left\{
\begin{array}{l}
\textbf{c}_{\textbf{s}}^{\transpose} \textbf{z} +{h}(\textbf{x}+\textbf{z}-\textbf{d}) \\
 +\alpha \cdot \Expectation \bigl[V_{n-1}(\textbf{x}+\textbf{z}-\textbf{d},\textbf{S}_{n-1})\bigm| \textbf{S}_n=\textbf{s}\bigr]
\end{array}
\right\} \label{Eq:two:finDP1} \\
&=&
\min_{\textbf{y} \in \tilde{\cal A}^{\textbf{d}}(\textbf{x},\textbf{s})}
\left\{
\begin{array}{l}
\textbf{c}_{\textbf{s}}^{\transpose} [\textbf{y}-\textbf{x}] +{h}(\textbf{y}-\textbf{d}) \\
 +\alpha \cdot \Expectation \bigl[V_{n-1}(\textbf{y}-\textbf{d},\textbf{S}_{n-1})\bigm| \textbf{S}_n=\textbf{s}\bigr]
\end{array}
\right\} \label{Eq:two:finDP2} \\
&=& -\textbf{c}_{\textbf{s}}^{\transpose} \textbf{x} + \min_{\textbf{y} \in \tilde{\cal A}^{\textbf{d}}(\textbf{x},\textbf{s})}
\Bigl\{
G_n(\textbf{y},\textbf{s}) \Bigr\}
~~~~~~n=N,N-1,\ldots,1~, \nonumber \\
%
V_0(\textbf{x},\textbf{s}) &=& 0, ~~\forall \textbf{x} \in \Real_+^2,\forall
\textbf{s} \in {\cal{S}}:={\cal{S}}^1 \times {\cal{S}}^2, \nonumber
\end{eqnarray}
where
\begin{align}
&G_n(\textbf{y},\textbf{s}):=\textbf{c}_{\textbf{s}}^{\transpose} \textbf{y} +{h}(\textbf{y}-\textbf{d})+\alpha \cdot \Expectation \bigl[V_{n-1}(\textbf{y}-\textbf{d},\textbf{S}_{n-1})\bigm| \textbf{S}_n=\textbf{s}\bigr], \nonumber \\
&~~~~~~~~~~~~~~~~~~~~~~~~~~~~~~~~~~~~~~~~~~~~~~~~~~~~~~~\forall \textbf{y} \in [d^1,\infty) \times [d^2,\infty),\forall \textbf{s} \in {\cal{S}},\hbox{ and } \nonumber \\
&\tilde{\cal A}^{\textbf{d}}(\textbf{x},\textbf{s}):=\biggl\{\textbf{y} \in \Real_{+}^2 :
\textbf{y} \succeq \textbf{d} \vee \textbf{x} 
\hbox{ and } 
\textbf{c}_{\textbf{s}}^{\transpose} [\textbf{y}-\textbf{x}] \leq P
\biggr\},~~\forall \textbf{x} \in \Real_+^2,\forall
\textbf{s} \in {\cal{S}}. \label{Eq:two:actionspace}
\end{align}
The transition from \eqref{Eq:two:finDP1} to \eqref{Eq:two:finDP2} follows again from a change of variable in the action space from $\textbf{Z}_n$ to $\textbf{Y}_n$, where $\textbf{Y}_n = \textbf{X}_n + \textbf{Z}_n$. The controlled random vector $\textbf{Y}_n$ represents the queue lengths of the receiver buffers \emph{after} transmission takes place in the $n^{th}$ slot, but \emph{before} playout takes place (i.e., before $d^m$ packets are removed from user $m$'s buffer). The restrictions on the action space, $\textbf{y} \succeq \textbf{d} \vee \textbf{x}
\hbox{ and } 
\textbf{c}_{\textbf{s}}^{\transpose} [\textbf{y}-\textbf{x}] \leq P$, ensure: (i) a nonnegative number of packets is transmitted to each user; (ii) there are at least $d^m$ packets in user $m$'s receiver buffer following transmission, in order to satisfy the underflow constraint; and (iii) the power constraint is satisfied.

Without the per slot peak power constraint, this $M$-dimensional problem would be separable, and could be solved by solving $M$ instances of the one-dimensional problem of Section \ref{Se:lowSNR}; however, the joint power constraint couples the queues.\footnote{This problem therefore falls into the class of \emph{weakly coupled stochastic dynamic programs} \cite{hawkins, adelman_weak}.} As a result, the optimal transmission quantity to one receiver depends on the other receivers' queue length, as the following example shows.

\begin{example} \label{Ex:two_coupling}
Assume receiver 1's channel is currently in a ``poor'' condition, receiver 2's channel is currently in a ``medium'' condition, and receiver 2's buffer contains enough packets to satisfy the demand for the next few slots. We consider two different scenarios for receiver 1's buffer level to show how the optimal transmission quantity to receiver 2 depends on receiver 1's buffer level. 
In Scenario 1, receiver 1's buffer already contains many packets. In this scenario, 
%
%
it may be beneficial for the scheduler to wait for receiver 2 to have a better channel condition, because it will be able to take full advantage of an ``excellent'' condition when it comes. In Scenario 2, receiver 1's queue only contains enough packets for playout in the current slot.
It may be optimal to transmit some packets to receiver 2 in the current slot in this scenario. To see this, note that
 even if receiver 2 experiences the best possible channel condition in the next slot, 
 the scheduler will need to allocate some power to receiver 1 in order to prevent receiver 1's buffer from emptying. 
 Therefore, the scheduler anticipates not being able to take full advantage of receiver 2's ``excellent'' condition in the next slot, and 
 may compensate by sending some 
 packets in the current slot under the ``medium'' condition.
 \end{example}

\subsection{Structure of Optimal Policy for the Finite Horizon Discounted Expected Cost Problem}

Before proceeding to the structure of the optimal transmission policy, we state some key properties of the value functions in the following theorem.
\begin{theorem} \label{Th:two_users:properties}
With two receivers and linear power-rate curves, the following statements are true for $n=1,2,\ldots, N$, and for all $\textbf{s} \in {\cal{S}}$:
\begin{itemize}
\item[(i)] 
$V_{n-1}(\textbf{x},\textbf{s})$ is convex in $\textbf{x}$.

\item[(ii)] 
$V_{n-1}(\textbf{x},\textbf{s})$ is supermodular in $\textbf{x}$; i.e., for all $\bar{\textbf{x}},\tilde{\textbf{x}} \in \Real_+^2$,
\begin{eqnarray*}
V_{n-1}(\bar{\textbf{x}},\textbf{s})+V_{n-1}(\tilde{\textbf{x}},\textbf{s}) \leq
V_{n-1}(\bar{\textbf{x}} \wedge
\tilde{\textbf{x}},\textbf{s})+V_{n-1}(\bar{\textbf{x}} \vee
\tilde{\textbf{x}},\textbf{s})~.
\end{eqnarray*}

\item[(iii)] 
$G_n(\textbf{y},\textbf{s})$ is convex in $\textbf{y}$.

\item[(iv)] 
$G_n(\textbf{y},\textbf{s})$ is supermodular in $\textbf{y}$;
i.e., for all $\bar{\textbf{y}},\tilde{\textbf{y}} \in 
\left[d^1,\infty\right) \times \left[d^2,\infty\right)$,
\begin{eqnarray*}
G_n(\bar{\textbf{y}},\textbf{s})+G_n(\tilde{\textbf{y}},\textbf{s}) \leq
G_n({\bar{\textbf{y}}} \wedge
\tilde{\textbf{y}},\textbf{s})+G_n({\bar{\textbf{y}}} \vee
\tilde{\textbf{y}},\textbf{s})~.
\end{eqnarray*}
\item[(v)] 
$y_n^1 <
{\hat{y}_n}^1$ implies:
\begin{eqnarray*}
\inf \left\{\argmin_{y_n^2 \in 
[d^2,\infty)}
\biggl\{G_n\left(y_n^1,y_n^2,s^1,s^2\right)\biggr\}
\right\} \geq \inf \left\{\argmin_{y_n^2 \in
[d^2,\infty)}
\biggl\{G_n\left({\hat{y}_n}^1,y_n^2,s^1,s^2\right)\biggr\}
\right\}
\end{eqnarray*}
and $y_n^2 < {\hat{y}_n}^2$ implies:
\begin{eqnarray*}
\inf \left\{\argmin_{y_n^1 \in
[d^1,\infty)}
\biggl\{G_n\left(y_n^1,y_n^2,s^1,s^2\right)\biggr\}
\right\} \geq \inf \left\{\argmin_{y_n^1 \in
[d^1,\infty)}
\biggl\{G_n\left(y_n^1,{\hat{y}_n}^2,s^1,s^2\right)\biggr\}
\right\}.
\end{eqnarray*}
\end{itemize}
\end{theorem}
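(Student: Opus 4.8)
The plan is to establish all five properties together by backward induction on $n$, since they form a single chain. The base case is $V_0 \equiv 0$, which is convex and supermodular trivially, so (i)--(ii) hold for $n=1$. For the inductive step I would assume that $V_{n-1}(\cdot,\textbf{s})$ is convex and supermodular for every $\textbf{s}$ (statements (i)--(ii)), derive (iii), (iv), and (v) for $G_n$, and then use the minimization in \eqref{Eq:two:finDP2} to conclude that $V_n(\cdot,\textbf{s})$ is convex and supermodular, which is precisely (i)--(ii) at the next index. Thus one induction propagates the whole package.

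Statements (iii) and (iv) are the routine part. Writing $G_n(\textbf{y},\textbf{s}) = \textbf{c}_{\textbf{s}}^{\transpose}\textbf{y} + \sum_{m}h^m(y^m-d^m) + \alpha\,\Expectation[V_{n-1}(\textbf{y}-\textbf{d},\textbf{S}_{n-1})\mid\textbf{S}_n=\textbf{s}]$, the first term is linear and the holding-cost term is separable, so both are modular and convex; the expectation term is a nonnegative mixture (over the transition kernel, which does not depend on $\textbf{y}$) of the maps $\textbf{y}\mapsto V_{n-1}(\textbf{y}-\textbf{d},\textbf{s}')$, and the coordinatewise shift $\textbf{y}\mapsto\textbf{y}-\textbf{d}$ is affine and commutes with $\wedge$ and $\vee$. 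Hence convexity and supermodularity of $V_{n-1}$ are inherited by this term, giving (iii) and (iv). Statement (v) then follows from (iv) by the standard monotone-comparative-statics (Topkis) argument: a supermodular $G_n$ has increasing differences in $(y^1,y^2)$, so for minimization the smallest minimizer in one coordinate is nonincreasing in the other, and convexity (iii) guarantees each one-dimensional $\argmin$ is a nonempty interval so that $\inf\argmin$ is well defined.

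For statement (i), write $V_n(\textbf{x},\textbf{s}) = -\textbf{c}_{\textbf{s}}^{\transpose}\textbf{x} + M_n(\textbf{x},\textbf{s})$ with $M_n(\textbf{x},\textbf{s}) := \min_{\textbf{y}\in\tilde{\cal A}^{\textbf{d}}(\textbf{x},\textbf{s})} G_n(\textbf{y},\textbf{s})$. The first term is linear, and $M_n$ is the partial minimization of the jointly convex function $G_n$ (convex in $\textbf{y}$, constant in $\textbf{x}$) over the set $\{(\textbf{x},\textbf{y}) : \textbf{y}\succeq\textbf{d},\ \textbf{y}\succeq\textbf{x},\ \textbf{c}_{\textbf{s}}^{\transpose}(\textbf{y}-\textbf{x})\leq P\}$, which is an intersection of half-spaces and hence jointly convex; the standard infimal-projection result then gives convexity of $M_n$, and the feasibility assumption on $P$ keeps it finite. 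This leaves (ii), the supermodularity of $V_n$, equivalently of $M_n$, as the crux.

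The hard part is (ii). I would prove $M_n(\bar{\textbf{x}},\textbf{s}) + M_n(\tilde{\textbf{x}},\textbf{s}) \leq M_n(\bar{\textbf{x}}\wedge\tilde{\textbf{x}},\textbf{s}) + M_n(\bar{\textbf{x}}\vee\tilde{\textbf{x}},\textbf{s})$ by an exchange argument: take $\textbf{a}$ optimal for the meet $\bar{\textbf{x}}\wedge\tilde{\textbf{x}}$ and $\textbf{b}$ optimal for the join $\bar{\textbf{x}}\vee\tilde{\textbf{x}}$, and build feasible candidates for $\bar{\textbf{x}}$ and $\tilde{\textbf{x}}$ out of $\textbf{a}$ and $\textbf{b}$ whose total cost is bounded by $G_n(\textbf{a})+G_n(\textbf{b})$ using the supermodularity (iv) of $G_n$. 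The obstacle is that the joint feasible set is \emph{not} a sublattice: the lower-bound constraints $\textbf{y}\succeq\textbf{d}\vee\textbf{x}$ do behave well under $\wedge$ and $\vee$, but the coupling budget constraint $\textbf{c}_{\textbf{s}}^{\transpose}(\textbf{y}-\textbf{x})\leq P$ does not, so the naive meet/join (or coordinate-swap) of $\textbf{a}$ and $\textbf{b}$ need not respect the individual per-slot power budgets. The one structural fact available is the lattice identity $(\textbf{a}\wedge\textbf{b})+(\textbf{a}\vee\textbf{b})=\textbf{a}+\textbf{b}$, which conserves \emph{total} power, $\textbf{c}_{\textbf{s}}^{\transpose}[(\textbf{a}\wedge\textbf{b})+(\textbf{a}\vee\textbf{b})]=\textbf{c}_{\textbf{s}}^{\transpose}[\textbf{a}+\textbf{b}]\leq 2P$, so that any excess power used by one candidate is matched by slack in the other. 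The decisive step, and the one I expect to require the most care (and where the multi-item inventory techniques of \cite{evans}--\cite{janakiraman} come in), is to rebalance transmission between the two candidates along this conserved total, shifting power from the over-budget candidate to the under-budget one down to the lower bounds $\textbf{d}\vee\bar{\textbf{x}}$ and $\textbf{d}\vee\tilde{\textbf{x}}$, and then to invoke the convexity (iii) of $G_n$ to guarantee that this redistribution does not increase the combined cost, thereby closing the supermodularity inequality.
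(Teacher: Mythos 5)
Your overall architecture is the same as the paper's: a joint induction in which (iii)--(v) for $G_n$ follow from (i)--(ii) for $V_{n-1}$ (closure of convexity and supermodularity under the affine shift $\textbf{y}\mapsto\textbf{y}-\textbf{d}$, nonnegative mixtures over the action-independent transition kernel, and sums, plus Topkis for (v)), and (i) for $V_n$ follows by infimal projection of $G_n$ over the jointly convex constraint set $\{(\textbf{x},\textbf{y}): \textbf{y}\succeq\textbf{d},\ \textbf{y}\succeq\textbf{x},\ \textbf{c}_{\textbf{s}}^{\transpose}(\textbf{y}-\textbf{x})\leq P\}$. All of that is correct and matches the paper. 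The problem is statement (ii), exactly where you flagged it: what you give there is a plan, not a proof, and the mechanism you propose does not cover all configurations.

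Your plan --- take $\textbf{a}$ optimal for $\bar{\textbf{x}}\wedge\tilde{\textbf{x}}$, $\textbf{b}$ optimal for $\bar{\textbf{x}}\vee\tilde{\textbf{x}}$, then rebalance power between the two candidates along the conserved total $\textbf{c}_{\textbf{s}}^{\transpose}[\textbf{a}+\textbf{b}]$ and close with convexity --- works only when the candidates can be kept on the line segment between $\textbf{a}$ and $\textbf{b}$ with complementary weights $\gamma$ and $1-\gamma$, so that convexity gives $G_n(\gamma\textbf{a}+(1-\gamma)\textbf{b},\textbf{s})+G_n((1-\gamma)\textbf{a}+\gamma\textbf{b},\textbf{s})\leq G_n(\textbf{a},\textbf{s})+G_n(\textbf{b},\textbf{s})$. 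That is precisely the paper's Case 2 (non-comparable optimizers). But the case $\textbf{a}\preceq\textbf{b}$ also occurs, and there segment points generically fail feasibility: with $\bar{x}^1<\tilde{x}^1$ and $\tilde{x}^2<\bar{x}^2$, the candidate for $\bar{\textbf{x}}$ must satisfy $y^2\geq\bar{x}^2$, which (since only $b^2$, not $a^2$, is guaranteed to exceed $\bar{x}^2$) pushes its weight toward $\textbf{b}$, while its budget constraint $\textbf{c}_{\textbf{s}}^{\transpose}[\textbf{y}-\bar{\textbf{x}}]\leq P$ pushes the weight toward $\textbf{a}$ (note $\textbf{c}_{\textbf{s}}^{\transpose}[\textbf{b}-\bar{\textbf{x}}]=\textbf{c}_{\textbf{s}}^{\transpose}[\textbf{b}-\bar{\textbf{x}}\vee\tilde{\textbf{x}}]+c_{s^1}(\tilde{x}^1-\bar{x}^1)$ can exceed $P$); the candidate for $\tilde{\textbf{x}}$ faces the mirror-image tension, and the complementary-weight requirement couples the two choices, so in general no admissible $\gamma$ exists. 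The paper resolves this with two ingredients absent from your proposal: a preliminary lemma that the optimizers can be chosen with $\textbf{y}^*(\bar{\textbf{x}}\wedge\tilde{\textbf{x}},\textbf{s})\nsucc\textbf{y}^*(\bar{\textbf{x}}\vee\tilde{\textbf{x}},\textbf{s})$ (enabling the case split), and, for the comparable case, candidates built from coordinatewise max/min of the \emph{transmission quantities} --- points in the box $[\textbf{a},\textbf{b}]$, off the segment, which are coordinatewise interpolations with complementary weights $(\sigma,\beta)$ and $(1-\sigma,1-\beta)$ --- whose combined cost is controlled by the parallelogram inequality of Lemma \ref{Le:mp:parallelogram}: for $f$ convex \emph{and} supermodular and $\textbf{z}\preceq\hat{\textbf{z}}$, $f(\textbf{z})+f(\hat{\textbf{z}})\geq f(\textbf{z}^{\sigma,\beta})+f(\textbf{z}^{1-\sigma,1-\beta})$. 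Convexity alone cannot bound these off-segment points, and supermodularity applied directly to $\textbf{a},\textbf{b}$ runs the wrong way ($G_n(\textbf{a}\wedge\textbf{b},\textbf{s})+G_n(\textbf{a}\vee\textbf{b},\textbf{s})\geq G_n(\textbf{a},\textbf{s})+G_n(\textbf{b},\textbf{s})$), so the specific interleaving of the two properties in that lemma is the missing idea; without it, your induction cannot be closed.
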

\medskip

A detailed proof is included in Appendix A. Because
$-\textbf{c}_{\textbf{s}}^{\transpose} \textbf{x}$ is supermodular in $\textbf{x}$, the key part of the induction step in the proof of (ii) is to show that $\min_{\textbf{y} \in
\tilde{\cal{A}}^{\textbf{d}}(\textbf{x},\textbf{s})} \left\{G_{n-1}(\textbf{y},\textbf{s})\right\}$ is also supermodular in $\textbf{x}$. Denoting $\argmin_{\textbf{y} \in
\tilde{\cal{A}}^{\textbf{d}}(\textbf{x},\textbf{s})} \left\{G_{n-1}(\textbf{y},\textbf{s})\right\}$ by $\textbf{y}^*(\textbf{x},\textbf{s})$, we do this constructively by showing that for all $\bar{\textbf{x}},\tilde{\textbf{x}} \in \Real_+^2$:
\begin{align}
&\min_{\textbf{y} \in
\tilde{\cal{A}}^{\textbf{d}}(\bar{\textbf{x}},\textbf{s})} \left\{G_{n-1}(\textbf{y},\textbf{s})\right\}
+\min_{\textbf{y} \in
\tilde{\cal{A}}^{\textbf{d}}(\tilde{\textbf{x}},\textbf{s})} \left\{G_{n-1}(\textbf{y},\textbf{s})\right\} \nonumber \\
&\leq~
G_{n-1}(\bar{\textbf{y}},\textbf{s})+G_{n-1}(\tilde{\textbf{y}},\textbf{s}) \nonumber \\
&\leq~
G_{n-1}\Bigl(\textbf{y}^*(\bar{\textbf{x}} \wedge \tilde{\textbf{x}},\textbf{s}),\textbf{s}\Bigr)+G_{n-1}\Bigl(\textbf{y}^*(\bar{\textbf{x}} \vee \tilde{\textbf{x}},\textbf{s}),\textbf{s}\Bigr) \label{Eq:two:keyStep} \\
&=~
\min_{\textbf{y} \in
\tilde{\cal{A}}^{\textbf{d}}(\bar{\textbf{x}} \wedge \tilde{\textbf{x}},\textbf{s})} \left\{G_{l-1}(\textbf{y},\textbf{s})\right\}
+\min_{\textbf{y} \in
\tilde{\cal{A}}^{\textbf{d}}(\bar{\textbf{x}} \vee \tilde{\textbf{x}},\textbf{s})} \left\{G_{l-1}(\textbf{y},\textbf{s})\right\} \nonumber ,
\end{align}
for a specific choice of $\bar{\textbf{y}} \in \tilde{\cal{A}}^{\textbf{d}}(\bar{\textbf{x}},\textbf{s})$ and $\tilde{\textbf{y}} \in \tilde{\cal{A}}^{\textbf{d}}(\tilde{\textbf{x}},\textbf{s})$. The difficulty is cleverly constructing $\bar{\textbf{y}}$ and $\tilde{\textbf{y}}$, depending on the relative locations of $\bar{\textbf{x}}$, $\tilde{\textbf{x}}$, $\textbf{y}^*(\bar{\textbf{x}} \wedge \tilde{\textbf{x}})$, and $\textbf{y}^*(\bar{\textbf{x}} \vee \tilde{\textbf{x}})$, so as to ensure \eqref{Eq:two:keyStep} is true.

%

It follows from Theorem \ref{Th:two_users:properties} that the structure of the optimal transmission policy for the finite horizon discounted expected cost problem is given by the following theorem.
\begin{theorem}\label{Th:two_users:structure}
For every $n \in
\{1,2,\ldots,N\}$ and $\textbf{s} \in {\cal{S}}^1 \times {\cal{S}}^2$, define the nonempty set of global minimizers of $G_n(\cdot,\textbf{s})$:
\begin{eqnarray*}
{\mathcal B}_n(\textbf{s}):= \left\{\hat{\textbf{y}} \in
[d^1,\infty) \times [d^2, \infty):~G_n(\hat{\textbf{y}},\textbf{s})=\min\limits_{\textbf{y}\in [d^1,\infty) \times [d^2,\infty)}
G_n(\textbf{y},\textbf{s}) \right\}~.
\end{eqnarray*}
Define also
\begin{eqnarray*}
b_n^1\left(\textbf{s}\right):= \min\Bigl\{y^1 \in [d^1,\infty): (y^1,y^2) \in {\mathcal B}_n(\textbf{s}) \hbox{ for some } y^2 \in [d^2,\infty) \Bigr\}~,
\end{eqnarray*}
and
\begin{eqnarray*}
b_n^2\left(\textbf{s}\right):=\min\Bigl\{y^2 \in [d^2,\infty): \bigl(b_n^1\left(\textbf{s}\right),y^2\bigr) \in {\mathcal B}_n(\textbf{s}) \Bigr\}~.
\end{eqnarray*}
Then the vector $\textbf{b}_n(\textbf{s}) = \Bigl(b_n^1\left(\textbf{s}\right),b_n^2\left(\textbf{s}\right) \Bigr) \in {\mathcal B}_n(\textbf{s})$ is a global minimizer of $G_n(\cdot,\textbf{s})$.
Define also the functions:
\begin{align*}
f_n^1(x^2,\textbf{s})&:= \inf \left\{\argmin_{y^1 \in
[d^1,\infty)}
\biggl\{G_n\left(y^1,x^2,s^1,s^2\right)\biggr\}
\right\},\hbox{ for }x^2 \in [d^2,\infty),
\hbox{ and } \\
f_n^2(x^1,\textbf{s})&:= \inf \left\{\argmin_{y^2 \in
[d^2,\infty)}
\biggl\{G_n\left(x^1,y^2,s^1,s^2\right)\biggr\}
\right\},\hbox{ for }x^1 \in [d^1,\infty).
\end{align*}
Note that by construction, $f_n^1\bigl(b_n^2(\textbf{s}),\textbf{s}\bigr)=b_n^1(\textbf{s})$ and $f_n^2\bigl(b_n^1(\textbf{s}),\textbf{s}\bigr)=b_n^2(\textbf{s})$. Partition 
$\Real_+^2$ into the following seven regions:
\begin{align*}
{\mathcal R}_{I}(n,\textbf{s})&:=  \Bigl\{ \textbf{x} \in \Real_+^2 : \textbf{x} \succeq \bigl(f_n^1 (x^2,\textbf{s}),f_n^2(x^1,\textbf{s}) \bigr) \hbox{ and } \textbf{x} \neq  \textbf{b}_n(\textbf{s}) \Bigr\}\\
{\mathcal R}_{II}(n,\textbf{s})&:= \Bigl\{ \textbf{x} \in \Real_+^2 : \textbf{x} \preceq \textbf{b}_n(\textbf{s}) \hbox{ and } \textbf{c}_{\textbf{s}}^{\transpose}\left[\textbf{b}_n(\textbf{s}) - \textbf{x} \right] \leq P \Bigr\}\\
{\mathcal R}_{III-A}(n,\textbf{s})&:= \Bigl\{ \textbf{x} \in \Real_+^2 : x^2 > b_n^2(\textbf{s}) \hbox{ and } f_n^1(x^2,\textbf{s})-\frac{P}{c_{s^1}}\leq x^1 < f_n^1(x^2,\textbf{s}) \Bigr\}\\
{\mathcal R}_{III-B}(n,\textbf{s})&:= \Bigl\{ \textbf{x} \in \Real_+^2 : x^1 > b_n^1(\textbf{s}) \hbox{ and } f_n^2(x^1,\textbf{s})-\frac{P}{c_{s^2}}\leq x^2 < f_n^2(x^1,\textbf{s}) \Bigr\} \\
{\mathcal R}_{IV-A}(n,\textbf{s})&:= \Bigl\{ \textbf{x} \in \Real_+^2 : x^2 > b_n^2(\textbf{s}) \hbox{ and } x^1 < f_n^1(x^2,\textbf{s})-\frac{P}{c_{s^1}} \Bigr\} \\
{\mathcal R}_{IV-B}(n,\textbf{s})&:=\Bigl\{ \textbf{x} \in \Real_+^2 : \textbf{x} \preceq \textbf{b}_n(\textbf{s}) \hbox{ and } \textbf{c}_{\textbf{s}}^{\transpose}\left[\textbf{b}_n(\textbf{s}) - \textbf{x} \right] > P \Bigr\} \\
{\mathcal R}_{IV-C}(n,\textbf{s})&:= \Bigl\{ \textbf{x} \in \Real_+^2 : x^1 > b_n^1(\textbf{s}) \hbox{ and } x^2 < f_n^2(x^1,\textbf{s})-\frac{P}{c_{s^2}} \Bigr\}~,
\end{align*}
and define ${\mathcal R}_{IV}(n,\textbf{s}):={\mathcal R}_{IV-A}(n,\textbf{s}) \cup {\mathcal R}_{IV-B}(n,\textbf{s}) \cup {\mathcal R}_{IV-C}(n,\textbf{s})$.

Then for Problem (\textbf{P1}) in the case of two receivers with linear power-rate curves, for all $\textbf{x} \notin {\mathcal R}_{IV}(n,\textbf{s})$, an optimal control action with $n$ slots remaining is given by:
\begin{eqnarray} \label{Eq:two:y_star}
\textbf{y}_n^*(\textbf{x},\textbf{s}):=\left\{
\begin{array}{ll}
   \textbf{x} , & \mbox{if } ~ \textbf{x} \in  {\mathcal R}_{I}(n,\textbf{s})\\
   \textbf{b}_n(\textbf{s}) , & \mbox{if } ~ \textbf{x} \in {\mathcal R}_{II}(n,\textbf{s}) \\
   \Bigl(f_n^1(x^2,\textbf{s}) ,x^2 \Bigr) , & \mbox{if } ~ \textbf{x} \in  {\mathcal R}_{III-A}(n,\textbf{s})\\
   \Bigl(x^1 ,f_n^2(x^1,\textbf{s}) \Bigr) , & \mbox{if } ~ \textbf{x} \in  {\mathcal R}_{III-B}(n,\textbf{s})\\
\end{array} \right..
\end{eqnarray}

For all $\textbf{x} \in {\mathcal R}_{IV}(n,\textbf{s})$, there exists an optimal control action with $n$ slots remaining, $\textbf{y}_n^*(\textbf{x},\textbf{s})$, which satisfies:
\begin{eqnarray}\label{Eq:two:full}
\textbf{c}_{\textbf{s}}^{\transpose}\left[\textbf{y}_n^*(\textbf{x},\textbf{s}) - \textbf{x} \right] = P~.
\end{eqnarray}
\end{theorem}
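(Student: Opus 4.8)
The plan is to solve, for each fixed $(\textbf{x},\textbf{s})$, the convex program $\min_{\textbf{y} \in \tilde{\cal A}^{\textbf{d}}(\textbf{x},\textbf{s})} G_n(\textbf{y},\textbf{s})$, exploiting the properties already established in Theorem \ref{Th:two_users:properties}: $G_n(\cdot,\textbf{s})$ is convex (part (iii)) and supermodular (part (iv)), with coercive growth guaranteeing a minimizer. The governing idea is to first drop the power constraint and solve the \emph{relaxed} problem over the upper orthant $\{\textbf{y} \succeq \textbf{d} \vee \textbf{x}\}$, then re-impose $\textbf{c}_{\textbf{s}}^{\transpose}[\textbf{y}-\textbf{x}] \leq P$. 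Two facts do the heavy lifting. First, because $G_n$ is convex, the first-order (KKT) stationarity conditions for minimizing over a box are \emph{sufficient} for global optimality; thus it suffices to exhibit a feasible point at which each coordinate is either at an unconstrained single-coordinate minimum or at its lower bound with nonnegative right derivative. Second, supermodularity (Theorem \ref{Th:two_users:properties}(v)) makes the single-coordinate best responses $f_n^1(\cdot,\textbf{s})$ and $f_n^2(\cdot,\textbf{s})$ nonincreasing and identifies $\textbf{b}_n(\textbf{s})$ as the lexicographically least fixed point $b_n^1 = f_n^1(b_n^2)$, $b_n^2 = f_n^2(b_n^1)$.

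Regions ${\mathcal R}_I$ and ${\mathcal R}_{II}$ are immediate. In ${\mathcal R}_{II}$ the global minimizer $\textbf{b}_n(\textbf{s})$ satisfies $\textbf{b}_n \succeq \textbf{d} \vee \textbf{x}$ and $\textbf{c}_{\textbf{s}}^{\transpose}[\textbf{b}_n - \textbf{x}] \leq P$, so it is feasible and hence optimal. In ${\mathcal R}_I$ we have $x^1 \geq f_n^1(x^2,\textbf{s})$ and $x^2 \geq f_n^2(x^1,\textbf{s})$; using monotonicity of the best responses, for any $\textbf{y} \succeq \textbf{x}$ one gets $f_n^2(y^1,\textbf{s}) \leq f_n^2(x^1,\textbf{s}) \leq x^2$ and $f_n^1(x^2,\textbf{s}) \leq x^1$, so $G_n(y^1,y^2,\textbf{s}) \geq G_n(y^1,x^2,\textbf{s}) \geq G_n(x^1,x^2,\textbf{s})$, and $\textbf{y} = \textbf{x}$ (zero power) is optimal.

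The crux is ${\mathcal R}_{III-A}$ (and, symmetrically, ${\mathcal R}_{III-B}$), where the claim is that it is optimal to move only the first coordinate to $y^1_\ast := f_n^1(x^2,\textbf{s})$ while holding $y^2 = x^2$. Given the convex-KKT reduction, this hinges entirely on showing that the second coordinate does \emph{not} want to increase at $(y^1_\ast,x^2)$, i.e. on the key lemma
\[
x^2 \geq b_n^2(\textbf{s}) \ \Longrightarrow\ f_n^2\bigl(f_n^1(x^2,\textbf{s}),\textbf{s}\bigr) \leq x^2 .
\]
I expect this to be the main obstacle, because it is exactly here that convexity and supermodularity must be combined (in the smooth case it is the statement that the curve $\partial_1 G_n = 0$ is steeper than $\partial_2 G_n = 0$, a consequence of $\partial_{11}\partial_{22} \geq \partial_{12}^2$). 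My plan for the lemma is a contradiction argument avoiding smoothness: set $p := f_n^1(x^2,\textbf{s}) \leq b_n^1$ (by monotonicity) and suppose $q := f_n^2(p,\textbf{s}) > x^2 \geq b_n^2$; monotonicity of $f_n^1$ then gives $f_n^1(q,\textbf{s}) \leq p$, so at $(p,q)$ the first coordinate is past its minimum (right derivative $\geq 0$) while the second is stationary. Convexity along the segment from $(p,q)$ to $\textbf{b}_n$ forces the directional derivative there to be $\leq 0$; combined with the sign information this forces either $p = b_n^1$ (whence $q = f_n^2(b_n^1,\textbf{s}) = b_n^2$, contradicting $q > b_n^2$) or $(p,q)$ itself to be a global minimizer with $p < b_n^1$, contradicting the minimality built into the definition of $b_n^1(\textbf{s})$. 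With the lemma in hand, $(y^1_\ast,x^2)$ meets the sufficient optimality conditions for the relaxed problem, and since ${\mathcal R}_{III-A}$ bakes in the power-feasibility bound $x^1 \geq f_n^1(x^2,\textbf{s}) - \frac{P}{c_{s^1}}$, the point is feasible, hence optimal, giving \eqref{Eq:two:y_star}.

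It remains to treat ${\mathcal R}_{IV}$ and to confirm the seven sets partition $\Real_+^2$. The partition, together with the identity that ${\mathcal R}_I \cup {\mathcal R}_{II} \cup {\mathcal R}_{III-A} \cup {\mathcal R}_{III-B}$ is exactly the set of states whose relaxed optimum is power-feasible, follows by unwinding the definitions, since the power conditions appended to ${\mathcal R}_{II}$, ${\mathcal R}_{III-A}$, ${\mathcal R}_{III-B}$ are precisely the feasibility thresholds of the relaxed optima computed above. For ${\mathcal R}_{IV}$ I would argue existence of an optimum on the power boundary directly from convexity: let $\textbf{y}_n^\ast$ be any optimal action, and suppose $\textbf{c}_{\textbf{s}}^{\transpose}[\textbf{y}_n^\ast - \textbf{x}] < P$ strictly. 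Then the power constraint is inactive, so $\textbf{y}_n^\ast$ is a local, and hence by convexity global, minimizer of $G_n$ over the orthant $\{\textbf{y} \succeq \textbf{d} \vee \textbf{x}\}$; that is, it is the relaxed optimum. But $\textbf{x} \in {\mathcal R}_{IV}$ means precisely that the relaxed optimum violates the power constraint, a contradiction. Hence every optimal action satisfies $\textbf{c}_{\textbf{s}}^{\transpose}[\textbf{y}_n^\ast - \textbf{x}] = P$, which is \eqref{Eq:two:full}. The remaining work is the routine verification of the stationarity bookkeeping and the edge cases where $x^m < d^m$ (so the active lower bound is $d^m$ rather than $x^m$), neither of which presents a conceptual difficulty once the key lemma is secured.
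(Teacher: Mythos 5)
Your overall plan (solve the power-relaxed problem over the orthant, then re-impose the budget) is reasonable, and your handling of ${\mathcal R}_{I}$ and ${\mathcal R}_{II}$ coincides with the paper's. But two steps fail as written. First, the ``convex-KKT reduction'' that underpins your treatment of ${\mathcal R}_{III-A}$/${\mathcal R}_{III-B}$ --- that coordinate-wise first-order conditions (each coordinate at a slice minimum, or at its bound with nonnegative right derivative) suffice for global optimality over a box --- is \emph{false} for nonsmooth convex functions, even supermodular ones. Concretely, $G(y^1,y^2) = y^1 + \max\{-y^2,\,y^2-2\} + |y^1+y^2-1|$ is convex and supermodular on $[0,\infty)^2$; at the point $(1,0)$ both coordinate slices attain their minima, yet $G(0,1)=-1<1=G(1,0)$. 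Since $G_n(\cdot,\textbf{s})$ in this model is genuinely nonsmooth (piecewise-linear costs, max operations, expectations of piecewise value functions), this principle cannot be invoked, and the same nonsmoothness undermines the directional-derivative bookkeeping in your key lemma (slice stationarity at $(p,q)$ does not by itself control the joint subgradients needed for your sign argument). The paper avoids all of this with a global, not local, argument: the partial minimum $\Psi(y^2):=\min_{y^1\geq d^1} G_n\bigl(y^1,y^2,\textbf{s}\bigr) = G_n\bigl(f_n^1(y^2,\textbf{s}),y^2,\textbf{s}\bigr)$ is convex and minimized at $b_n^2(\textbf{s})$, so every feasible $\tilde{\textbf{y}}$ (which necessarily has $\tilde{y}^2 \geq x^2 \geq b_n^2(\textbf{s})$) satisfies $G_n(\tilde{\textbf{y}},\textbf{s}) \geq \Psi(\tilde{y}^2) \geq \Psi(x^2) = G_n\bigl((f_n^1(x^2,\textbf{s}),x^2),\textbf{s}\bigr)$. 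This one chain both replaces your KKT step and subsumes your key lemma (which is true, but is cleanly proved by exactly this $\Psi$ argument, not by local sign conditions).

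Second, your region ${\mathcal R}_{IV}$ argument tacitly assumes the relaxed optimizer is unique, and it concludes something stronger than the theorem --- that \emph{every} optimal action saturates the budget --- which is false in general. If ${\mathcal B}_n(\textbf{s})$ is a nontrivial segment, $\textbf{x}$ can lie in ${\mathcal R}_{IV-B}$ because the lexicographically minimal minimizer $\textbf{b}_n(\textbf{s})$ is unaffordable while a different global minimizer is strictly affordable; then an optimal action with power slack exists and your contradiction evaporates (so ``$\textbf{x} \in {\mathcal R}_{IV}$ means the relaxed optimum violates the power constraint'' is not a correct reading of the region definitions). What survives, and is all that \eqref{Eq:two:full} claims, is \emph{existence} of a budget-tight optimal action; your argument can be repaired by noting that the orthant argmin set is convex and, in ${\mathcal R}_{IV}$, contains an unaffordable point ($\textbf{b}_n(\textbf{s})$ in ${\mathcal R}_{IV-B}$, $(f_n^1(x^2,\textbf{s}),x^2)$ in ${\mathcal R}_{IV-A}$), so the segment from any slack optimal action to that point crosses the power boundary at a feasible global minimizer. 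The paper instead argues constructively, with no uniqueness assumption: for each feasible $\hat{\textbf{y}}$ with slack it exhibits a budget-tight $\tilde{\textbf{y}}$ with $G_n(\tilde{\textbf{y}},\textbf{s}) \leq G_n(\hat{\textbf{y}},\textbf{s})$, by moving along the segment toward $\textbf{b}_n(\textbf{s})$ in ${\mathcal R}_{IV-B}$, and in ${\mathcal R}_{IV-A}$/${\mathcal R}_{IV-C}$ by a more delicate construction that follows the best-response curve $f_n^2(\cdot,\textbf{s})$ and explicitly handles its discontinuities --- a case your sketch does not reach.
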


A detailed proof is included in Appendix A. Equation \eqref{Eq:two:full} says that it is optimal for the transmitter to allocate the full power budget for transmission when the vector of receiver buffer levels at the beginning of slot $n$ falls in region ${\mathcal R}_{IV}(n,\textbf{s})$. We cannot say anything in general about the optimal allocation (split) of the full power budget between the two receivers when the starting buffer levels lie in region ${\mathcal R}_{IV}(n,\textbf{s})$. Figure \ref{Fig:two:opt_policy} shows the partition of $\Real_+^2$ into the seven regions, and a
diagram of the structure of the optimal transmission policy. Note that the figure shows the seven regions of the optimal policy for a \emph{fixed realization} of the pair of channel conditions. Under different pairs of channel realizations, the seven regions have the same general form, but the targets $\textbf{b}_n(\textbf{s})$ are shifted and the boundary functions $f_n^1(x^2,\textbf{s})$ and $f_n^2(x^1,\textbf{s})$ are different.

\begin{figure}[htbp]
\centerline {
\includegraphics[width=6in]{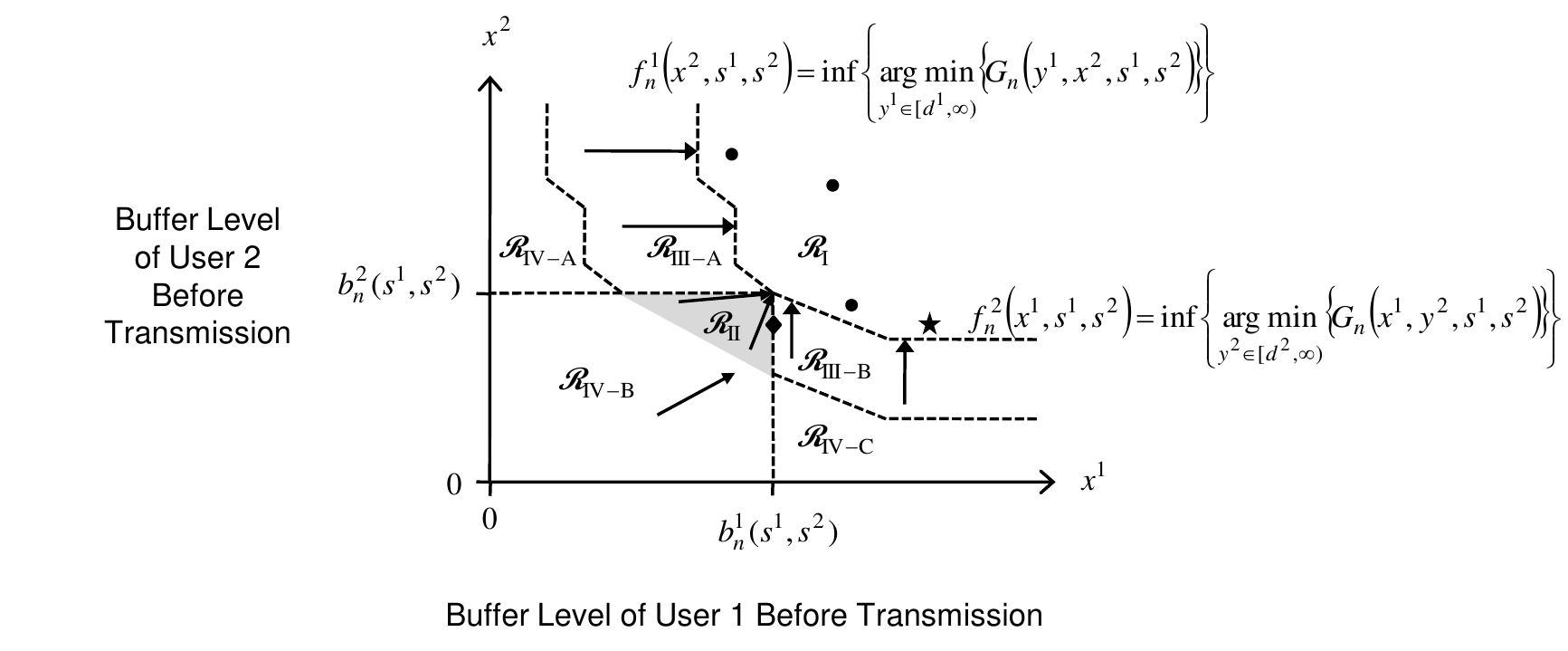}
} \caption{Optimal transmission policy for the two receiver case in slot $n$ when the state is $(\textbf{x},\textbf{s})$. The seven regions described in Theorem \ref{Th:two_users:structure} are labeled. The tails of the arrows represent the vectors of the receiver buffer levels at the beginning of slot $n$, and the heads of the arrows represent the vectors of the receiver buffer levels after transmission but before playout in slot $n$ under the optimal transmission policy. In region ${\mathcal R}_{I}(n,\textbf{s})$, a single dot represents that it is optimal to not transmit any packets to either user. The $\bigstar$ and $\blacklozenge$ represent possible starting buffer levels for Scenarios 1 and 2, respectively, in Example \ref{Ex:two_coupling}.}  \label{Fig:two:opt_policy}
\end{figure}

In some sense, the structure of the optimal policy outlined in Theorem \ref{Th:two_users:structure} can be interpreted as an extension of the modified base-stock policy for the case of a single receiver outlined in Theorem \ref{Th:str:finite}. Namely, under each channel condition at each time, there is a critical number for each receiver $\bigl(b_n^m(\textbf{s})\bigr)$ such that it is optimal to bring both receivers' buffer levels up to those critical numbers if it is possible to do so $\bigl(\hbox{region }{\mathcal R}_{II}(n,\textbf{s})\bigr)$, and it is optimal to not transmit any packets if both receivers' buffer levels start beyond their critical numbers $\bigl(\hbox{region }{\mathcal R}_{I}(n,\textbf{s})\bigr)$. However, this extended notion of the modified base-stock policy only captures the optimal behavior in two of the seven regions, and does not account for the coupling behavior between users that arises through the joint power constraint. For instance, possible starting buffer levels for Scenario 1 and Scenario 2 
in Example \ref{Ex:two_coupling} are illustrated in Figure \ref{Fig:two:opt_policy} by the $\bigstar$ and $\blacklozenge$, respectively. Even though the buffer level of receiver 2 before transmission is the same under both scenarios, the optimal transmission quantity to receiver 2 is different under the two scenarios due to the different starting buffer levels of receiver 1.

\subsection{Structure of the Optimal Policy for the Infinite Horizon Discounted Expected Cost Problems} \label{Se:two:infd}
In this section, we show that the structure of the optimal \emph{stationary (or time-invariant)} policy for the infinite horizon discounted expected cost problem is the same as the structure of the optimal policy for the finite horizon discounted expected cost problem. Moreover, the boundaries of the seven regions of the finite horizon optimal policy shown in Figure \ref{Fig:two:opt_policy} converge to the boundaries of the seven regions of the infinite horizon discounted expected cost optimal policy as the time horizon $N$ goes to infinity.
\begin{theorem} \label{Th:two:infd}
Define:
 \begin{itemize}
 \item[(i)] $V_{\infty}(\textbf{x},\textbf{s}):=\lim\limits_{n \rightarrow \infty}V_n(\textbf{x},\textbf{s})$, for all $\textbf{x} \in \Real_+^2$ and $\textbf{s} \in {\cal S}$ (this limit exists).
 \item[(ii)] $G_{\infty}(\textbf{y},\textbf{s}):=\textbf{c}_{\textbf{s}}^{\transpose} \textbf{y} +{h}(\textbf{y}-\textbf{d})+\alpha \cdot \Expectation \bigl[V_{\infty}(\textbf{y}-\textbf{d},\textbf{S}^{\prime})\bigm| \textbf{S}=\textbf{s}\bigr]$, for all $\textbf{y} \in [d^1,\infty) \times [d^2, \infty)$ and $\textbf{s} \in {\cal S}$.
 \item[(iii)]  ${\mathcal B}_{\infty}(\textbf{s}):= \left\{\hat{\textbf{y}} \in
[d^1,\infty) \times [d^2, \infty):~G_{\infty}(\hat{\textbf{y}},\textbf{s})=\min\limits_{\textbf{y}\in [d^1,\infty) \times [d^2,\infty)}
G_{\infty}(\textbf{y},\textbf{s}) \right\}$.
\item[(iv)] $b_{\infty}^1\left(\textbf{s}\right):= \min\Bigl\{y^1 \in [d^1,\infty): (y^1,y^2) \in {\mathcal B}_{\infty}(\textbf{s}) \hbox{ for some } y^2 \in [d^2,\infty) \Bigr\}$.
\item[(v)] $b_{\infty}^2\left(\textbf{s}\right):=\min\Bigl\{y^2 \in [d^2,\infty): \bigl(b_{\infty}^1\left(\textbf{s}\right),y^2\bigr) \in {\mathcal B}_{\infty}(\textbf{s}) \Bigr\}$.
    \item[(vi)] $\textbf{b}_{\infty}(\textbf{s}) := \Bigl(b_{\infty}^1\left(\textbf{s}\right),b_{\infty}^2\left(\textbf{s}\right) \Bigr)$.
    \item[(vii)] The functions
    \begin{align*}
f_{\infty}^1(x^2,\textbf{s})&:= \inf \left\{\argmin_{y^1 \in
[d^1,\infty)}
\biggl\{G_{\infty}\left(y^1,x^2,s^1,s^2\right)\biggr\}
\right\},\hbox{ for }x^2 \in [d^2,\infty),
\hbox{ and } \\
f_{\infty}^2(x^1,\textbf{s})&:= \inf \left\{\argmin_{y^2 \in
[d^2,\infty)}
\biggl\{G_{\infty}\left(x^1,y^2,s^1,s^2\right)\biggr\}
\right\},\hbox{ for }x^1 \in [d^1,\infty).
\end{align*}
\item[(viii)] The seven regions ${\mathcal R}_{I}(\infty,\textbf{s}) - {\mathcal R}_{IV-C}(\infty,\textbf{s})$, defined in the same way as in Theorem \ref{Th:two_users:structure}, with $n$ replaced by $\infty$.
\end{itemize}
Then
\begin{itemize}
\item[(a)] $V_{\infty}(\textbf{x},\textbf{s})$ satisfies the
$\alpha$-discounted optimality equation ($\alpha$-DCOE):
\begin{eqnarray} \label{Eq:adcoe}
V_{\infty}(\textbf{x},\textbf{s})=
\min_{\textbf{y} \in \tilde{\cal A}^{\textbf{d}}(\textbf{x},\textbf{s})}
\left\{
\begin{array}{l}
\textbf{c}_{\textbf{s}}^{\transpose}[\textbf{y}-\textbf{x}]
+\textbf{h}\left(\textbf{y}-\textbf{d}\right) \\
 +\alpha \cdot \Expectation \bigl[V_{\infty}(\textbf{y}-\textbf{d},\textbf{S}^{\prime})\bigm| \textbf{S}=\textbf{s}\bigr]
 \end{array}
\right\}~,~~\forall \textbf{x} \in \Real_+^2,\forall
\textbf{s} \in {\cal{S}}~.
\end{eqnarray}
\item[(b)] An optimal stationary policy for Problem (\textbf{P2}) in the case of two receivers with linear power-rate curves is given by
$\boldsymbol{{\pi_{\infty}^*}}=(\textbf{y}_{\infty}^*,\textbf{y}_{\infty}^*,\ldots)$, where
\begin{eqnarray*}
\textbf{y}_{\infty}^*(\textbf{x},\textbf{s}):=\left\{
\begin{array}{ll}
   \textbf{x} , & \mbox{if } ~ \textbf{x} \in  {\mathcal R}_{I}(\infty,\textbf{s})\\
   \textbf{b}_{\infty}(\textbf{s}) , & \mbox{if } ~ \textbf{x} \in {\mathcal R}_{II}(\infty,\textbf{s}) \\
   \Bigl(f_{\infty}^1(x^2,\textbf{s}) ,x^2 \Bigr) , & \mbox{if } ~ \textbf{x} \in  {\mathcal R}_{III-A}(\infty,\textbf{s})\\
   \Bigl(x^1 ,f_{\infty}^2(x^1,\textbf{s}) \Bigr) , & \mbox{if } ~ \textbf{x} \in  {\mathcal R}_{III-B}(\infty,\textbf{s})\\
\end{array} \right.,
\end{eqnarray*}
and for all $\textbf{x} \in {\mathcal R}_{IV}(\infty,\textbf{s})$, there exists an optimal control action, $\textbf{y}_{\infty}^*(\textbf{x},\textbf{s})$, which satisfies:
\begin{eqnarray*}
\textbf{c}_{\textbf{s}}^{\transpose}\left[\textbf{y}_{\infty}^*(\textbf{x},\textbf{s}) - \textbf{x} \right] = P~.
\end{eqnarray*}
\item[(c)] $\lim\limits_{n \rightarrow \infty}\textbf{b}_{n}(\textbf{s}) = \textbf{b}_{\infty}(\textbf{s})$ for all $\textbf{s} \in {\cal S}$.
\item[(d)]
$\lim\limits_{n \rightarrow \infty}f_{n}^1(x^2,\textbf{s}) = f_{\infty}^1(x^2,\textbf{s})$ for all $x^2 \in [d^2,\infty)$ and $\textbf{s} \in {\cal S}$.
\item[(e)] $\lim\limits_{n \rightarrow \infty}f_{n}^2(x^1,\textbf{s}) = f_{\infty}^2(x^1,\textbf{s})$ for all $x^1 \in [d^1,\infty)$ and $\textbf{s} \in {\cal S}$.
\end{itemize}
\end{theorem}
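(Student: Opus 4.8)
The plan is to follow the vanishing-horizon argument that proves the single-receiver analog, Theorem~\ref{Th:one:infinite}, while carrying along the two-dimensional structure established in Theorem~\ref{Th:two_users:properties}. First I would show the limit in (i) exists. Monotonicity is immediate: since every instantaneous cost is nonnegative and $V_0 \equiv 0$, adding one more decision stage can only raise the optimal cost-to-go, so $V_{n-1}(\textbf{x},\textbf{s}) \leq V_n(\textbf{x},\textbf{s})$ for all $n$. For the finite upper bound I would exhibit one admissible stationary policy and bound its discounted cost; the just-in-time policy that transmits exactly $\textbf{d}$ in every slot is feasible by the standing assumption $\sum_{m} c^m(d^m,s^m) \leq P$, leaves the buffer vector unchanged, and hence incurs a per-slot cost that is uniformly bounded over the finitely many channel states. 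Since $\alpha < 1$ in Problem~(\textbf{P2}), geometric discounting gives a bound on $V_n(\textbf{x},\textbf{s})$ uniform in $n$, so the nondecreasing sequence converges to a finite limit $V_{\infty}(\textbf{x},\textbf{s})$.

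Next I would transfer regularity to the limit. Convexity of $V_{\infty}(\cdot,\textbf{s})$ is inherited as the pointwise limit of the convex $V_n(\cdot,\textbf{s})$ of Theorem~\ref{Th:two_users:properties}(i), and supermodularity of $V_{\infty}(\cdot,\textbf{s})$ is inherited as the limit of the supermodular $V_n(\cdot,\textbf{s})$ of Theorem~\ref{Th:two_users:properties}(ii), both properties being closed under pointwise limits. With $G_{\infty}$ defined as in (ii), monotone convergence gives $G_n(\textbf{y},\textbf{s}) \to G_{\infty}(\textbf{y},\textbf{s})$, and $G_{\infty}(\cdot,\textbf{s})$ inherits convexity and supermodularity, together with the coercivity that its sublevel sets are bounded (a consequence of $\lim_{x\rightarrow\infty} h^m(x)=\infty$ for each $m$). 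Thus $G_{\infty}(\cdot,\textbf{s})$ has precisely the properties of $G_n(\cdot,\textbf{s})$ required to invoke the geometric construction of Theorem~\ref{Th:two_users:structure}.

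For part~(a) I would pass to the limit in the finite-horizon recursion~\eqref{Eq:two:finDP2}. The action set $\tilde{\cal A}^{\textbf{d}}(\textbf{x},\textbf{s})$ is compact and does not depend on $n$, and the objectives inside the minimization increase monotonically and continuously to the limiting $V_{\infty}$ objective; a standard monotone-convergence argument for the Bellman operator (choose minimizers $\textbf{y}_n$ for each $n$, extract a convergent subsequence by compactness, and compare against each fixed stage) then lets me interchange the limit with the minimization to recover the $\alpha$-DCOE~\eqref{Eq:adcoe}. With \eqref{Eq:adcoe} in hand, part~(b) follows by applying the seven-region decomposition of Theorem~\ref{Th:two_users:structure} verbatim to $G_{\infty}$ with $n$ replaced by $\infty$, and then invoking the verification theorem and measurable-selection condition from \cite{lerma2} to certify that the induced stationary policy $\boldsymbol{{\pi_{\infty}^*}}$ is optimal for Problem~(\textbf{P2}).

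The step I expect to be the main obstacle is parts~(c)--(e), the convergence of the minimizer selections, because in two dimensions the set of global minimizers of the convex function $G_n(\cdot,\textbf{s})$ may be a nondegenerate region rather than a point, yet $\textbf{b}_n(\textbf{s})$, $f_n^1$, and $f_n^2$ each pick a specific extremal (lexicographically smallest or infimum-of-argmin) element. I would first use that pointwise convergence of finite convex functions $G_n(\cdot,\textbf{s}) \to G_{\infty}(\cdot,\textbf{s})$ upgrades to uniform convergence on compact sets, hence epi-convergence, so every accumulation point of minimizers of $G_n$ is a minimizer of $G_{\infty}$; coercivity confines all of these minimizers to one common compact set. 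The delicate part is showing the particular infimum selections are preserved in the limit: I would argue, using the monotone increase $G_n \uparrow G_{\infty}$ together with convexity, that the lower-left boundary of the argmin region varies continuously, yielding $\textbf{b}_n(\textbf{s}) \to \textbf{b}_{\infty}(\textbf{s})$ and the pointwise convergences $f_n^1(x^2,\textbf{s}) \to f_{\infty}^1(x^2,\textbf{s})$ and $f_n^2(x^1,\textbf{s}) \to f_{\infty}^2(x^1,\textbf{s})$ claimed in (c)--(e).
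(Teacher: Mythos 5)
Your handling of (i), the regularity transfer, and parts (a)--(b) is essentially the paper's own route: monotone nondecreasing $V_n$ bounded by the cost of the stationary policy that transmits $\textbf{d}$ in every slot, convexity and supermodularity passed to the pointwise limit, $G_n \rightarrow G_\infty$ by monotone convergence, coercivity of $G_\infty$, and the seven-region structure of Theorem \ref{Th:two_users:structure} re-derived verbatim for $G_\infty$. Your argument for part (a) -- picking minimizers $\textbf{y}_n$, extracting a convergent subsequence from the compact action set, and comparing against each fixed stage to interchange limit and minimization -- is a valid and somewhat more self-contained alternative to the paper, which (following the single-receiver template of Theorem \ref{Th:one:infinite}) obtains the DCOE only after, and by means of, the convergence of the structured optimal actions.

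The genuine gap is in parts (c)--(e), and you have located it without closing it. Epi-convergence plus coercivity gives that every accumulation point of minimizers of $G_n(\cdot,\textbf{s})$ minimizes $G_\infty(\cdot,\textbf{s})$, but when the argmin sets are nondegenerate this says nothing about the convergence of the particular extremal selections $\textbf{b}_n(\textbf{s})$, $f_n^1$, $f_n^2$; your sentence that ``the lower-left boundary of the argmin region varies continuously'' is precisely the assertion that must be proven, not an argument for it. What closes this step in the paper is a reduction to one-dimensional convex analysis. For (d)--(e), observe that $f_n^1(x^2,\textbf{s})$ is the infimum of the minimizers of the one-dimensional convex section $y^1 \mapsto G_n(y^1,x^2,\textbf{s})$, i.e. $f_n^1(x^2,\textbf{s}) = \inf\bigl\{b \bigm| G_n^{\prime+}(b,x^2,s^1,s^2) \geq 0 \bigr\}$, so the single-receiver argument used to prove \eqref{Eq:inf_proof:crits} applies directly; that argument is not epi-convergence but a derivative sandwich for monotone sequences of convex functions (Sobel's lemma, Lemma \ref{Le:inf:sobel}), combined with right-continuity of the right derivative and a two-sided contradiction. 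For (c), the lexicographic selection $\textbf{b}_n(\textbf{s})$ requires two further one-dimensional passes: first the partial minimization $\Psi_n(x^1,\textbf{s}) := \min_{x^2 \in [d^2,\infty)} G_n(x^1,x^2,s^1,s^2) = G_n\bigl(x^1,f_n^2(x^1,\textbf{s}),s^1,s^2\bigr)$, whose pointwise convergence to $\Psi_\infty$ needs Dini's theorem (uniform convergence on a compact interval containing $f_\infty^2(x^1,\textbf{s})$) together with the already-established convergence of $f_n^2$, after which the one-dimensional argument yields $b_n^1(\textbf{s}) \rightarrow b_\infty^1(\textbf{s})$; then the section $\tilde{\Psi}_n(x^2,\textbf{s}) := G_n\bigl(b_n^1(\textbf{s}),x^2,s^1,s^2\bigr)$ and the same machinery yield $b_n^2(\textbf{s}) \rightarrow b_\infty^2(\textbf{s})$. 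Without this reduction and the derivative lemma, your plan for (c)--(e) does not constitute a proof.
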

A detailed proof of Theorem \ref{Th:two:infd} is included in Appendix B.
\subsection{Structure of the Optimal Policy for the Infinite Horizon Average Expected Cost Problems}
In this section, we again use the vanishing discount approach to show that the structure of the optimal policy for the finite horizon expected cost and infinite horizon discounted expected cost problems extends to the infinite horizon average expected cost problem. As in Section \ref{Se:pwl:average}, we make explicit the dependence of the value functions and optimal policies from 
the corresponding infinite horizon discounted expected cost problem on the discount factor, $\alpha$.
\begin{theorem} \label{Th:two:infa}
For all $\alpha \in [0,1)$, define:
\begin{eqnarray}
m_{\infty,\alpha}&:=&\inf\limits_{\substack{\textbf{x} \in \Real_+^2 \\ \textbf{s} \in {\cal S}}} V_{\infty,\alpha}(\textbf{x},\textbf{s}), \label{Eq:mdef}\\
\rho^*&:=&\lim\limits_{\alpha \nearrow 1} (1-\alpha) \cdot m_{\infty,\alpha},~\hbox{and} \label{Eq:rho_conv} \\
w_{\infty,\alpha}(\textbf{x},\textbf{s})&:=& V_{\infty,\alpha}(\textbf{x},\textbf{s}) - m_{\infty,\alpha},~\forall \textbf{x} \in \Real_+^2,~\forall \textbf{s} \in {\cal S}.  \label{Eq:wdef}
\end{eqnarray}
Then:
\begin{itemize}
\item[(a)] There exists a continuous function $w_{\infty,1}(\cdot,\cdot)$ and a selector $\textbf{y}_{\infty,1}^*(\cdot,\cdot)$ that satisfy the ACOE:
\begin{align} \label{Eq:inf_acoe}
\rho^* + w_{\infty,1}(\textbf{x},\textbf{s}) 
&=
\min_{\textbf{y} \in \tilde{\cal A}^{\textbf{d}}(\textbf{x},\textbf{s})}
\left\{
\begin{array}{l}
\textbf{c}_{\textbf{s}}^{\transpose}[\textbf{y}-\textbf{x}]
+\textbf{h}\left(\textbf{y}-\textbf{d}\right) \\
 +\Expectation \bigl[w_{\infty,1}(\textbf{y}-\textbf{d},\textbf{S}^{\prime})\bigm| \textbf{S}=\textbf{s}\bigr]
 \end{array}
\right\} \\
&=\textbf{c}_{\textbf{s}}^{\transpose}\Bigl[\textbf{y}_{\infty,1}^*(\textbf{x},\textbf{s})-\textbf{x}\Bigr] + \textbf{h}\Bigl(\textbf{y}_{\infty,1}^*(\textbf{x},\textbf{s})-\textbf{d}\Bigr) \nonumber \\
&~~+\Expectation \left[w_{\infty,1}\Bigl(\textbf{y}_{\infty,1}^*(\textbf{x},\textbf{s})-\textbf{d},\textbf{S}^{\prime}\Bigr) \middle| \textbf{S} =\textbf{s}\right],~\forall \textbf{x} \in \Real_+^2,~\forall \textbf{s} \in {\cal S}. \nonumber
\end{align}
\item[(b)] The stationary policy $\boldsymbol{{\pi_{\infty,1}^*}}=(\textbf{y}_{\infty,1}^*,\textbf{y}_{\infty,1}^*,\ldots)$ is optimal for Problem (\textbf{P3}) in the case of two receivers with linear power-rate curves.
\item[(c)] The resulting optimal average cost beginning from any initial state $(\textbf{x},\textbf{s}) \in \Real_+^2 \times {\cal S}$ is $\rho^*$.
\item[(d)] For every 
increasing sequence of discount factors $\{\alpha(l)\}_{l=1,2,\ldots}$ approaching 1, there exists a subsequence $\{\alpha(l_i)\}_{i=1,2,\ldots}$ approaching 1
such that:
\begin{eqnarray*}
w_{\infty,1}(\textbf{x},\textbf{s})&=\lim\limits_{i \rightarrow \infty} w_{\infty,\alpha(l_i)}(\textbf{x},\textbf{s}),~\forall \textbf{x} \in \Real_+^2,~\forall \textbf{s} \in {\cal S}.
\end{eqnarray*}
Therefore, for every $\textbf{s} \in {\cal S}$, $w_{\infty,1}(\textbf{x},\textbf{s})$ is convex and supermodular in $\textbf{x}$.
\item[(e)] For every $(\textbf{x},\textbf{s}) \in \Real_+^2 \times {\cal S}$ and increasing sequence of discount factors $\{\alpha(l)\}_{l=1,2,\ldots}$ approaching 1, there exists a subsequence $\{\alpha(l_i)\}_{i=1,2,\ldots}$ approaching 1 and a sequence $\{\textbf{x}(i)\}_{i=1,2,\ldots}$ approaching $\textbf{x}$ such that:
\begin{eqnarray*}
    \textbf{y}_{\infty,1}^*(\textbf{x},\textbf{s})&=\lim\limits_{i \rightarrow \infty} \textbf{y}_{\infty,\alpha(l_i)}^*(\textbf{x}(i),\textbf{s})~.
\end{eqnarray*}
\item[(f)] There exists an optimal stationary policy with the same structure as statement (b) in Theorem \ref{Th:two:infd}.
\end{itemize}
\end{theorem}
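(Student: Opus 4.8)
The plan is to prove Theorem \ref{Th:two:infa} by the \emph{vanishing discount approach}, obtaining the average cost optimality equation (ACOE) as a limit of the $\alpha$-discounted cost optimality equations as $\alpha \nearrow 1$; this parallels the proof of Theorem \ref{Th:pwl:infa} and leans heavily on the structural facts already established for the discounted problem in Theorem \ref{Th:two:infd}. Throughout I would take as given that, for each $\alpha \in [0,1)$, $V_{\infty,\alpha}(\textbf{x},\textbf{s})$ exists, satisfies the $\alpha$-DCOE \eqref{Eq:adcoe}, and is convex and supermodular in $\textbf{x}$ for each fixed $\textbf{s}$, and that the associated minimizer $\textbf{y}_{\infty,\alpha}^*$ obeys the seven-region structure of Theorem \ref{Th:two:infd}(b).

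First I would establish the two boundedness conditions that drive the whole argument. That $\rho^* = \lim_{\alpha \nearrow 1}(1-\alpha)\,m_{\infty,\alpha}$ is finite follows by sandwiching: nonnegativity of all per-slot costs gives $m_{\infty,\alpha} \geq 0$, while exhibiting a \emph{single} admissible stationary policy of finite average cost (for instance the just-in-time policy that transmits exactly $\textbf{d}$ in every slot, whose per-slot cost is bounded above uniformly over $\textbf{s}$ by $\textbf{c}_{\textbf{s}}^{\transpose}\textbf{d}$, the holding cost vanishing since the buffer is drained each slot) yields $V_{\infty,\alpha}(\textbf{0},\textbf{s}) \leq \frac{C}{1-\alpha}$ and hence an $\alpha$-independent bound $(1-\alpha)\,m_{\infty,\alpha} \leq C$. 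The delicate condition is the equiboundedness of the relative value functions $w_{\infty,\alpha}(\textbf{x},\textbf{s}) = V_{\infty,\alpha}(\textbf{x},\textbf{s}) - m_{\infty,\alpha}$: the lower bound $w_{\infty,\alpha} \geq 0$ is immediate from the definition of $m_{\infty,\alpha}$, but the uniform-in-$\alpha$ upper bound $w_{\infty,\alpha}(\textbf{x},\textbf{s}) \leq M(\textbf{x},\textbf{s}) < \infty$ is the crux of the proof. I would obtain it by a reachability estimate: from an arbitrary state $(\textbf{x},\textbf{s})$, construct a suboptimal policy that first transmits nothing while the deterministic demand $\textbf{d}$ drains each buffer toward the minimum feasible level, and then reaches a fixed reference channel state by exploiting ergodicity of the finite-state channel chain; because the demand is deterministic the draining phase has a deterministic, finitely-bounded length, and because the channel is ergodic the reference channel state is hit in bounded expected time. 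The total discounted cost of this policy dominates $V_{\infty,\alpha}(\textbf{x},\textbf{s})$ yet exceeds $m_{\infty,\alpha}$ by an amount bounded independently of $\alpha$, giving the desired $M(\textbf{x},\textbf{s})$.

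With equiboundedness in hand I would extract the limit. For each fixed $\textbf{s}$, $w_{\infty,\alpha}(\cdot,\textbf{s})$ is convex and, by the previous step, pointwise bounded on every compact subset of $\Real_+^2$; convex functions that are locally uniformly bounded are locally equi-Lipschitz, hence equicontinuous on compacts. Applying the Arzel\`{a}--Ascoli theorem together with a diagonal argument over a countable exhausting family of compact sets and the finite state space ${\cal S}$, I would select from any given increasing sequence $\{\alpha(l)\}$ approaching $1$ a subsequence $\{\alpha(l_i)\}$ along which $w_{\infty,\alpha(l_i)} \to w_{\infty,1}$ uniformly on compacts, proving (d); convexity and supermodularity are preserved under pointwise limits, so $w_{\infty,1}(\cdot,\textbf{s})$ inherits both. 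Passing to the limit in the rearranged $\alpha$-DCOE, $(1-\alpha)\,m_{\infty,\alpha} + w_{\infty,\alpha}(\textbf{x},\textbf{s}) = \min_{\textbf{y} \in \tilde{\cal A}^{\textbf{d}}(\textbf{x},\textbf{s})}\{\textbf{c}_{\textbf{s}}^{\transpose}[\textbf{y}-\textbf{x}] + h(\textbf{y}-\textbf{d}) + \alpha\,\Expectation[w_{\infty,\alpha}(\textbf{y}-\textbf{d},\textbf{S}^{\prime}) \mid \textbf{S}=\textbf{s}]\}$, and using uniform convergence on compacts to interchange the limit with the minimization (the minimand is jointly continuous, the feasible sets vary continuously, and the measurable selection condition 3.3.3 of \cite{lerma2} applies as noted earlier), yields the ACOE \eqref{Eq:inf_acoe}, proving (a). Closedness of the argmin correspondence for uniformly convergent convex functions then gives the selector statement (e).

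Finally I would deduce (b), (c), and (f). Optimality of $\boldsymbol{\pi_{\infty,1}^*}$ and the identification of its value as $\rho^*$ from every initial state follow from a standard verification theorem for the ACOE in \cite{lerma2}; I would check its hypotheses, in particular that $\frac{1}{N}\Expectation[w_{\infty,1}(\textbf{X}_N,\textbf{S}_N)] \to 0$ under the policy, which follows from the growth bound on $w_{\infty,1}$ together with the stability of the buffer process under a base-stock-type policy. For (f), since $G_{\infty,1}(\textbf{y},\textbf{s}) := \textbf{c}_{\textbf{s}}^{\transpose}\textbf{y} + h(\textbf{y}-\textbf{d}) + \Expectation[w_{\infty,1}(\textbf{y}-\textbf{d},\textbf{S}^{\prime}) \mid \textbf{S}=\textbf{s}]$ is convex and supermodular in $\textbf{y}$ by (d), the argument of Theorem \ref{Th:two_users:structure} applies verbatim to $G_{\infty,1}$ and produces a minimizer obeying the same seven-region structure, consistent with (e). The main obstacle is the uniform upper bound on $w_{\infty,\alpha}$ in the second step: because the buffer component of the state is unbounded and the holding cost is unbounded, this estimate cannot be read off from compactness and instead requires the explicit coasting-and-coupling construction above, exploiting both the determinism of the demand and the ergodicity of the channel.
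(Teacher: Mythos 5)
Your overall route is the paper's route: vanishing discount, an equicontinuity argument from convexity plus local uniform boundedness, Arzel\'{a}--Ascoli along a subsequence, passing to the limit in the rearranged $\alpha$-DCOE, and preservation of convexity/supermodularity to recover the seven-region structure. However, there is a genuine gap at exactly the step you call the crux: the uniform-in-$\alpha$ upper bound on $w_{\infty,\alpha}$ (the paper's Condition (B)). Your construction travels from an arbitrary $(\textbf{x},\textbf{s})$ to a \emph{fixed} reference state (drained buffers, fixed channel state) and asserts that the cost of this policy ``exceeds $m_{\infty,\alpha}$ by an amount bounded independently of $\alpha$.'' Writing the bound out, what the construction actually yields is $w_{\infty,\alpha}(\textbf{x},\textbf{s}) \leq T(\textbf{x},\textbf{s}) + w_{\infty,\alpha}(\textbf{x}_{\mathrm{ref}},\textbf{s}_{\mathrm{ref}})$, where $T$ is the ($\alpha$-independent) travel cost and the leftover term $w_{\infty,\alpha}(\textbf{x}_{\mathrm{ref}},\textbf{s}_{\mathrm{ref}}) = V_{\infty,\alpha}(\textbf{x}_{\mathrm{ref}},\textbf{s}_{\mathrm{ref}}) - m_{\infty,\alpha}$ is precisely an instance of the quantity you are trying to bound uniformly in $\alpha$; the argument is circular. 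Nothing forces the drained-buffer state to be (near) the minimizer of $V_{\infty,\alpha}$ --- indeed a state with empty buffers forces immediate transmission under whatever channel conditions occur, so it is in general \emph{not} the minimizer, and the minimizer is an interior, $\alpha$-dependent point. The paper's fix is to anchor the excursion at the $\alpha$-dependent minimizer $\bigl(\textbf{x}^*(\textbf{s}^*),\textbf{s}^*\bigr)$ attaining $m_{\infty,\alpha}$, so that the leftover term vanishes identically, and to build the explicit policy $\breve{\boldsymbol{\pi}}$ (at most $\frac{P}{2}$ per user, drain or fill toward $x^{m^*}(\textbf{s}^*)+d^m$, then hold while waiting for $\textbf{s}^*$ by ergodicity) whose cost gives the majorant $\bar{\kappa}$. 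Note also that the same $\bar{\kappa}$ serves as the paper's Condition (B2), the integrable dominating function needed to apply dominated convergence when taking $\alpha \nearrow 1$ inside the conditional expectation in the DCOE; your sketch passes to the limit inside the minimization without supplying any such domination.

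A secondary gap is in parts (b) and (c). Verifying $\frac{1}{N}\Expectation\bigl[w_{\infty,1}(\textbf{X}_N,\textbf{S}_N)\bigr] \rightarrow 0$ under the candidate stationary policy shows only that this policy \emph{achieves} average cost $\rho^*$; it does not show that $\rho^*$ is the \emph{minimal} average cost. Iterating the ACOE under an arbitrary policy $\boldsymbol{\pi}$ gives $N\rho^* + w_{\infty,1}(\textbf{x}_0,\textbf{s}_0) \leq \Expectation^{\boldsymbol{\pi}}\bigl[\,\hbox{total cost}\,\bigr] + \Expectation^{\boldsymbol{\pi}}\bigl[w_{\infty,1}(\textbf{X}_N,\textbf{S}_N)\bigr]$, where the unbounded, coercive function $w_{\infty,1}$ enters with the unfavorable sign: to conclude that every policy has average cost at least $\rho^*$ you would need to control $\frac{1}{N}\Expectation^{\boldsymbol{\pi}}\bigl[w_{\infty,1}(\textbf{X}_N,\textbf{S}_N)\bigr]$ under \emph{arbitrary} policies, which is not available in this unbounded-cost model. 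The paper sidesteps this by invoking Sch\"{a}l's theorem (its Theorem 8, under Conditions (G), (W), (B)), which directly identifies $\lim_{\alpha \nearrow 1}(1-\alpha)m_{\infty,\alpha}$ with the optimal average cost, delivers the optimal selector, and gives part (e); your proof would need either that result or a Tauberian argument relating discounted and average costs to close this step.
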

A detailed proof of Theorem \ref{Th:two:infa} is included in Appendix C.

\subsection{Discussion}

At first glance, the structure of the optimal policy described in Theorem \ref{Th:two_users:structure} may also seem analogous to the structure of the optimal policy for the two-item resource-constrained inventory problem with \emph{deterministic} prices and \emph{stochastic} demands (i.e., the reverse of our problem), originally studied by Evans in \cite{evans}, and revisited in \cite{decroix}-\nocite{shaoxiang}\cite{janakiraman}. The structure of the optimal control action at each time for that problem can also be described in terms of seven regions that look essentially the same as those shown in Figure \ref{Fig:two:opt_policy}.\footnote{In the case of deterministic prices and stochastic demands, the boundaries of the regions do not depend on the ordering price (corresponding to the channel conditions $\textbf{s}$ in our case), because the vector of ordering prices is deterministic.}  However, there are two fundamental differences that distinguish these two problems.

First, the function $\tilde{G}_n(\cdot)$ in the deterministic price and stochastic demand inventory problem that corresponds to our function $G_n(\cdot,\textbf{s})$ has an additional structural property that Chen calls \emph{$\mu$-difference monotone} \cite{shaoxiang}.
This property is equivalent to the function $\tilde{G}_n(\cdot)$ not only being supermodular, but also submodular with respect to a partial order introduced by Antoniadou in \cite{antoniadou_thesis,antoniadou} called the \emph{direct value order} (see \cite{shuman_thesis} for further details). This functional property leads to two additional structural results on the optimal control action: (i) when the initial vector of inventories (corresponds to the vector of receivers' buffer levels in our problem) is in region ${\mathcal{R}}_{IV-B}(n)$, there exists an optimal control action such that $\textbf{y}_n^*(\textbf{x}) \preceq \textbf{b}_n$; and (ii) when the initial vector of inventories is in region ${\mathcal{R}}_{IV-A}(n)$ (respectively, ${\mathcal{R}}_{IV-C}(n)$), there exists an optimal control action that includes not ordering any of item 2 (respectively, item 1), corresponding to not transmitting any packets to user 2 (respectively, user 1) in our problem. Due to the time-varying channel conditions, this property does not hold for our function $G_n(\cdot,\textbf{s})$, and these two additional statements on the structure of the optimal policy are not true in general for our problem, as shown by the following example.

\begin{example} \label{Ex:one}
Consider a single sender transmitting to two statistically identical receivers, whose channel conditions are IID over time and independent of each other. The power-rate curves are linear, and the possible per packet power costs are 1.750 (best possible channel condition), 2.000, 2.001, and 2.100 (worst possible channel condition). The associated probabilities of each user experiencing these channel conditions are 0.4, 0.4, 0.1, and 0.1, respectively. The total power constraint in each slot is $P=4.2$, and 1 packet is removed from each receiver's buffer at the end of each time slot (i.e., $\textbf{d}=(1,1)$). We consider a finite horizon problem with the discount rate $\alpha=1$, and no holding costs. We are interested in the optimal control action with $T=3$ time slots remaining, and the current channel conditions are such that it costs 2.000 units of power to transmit a packet to user 1, and 2.001 units of power to transmit a packet to user 2.

Exactly solving the dynamic program shows that the unique global minimizer of the function $G_3(\cdot,\cdot,\textbf{s}_3)$ is the vector $(\frac{101}{75},\frac{101}{75})$. However, if the vector of starting receiver buffer levels at time $T=3$ is $\textbf{x}_3=(0.2,0.2)$, the unique optimal scheduling decision in the slot is to transmit 0.8 packets to user 2, and use the remaining power for transmission to user 1, which results in 1.2996 packets being sent to user 1. A diagram of this optimal control action is shown in Figure \ref{Fig:two:example}. The interesting thing to note here is that despite being power-constrained (the vector of starting buffer levels is in Region ${\cal R}_{IV-B}$), the unique optimal scheduling decision calls for filling user 1's buffer beyond its critical number $b_3^1(\textbf{s}_3)=\frac{101}{75}$. That is, the optimal scheduling decision brings the buffer levels from Region ${\cal R}_{IV-B}$ to Region ${\cal R}_{III-B}$ rather than Region ${\cal R}_{II}$.
\end{example}
\begin{figure}[htbp]
\centerline {
\includegraphics[width=5in]{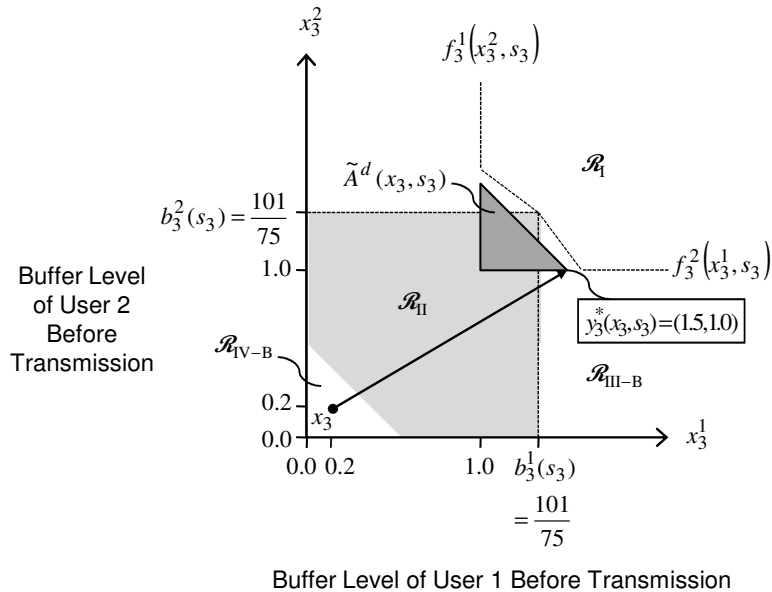}
} \caption{Optimal scheduling decision with 3 slots remaining in Example \ref{Ex:one}. The action space is represented by the triangle $\tilde{{\cal A}}^{\textbf{d}}(\textbf{x}_3,\textbf{s}_3)$. The critical vector $\textbf{b}_3(\textbf{s}_3)$ is not reachable from the starting buffer levels $\textbf{x}_3=(0.2,0.2)$. The unique optimal control action is to choose $\textbf{y}_3(\textbf{x}_3,\textbf{s}_3)$ (the buffer levels after transmission but before playout) to be (1.5, 1.0). The interesting feature of the example is that even though $\textbf{x}_3 \preceq \textbf{b}_3(\textbf{s}_3)$, we have $\textbf{y}_3^*(\textbf{x}_3,\textbf{s}_3) \npreceq \textbf{b}_3(\textbf{s}_3)$.}  \label{Fig:two:example}
\end{figure}

The second fundamental difference is also a consequence of the time-varying channel conditions in our model.
In the infinite horizon version of the two-item inventory problem with deterministic prices and stochastic demands, the critical numbers are time-invariant. Combined with the above property that it is optimal to not order inventory so as to move out of regions ${\mathcal R_{II}}$ and ${\mathcal R_{IV-B}}$, the time-invariant critical numbers mean that the region ${\mathcal R_{II}} \cup {\mathcal R_{IV-B}}$ (i.e., the lower-left square below the critical vector) is a ``stability'' region. Eventually, the vector of inventories enters this region under the optimal ordering policy, and once it does, it never leaves. This behavior both simplifies the analysis and opens the door for new mathematical techniques, such as analyzing \emph{shortfall} to compute the critical numbers \cite{tayur, janakiraman}. In our Problems (\textbf{P2}) and (\textbf{P3}), even though the boundaries of the seven regions for each possible channel condition are time-invariant, no such stability region exists, because the critical numbers vary over time due to the time-varying channel conditions. This makes it significantly more difficult to determine optimal and near-optimal policies.


\section{Extensions} \label{Se:discussion}
In this section, we discuss 
the relaxation of the strict underflow constraints and the extension to the general case of $M$ receivers.

\subsection{Relaxation of the Strict Underflow Constraints}\label{Se:relaxation}
In some applications, it may not be the case that the peak power per slot is always sufficient to transmit one slot's worth of packets to each receiver, even under the worst channel conditions. In this case, a more appropriate model is to relax the strict underflow constraints, and allow underflow at a cost. One way to model this situation is to allow the receivers' queues to be negative, with a negative buffer level representing the number of packets that the playout process is behind. Then, in addition to the holding costs assessed on positive buffer levels, shortage costs are assessed on negative buffer levels. With some minor alterations to the proofs, it is straightforward to show that as long as the shortage cost function is a convex function of the negative buffer level, the structural results of Theorems \ref{Th:str:finite}, \ref{Th:pwl:fin} and \ref{Th:two_users:structure} are essentially unchanged by the relaxation of the strict underflow constraints to loose underflow constraints with penalties on underflow. This is not too surprising as the strict underflow constraint case we consider can be thought of as the limiting case as the penalties on underflow go to infinity.\footnote{Tracking the number of packets that the playout process is behind in this manner corresponds to the \emph{complete backlogging} assumption in inventory theory. An alternate model is to say that a packet is of no use once it misses its deadline, penalize missed packets, and keep the receiver queue length at zero. This model corresponds to the \emph{lost sales} assumption in inventory theory.}

\subsection{Extension to the General Case of $M$ Receivers} \label{Se:m_users}

Our ongoing work 
includes examining the extension to the most general case of $M$ receivers. It is unlikely that the structure of the optimal policy in this case has a simple, intuitive, and implementable form. Therefore, our approach is to find lower bounds on the value function and a feasible policy whose expected cost is as close as possible to these bounds. One simple lower bound to the value function can be found by relaxing the per slot peak power constraint of $P$ units of total power allocated to all users, and allowing up to $P$ units of power to be allocated to \emph{each} receiver in a single slot (for a total of up to $M\cdot P$). The advantage of this technique is that it is easy to compute the lower bound, as the $M$-dimensional problem separates into $M$ instances of the 1-dimensional problem we know how to solve from Section \ref{Se:lowSNR}. However, the resulting bound is likely to be loose. A second lower bounding method we are investigating 
is the information relaxation method of Brown, Smith, and Sun \cite{brown}. The main idea there is to assume the scheduler has access to future channel states (corresponding to the \emph{non-causal} or \emph{offline} model often considered in the literature), but penalize the scheduler for using this information. A clever choice of the penalty function often leads to tight lower bounds on the value function. A third method is the Lagrangian relaxation method discussed in \cite{hawkins,adelman_weak}. For our problem, this method is equivalent to relaxing the per slot peak power constraint to an average power constraint (i.e., the scheduler may allocate more than $P$ units of power in some slots, but the average power consumed per slot over the duration of the horizon cannot exceed $P$). Like the first method we mentioned, the resulting relaxed problem under this method can be separated into $M$ instances of a 1-dimensional problem, this time with an average power constraint of $\frac{P}{M}$ instead of a strict power constraint of $P$ for each receiver. A fourth lower bounding method is the linear programming approach to approximate dynamic programming discussed in \cite{adelman_weak}, \cite{seidmann}, and \cite{defarias}. The idea there is to formulate the dynamic program as a linear program,
and approximate the value functions as linear combinations of a set of basis functions. For a more in-depth comparison of the Lagrangian relaxation and approximate linear programming approaches, see \cite{adelman_weak}. Once lower bounds to the value function are determined from any of these methods, feasible policies can be generated based on our structural results or via one-step greedy optimization with the lower bounds substituted into the right-hand side of the dynamic programming equation. 

These same numerical techniques are most likely also the best way to approximate the boundaries of the seven regions of the two receiver optimal policy, and determine a near-optimal split of the power $P$ between the two receivers when the vector of starting receiver buffer levels is in the power-constrained region ${\mathcal R}_{IV}(n,\textbf{s})$.

The results we have presented in this paper 
are useful not only in terms of the intuition they provide, but also in generating feasible policies for the most general case of $M$ receivers and solving subproblems resulting from the relaxation methods described above.


\section{Conclusion}\label{Se:conclusion}
In this paper, we considered the problem of transmitting data to one or more receivers 
over a shared wireless channel in a manner that minimizes power consumption and prevents the
receivers' buffers from emptying. 
We showed that under 
the finite horizon
discounted expected cost, infinite horizon discounted expected cost, and infinite horizon average expected cost criteria, the optimal transmission
policy to a single receiver under linear power-rate curves has a modified base-stock structure. When the power-rate curves are generalized to piecewise-linear power-rate curves, the optimal transmission policy to a single receiver has a finite generalized base-stock structure.
For the special case when holding costs are linear, the stochastic process
representing the channel condition evolution over time is IID, 
and
the maximum number of packets that can be transmitted at any given marginal power cost in a slot is an integer multiple of the drainage rate of the receiver's buffer, we presented an efficient method to compute the critical numbers that fully characterize the modified base-stock and finite generalized base-stock policies. 

We also analyzed the structure of the optimal transmission policy for the case of two receivers. In some sense, the structure of the optimal policy was shown to be an extension of the modified base-stock policy; however, the peak power constraint couples the optimal scheduling of the two data streams, and the time-varying channel conditions may result in 
counterintuitive optimal scheduling decisions that 
are not possible 
in the analogous inventory theory problems.

The extension to the most general case of $M$ receivers is quite complex, and it is likely that numerical approximation techniques need to be used to develop further insights on the nature of the optimal policy. We presented a few possible approaches that constitute ongoing work in that regard.



%

\section{Appendix A - Finite Horizon Proofs}\label{Se:appendix1}

\subsection{Proof of Theorem \ref{Th:str:finite}} \label{Se:fin_proof}
Before proceeding to the proof of Theorem \ref{Th:str:finite}, we present
a lemma due to Karush \cite{karush},
which is presented in \cite[pp.~237--238]{porteus}.
\medskip


\begin{lemma}[Karush, 1959] \label{Le:str:karush}
Suppose that $f:\Real \rightarrow \Real$ and that $f$ is convex on
$\Real$. For $v\leq w$, define $\tilde{f}(v,w):=\min\limits_{z \in
[v,w]}{f(z)}.$ Then it follows that:
\begin{itemize}
\item[(a)] $\tilde{f}$ can be expressed as $\tilde{f}(v,w) = F_1(v) + F_2(w)$, where
$F_1$ is convex nondecreasing and $F_2$ is convex nonincreasing on $\Real$.
\item[(b)] Suppose that $S$ is a minimizer of $f$ over $\Real$.
Then $\tilde{f}$ can be expressed as:
\begin{eqnarray*}
\tilde{f}(v,w) = \left\{
\begin{array}{ll}
f(v), & \mbox{ if } S \leq v \\
f(S), & \mbox{ if } v \leq S \leq w \\
f(w), & \mbox{ if } w \leq S
\end{array}
~~.\right.
\end{eqnarray*}
\end{itemize}
\end{lemma}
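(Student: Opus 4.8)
The plan is to first isolate the elementary monotonicity structure of convex functions, then dispatch part (b) by a short case analysis, and finally build the separable decomposition of part (a) out of one-sided running minima of $f$.

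First I would record the standard fact that if $f:\Real\rightarrow\Real$ is convex and attains its minimum at $S$, then $f$ is nonincreasing on $(-\infty,S]$ and nondecreasing on $[S,\infty)$; this is immediate from the monotonicity of the difference quotients of a convex function. With this in hand, part (b) is purely a matter of locating the interval $[v,w]$ relative to $S$. If $S\leq v$, then $[v,w]\subseteq[S,\infty)$, on which $f$ is nondecreasing, so the minimum over $[v,w]$ is attained at the left endpoint and $\tilde{f}(v,w)=f(v)$. If $v\leq S\leq w$, the unconstrained minimizer $S$ lies in $[v,w]$, so $\tilde{f}(v,w)=f(S)$. If $w\leq S$, then $[v,w]\subseteq(-\infty,S]$, on which $f$ is nonincreasing, so the minimum is attained at the right endpoint and $\tilde{f}(v,w)=f(w)$.

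For part (a), I would define the one-sided running minima $F_1(v):=\inf_{z\geq v}f(z)-f(S)$ and $F_2(w):=\inf_{z\leq w}f(z)$, where $S$ is again a minimizer of $f$. Using the monotonicity fact, $F_1(v)=0$ for $v\leq S$ and $F_1(v)=f(v)-f(S)$ for $v\geq S$, so $F_1$ is convex, nonnegative, and nondecreasing; likewise $F_2(w)=f(w)$ for $w\leq S$ and $F_2(w)=f(S)$ for $w\geq S$, so $F_2$ is convex and nonincreasing. It then remains to verify $F_1(v)+F_2(w)=\tilde{f}(v,w)$ by re-examining the same three cases from part (b), which is a direct substitution. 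Note that both infima are finite because the global minimum is attained at $S$, so no $-\infty$ difficulty arises.

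The one genuinely delicate point---and the main obstacle to a fully general statement of (a)---is the case in which the convex function $f$ has no minimizer, as with $f(x)=e^{x}$ or $f(x)=-x$. In that situation $f$ is monotone on all of $\Real$, and the running-minimum construction above may degenerate, since an infimum over a half-line can equal $-\infty$. I would handle this separately and trivially: if $f$ is nondecreasing throughout then $\tilde{f}(v,w)=f(v)$, so one takes $F_1:=f$ and $F_2:=0$; if $f$ is nonincreasing throughout then $\tilde{f}(v,w)=f(w)$, so one takes $F_1:=0$ and $F_2:=f$. I would also remark that this degenerate case never arises in the paper's intended applications, since $g_n(\cdot,s)$ is convex with $\lim_{y\rightarrow\infty}g_n(y,s)=\infty$ and hence always attains a minimum, which is precisely the hypothesis under which part (b) is invoked in the proof of Theorem \ref{Th:str:finite}.
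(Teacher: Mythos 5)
Note first that the paper does not actually prove this lemma: it is imported from Karush \cite{karush} as presented in Porteus \cite[pp.~237--238]{porteus}, and is used as a black box in the proof of Theorem \ref{Th:str:finite}. So there is no internal proof to compare against, and the only question is whether your argument is sound --- it is, and it is complete. Your part (b) is the standard unimodality argument: a convex $f$ with minimizer $S$ is nonincreasing on $(-\infty,S]$ and nondecreasing on $[S,\infty)$, and the three cases follow by locating $[v,w]$ relative to $S$. Your part (a) decomposition $F_1(v)=\inf_{z\geq v}f(z)-f(S)$, $F_2(w)=\inf_{z\leq w}f(z)$ does satisfy $F_1(v)+F_2(w)=\tilde{f}(v,w)$ in each of the same three cases, with $F_1$ nonnegative, convex, nondecreasing and $F_2$ convex, nonincreasing. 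Two remarks. First, the one step you pass over quickly is the convexity of the glued functions: that $v\mapsto\inf_{z\geq v}f(z)$ is convex deserves a line of justification --- either invoke preservation of convexity under partial minimization of the jointly convex function $f(z)$ over the convex set $\left\{(v,z):z\geq v\right\}$, or observe that the glued function is continuous with nondecreasing right derivative (equal to $0$ to the left of $S$ and to $f^{\prime+}\geq 0$ to the right of $S$). Second, your handling of the no-minimizer case is a genuine contribution of your write-up, not pedantry: part (a) carries no attainment hypothesis, and without it the running-minimum construction can produce $-\infty$. Your observation that a convex $f$ on $\Real$ with no minimizer must be monotone (its nondecreasing right derivative never crosses from negative to nonnegative, else a minimizer exists), together with the trivial decompositions $F_1=f,\,F_2=0$ or $F_1=0,\,F_2=f$, closes that case; and you correctly note it is irrelevant to the paper's application, where $g_n(\cdot,s)$ is convex and coercive so part (b)'s hypothesis always holds.
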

%
%
%
~\\

{\itshape Proof of Theorem \ref{Th:str:finite}: } We present the proof in three parts.

\emph{\textbf{Part I - Modified Base-Stock Structure:~}}
Recall the dynamic programming equation (\ref{Eq:y_act}): 
\begin{eqnarray*}
V_n(x,s)
=-c_s \cdot x + \min\limits_{\max(x,d)\leq y \leq x + \frac{P}{c_s}}
\left\{ g_n(y,s)
\right\}~,~n=N,N-1,\ldots,1~, 
\end{eqnarray*}
where $g_n(y,s):=c_s \cdot y + h(y-d) +\alpha \cdot \Expectation
\bigl[V_{n-1}(y-d,S_{n-1})\mid S_n=s\bigr]$. We now show by
induction on $n$ that the following statements are true for every
$n \in \left\{1,2,\ldots,N\right\}$ and all
$s \in {\cal{S}}$:
\begin{itemize}
\item[(i)] $g_n(y,s)$ is convex in $y$ on $[d,\infty)$.
\item[(ii)] $\lim_{y\rightarrow\infty}{g_n(y,s)} = \infty $.
\item[(iii)]$V_n(x,s)$ is convex in $x$ on $\Real_+$.
\end{itemize}
\medskip

\noindent \underline{Base Case}: $n=1$ \\
\noindent Let $s_1 \in {\cal{S}}$ be arbitrary. We have
$g_1(y,s_1)=c_{s_1} \cdot y + h(y-d)$, which clearly satisfies (i) and (ii). $y_1^*(x,s_1)=\max(x,d)$ and thus $V_1(x,s_1)=c_{s_1} \cdot (d-x)^+ +
h\Bigl((x-d)^+\Bigr)$, which is convex in $x$. We conclude (i)-(iii) are true at time $n=1$, for all $s \in {\cal{S}}$.
\medskip

\noindent \underline{Induction Step}: We now assume (i)-(iii) are true for $n=m-1$ and all $s
\in {\cal{S}}$, and show they hold for $n=m$ and an arbitrary $s_m \in {\cal{S}}$. Let ${s}_{m-1} \in {\cal{S}}$ also be
arbitrary. $V_{m-1}(y-d,{s}_{m-1})$ is convex in $y$,
so $g_m(y,s_m)$ is 
convex in $y$ as
it is the sum of an affine function, $c_{s_m} \cdot y$, a convex function, $h(y-d)$, and a
nonnegative weighted sum/integral of convex functions, $\alpha \cdot
\Expectation \bigl[V_{m-1}(y-d,S_{m-1})\mid S_m=s_m\bigr]$ (see, e.g., \cite[Section
3.2]{boyd} for the relevant results on convexity-preserving
operations). To show (ii) for $n=m$, we have
$\lim\limits_{y\rightarrow\infty}{g_m(y,s_m)} \geq
\lim\limits_{y\rightarrow\infty}{c_{s_m} \cdot y}= \infty$,
where the inequality follows from $V_{m-1}(x,{s}_{m-1})\geq 0,
\forall x \in \Real_+, \forall {s}_{m-1} \in {\cal{S}}$ and $h(y-d)\geq 0$.
Moving on to (iii), we have:
\begin{eqnarray*}
V_m(x,s_m)&=&-c_{s_m} \cdot x + \min\limits_{\max(x,d)\leq
y \leq x + \frac{P}{c_{s_m}}} \left\{ g_m(y,s_m) \right\} \\
&=& -c_{s_m}
\cdot x + F_1(\max(x,d)) + F_2(x+\frac{P}{c_{s_m}}),
\end{eqnarray*}
\noindent where, by Lemma \ref{Le:str:karush}, $F_1$ is convex
nondecreasing and $F_2$ is convex nonincreasing. $F_1(\max(x,d))$ is also convex
in $x$, as it is the composition of a convex increasing function
with a convex function, and 
$V_m(x,s_m)$ is therefore convex in
$x$. 
%
\noindent This concludes the induction step, and we conclude
(i)-(iii) are true for all $n \in \{1,2,\ldots,N\}$.

Next, we define the critical numbers $b_n(s)$ for all $n \in \{1,2,\ldots,N\}$ and $s \in {\cal{S}}$:
\begin{eqnarray*}
b_n(s) := \min \left\{\hat{y} \in
[d,\infty):~g_n(\hat{y},s)=\min\limits_{y\in [d,\infty)}
g_n(y,s) \right\}~.
\end{eqnarray*}
\noindent Note that by properties (i) and (ii) from the above induction, the minimum of $g_n(\cdot,s)$ over $[d,\infty)$ is achieved, and the set of minimizers over $[d,\infty)$ is a nonempty closed, convex set. Thus, $b_n(s)$ is well-defined.
The form of
$y_n^*(x,s)$, (\ref{Eq:str:y_star}), then follows from part (b) of Karush's
result, Lemma \ref{Le:str:karush}, with $g_n(y,s)$ playing the
role of $f$, $\max(x,d)$ the role of $v$, $x + \frac{P}{c_s}$ the
role of $w$, and $b_n(s)$ the role of $S$.

\emph{\textbf{Part II - Monotonicity of Thresholds in Time:~}}
In this section, we prove (\ref{Eq:str:inc_n}). We showed above that the optimal action with one time slot remaining is $y_1^*(x,s) = \max(x,d)$, for all $s \in {\cal{S}}$. This is precisely the policy suggested by
(\ref{Eq:str:y_star}) with $b_1(s)=d$, as $\frac{P}{c_s}$ is at least as great as $d$. Thus, we conclude the far right equality in (\ref{Eq:str:inc_n}) holds: $b_1(s) = d,~\forall s \in {\cal{S}}$.

In order to show the far left inequality in (\ref{Eq:str:inc_n}), we claim more generally that $b_n(s) \leq n \cdot d$, for all $n$ and $s$. This follows from a simple interchange argument, as all packets transmitted beyond $n\cdot d$ incur transmission costs and holding costs for the duration of the horizon; however, they do not satisfy the playout requirements in any remaining slot. Thus, a policy that transmits enough packets to fill the buffer up to $n \cdot d$ at time $n$ is strictly superior to a policy that transmits more packets.

Next, we prove:
\begin{eqnarray}\label{Eq:str_pr:mon_n}
b_{n+1}(s)\geq b_n(s),~\forall s \in {\cal{S}},~ \forall n \in \left\{1,2,\ldots,N-1\right\}~.
\end{eqnarray}
\noindent By Topkis' Theorem 2.8.1 \cite[pg. 76]{topkis}, in order to show (\ref{Eq:str_pr:mon_n}), it suffices to show that for all $s \in {\cal{S}}$, $n \in \left\{1,2,\ldots,N-1\right\}$, and $y^1,y^2 \in [d,(n+1)\cdot d]$, $y^1>y^2$ implies:
\begin{eqnarray} \label{Eq:str_pr:sub}
g_{n+1}\left(y^1,s\right)-g_{n}\left(y^1,s\right) \leq g_{n+1}\left(y^2,s\right) - g_{n}\left(y^2,s\right)~.
\end{eqnarray}
We let $s \in {\cal{S}}$ be arbitrary, and proceed by induction on the time slot $n$.   \\
\noindent \underline{Base Case}: $n=1$ \\
For all $y \in [d,2d]$,
\begin{eqnarray*}
g_2\left(y,s\right)-g_1\left(y,s\right)&=& \alpha \cdot \Expectation \left[V_1\left(y-d,S_1\right) \mid S_2=s\right]\\
&=&\alpha \cdot \Expectation\left[c_{S_1} \middle| S_2=s\right] \cdot (2d-y)~,
\end{eqnarray*}
which is decreasing in $y$ as $\Expectation\left[c_{S_1} \middle| S_2=s\right] > 0$.
%

\noindent \underline{Induction Step}: We assume that (\ref{Eq:str_pr:sub}) is true for all $n=1,2,\ldots,m-1$ and $s \in {\cal{S}}$. We wish to show it is true for $n=m$. Let 
$y^1,y^2 \in [d,(m+1)\cdot d]$ be arbitrary, with $y^1 > y^2$. Also, let $\hat{s} \in {\cal{S}}$ be arbitrary. Define:
\begin{eqnarray*}
{\beta}_1 &:=&  \min \left\{\argmin_{\max(y^1-d,d) \leq \hat{y} \leq y^1 -d + \frac{P}{{c}_{\hat{s}}}} \left\{g_{m-1}(\hat{y},\hat{s}) \right\} \right\} \\
\hbox{and } ~~{\beta}_2 &:=& \min \left\{
\argmin_{\max(y^2-d,d)\leq \hat{y} \leq y^2 -d + \frac{P}{c_{\hat{s}}}}
\left\{g_{m}(\hat{y},c_{\hat{s}}) \right\} \right\}~.
\end{eqnarray*}
\noindent Note that:
\begin{align}
&\max\left(y^1-d,d\right) \leq \beta_1 \leq \beta_1 \vee \beta_2 \leq y^1-d+\frac{P}{c_{\hat{s}}}~,\hbox{ and } \label{Eq:str_pr:b1} \\
&\max\left(y^2-d,d\right) \leq \beta_1 \wedge \beta_2 \leq \beta_2 \leq y^2-d+\frac{P}{c_{\hat{s}}}~. \label{Eq:str_pr:b2}
\end{align}
Then we have:
\begin{align}
&\min\limits_{\max(y^1-d,d)\leq \hat{y} \leq y^1-d+\frac{P}{c_{\hat{s}}}}\left\{g_{m}\left(\hat{y},\hat{s}\right) \right\}
-\min\limits_{\max(y^1-d,d)\leq \hat{y} \leq y^1-d+\frac{P}{c_{\hat{s}}}}\left\{g_{m-1}\left(\hat{y},\hat{s}\right) \right\} \nonumber \\
&\leq g_{m}\left(\beta_1 \vee \beta_2,\hat{s}\right) - g_{m-1}\left(\beta_1,\hat{s}\right)  \label{Eq:str_pr:min_cond} \\
&\leq g_{m}\left(\beta_2,\hat{s}\right) - g_{m-1}\left(\beta_1 \wedge \beta_2,\hat{s}\right) \label{Eq:str_pr:min_condzz}\\
&\leq
\min\limits_{\max(y^2-d,d)\leq \hat{y} \leq y^2-d+\frac{P}{c_{\hat{s}}}}\left\{g_{m}\left(\hat{y},\hat{s}\right) \right\}
-\min\limits_{\max(y^2-d,d)\leq \hat{y} \leq y^2-d+\frac{P}{c_{\hat{s}}}}\left\{g_{m-1}\left(\hat{y},\hat{s}\right) \right\}~. \label{Eq:str_pr:min_condzzz}
\end{align}
\noindent Equation \eqref{Eq:str_pr:min_cond} follows from \eqref{Eq:str_pr:b1} and \eqref{Eq:str_pr:min_condzzz} follows from \eqref{Eq:str_pr:b2}. If $\beta_2 \geq \beta_1$, \eqref{Eq:str_pr:min_condzz} holds with equality. Otherwise, it follows from the induction hypothesis.
  Since $\hat{s}$ was arbitrary, (\ref{Eq:str_pr:min_condzzz}) holds for all $\hat{s} \in {\cal{S}}$. Therefore, combined with the fact that the Markov process $\left\{S_n\right\}_{n=N,N-1,\ldots,1}$ is homogeneous, (\ref{Eq:str_pr:min_condzzz}) implies:
\begin{eqnarray} \label{Eq:str_pr:exp_min_cond}
\begin{array}{l}
\Expectation \biggl[\min\limits_{\max\left(y^1-d,d\right)\leq \hat{y} \leq y^1-d+\frac{P}{c_{S_{m}}}}\left\{g_{m}\left(\hat{y},S_{m} \right) \right\} \mid S_{m+1} = s \biggr] \\
~~-~\Expectation \biggl[\min\limits_{\max\left(y^1-d,d\right)\leq \hat{y} \leq y^1-d+\frac{P}{c_{S_{m-1}}}}\left\{g_{m-1}\left(\hat{y},S_{m-1} \right) \right\} \mid S_m = s \biggr] \\
\leq ~
\Expectation \biggl[\min\limits_{\max\left(y^2-d,d\right)\leq \hat{y} \leq y^2-d+\frac{P}{c_{S_{m}}}}\left\{g_{m}\left(\hat{y},S_{m} \right) \right\} \mid S_{m+1} = s \biggr] \\
~~-~
\Expectation \biggl[\min\limits_{\max\left(y^2-d,d\right)\leq \hat{y} \leq y^2-d+\frac{P}{c_{S_{m-1}}}}\left\{g_{m-1}\left(\hat{y},S_{m-1} \right) \right\} \mid S_m = s \biggr]~.
\end{array}
\end{eqnarray}
Finally, we have:
\begin{align}
&g_{m+1}(y^1,s)-g_m(y^1,s) \nonumber \\
&= \alpha \cdot \Expectation\left[V_m(y^1-d,S_m)\middle|S_{m+1}=s\right]-\alpha \cdot \Expectation\left[V_{m-1}(y^1-d,S_{m-1})\middle|S_{m}=s\right] \nonumber \\
&=  \alpha \cdot \Expectation \biggl[\min\limits_{\max\left(y^1-d,d\right)\leq \hat{y} \leq y^1-d+\frac{P}{c_{S_{m}}}}\left\{g_{m}\left(\hat{y},S_{m} \right) \right\} \mid S_{m+1} = s \biggr] \nonumber \\
&~~-\alpha \cdot \Expectation \biggl[\min\limits_{\max\left(y^1-d,d\right)\leq \hat{y} \leq y^1-d+\frac{P}{c_{S_{m-1}}}}\left\{g_{m-1}\left(\hat{y},S_{m-1} \right) \right\} \mid S_m = s \biggr] \label{Eq:step1of} 
\end{align}
\begin{align}
&\leq \alpha \cdot \Expectation \biggl[\min\limits_{\max\left(y^2-d,d\right)\leq \hat{y} \leq y^2-d+\frac{P}{c_{S_{m}}}}\left\{g_{m}\left(\hat{y},S_{m} \right) \right\} \mid S_{m+1} = s \biggr] \nonumber \\
&~~-\alpha \cdot \Expectation \biggl[\min\limits_{\max\left(y^2-d,d\right)\leq \hat{y} \leq y^2-d+\frac{P}{c_{S_{m-1}}}}\left\{g_{m-1}\left(\hat{y},S_{m-1} \right) \right\} \mid S_m = s \biggr] \label{Eq:step1ofzzzz} \\
&= \alpha \cdot \Expectation\left[V_m(y^2-d,S_m)\middle|S_{m+1}=s\right]-\alpha \cdot \Expectation\left[V_{m-1}(y^2-d,S_{m-1})\middle|S_{m}=s\right] \label{Eq:step1ofzzzzz} \\
&= g_{m+1}(y^2,s)-g_m(y^2,s)~. \nonumber
\end{align}
Here, \eqref{Eq:step1of} and \eqref{Eq:step1ofzzzzz} follow from the fact that $\Expectation \left[c_{S_{m-1}} \mid S_m =s \right] = \Expectation \left[c_{S_{m}} \mid S_{m+1} =s \right]$, and \eqref{Eq:step1ofzzzz} follows from \eqref{Eq:str_pr:exp_min_cond}. This completes the induction step, and the proof of (\ref{Eq:str:inc_n}).
\emph{\textbf{Part III - Monotonicity of Thresholds in the Channel Condition:~}}
Finally, we show (\ref{Eq:str:inc_c}), the monotonicity of the thresholds in the channel condition, when the channel condition process is IID. The far left inequality follows from the same interchange argument described above, showing $b_n(s) \leq n \cdot d$ for all $s$ and $n$.
%
We now show the far
right equality of (\ref{Eq:str:inc_c}), $b_n(s_{\worst})=d$. To satisfy feasibility, we must have
$b_n(s) \geq d$ for all $n \in \left\{1,2,\ldots,N \right\}$ and
$s \in {\cal{S}}$. To see that $b_n(s_{\worst}) \leq d$, assume the channel condition at time $n$ is $s_{\worst}$, and consider two
control policies satisfying (\ref{Eq:str:y_star}), with the same
critical numbers $b_m(s)$, for all times $m<n$.
At time $n$, the first policy, $\boldsymbol{\pi^1}$,
transmits according to (\ref{Eq:str:y_star}), with critical number $b_n(s_{\worst})=d+ \epsilon$
($\epsilon > 0$), and the second, $\boldsymbol{\pi^2}$, transmits according to (\ref{Eq:str:y_star}), with critical number $b_n(s_{\worst})=d$. These two strategies
result in the same control action at time $n$ if $x_n \geq d+ \epsilon$, and we have already shown it is not
optimal to fill the buffer beyond $n \cdot d$, so we only need to consider the
case where $x_n < d + \epsilon$ and $\epsilon \leq (n-1) \cdot d$.
Let $Z_n^1,Z_{n-1}^1,\ldots,Z_1^1$ and  $Z_n^2,Z_{n-1}^2,\ldots,Z_1^2$
be random variables representing the number of packets transmitted at times $n,n-1,\ldots,1$ by
$\boldsymbol{\pi^1}$ and $\boldsymbol{\pi^2}$, respectively.
If $d \leq x_n \leq d + \epsilon$, then $Z_n^2=0$ and $Z_n^1 - Z_n^2=Z_n^1=\min\left\{\frac{P}{c_{\max}},d + \epsilon - x_n \right\}.$
If $x_n < d$, then $Z_n^2 = d-x_n$, $Z_n^1 = \min \left\{\frac{P}{c_{\max}},d + \epsilon - x_n \right\}$, and $Z_n^1-Z_n^2= \min \left\{\frac{P}{c_{\max}}-d+x_n,\epsilon \right\}$. Thus, for all $x_n < d + \epsilon$, we have $Z_n^1-Z_n^2 \geq 0$. If $Z_n^1-Z_n^2=0$, the two control policies result in the same actions for all remaining times, and therefore result in the same expected cost. So we only need to consider the case where $\lambda := Z_n^1-Z_n^2 >0$.
Because the critical numbers at times $n-1,n-2,\ldots,1$ are the same for both policies,
for any realization, $\omega$, of the channel condition over future times, we have $Z_m^1(\omega)
\leq Z_m^2(\omega),~\forall m \in \{n-1,\ldots,1\}$. Moreover, because the scheduler must satisfy the playout requirements for the last $n$ slots, we have $\sum_{m=1}^{n-1}
(Z_m^2(\omega)-Z_m^1(\omega))=\lambda$; i.e., over the remainder of the horizon,
an extra $\lambda$ packets are transmitted under the second
policy.
The total discounted holding costs from time $n$ until the end of the horizon are therefore lower for
$\boldsymbol{\pi^2}$ than $\boldsymbol{\pi^1}$,
because the number of packets remaining after transmission in each slot is never greater under policy $\boldsymbol{\pi^2}$. Furthermore, the
total discounted transmission costs of the extra $\lambda$ packets
are also lower for $\boldsymbol{\pi^2}$ as they are transmitted at the maximum cost $c_{\max}$
under $\boldsymbol{\pi^1}$, and transmitted later (and therefore discounted more heavily) under $\boldsymbol{\pi^2}$. Thus, the total discounted transmission plus
holding costs are lower for $\boldsymbol{\pi^2}$ under all
realizations, and the expected discounted cost of
$\boldsymbol{\pi^2}$ is lower than $\boldsymbol{\pi^1}$. We
conclude $b_n(s_{\worst})=d$.

To show $c_{s^1} \leq c_{s^2}$ implies $b_n(s^1) \geq b_n(s^2)$, we follow Kalymon's methodology for the proof of Theorem 1.3 in \cite{kalymon}. For all $y \in [d,\infty)$, we have:
\begin{eqnarray} \label{Eq:str_pr:chan1}
g_n\left(y,s^2\right) &=& c_{s^2} \cdot y + h(y-d) + \alpha \cdot \Expectation \left[V_{n-1}\left(y-d,S_{n-1}\right) \right] \nonumber \\
&=&
(c_{s^2}-c_{s^1}) \cdot y + c_{s^1} \cdot y + h(y-d) + \alpha \cdot \Expectation \left[V_{n-1}\left(y-d,S_{n-1}\right) \right] \nonumber \\
&=& (c_{s^2}-c_{s^1}) \cdot y + g_n\left(y,s^1\right)~.
\end{eqnarray}
\noindent Assume $b_n(s^1) < b_n(s^2)$ for some $n \in \left\{1,2,\ldots,N\right\}$ and $s^1,s^2 \in {\cal{S}}$, with $c_{s^1} \leq c_{s^2}$. Substituting first $y=b_n(s^1)$ and then $y=b_n(s^2)$ into (\ref{Eq:str_pr:chan1}) yields:
\begin{eqnarray}\label{Eq:str_pr:chan2}
\left(c_{s^2}-c_{s^1}\right) \cdot b_n\left(s^1\right) + g_n\left(b_n\left(s^1\right),s^1\right) &=& g_n\left(b_n\left(s^1\right),s^2\right) \nonumber \\
&\geq & g_n\left(b_n\left(s^2\right),s^2\right) \nonumber \\
&=& \left(c_{s^2}-c_{s^1}\right) \cdot b_n\left(s^2\right) + g_n\left(b_n\left(s^2\right),s^1\right)~.
\end{eqnarray}
\noindent Yet, $c_{s^1} \leq c_{s^2}$ and $b_n(s^1) < b_n(s^2)$ imply:
\begin{eqnarray} \label{Eq:str_pr:chan3}
\left(c_{s^2}-c_{s^1}\right) \cdot b_n\left(s^1\right)
< \left(c_{s^2}-c_{s^1}\right) \cdot b_n\left(s^2\right)~.
\end{eqnarray}
\noindent Equations (\ref{Eq:str_pr:chan2}) and (\ref{Eq:str_pr:chan3}) imply:
\begin{eqnarray*}
 g_n\left(b_n\left(s^1\right),s^1\right) > g_n\left(b_n\left(s^2\right),s^1\right)~,
\end{eqnarray*}
\noindent which clearly contradicts the fact that $b_n\left(s^1\right)$ is a global minimizer of $g_n\left(\cdot,s^1\right)$. We conclude that $c_{s^1} \leq c_{s^2}$ implies $b_n(s^1) \geq b_n(s^2)$, completing the proofs of (\ref{Eq:str:inc_c}) and Theorem \ref{Th:str:finite}. \qedsymbol 
\subsection{Proof of Theorem \ref{Th:pwl:fin}}
While the proof is similar in spirit to the proof of a finite generalized base-stock policy in \cite[pp.~324--334]{bensoussan_book}, some key differences include the introduction of (i) stochastic channel conditions (ordering costs); (ii) the underflow constraint $x+z \geq d$; and (iii) the power constraint $z \leq \tilde{z}_{\max}(s)$.

We show by induction on $n$ that the following two statements are true for every $n \in \{1,2,\ldots,N\}$ and $s \in \cal{S}$:
\begin{itemize}
\item[(i)] $V_n(x,s)$ is convex in $x$ on $\Real_+$.
    \item[(ii)] There exists a nonincreasing sequence of critical numbers $\bigl\{b_{n,k}(s)\bigr\}_{k \in \left\{-1,0,1,\ldots,K\right\}}$ such that the optimal control action with $n$ slots remaining is given by:
\begin{eqnarray}\label{Eq:str:gen_base}
z_n^*(x,s) := \left\{
\begin{array}{ll}
   \tilde{z}_{k-1}(s) , & \mbox{if } ~b_{n,k}(s)-\tilde{z}_{k-1}(s) \leq x < b_{n,k-1}(s)-\tilde{z}_{k-1}(s)~, \\ 
       &~~~~~~~~~~~~~~~~~~~~~~~~~~~~~~~~~~~~~~~~~~~k \in \{0,1,\ldots,K\} \\
   b_{n,k}(s)-x , & \mbox{if } ~b_{n,k}(s)-\tilde{z}_k(s) \leq x < b_{n,k}(s)-\tilde{z}_{k-1}(s)~, \\ 
       &~~~~~~~~~~~~~~~~~~~~~~~~~~~~~~~~~~~~~~~k \in \{0,1,\ldots,K-1\} \\
   b_{n,K}(s)-x , & \mbox{if } ~b_{n,K}(s)-\tilde{z}_{\max}(s) \leq x < b_{n,K}(s)-\tilde{z}_{K-1}(s) \\
   \tilde{z}_{\max}(s) , & \mbox{if } ~0 \leq x < b_{n,K}(s)-\tilde{z}_{\max}(s)
   \end{array} \right.
\end{eqnarray}
\end{itemize}

\noindent \underline{Base Case}: $n=1$ 
\begin{eqnarray}
V_1(x,s) &=& \min_{\max(0,d-x) \leq z \leq \tilde{z}_{\max}(s)}
\left\{{c}(z,s)+{h}({x}+z-{d})\right\} \label{Eq:str:v1} \\
&=& c\bigl(\max\left\{0,d-x\right\},s\bigr)+ {h}\bigl(\max\left\{0,x-d\right\}\bigr)~, \nonumber
\end{eqnarray}
which is convex because $c(\cdot,s)$ and $h(\cdot)$ are both convex and nondecreasing functions, and $\max\left\{0,d-x\right\}$ and $\max\left\{0,x-d\right\}$ are both convex functions (see, e.g., \cite[Section
3.2]{boyd} for the relevant results on convexity-preserving
operations). Further,
let $b_{1,-1}(s)=\infty$ and $b_{1,k}(s)=d$ for all $k \in \{0,1,\ldots,K\}$. Then (\ref{Eq:str:gen_base}) is equivalent to $z_1^*(x,s)=\max\{0,d-x\}$, which clearly achieves the minimum in (\ref{Eq:str:v1}).
\medskip

\noindent \underline{Induction Step}: We now assume (i)-(ii) are true for $n=m-1$ and all $s
\in {\cal{S}}$, and show they hold for $n=m$ and an arbitrary $s \in {\cal{S}}$. Let 
$\breve{x},\hat{x} \in \Real_+$ and $\theta \in [0,1]$ be arbitrary, and define $\bar{x}:=\theta \cdot \breve{x} + (1-\theta) \cdot \hat{x}$. We have:
\begin{eqnarray}
&&V_m(\theta \cdot \breve{x} + (1-\theta) \cdot \hat{x},{s}) \nonumber \\
&&= V_m(\bar{x},{s}) \nonumber \\
&&= \min_{\max(0,d-\bar{x}) \leq z \leq \tilde{z}_{\max}(s)}
\left\{ {c}(z,s)+{h}(\bar{x}+z-{d}) +\alpha \cdot \Expectation \bigl[V_{m-1}(\bar{x}+z-{d},{S}_{m-1})\mid S_m=s\bigr]
\right\} \nonumber \\
&&\leq \min_{\substack{\max\left\{0,d-\breve{x}\right\} \leq \breve{z} \leq \tilde{z}_{\max}(s)  \\ \max\left\{0,d-\hat{x}\right\} \leq \hat{z} \leq \tilde{z}_{\max}(s)}}
\left\{ \begin{array}{l}
{c}(\theta \cdot \breve{z} + (1-\theta) \cdot \hat{z},s)+{h}(\bar{x}+\theta \cdot \breve{z} + (1-\theta) \cdot \hat{z}-{d}) \\
+\alpha \cdot \Expectation \bigl[V_{m-1}(\bar{x}+\theta \cdot \breve{z} + (1-\theta) \cdot \hat{z}-{d},{S}_{m-1})\mid S_m=s\bigr]
\end{array}
\right\} \label{Eq:pwl:convexity_step1} \\
&&\leq \min_{\substack{\max\left\{0,d-\breve{x}\right\} \leq \breve{z} \leq \tilde{z}_{\max}(s)  \\ \max\left\{0,d-\hat{x}\right\} \leq \hat{z} \leq \tilde{z}_{\max}(s)}}
\left\{ \begin{array}{l}
\theta \cdot {c}(\breve{z},s) + (1-\theta) \cdot c(\hat{z},s)+ \\
\theta \cdot {h}(\breve{x} + \breve{z} - d) + (1-\theta) \cdot {h}(\hat{x} + \hat{z} - d) \\
+\alpha \cdot \theta \cdot \Expectation \bigl[V_{m-1}(\breve{x}+ \breve{z} -{d},{S}_{m-1})\mid S_m=s\bigr] \\
+\alpha \cdot (1-\theta) \cdot \Expectation \bigl[V_{m-1}(\hat{x}+ \hat{z} -{d},{S}_{m-1})\mid S_m=s\bigr]
\end{array}
\right\} \label{Eq:pwl:convexity_step}\\
&&=\theta \cdot \min_{\max\left\{0,d-\breve{x}\right\} \leq \breve{z} \leq \tilde{z}_{\max}(s)}
\left\{
\begin{array}{l}
{c}(\breve{z},s)+{h}(\breve{x}+\breve{z}-{d}) \\
+\alpha \cdot \Expectation \bigl[V_{m-1}(\breve{x}+\breve{z}-{d},{S}_{m-1})\mid S_m=s\bigr]
\end{array}
\right\}
\nonumber \\
&&~+(1-\theta) \cdot \min_{\max\left\{0,d-\hat{x}\right\} \leq \hat{z} \leq \tilde{z}_{\max}(s)}
\left\{
\begin{array}{l}
{c}(\hat{z},s)+{h}(\hat{x}+\hat{z}-{d}) \\
+\alpha \cdot \Expectation \bigl[V_{m-1}(\hat{x}+\hat{z}-{d},{S}_{m-1})\mid S_m=s\bigr]
\end{array}
\right\} \nonumber \\
&&=\theta \cdot V_m(\breve{x},{s}) + (1-\theta) \cdot V_m(\hat{x},{s})~, \nonumber
\end{eqnarray}
where (\ref{Eq:pwl:convexity_step}) follows from the convexity of $c(\cdot,s)$, $h(\cdot)$, and $\Expectation \bigl[V_{m-1}(\cdot, {S}_{m-1})\mid S_m=s\bigr]$, the last of which follows from the induction hypothesis.
Equation (\ref{Eq:pwl:convexity_step1}) follows from the fact that for every $\max\left\{0,d-\breve{x}\right\} \leq \breve{z} \leq \tilde{z}_{\max}(s)$ and $\max\left\{0,d-\hat{x}\right\} \leq \hat{z} \leq \tilde{z}_{\max}(s)$, there exists a $\max\left\{0,d-\bar{x}\right\} \leq \bar{z} \leq \tilde{z}_{\max}(s)$ (namely, $\bar{z}:=\theta \cdot \breve{z} + (1-\theta) \cdot \hat{z}$) such that:
\begin{eqnarray*}
&&{c}(\bar{z},s)+{h}(\bar{x}+\bar{z}-{d}) +\alpha \cdot \Expectation \bigl[V_{m-1}(\bar{x}+\bar{z}-{d},{S}_{m-1})\mid S_m=s\bigr] \\
&& =
{c}(\theta \cdot \breve{z} + (1-\theta) \cdot \hat{z},s)+{h}(\bar{x}+\theta \cdot \breve{z} + (1-\theta) \cdot \hat{z}-{d}) \\
&&~+\alpha \cdot \Expectation \bigl[V_{m-1}(\bar{x}+\theta \cdot \breve{z} + (1-\theta) \cdot \hat{z}-{d},{S}_{m-1})\mid S_m=s\bigr]~.
\end{eqnarray*}
This concludes the induction step for (i) and we now proceed to (ii).
%

Note first that $\tilde{g}_m(y,s) = h(y-d) + \alpha \cdot \Expectation \left[V_{m-1}\left(y-d,S_{m-1}\right) \middle| S_m = s\right]$ is convex in $y$, as $h(\cdot)$ is convex, and $V_{m-1}(x,s)$ is convex in $x$ for every $s \in {\cal S}$ by the induction hypothesis.
Let $b_{m,-1}(s) := \infty$ and
\begin{eqnarray*}
b_{m,k}(s) := \max\Bigl\{d, \inf\bigl\{b \bigm| \tilde{g}_m^{\prime+}(b,s) \geq -\tilde{c}_k(s) \bigr\} \Bigr\}~,~\forall k \in \{0,1,\ldots,K\}~,
\end{eqnarray*}
where $\tilde{g}_m^{\prime+}(b,s)$ represents the right derivative: 
\begin{eqnarray*}
\tilde{g}_m^{\prime+}(b,s) := \lim\limits_{y \downarrow b} \frac{\tilde{g}_m(y,s)-\tilde{g}_m(b,s)}{y-b}~,
\end{eqnarray*}
which is nondecreasing and continuous from the right, by the convexity of $\tilde{g}_m(\cdot,s)$ \cite[Section 24]{rockafellar}.
Note that $\bigl\{b_{m,k}(s)\bigr\}_{k \in \left\{-1,0,1,\ldots,K\right\}}$ is a nonincreasing sequence, because the sequence $\left\{\tilde{c}_k(s)\right\}_{k \in \{0,1,\ldots,K\}}$ is nondecreasing.
We show the optimal control action $z_m^*(x,s)$ is then given by (\ref{Eq:str:gen_base}), by considering the four exhaustive cases.

\noindent \underline{Case 1}: $b_{m,k}(s)-\tilde{z}_{k-1}(s) \leq x < b_{m,k-1}(s)-\tilde{z}_{k-1}(s)~,~k \in \{0,1,\ldots,K\}$ \\
\noindent
In order to show $z_m^*(x,s)$ is given by (\ref{Eq:str:gen_base}), it suffices to show:
\begin{eqnarray}
&&c^{\prime+}(z,s)+\tilde{g}_m^{\prime+}(x+z,s) < 0~,\hbox{ for }\max\{0,d-x\}\leq z <\tilde{z}_{k-1}(s)~,\hbox{ and} \label{Eq:str:wtsc2a} \\
&&c^{\prime+}(z,s)+\tilde{g}_m^{\prime+}(x+z,s) \geq 0~,\hbox{ for } \tilde{z}_{k-1}(s)\leq z \leq \tilde{z}_{\max}(s)~. \label{Eq:str:wtsc2b}
\end{eqnarray}
First, let $z \in \Bigl[\max\{0,d-x\},\tilde{z}_{k-1}(s)\Bigr)$ be arbitrary, and let $j\in\{0,1,\ldots,k-1\}$ be such that $z \in \Bigl[\tilde{z}_{j-1}(s),\tilde{z}_j(s)\Bigr)$.
If $b_{m,k-1}(s)=d$, then $b_{m,k}(s)=d$, as $d \leq b_{m,k}(s) \leq b_{m,k-1}(s)=d$. Yet, $b_{m,k}(s)=b_{m,k-1}(s)=d$ implies $d-\tilde{z}_{k-1}(s) \leq x < d-\tilde{z}_{k-1}(s)$, which is vacuous. Therefore, we need only consider $b_{m,k-1}(s)=\inf\bigl\{b \bigm| \tilde{g}_m^{\prime+}(b,s) \geq -\tilde{c}_{k-1}(s) \bigr\}$. By the construction of the piecewise-linear function $c(\cdot,s)$, $z < \tilde{z}_{k-1}(s)$ implies:
\begin{eqnarray}\label{Eq:str:aa0}
c^{\prime+}(z,s) \leq \tilde{c}_{k-1}(s)~.
\end{eqnarray}
We also have:
\begin{eqnarray*}
x+z < x + \tilde{z}_{k-1}(s) < b_{m,k-1}(s) = \inf\bigl\{b \bigm| \tilde{g}_m^{\prime+}(b,s) \geq -\tilde{c}_{k-1}(s) \bigr\}~,
\end{eqnarray*}
which implies:
\begin{eqnarray}\label{Eq:str:aa1}
\tilde{g}_m^{\prime+}\left(x+z,s\right) < -\tilde{c}_{k-1}(s)~.
\end{eqnarray}
Summing \eqref{Eq:str:aa0} and \eqref{Eq:str:aa1} yields \eqref{Eq:str:wtsc2a}.


Next, let $z \in \bigl[\tilde{z}_{k-1}(s),\tilde{z}_{\max}(s)\bigr]$ be arbitrary, so that by construction of $c(\cdot,s)$:
\begin{eqnarray}\label{Eq:str:ba}
c^{\prime+}(z,s) \geq \tilde{c}_k(s)~.
\end{eqnarray}
We also have:
\begin{eqnarray*}
x+z \geq x + \tilde{z}_{k-1}(s) \geq b_{m,k}(s) \geq \inf\bigl\{b \bigm| \tilde{g}_m^{\prime+}(b,s) \geq -\tilde{c}_k(s) \bigr\}~,
\end{eqnarray*}
which, in combination with the nondecreasing nature of $\tilde{g}_m^{\prime+}\left(\cdot,s\right)$, implies:
\begin{eqnarray}\label{Eq:str:bb}
\tilde{g}_m^{\prime+}\left(x+z,s\right) \geq 
\tilde{g}_m^{\prime+}\Bigl(\inf\bigl\{b \bigm| \tilde{g}_m^{\prime+}(b,s) \geq -\tilde{c}_k(s) \bigr\},s\Bigr)~. 
\end{eqnarray}
Because $\tilde{g}_m^{\prime+}\left(\cdot,s\right)$ is continuous from the right, 
\begin{eqnarray}\label{Eq:str:bbb}
\tilde{g}_m^{\prime+}\Bigl(\inf\bigl\{b \bigm| \tilde{g}_m^{\prime+}(b,s) \geq -\tilde{c}_k(s) \bigr\},s\Bigr) \geq -\tilde{c}_k(s)~.
\end{eqnarray}
Combining (\ref{Eq:str:bb}) and (\ref{Eq:str:bbb}), and summing with (\ref{Eq:str:ba}) yields (\ref{Eq:str:wtsc2b}).

\noindent \underline{Case 2}: $b_{m,k}(s)-\tilde{z}_k(s) \leq x < b_{m,k}(s)-\tilde{z}_{k-1}(s)~,~k \in \{0,1,\ldots,K-1\}$ \\
\noindent
In order to show $z_m^*(x,s)$ is given by (\ref{Eq:str:gen_base}), it suffices to show:
\begin{eqnarray}
&&c^{\prime+}(z,s)+\tilde{g}_m^{\prime+}(x+z,s) < 0~,\hbox{ for }\max\{0,d-x\} \leq z < b_{m,k}(s)-x~,\hbox{ and} \label{Eq:str:wtsc1a} \\
&&c^{\prime+}(z,s)+\tilde{g}_m^{\prime+}(x+z,s) \geq 0~,\hbox{ for } b_{m,k}(s)-x \leq z \leq \tilde{z}_{\max}(s)~. \label{Eq:str:wtsc1b}
\end{eqnarray}

First, let $z \in \Bigl[\max\{0,d-x\},b_{m,k}(s)-x\Bigr)$ be arbitrary. This case is vacuous if $b_{m,k}(s)=d$, so
%
$b_{m,k}(s)=\inf\bigl\{b \bigm| \tilde{g}_m^{\prime+}(b,s) \geq -\tilde{c}_k(s) \bigr\}$.
Thus, we have:
\begin{eqnarray*}
x+z<b_{m,k}(s)=\inf\bigl\{b \bigm| \tilde{g}_m^{\prime+}(b,s) \geq -\tilde{c}_k(s) \bigr\}~,
\end{eqnarray*}
which implies:
\begin{eqnarray} \label{Eq:str:e0}
\tilde{g}_m^{\prime+}(x+z,s) < -\tilde{c}_k(s)~.
\end{eqnarray}
Furthermore, from $z < b_{m,k}(s)-x \leq \tilde{z}_k(s)$ and the construction of the piecewise-linear function $c(\cdot,s)$, 
\begin{eqnarray} \label{Eq:str:e1}
c^{\prime+}(z,s) \leq \tilde{c}_k(s)~.
\end{eqnarray}
Summing \eqref{Eq:str:e0} and \eqref{Eq:str:e1} yields \eqref{Eq:str:wtsc1a}.

Next, let $z \in \bigl[b_{m,k}(s)-x,\tilde{z}_{\max}(s)\bigr]$ be arbitrary, so that $z \geq b_{m,k}(s)-x > \tilde{z}_{k-1}(s)$, which by the construction of the piecewise-linear function $c(\cdot,s)$ implies:
\begin{eqnarray}\label{Eq:str:ca}
c^{\prime+}(z,s) \geq \tilde{c}_k(s)~.
\end{eqnarray}
We also have
$x+z \geq b_{m,k}(s) \geq \inf\bigl\{b \bigm| \tilde{g}_m^{\prime+}(b,s) \geq -\tilde{c}_k(s) \bigr\}$. Therefore, because $\tilde{g}_m^{\prime+}(\cdot,s)$ is nondecreasing and continuous from the right, 
\begin{eqnarray}\label{Eq:str:cb}
\tilde{g}_m^{\prime+}\left(x+z,s\right) \geq \tilde{g}_m^{\prime+}\left(\inf\bigl\{b \bigm| \tilde{g}_m^{\prime+}(b,s) \geq -\tilde{c}_k(s) \bigr\},s\right) \geq -\tilde{c}_k(s)~.
\end{eqnarray}
Summing (\ref{Eq:str:ca}) and (\ref{Eq:str:cb}) yields (\ref{Eq:str:wtsc1b}).


\noindent \underline{Case 3}: $b_{m,K}(s)-\tilde{z}_{\max}(s) \leq  x < b_{m,K}(s)-\tilde{z}_{K-1}(s)$ \\
\noindent
This case is the same as Case 2, with $K$ in place of $k$, and $\tilde{z}_{\max}(s)$ in place of $\tilde{z}_k(s)$.

\noindent \underline{Case 4}: $0 \leq x < b_{m,K}(s)-\tilde{z}_{\max}(s)$ \\
\noindent
Let $z \in \bigl[\max\{0,d-x\},\tilde{z}_{\max}(s)\bigr)$ be arbitrary.
$\tilde{z}_{\max}(s) \geq d$ by assumption, so this case is vacuous if $b_{m,K}=d$. Thus, we have
$b_{m,K}(s)=\inf\bigl\{b \bigm| \tilde{g}_m^{\prime+}(b,s) \geq -\tilde{c}_K(s) \bigr\}$, which, in combination with $x+z < x+\tilde{z}_{\max} <  b_{m,K}(s)$, implies:
\begin{eqnarray} \label{Eq:str:f00}
\tilde{g}_m^{\prime+}\left(x+z,s\right) < -\tilde{c}_{K}(s)~.
\end{eqnarray}
Additionally, $z<\tilde{z}_{\max}(s)$ implies:
\begin{eqnarray}\label{Eq:str:f01}
c^{\prime+}(z,s) \leq \tilde{c}_{K}(s)~.
\end{eqnarray}
Summing \eqref{Eq:str:f00} and \eqref{Eq:str:f01} yields $c^{\prime+}(z,s)+\tilde{g}_m^{\prime+}\left(x+z,s\right)<0$ for all $z \in \bigl[\max\{0,d-x\},\tilde{z}_{\max}(s)\bigr)$, which implies $z_m^*(x,s)=\tilde{z}_{\max}(s)$.
\qedsymbol 

\subsection{Proof of Theorem \ref{Th:pwl_calc}}
We proceed in a manner similar to \cite{golabi85}, incorporating the per slot peak power constraints and the relaxing the linear ordering costs to piecewise-linear convex ordering costs.
Before proving Theorem \ref{Th:one:finite}, we state and prove two lemmas. Let
$\bar{{\boldsymbol{\pi}}}$ be a strategy that prescribes
transmitting according to \eqref{Eq:pwl:opt_structure}. 

\begin{lemma} \label{Le:app:gol_1}
If $\bar{\boldsymbol{{\boldsymbol{\pi}}}}$ is optimal for periods
$m-1,m-2,\ldots,1$, then
\begin{eqnarray} \label{Eq:app:lem_1}
\alpha \cdot \Expectation \left[V_{l-1}\bigl((r-1)\cdot d +
\eta,S\bigr)-V_{l-1}\bigl((r-1) \cdot d,S\bigr) \right] \geq -\eta
\cdot \bigl(\tilde{\gamma}_{l,r+1}+h\bigr),
\end{eqnarray}
\noindent for all $(l,r,\eta) \in {\cal{Z}}_1:=\left\{(l,r,\eta)\in \Nat \times \Nat \times [0,d]: 1 \leq l \leq m, 1 \leq r \leq l \right\}$.
\end{lemma}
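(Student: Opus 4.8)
The plan is to reduce the claim to a one-sided bound on the marginal (right-derivative) of the discounted expected cost-to-go, and then to prove that bound by induction on the number of periods $l$, matching terms against the recursion \eqref{Eq:high:iii}. Write $\psi_{l-1}(x) := \alpha \cdot \Expectation\left[V_{l-1}(x,S)\right]$. Since $V_{l-1}(\cdot,s)$ is convex for every $s$ (Theorem \ref{Th:pwl:fin}), $\psi_{l-1}$ is convex, so its difference quotient dominates its right derivative; hence $\psi_{l-1}\bigl((r-1)d+\eta\bigr) - \psi_{l-1}\bigl((r-1)d\bigr) \geq \eta \cdot \psi_{l-1}^{\prime +}\bigl((r-1)d\bigr)$ for every $\eta \geq 0$. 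Because the left-hand side of \eqref{Eq:app:lem_1} is exactly $\psi_{l-1}\bigl((r-1)d+\eta\bigr) - \psi_{l-1}\bigl((r-1)d\bigr)$, it therefore suffices to establish the derivative bound $\psi_{l-1}^{\prime +}\bigl((r-1)d\bigr) \geq -\bigl(\tilde{\gamma}_{l,r+1}+h\bigr)$; multiplying by $\eta \geq 0$ then yields \eqref{Eq:app:lem_1} for all $\eta \in [0,d]$ at once.

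First I would dispose of the trivial cases using part (ii) of Theorem \ref{Th:pwl_calc}: when $l=1$ (so $r=1$), or more generally when $r=l$, we have $\tilde{\gamma}_{l,r+1}=0$, and since $V_0 \equiv 0$ and the marginal cost-to-go at a buffer level that already covers the remaining horizon is a sum of nonnegative holding-cost increments, $\psi_{l-1}^{\prime +}\bigl((r-1)d\bigr) \geq 0 \geq -h$, as required. For the main range $1 \leq r \leq l-1$ (so that the sum form \eqref{Eq:high:iii} applies to $\tilde{\gamma}_{l,r+1}$ with $2 \leq r+1 \leq l$), I would compute the per-state marginal value $v_{l-1}(x,s) := V_{l-1}^{\prime +}(x,s)$ at $x=(r-1)d$ by an envelope argument applied to the known finite generalized base-stock form \eqref{Eq:pwl:opt_structure} of the optimal action for period $l-1$ (available because $l-1 \leq m-1$, so $\bar{\boldsymbol{\pi}}$ is optimal there by hypothesis). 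There are three regimes: if no packets are transmitted, $v_{l-1}\bigl((r-1)d,s\bigr) = h + \psi_{l-2}^{\prime +}\bigl((r-2)d\bigr)$; if transmission fills the buffer to a critical number using segment $k$ with the power constraint slack, the only $x$-dependence is through $c(\cdot,s)$, so $v_{l-1}\bigl((r-1)d,s\bigr) = -\tilde{c}_k(s)$; and if the power constraint binds, $v_{l-1}\bigl((r-1)d,s\bigr) = h + \psi_{l-2}^{\prime +}\bigl((r-2+\tilde{L}_{\max}(s))d\bigr)$.

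Next I would split the expectation over channel states according to which regime governs the optimal action at buffer level $(r-1)d$, using the equivalence (from the definition $b_{l-1,k}(s)=jd \iff \tilde{\gamma}_{l-1,j+1}\leq\tilde{c}_k(s)<\tilde{\gamma}_{l-1,j}$) between a buffer-versus-critical-number comparison and a marginal-cost-versus-threshold comparison. This makes the partition of states coincide with the conditions indexing the three groups of sums in \eqref{Eq:high:iii}. In the no-transmission group I would invoke the induction hypothesis at $(l-1,\,r-1,\,\eta)$ in the limit $\eta \downarrow 0$, giving $\psi_{l-2}^{\prime +}\bigl((r-2)d\bigr) \geq -\bigl(\tilde{\gamma}_{l-1,r}+h\bigr)$ and hence $v_{l-1}\bigl((r-1)d,s\bigr) \geq -\tilde{\gamma}_{l-1,r}$, matching the first sum; in the power-constrained group the hypothesis at $\bigl(l-1,\,r-1+\tilde{L}_{\max}(s),\,\eta\bigr)$ gives $v_{l-1}\bigl((r-1)d,s\bigr) \geq -\tilde{\gamma}_{l-1,\,r+\tilde{L}_{\max}(s)}$, matching the last sum; and in the slack-transmission group the identity $v_{l-1}\bigl((r-1)d,s\bigr) = -\tilde{c}_k(s)$ matches the middle double sum exactly. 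Taking the probability-weighted sum and multiplying by $\alpha$ reproduces $-\bigl(\tilde{\gamma}_{l,r+1}+h\bigr)$ as a lower bound for $\psi_{l-1}^{\prime +}\bigl((r-1)d\bigr)$, closing the induction.

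The main obstacle is the bookkeeping in the previous paragraph: I must verify that for each channel state $s$ the regime actually governing the optimal action at the specific level $(r-1)d$ is precisely the one whose threshold condition places $s$ in the corresponding sum of \eqref{Eq:high:iii}, with the correct segment index $k$ and the correct shifted argument of $\psi_{l-2}^{\prime +}$, so that the induction hypothesis is applied at the right period and level. Care is also needed at the breakpoints, where $(r-1)d$ is itself a kink of the convex piecewise-linear $V_{l-1}(\cdot,s)$; working consistently with right derivatives (which reflect the optimal action just above $(r-1)d$) keeps the envelope computation unambiguous. Finally, the technical condition that all segment boundaries $\tilde{z}_k(s)$ and $\tilde{z}_{\max}(s)$ are integer multiples of $d$ is exactly what guarantees that every shifted argument remains of the form (integer)$\,\cdot d$, so the induction hypothesis is always applicable to a triple in $\mathcal{Z}_1$.
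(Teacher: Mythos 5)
Your overall architecture is the paper's own: induction on $l$, a split of the expectation over channel states according to the regime of the optimal action, and a term-by-term match against the recursion \eqref{Eq:high:iii}; your reformulation via right derivatives (using convexity of $V_{l-1}(\cdot,s)$ so that the difference bound for all $\eta\in[0,d]$ is equivalent to the bound $\psi_{l-1}^{\prime+}\bigl((r-1)d\bigr)\geq-(\tilde{\gamma}_{l,r+1}+h)$) is a legitimate, and arguably cleaner, equivalent. The genuine gap is that your enumeration of regimes is incomplete, so the induction step as sketched does not close. The finite generalized base-stock policy \eqref{Eq:pwl:opt_structure} has a fourth family of regimes that your list of three omits: the optimal transmission can equal an \emph{interior kink quantity} $\tilde{z}_k(s)$, $k\in\{0,1,\ldots,K-1\}$ (the first line of \eqref{Eq:pwl:opt_structure} with a positive $\tilde{z}_{k-1}(s)$), i.e., the scheduler uses all of segments $0,\ldots,k$ and none of segment $k+1$, with the power constraint slack. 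In that regime the action is locally constant in $x$ (as in your max-power case, but at an interior kink), so the envelope computation gives $v_{l-1}\bigl((r-1)d,s\bigr)=h+\psi_{l-2}^{\prime+}\bigl((r-2+\tilde{L}_k(s))d\bigr)$, which is neither $-\tilde{c}_k(s)$ nor either of your other two formulas; bounding it requires invoking the induction hypothesis at the shifted triple $\bigl(l-1,\,r-1+\tilde{L}_k(s),\,\eta\bigr)$, which yields $v_{l-1}\bigl((r-1)d,s\bigr)\geq-\tilde{\gamma}_{l-1,\,r+\tilde{L}_k(s)}$.

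Those are precisely the terms $\sum_{s:\,\tilde{c}_k(s)<\tilde{\gamma}_{l-1,r+\tilde{L}_k(s)}\leq\tilde{c}_{k+1}(s)}p(s)\,\tilde{\gamma}_{l-1,r+\tilde{L}_k(s)}$ appearing as the \emph{second} half of the double sum in \eqref{Eq:high:iii}. Your matching asserts that the middle double sum is reproduced ``exactly'' by the identity $v_{l-1}=-\tilde{c}_k(s)$, but that identity only generates the $p(s)\,\tilde{c}_k(s)$ half; the channel states whose optimal action sits at an interior kink belong to none of your three regimes, so your partition of ${\cal S}$ is not exhaustive and the probability-weighted sum you form is not $-(\tilde{\gamma}_{l,r+1}+h)$. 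The fix is mechanical: add the kink regime and treat it exactly as you treat the max-power regime (induction hypothesis at the shifted level), which is what the paper does with its sums over $\bigl\{s:\,b_{t,k+1}(s)\leq(r-1+\tilde{L}_k(s))\cdot d<b_{t,k}(s)\bigr\}$. When you do this, also note that the shifted index $r-1+\tilde{L}_k(s)$ may exceed $l-1$, taking the triple outside ${\cal Z}_1$; there $\tilde{\gamma}_{l-1,r+\tilde{L}_k(s)}=0$ by rule (ii) of Theorem \ref{Th:pwl_calc}, and the needed inequality follows from monotonicity of the value function beyond full-horizon coverage, just as in your $r=l$ case.
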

\medskip

\begin{proof}
We proceed by induction on $l$.
\newpage
\noindent \underline{Base Case}: $l=1$ \\
$l=1$ implies $r=1$, so we have:
\begin{eqnarray*}
\begin{array}{l}
\alpha \cdot \Expectation \left[V_{l-1}\bigl((r-1)\cdot d + \eta,S\bigr)-V_{l-1}\bigl((r-1) \cdot d,S\bigr) \right] \\
=~ \alpha \cdot \Expectation \left[V_{0}\bigl(\eta,S\bigr)-V_{0}\bigl(0,S\bigr) \right] \\
=~ 0 \\
\geq ~ -\eta \cdot h \\
=~ -\eta \cdot \left(\tilde{\gamma}_{1,2} + h \right)~,
\end{array}
\end{eqnarray*}
\noindent and we conclude (\ref{Eq:app:lem_1}) holds for $l=1$.
\medskip

\noindent \underline{Induction Step}\\
Assume (\ref{Eq:app:lem_1}) is true for $l=2,3,\ldots,t$ and all
$r$ and $\eta$ such that $(l,r,\eta)\in {\cal{Z}}_1$. We show
(\ref{Eq:app:lem_1}) is true for $l=t+1$ by letting $r$ and $\eta$
be arbitrary such that $(t+1,r,\eta)\in {\cal{Z}}_1$. Note that
$(t+1,r,\eta)\in {\cal{Z}}_1$ implies $t \leq m-1$, so
$\bar{{\boldsymbol{\pi}}}$ is optimal at time $t$, and we have:
\begin{align*}%
&\alpha \cdot \Expectation \bigl[V_{t}\bigl((r-1)\cdot d +
\eta,S\bigr)-V_{t}\bigl((r-1) \cdot d,S\bigr) \bigr] \\
&= \sum\limits_{\bigl\{s:~b_{t,0}(s)\leq(r-1)\cdot d\bigr\}}\alpha
\cdot p(s) \cdot \Bigl[h \cdot \eta + \alpha \cdot \Expectation
\bigl[V_{t-1}\bigl((r-2)\cdot d + \eta,S
\bigr) - V_{t-1}\bigl((r-2)\cdot d,S \bigr) \bigr]\Bigr] \\
&+\sum_{k=0}^{K-1} \left\{
\begin{array}{l}
~~~~~~~~~~~~~~~~~~~~\smashoperator{\sum\limits_{\bigl\{s:\bigl(r-1+\tilde{L}_{k-1}(s)\bigr)\cdot d<
b_{t,k}(s)\leq\bigl(r-1+\tilde{L}_k(s)\bigr)\cdot d\bigr\}}}~~~~~-~\alpha \cdot p(s)
\cdot
\Bigl(\eta \cdot \tilde{c}_k(s)\Bigr) \\
+~~~~~~~~~~~~~~~~~\smashoperator{\sum\limits_{\left\{s:~b_{t,k+1}(s) \leq \bigl(r-1+\tilde{L}_{k}(s)\bigr) \cdot d < b_{t,k}(s) \right\}}}~~~~~
\alpha \cdot p(s) \cdot \left[h \cdot \eta + \alpha \cdot
\Expectation \Biggl[
\begin{array}{l}
V_{t-1}\Bigl(\bigl(r-2+\tilde{L}_k(s)\bigr)\cdot d + \eta,S \Bigr) \\
-{} V_{t-1}\Bigl(\bigl(r-2+\tilde{L}_k(s)\bigr)\cdot d,S \Bigr)
\end{array}
\Biggr]\right]
\end{array}
\right\} \\
&+{}~~~~~~~~~~~~~~~~~~~~~~~~
\smashoperator{\sum\limits_{\bigl\{s:\bigl(r-1+\tilde{L}_{K-1}(s)\bigr)\cdot d<
b_{t,K}(s)\leq\bigl(r-1+\tilde{L}_{\max}(s)\bigr)\cdot d\bigr\}}}~~~~~~~~-~\alpha \cdot p(s)
\cdot
\Bigl(\eta \cdot \tilde{c}_K(s)\Bigr) \\
&+{}~~~~~~~~~~~~~~~
\smashoperator{\sum\limits_{\bigl\{s:~b_{t,K}(s)>\bigl(r-1+\tilde{L}_{\max}(s)\bigr)\cdot d\bigr\}}}~~~~~~
\alpha \cdot p(s) \cdot \left[h \cdot \eta + \alpha \cdot
\Expectation \Biggl[
\begin{array}{l}
V_{t-1}\Bigl(\bigl(r-2+\tilde{L}_{\max}(s)\bigr)\cdot d + \eta,S \Bigr) \\
-{} V_{t-1}\Bigl(\bigl(r-2+\tilde{L}_{\max}(s)\bigr)\cdot d,S \Bigr)
\end{array}
\Biggr]\right] 
\end{align*}
\begin{align*}
&\geq \sum\limits_{\bigl\{s:~b_{t,0}(s)\leq(r-1)\cdot d\bigr\}}\alpha
\cdot p(s) \cdot \Bigl[-\eta \cdot \tilde{\gamma}_{t,r}\Bigr] \\
&~~~~~~+\sum_{k=0}^{K-1} \left\{
\begin{array}{l}
~~~~~~~~~~~~~~~~~~~~\smashoperator{\sum\limits_{\bigl\{s:\bigl(r-1+\tilde{L}_{k-1}(s)\bigr)\cdot d<
b_{t,k}(s)\leq\bigl(r-1+\tilde{L}_k(s)\bigr)\cdot d\bigr\}}}~~~~~-~\alpha \cdot p(s)
\cdot
\Bigl(\eta \cdot \tilde{c}_k(s)\Bigr) \\
+~~~~~~~~~~~~~~~~~\smashoperator{\sum\limits_{\left\{s:~b_{t,k+1}(s) \leq \bigl(r-1+\tilde{L}_{k}(s)\bigr) \cdot d < b_{t,k}(s) \right\}}}~~~~~
\alpha \cdot p(s) \cdot \left[-\eta \cdot \tilde{\gamma}_{t,r+\tilde{L}_k(s)}\right]
\end{array}
\right\} \\
&~~~~~~+{}~~~~~~~~~~~~~~~~~~~~~~~~
\smashoperator{\sum\limits_{\bigl\{s:\bigl(r-1+\tilde{L}_{K-1}(s)\bigr)\cdot d<
b_{t,K}(s)\leq\bigl(r-1+\tilde{L}_{\max}(s)\bigr)\cdot d\bigr\}}}~~~~~~~~-~\alpha \cdot p(s)
\cdot
\Bigl(\eta \cdot \tilde{c}_K(s)\Bigr) \\
&~~~~~~+{}~~~~~~~~~~~~~~~
\smashoperator{\sum\limits_{\bigl\{s:~b_{t,K}(s)>\bigl(r-1+\tilde{L}_{\max}(s)\bigr)\cdot d\bigr\}}}~~~~~~
\alpha \cdot p(s) \cdot \left[-\eta \cdot \tilde{\gamma}_{t,r+\tilde{L}_{\max}(s)}\right] \\
&~~~ \\
&=~~~~ -\alpha \cdot \eta \cdot
\left\{
\begin{array}{l}
    \sum\limits_{s:~\tilde{c}_0(s)\geq \tilde{\gamma}_{t,r}} p(s)\cdot\tilde{\gamma}_{t,r} \\
    ~~~~~+
    \sum\limits_{k=0}^{K-1}
    \left\{
    \begin{array}{l}
    \sum\limits_{s:~\tilde{\gamma}_{t,r+\tilde{L}_k(s)}\leq \tilde{c}_k(s)< \tilde{\gamma}_{t,r+\tilde{L}_{k-1}(s)}} p(s)\cdot \tilde{c}_k(s) \\
~~~~~+ \sum\limits_{s:~\tilde{c}_k(s) < \tilde{\gamma}_{t,r+\tilde{L}_k(s)} \leq \tilde{c}_{k+1}(s)}
p(s) \cdot \tilde{\gamma}_{t,r+\tilde{L}_k(s)}
\end{array}
\right\}
\\
~~~~~+ \sum\limits_{s:~\tilde{\gamma}_{t,r+\tilde{L}_{\max}(s)} \leq \tilde{c}_K(s) < \tilde{\gamma}_{t,r+\tilde{L}_{K-1}(s)}}
p(s) \cdot \tilde{c}_K(s) \\
~~~~~+ \sum\limits_{s:~\tilde{c}_K(s) < \tilde{\gamma}_{t,r+\tilde{L}_{\max}(s)}}
p(s) \cdot \tilde{\gamma}_{t,r+\tilde{L}_{\max}(s)} \\
\end{array}
\right\} \\
&~~~ \\
&=~~~~ -\eta \cdot \left(\tilde{\gamma}_{t+1,r+1} + h \right)~,
\end{align*}
\noindent where the inequality follows from the induction
hypothesis, and the penultimate equality follows from the definition $b_{n,k}:=j \cdot d$, if $\tilde{\gamma}_{n,j+1} \leq \tilde{c}_k(s) < \tilde{\gamma}_{n,j}$.
\noindent This concludes the induction step, and the proof of Lemma \ref{Le:app:gol_1}.
\end{proof}
\medskip

\medskip

\begin{lemma} \label{Le:app:gol_2}
If $\bar{{\boldsymbol{\pi}}}$ is optimal for periods
$m-1,m-2,\ldots,1$, then
\begin{eqnarray} \label{Eq:app:lem_2}
\alpha \cdot \Expectation \left[V_{l-1}\bigl((r-1)\cdot d -
\eta,S\bigr)-V_{l-1}\bigl((r-1) \cdot d,S\bigr) \right] \geq \eta
\cdot \bigl(\tilde{\gamma}_{l,r}+h\bigr),
\end{eqnarray}
\noindent for all $(l,r,\eta) \in {\cal{Z}}_2:=\left\{(l,r,\eta)\in \Nat \times
\Nat \times [0,d]: 2 \leq l \leq m, 2 \leq r \leq l \right\}$.
\end{lemma}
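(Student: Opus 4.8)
The plan is to prove Lemma \ref{Le:app:gol_2} by backward induction on $l$, in close parallel to the proof of Lemma \ref{Le:app:gol_1}, exploiting the technical assumption that every breakpoint $\tilde{z}_k(s)$ and the cap $\tilde{z}_{\max}(s)$ is an integer multiple of $d$. Under the conjectured policy $\bar{\boldsymbol{\pi}}$ of \eqref{Eq:pwl:opt_structure}, this forces every reachable post-transmission buffer level to be either an integer multiple of $d$ (when a critical number is reached) or such a multiple shifted down by $\eta$ (when the power cap saturates from the lower start). The inequality \eqref{Eq:app:lem_2} is the ``downward'' marginal-cost bound complementing the ``upward'' bound of Lemma \ref{Le:app:gol_1}, and the two together pin down the slopes of $V_{l-1}$ by the thresholds $\tilde{\gamma}_{l,\cdot}$.

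For the base case $l=2$ (which forces $r=2$), the relevant quantity is $\alpha\cdot\Expectation\bigl[V_1(d-\eta,S)-V_1(d,S)\bigr]$. Using the closed form $V_1(x,s)=c(\max\{0,d-x\},s)+h(\max\{0,x-d\})$ from \eqref{Eq:str:v1} together with $h(0)=0$, this equals $\alpha\cdot\Expectation[c(\eta,S)]$. Since $c(\cdot,s)$ is convex with $c(0,s)=0$ and first-segment slope $\tilde{c}_0(s)$, we have $c(\eta,s)\geq \eta\,\tilde{c}_0(s)$, so the quantity is at least $\alpha\eta\,\Expectation[\tilde{c}_0(S)]$. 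Evaluating the recursion \eqref{Eq:high:iii} at $(n,j)=(2,2)$, using $\tilde{\gamma}_{1,1}=\infty$ and $\tilde{\gamma}_{1,j}=0$ for $j\geq 2$, collapses all but a single term and gives $\tilde{\gamma}_{2,2}+h=\alpha\,\Expectation[\tilde{c}_0(S)]$, which establishes \eqref{Eq:app:lem_2} for $l=2$ with no appeal to Lemma \ref{Le:app:gol_1}.

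For the induction step, assuming \eqref{Eq:app:lem_2} for $l=2,\ldots,t$, I would fix $(t+1,r,\eta)\in{\cal Z}_2$ and expand $\alpha\cdot\Expectation[V_t((r-1)d-\eta,S)-V_t((r-1)d,S)]$ by conditioning on the channel state $s$ and substituting the optimal action \eqref{Eq:pwl:opt_structure} at the two starting levels $(r-1)d$ and $(r-1)d-\eta$. Because the optimal post-transmission level $y^*(\cdot,s)$ is nondecreasing in the starting level (the finite generalized base-stock structure of Theorem \ref{Th:pwl:fin}), the level from the lower start never exceeds that from the higher start; consequently the two next-period buffer levels either coincide (a common critical number $b_{t,k}(s)$ is reached, and the only cost gap is the $\eta$ extra packets charged at the marginal slope $\tilde{c}_k(s)$, contributing an $\alpha\,p(s)\,\eta\,\tilde{c}_k(s)$ term) or differ by exactly $\eta$ (both starts trigger no transmission or both saturate the power cap, so the residual future difference is itself an instance of \eqref{Eq:app:lem_2} at $l=t$ with a shifted second index). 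Applying the induction hypothesis to the latter terms and collecting, I would match the resulting partition of ${\cal S}$ term-by-term against the right-hand side of \eqref{Eq:high:iii} at $(n,j)=(t+1,r)$, yielding exactly $\eta\cdot(\tilde{\gamma}_{t+1,r}+h)$.

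The main obstacle is the bookkeeping in the induction step, and in particular a one-index shift relative to Lemma \ref{Le:app:gol_1}. Because $\eta\in(0,d]$ is subtracted from an integer multiple of $d$, a threshold $b_{t,0}(s)=j\,d$ satisfies $b_{t,0}(s)\leq (r-1)d-\eta$ iff $j\leq r-2$, rather than $j\leq r-1$ as in the upward case, and the analogous shift propagates through every segment. This is precisely what turns the index $r+1$ of Lemma \ref{Le:app:gol_1} into the index $r$ here, and it converts the buffer-level partition into the cost-threshold partition governed by comparisons of $\tilde{c}_k(s)$ with the $\tilde{\gamma}_{t,\cdot}$ that appears in \eqref{Eq:high:iii}. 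The one delicate configuration is when the target is reachable from $(r-1)d$ using exactly the full power budget but is $\eta$ short from $(r-1)d-\eta$; there ``reaching the target'' and ``saturating the cap'' coincide, so this case reduces to the power-saturated case and no genuinely mixed case survives. Verifying that the collected terms reproduce \eqref{Eq:high:iii} is then routine but lengthy, exactly as in the proof of Lemma \ref{Le:app:gol_1}.
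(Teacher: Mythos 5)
Your proposal is correct and follows essentially the same route as the paper's proof: induction on $l$ upward from $l=2$ (despite your label ``backward''), with the base case computed from the closed form of $V_1$ and the collapse of the recursion \eqref{Eq:high:iii} at $(n,j)=(2,2)$, and the induction step partitioning the channel states according to whether the two trajectories' post-transmission levels coincide (contributing the marginal term $\alpha\,p(s)\,\eta\,\tilde{c}_k(s)$) or differ by exactly $\eta$ (induction hypothesis at a shifted index), then matching the collected terms against \eqref{Eq:high:iii} at $(t+1,r)$. Your remarks on the one-index shift relative to Lemma \ref{Le:app:gol_1} and on the degeneration of the mixed target/power-cap configuration under the integrality condition are precisely the bookkeeping the paper carries out.
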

\medskip

\begin{proof}
We proceed by induction on $l$.
\medskip

\noindent \underline{Base Case}: $l=2$ \\
$l=2$ implies $r=2$, so we have:
\begin{align*}
&\alpha \cdot \Expectation \left[V_{l-1}\bigl((r-1)\cdot d - \eta,S\bigr)-V_{l-1}\bigl((r-1) \cdot d,S\bigr) \right] \\
&~~~~~~~~~~~~~~~~~~~~~~~~~~~~=~ \alpha \cdot \Expectation \left[V_{1}\bigl(d - \eta,S\bigr)-V_{1}\bigl(d,S\bigr) \right] \\
&~~~~~~~~~~~~~~~~~~~~~~~~~~~~=~ \alpha \cdot \Expectation \bigl[c(\eta,S) \bigr] \\
&~~~~~~~~~~~~~~~~~~~~~~~~~~~~=~ \eta \cdot
\left(\tilde{\gamma}_{2,2}+h \right),
\end{align*}
\noindent where the last equality follows from ${\gamma}_{2,2}=-h+
\alpha \cdot \Expectation [\tilde{c}_0(S)]$, and the fact that $\eta \leq \tilde{z}_0(s)$ for every $s \in {\cal S}$. So (\ref{Eq:app:lem_2}) holds with equality
for $l=2$.
\medskip

\noindent \underline{Induction Step} \\
Assume (\ref{Eq:app:lem_2}) is true for $l=2,3,\ldots,t$ and all
$r$ and $\eta$ such that $(l,r,\eta)\in {\cal{Z}}_2$. We show
(\ref{Eq:app:lem_2}) is true for $l=t+1$ by letting $r$ and $\eta$
be arbitrary such that $(t+1,r,\eta)\in {\cal{Z}}_2$. Note that
$(t+1,r,\eta)\in {\cal{Z}}_2$ implies $t \leq m-1$, so
$\bar{{\boldsymbol{\pi}}}$ is optimal at time $t$, and we have:
\begin{align*}
&\alpha \cdot \Expectation \bigl[V_{t}\bigl((r-1)\cdot d -
\eta,S\bigr)-V_{t}\bigl((r-1) \cdot d,S\bigr) \bigr] \\
&~~~ \\
&= \sum\limits_{\bigl\{s:~b_{t,0}(s)\leq(r-2)\cdot d\bigr\}}\alpha
\cdot p(s) \cdot \Bigl[-\eta \cdot h +
\alpha \cdot \Expectation \left[V_{t-1}\bigl((r-2)\cdot d -
\eta,S\bigr)-V_{t-1}\bigl((r-2) \cdot d,S\bigr) \right]
\Bigr] \\
&+\sum_{k=0}^{K-1} \left\{
\begin{array}{l}
~~~~~~~~~~~~~~~~~~~~\smashoperator{\sum\limits_{\bigl\{s:\bigl(r-2+\tilde{L}_{k-1}(s)\bigr)\cdot d<
b_{t,k}(s)\leq\bigl(r-2+\tilde{L}_k(s)\bigr)\cdot d\bigr\}}}~~~~~~\alpha \cdot p(s)
\cdot
\Bigl(\eta \cdot \tilde{c}_k(s)\Bigr) \\
+~~~~~~~~~~~~~~~~~\smashoperator{\sum\limits_{\left\{s:~b_{t,k+1}(s) \leq \bigl(r-2+\tilde{L}_{k}(s)\bigr) \cdot d < b_{t,k}(s) \right\}}}~~~~~
\alpha \cdot p(s) \cdot \left[-\eta \cdot h +
\alpha \cdot \Expectation \left[
\begin{array}{l}
V_{t-1}\Bigl(\bigl(r-2+\tilde{L}_k(s)\bigr)\cdot d -
\eta,S\Bigr) \\
-V_{t-1}\Bigl(\bigl(r-2+\tilde{L}_k(s)\bigr) \cdot d,S\Bigr)
\end{array}
\right]
\right]
\end{array}
\right\} \\
&+{}~~~~~~~~~~~~~~~~~~~~~~~~
\smashoperator{\sum\limits_{\bigl\{s:\bigl(r-2+\tilde{L}_{K-1}(s)\bigr)\cdot d<
b_{t,K}(s)\leq\bigl(r-2+\tilde{L}_{\max}(s)\bigr)\cdot d\bigr\}}}~~~~~~~~~\alpha \cdot p(s)
\cdot
\Bigl(\eta \cdot \tilde{c}_K(s)\Bigr) \\
&+{}~~~~~~~~~~~~~~~
\smashoperator{\sum\limits_{\bigl\{s:~b_{t,K}(s)>\bigl(r-2+\tilde{L}_{\max}(s)\bigr)\cdot d\bigr\}}}~~~~~~
\alpha \cdot p(s) \cdot \left[-\eta \cdot h +
\alpha \cdot \Expectation \left[
\begin{array}{l}
V_{t-1}\Bigl(\bigl(r-2+\tilde{L}_{\max}(s)\bigr)\cdot d -
\eta,S\Bigr) \\
-V_{t-1}\Bigl(\bigl(r-2+\tilde{L}_{\max}(s)\bigr) \cdot d,S\Bigr)
\end{array}
\right]\right] \\
&~~~ \\
&\geq \sum\limits_{\bigl\{s:~b_{t,0}(s)\leq(r-2)\cdot d\bigr\}}\alpha
\cdot p(s) \cdot \Bigl[\eta \cdot \tilde{\gamma}_{t,r-1}\Bigr] \\
&+\sum_{k=0}^{K-1} \left\{
\begin{array}{l}
~~~~~~~~~~~~~~~~~~~~\smashoperator{\sum\limits_{\bigl\{s:\bigl(r-2+\tilde{L}_{k-1}(s)\bigr)\cdot d<
b_{t,k}(s)\leq\bigl(r-2+\tilde{L}_k(s)\bigr)\cdot d\bigr\}}}~~~~~~\alpha \cdot p(s)
\cdot
\Bigl(\eta \cdot \tilde{c}_k(s)\Bigr) \\
+~~~~~~~~~~~~~~~~~\smashoperator{\sum\limits_{\left\{s:~b_{t,k+1}(s) \leq \bigl(r-2+\tilde{L}_{k}(s)\bigr) \cdot d < b_{t,k}(s) \right\}}}~~~~~
\alpha \cdot p(s) \cdot \left[\eta \cdot \tilde{\gamma}_{t,r-1+\tilde{L}_k(s)}\right]
\end{array}
\right\} \\
&+{}~~~~~~~~~~~~~~~~~~~~~~~~
\smashoperator{\sum\limits_{\bigl\{s:\bigl(r-2+\tilde{L}_{K-1}(s)\bigr)\cdot d<
b_{t,K}(s)\leq\bigl(r-2+\tilde{L}_{\max}(s)\bigr)\cdot d\bigr\}}}~~~~~~~~~\alpha \cdot p(s)
\cdot
\Bigl(\eta \cdot \tilde{c}_K(s)\Bigr) \\
&+{}~~~~~~~~~~~~~~~
\smashoperator{\sum\limits_{\bigl\{s:~b_{t,K}(s)>\bigl(r-2+\tilde{L}_{\max}(s)\bigr)\cdot d\bigr\}}}~~~~~~
\alpha \cdot p(s) \cdot \left[\eta \cdot \tilde{\gamma}_{t,r-1+\tilde{L}_{\max}(s)}\right] \\
\end{align*}
%
\begin{align*}
&=~~~~ \alpha \cdot \eta \cdot
\left\{
\begin{array}{l}
    \sum\limits_{s:~\tilde{c}_0(s)\geq \tilde{\gamma}_{t,r-1}} p(s)\cdot\tilde{\gamma}_{t,r-1} \\
    ~~~~~+
    \sum\limits_{k=0}^{K-1}
    \left\{
    \begin{array}{l}
    \sum\limits_{s:~\tilde{\gamma}_{t,r-1+\tilde{L}_k(s)}\leq \tilde{c}_k(s)< \tilde{\gamma}_{t,r-1+\tilde{L}_{k-1}(s)}} p(s)\cdot \tilde{c}_k(s) \\
~~~~~+ \sum\limits_{s:~\tilde{c}_k(s) < \tilde{\gamma}_{t,r-1+\tilde{L}_k(s)} \leq \tilde{c}_{k+1}(s)}
p(s) \cdot \tilde{\gamma}_{t,r-1+\tilde{L}_k(s)}
\end{array}
\right\}
\\
~~~~~+ \sum\limits_{s:~\tilde{\gamma}_{t,r-1+\tilde{L}_{\max}(s)} \leq \tilde{c}_K(s) < \tilde{\gamma}_{t,r-1+\tilde{L}_{K-1}(s)}}
p(s) \cdot \tilde{c}_K(s) \\
~~~~~+ \sum\limits_{s:~\tilde{c}_K(s) < \tilde{\gamma}_{t,r-1+\tilde{L}_{\max}(s)}}
p(s) \cdot \tilde{\gamma}_{t,r-1+\tilde{L}_{\max}(s)} \\
\end{array}
\right\} \\
&~~~ \\
&=~~~~ \eta \cdot \left(\tilde{\gamma}_{t+1,r} + h \right)~,
\end{align*}
\noindent where the inequality follows from the induction
hypothesis, and the penultimate equality again follows from the definition of $b_{n,k}(s)$.
\noindent This concludes the induction step, and the proof of Lemma \ref{Le:app:gol_2}.
\end{proof}
\medskip

We now return to the proof of Theorem \ref{Th:one:finite}. We first show
by induction that $V_n^{\bar{{\boldsymbol{\pi}}}}(x,s)=V_n(x,s),
\forall n\in \{1,2,\ldots,N\}$, $\forall s \in {\cal{S}},$ and $~\forall x\in \left\{0,d,2d,3d,\ldots \right\}$.   \\
\medskip

\noindent \underline{Base Case}: $n=1$ \\
With one slot remaining, we have:
\begin{eqnarray*}
V_1(x,s)
&=&
\min\limits_{\bigl\{\max(0,d-x) \leq z_1 \leq \tilde{z}_{\max}(s)\bigr\}}
\left\{
c(z_1,s) + h(x+z_1-d)
\right\} \nonumber \\
&=& c\Bigl(\max\{0,d-x\},s\Bigr) + h \Bigl(\max\{0,(x-d)\}
\Bigr)~,
\end{eqnarray*}
where the minimum is achieved by $z_1=\max\{0,d-x\}$.
$\tilde{\gamma}_{1,1}=\infty$ and $\tilde{\gamma}_{1,2}=0$, so $b_{1,k}(s) = d$ for all $s \in {\cal S}$ and $k \in \{0,1,\ldots,K\}$. Thus,
according to \eqref{Eq:pwl:opt_structure},
${\bar{z}}_1(x,s)$
is also equal to $\max\{0,d-x\}$, the optimal
amount.
\medskip

\noindent \underline{Induction Step} \\
Assume that for
$n=\{1,2,\ldots,m-1\},~V_n^{\bar{{\boldsymbol{\pi}}}}(x,s)=V_n(x,s),~\forall x\in \left\{0,d,2d,3d,\ldots \right\},~\forall s \in {\cal{S}}$. We show this is also true
for $n=m$ by considering first any strategy that transmits more
than $\bar{{\boldsymbol{\pi}}}$ at time $m$, and then any strategy
that transmits less than $\bar{{\boldsymbol{\pi}}}$ at time $m$.
Let $s \in {\cal{S}}$ be arbitrary.
 with $\tilde{\gamma}_{m,j_k+1}\leq \tilde{c}_k(s) <
\tilde{\gamma}_{m,j_k}$ so that $\bar{{\boldsymbol{\pi}}}$ prescribes
$b_{m,k}(s) = j_k \cdot d$ for $k \in \{0,1,\ldots,K\}$. Let ${\boldsymbol{\pi}}^q$ be a strategy
that at time $m$ transmits enough to satisfy the demands of slots
$m,m-1,m-2,\ldots,q+1,$ and $q$, and transmits optimally at times $m-1$,$m-2,\ldots,1$.
\medskip

\noindent \underline{Part I}: Do not transmit more than suggested by
$\bar{{\boldsymbol{\pi}}}$ at time $m$
\medskip

Let ${\boldsymbol{\pi}}^{\prime}(\epsilon)$ be a feasible strategy
with $z_m^{\prime}= {\bar{z}}_m+\epsilon,$ where $\epsilon>0$, and the optimal transmission policy at times $m-1$,$m-2,\ldots,1$.
We consider
four cases for the current 
buffer level $x$.
\medskip

\noindent \underline{Case (a)}: $j_k \cdot d-\tilde{z}_{k-1}(s) < x \leq j_{k-1} \cdot d-\tilde{z}_{k-1}(s),~k\in\{0,1,\ldots,K\}$ \\
In this case, ${\bar{z}}_m=\tilde{z}_{k-1}(s)$. Let $p$ be the integer such that $x+\tilde{z}_{k-1}(s)=p \cdot d$. Let $q,~\eta$ be such that $z_m^{\prime}=\tilde{z}_{k-1}(s)+\epsilon
=q\cdot d + \eta - x$ and $0\leq \eta < d~\Bigl($i.e., $q=\left\lfloor
\frac{z_m^{\prime}+x}{d} \right\rfloor$ and $\eta = z_m^{\prime}+x-q
\cdot d\Bigr)$. Thus, we have $q \geq p \geq j_k$.


Then we have:
\begin{eqnarray}
V_m^{{\boldsymbol{\pi}}^{\prime}(\epsilon)}(x,s) - V_m^{{\boldsymbol{\pi}}^q}(x,s) &=& c\Bigl(z_m^{\prime},s\Bigr) - c\Bigl(z_m^{\prime}-\eta,s\Bigr) \nonumber \\
&& +\eta \cdot h \nonumber \\
&& + \alpha \cdot \Expectation\Bigl[V_{m-1}\bigl((q-1)\cdot d + \eta,S\bigr)-V_{m-1}\bigl((q-1) \cdot d,S\bigr) \Bigr] \nonumber \\
&\geq& c\Bigl(z_m^{\prime},s\Bigr) - c\Bigl(z_m^{\prime}-\eta,s\Bigr)  \nonumber \\
&& - \eta \cdot \tilde{\gamma}_{m,q+1} \label{Eq:app:b_main3s}  \\
&\geq& c\Bigl(z_m^{\prime},s\Bigr) - c\Bigl(z_m^{\prime}-\eta,s\Bigr)  \nonumber \\
&& - \eta \cdot \tilde{\gamma}_{m,j_k+1} \label{Eq:app:b_main2s} \\
&\geq& \eta \cdot \bigl(\tilde{c}_k(s) - \tilde{\gamma}_{m,j_k+1} \bigr) \label{Eq:app:b_main1s} \\
&\geq&0. \label{Eq:app:b_mains}
\end{eqnarray}
\noindent 
Equation \eqref{Eq:app:b_main3s} follows from Lemma
\ref{Le:app:gol_1}, with $l=m$, $r=q$, and $\eta=\eta$.
Equation \eqref{Eq:app:b_main2s}
follows from $q+1 \geq j_k + 1$, which
implies $\tilde{\gamma}_{m,q+1}\leq \tilde{\gamma}_{m,j_k+1}$.
Equation \eqref{Eq:app:b_main1s} follows from $z_m^{\prime}-\eta \geq \tilde{z}_{k-1}(s)$ and the construction of $c(\cdot,s)$. Finally, \eqref{Eq:app:b_mains} follows from $\tilde{c}_k(s) \geq \tilde{\gamma}_{m,j_k+1}$, by construction of $j_k$, and we conclude:
\begin{eqnarray}\label{Eq:app:t_comp1b}
V_m^{{\boldsymbol{\pi}}^{\prime}(\epsilon)}(x,s) \geq V_m^{{\boldsymbol{\pi}}^q}(x,s)~.
\end{eqnarray}

Now let $t \in \left\{q+1, q+2, \ldots, m-p, m-p+1\right\}$ be
arbitrary. We have:
\begin{eqnarray}
V_m^{{\boldsymbol{\pi}}^{t-1}}(x,s) - V_m^{{\boldsymbol{\pi}}^t}(x,s)  &=& c\Bigl((m-t+2)\cdot d - x,s\Bigr) - c\Bigl((m-t+1)\cdot d - x,s\Bigr) \nonumber \\
&& +d \cdot h \nonumber \\
&& + \alpha \cdot \Expectation\Bigl[V_{m-1}\bigl((m-t+1)\cdot d,S\bigr)-V_{m-1}\bigl((m-t) \cdot d,S\bigr) \Bigr] \nonumber \\
&\geq&  c\Bigl((m-t+2)\cdot d - x,s\Bigr) - c\Bigl((m-t+1)\cdot d - x,s\Bigr)  \nonumber \\
&& - d \cdot \tilde{\gamma}_{m,m-t+2} \label{Eq:app:b_main3b}  \\
&\geq&  c\Bigl((m-t+2)\cdot d - x,s\Bigr) - c\Bigl((m-t+1)\cdot d - x,s\Bigr)  \nonumber \\
&& - d \cdot \tilde{\gamma}_{m,j_k+1} \label{Eq:app:b_main2b} \\
&\geq& d \cdot \bigl(\tilde{c}_k(s) - \tilde{\gamma}_{m,j_k+1} \bigr) \label{Eq:app:b_main1b} \\
&\geq&0. \label{Eq:app:b_mainb}
\end{eqnarray}
%
Equation \eqref{Eq:app:b_main3b} follows from Lemma \ref{Le:app:gol_1}, with
$l=m$, $r=m-t+1\leq m-q \leq m = l$, and $\eta=d$.
Equation \eqref{Eq:app:b_main2b} follows from:
\begin{eqnarray*}
t \leq m-p+1 ~~ \Leftrightarrow ~~ p+1 \leq m-t+2 ~~ \Rightarrow j_k + 1 \leq m-t+2 ~~ \Rightarrow
~~ \tilde{\gamma}_{m,j_k+1} \geq \tilde{\gamma}_{m,m-t+2}~.
\end{eqnarray*}
Equation \eqref{Eq:app:b_main1b} follows from the construction of $c(\cdot,s)$ and the fact that:
\begin{eqnarray*}
(m-t+1) \cdot d - x \geq \Bigl[m-(m-p+1)+1\Bigr] \cdot d - x = p \cdot d -x = \tilde{z}_{k-1}(s)~.
\end{eqnarray*}
Finally, \eqref{Eq:app:b_mainb} follows once again from $\tilde{c}_k(s) \geq \tilde{\gamma}_{m,j_k+1}$, by construction of $j_k$.
Rearranging (\ref{Eq:app:b_mainb}) yields:
\begin{eqnarray} \label{Eq:app:t_compb}
V_m^{{\boldsymbol{\pi}}^{t-1}}(x,s) \geq
V_m^{{\boldsymbol{\pi}}^t}(x,s),~\forall t\in
\left\{q+1,q+2,\ldots,m-p,m-p+1\right\}.
\end{eqnarray}
Noting that
$V_m^{\bar{{\boldsymbol{\pi}}}}(x,s)=V_m^{{\boldsymbol{\pi}}^{m-p+1}}(x,s)$,
(\ref{Eq:app:t_comp1b}) and repeated application of
(\ref{Eq:app:t_compb}) imply:
\begin{eqnarray*}
V_m^{\bar{{\boldsymbol{\pi}}}}(x,s)=V_m^{{\boldsymbol{\pi}}^{m-p+1}}(x,s)
\leq V_m^{{\boldsymbol{\pi}}^{m-p}}(x,s) \leq
\ldots \leq
V_m^{{\boldsymbol{\pi}}^{q+1}}(x,s) \leq
V_m^{{\boldsymbol{\pi}}^{q}}(x,s) \leq
V_m^{{\boldsymbol{\pi}}^{{\prime}}(\epsilon)}(x,s)~,
\end{eqnarray*}
and we conclude $\bar{{\boldsymbol{\pi}}}$ is at least as good as
${\boldsymbol{\pi}}^{\prime}(\epsilon)$.
\medskip


\noindent \underline{Case (b)}: $j_k \cdot d-\tilde{z}_{k}(s) < x \leq j_k \cdot d-\tilde{z}_{k-1}(s),~k\in\{0,1,\ldots,K-1\}$ \\
Let $q,\eta$ be such that $z_m^{\prime}=(m-q+1)\cdot d + \eta - x$ and
$0\leq \eta < d~\Bigl($i.e., $q=m+1-\left\lfloor
\frac{z_m^{\prime}+x}{d} \right\rfloor$ and $\eta =
z_m^{\prime}-(m-q+1) \cdot d - x\Bigr)$. Note that $m-q+1 \geq j_k$ by
the assumption that $z_m^{\prime} \geq {\bar{z}}_m = j_k \cdot d - x$.
Additionally, because $x \leq j_k \cdot d - \tilde{z}_{k-1}(s)$ and $m-q+1 \geq j_k$, we have:
\begin{eqnarray*}
(m-q+1) \cdot d - x \geq (m-q+1-j_k) \cdot d + \tilde{z}_{k-1}(s) \geq \tilde{z}_{k-1}(s)~,
\end{eqnarray*}
which implies:
\begin{eqnarray} \label{Eq:app:b_main5}
 c\Bigl((m-q+1)\cdot d + \eta - x,s\Bigr) - c\Bigl((m-q+1)\cdot d - x,s\Bigr) \geq \eta \cdot \tilde{c}_k(s)~.
\end{eqnarray}
Then we have:
\begin{eqnarray}
V_m^{{\boldsymbol{\pi}}^{\prime}(\epsilon)}(x,s) - V_m^{{\boldsymbol{\pi}}^q}(x,s) &=& c\Bigl((m-q+1)\cdot d + \eta - x,s\Bigr) - c\Bigl((m-q+1)\cdot d - x,s\Bigr) \nonumber \\
&& +\eta \cdot h \nonumber \\
&& + \alpha \cdot \Expectation\Bigl[V_{m-1}\bigl((m-q)\cdot d + \eta,S\bigr)-V_{m-1}\bigl((m-q) \cdot d,S\bigr) \Bigr] \nonumber \\
&\geq& c\Bigl((m-q+1)\cdot d + \eta - x,s\Bigr) - c\Bigl((m-q+1)\cdot d - x,s\Bigr) \nonumber \\
&& - \eta \cdot \tilde{\gamma}_{m,m-q+2} \label{Eq:app:b_main3}  \\
&\geq& c\Bigl((m-q+1)\cdot d + \eta - x,s\Bigr) - c\Bigl((m-q+1)\cdot d - x,s\Bigr) \nonumber \\
&& - \eta \cdot \tilde{\gamma}_{m,j_k+1} \label{Eq:app:b_main2} \\
&\geq& \eta \cdot \bigl(\tilde{c}_k(s) - \tilde{\gamma}_{m,j_k+1} \bigr) \label{Eq:app:b_main1} \\
&\geq&0. \label{Eq:app:b_main}
\end{eqnarray}
\noindent 
Equation \eqref{Eq:app:b_main3} follows from Lemma
\ref{Le:app:gol_1}, with $l=m$, $r=m-q\leq m-1$, and $\eta=\eta$.
Equation \eqref{Eq:app:b_main2}
follows from $m-q+2 \geq j_k + 1$, which
implies $\tilde{\gamma}_{m,m-q+2}\leq \tilde{\gamma}_{m,j_k+1}$.
Equation \eqref{Eq:app:b_main1} follows from \eqref{Eq:app:b_main5}. Finally, \eqref{Eq:app:b_main} follows from $\tilde{c}_k(s) \geq \tilde{\gamma}_{m,j_k+1}$, by construction of $j_k$, and we conclude:
\begin{eqnarray}\label{Eq:app:t_comp1}
V_m^{{\boldsymbol{\pi}}^{\prime}(\epsilon)}(x,s) \geq V_m^{{\boldsymbol{\pi}}^q}(x,s)~.
\end{eqnarray}

Now let $t \in \left\{q+1, q+2, \ldots, m-j_k, m-j_k+1\right\}$ be
arbitrary. We have:
\begin{eqnarray}
V_m^{{\boldsymbol{\pi}}^{t-1}}(x,s) - V_m^{{\boldsymbol{\pi}}^t}(x,s)  &=& c\Bigl((m-t+2)\cdot d - x,s\Bigr) - c\Bigl((m-t+1)\cdot d - x,s\Bigr) \nonumber \\
&& +d \cdot h \nonumber \\
&& + \alpha \cdot \Expectation\Bigl[V_{m-1}\bigl((m-t+1)\cdot d,S\bigr)-V_{m-1}\bigl((m-t) \cdot d,S\bigr) \Bigr] \nonumber \\
&\geq&  c\Bigl((m-t+2)\cdot d - x,s\Bigr) - c\Bigl((m-t+1)\cdot d - x,s\Bigr)  \nonumber \\
&& - d \cdot \tilde{\gamma}_{m,m-t+2} \label{Eq:app:b_main3a}  \\
&\geq&  c\Bigl((m-t+2)\cdot d - x,s\Bigr) - c\Bigl((m-t+1)\cdot d - x,s\Bigr)  \nonumber \\
&& - d \cdot \tilde{\gamma}_{m,j_k+1} \label{Eq:app:b_main2a} \\
&\geq& d \cdot \bigl(\tilde{c}_k(s) - \tilde{\gamma}_{m,j_k+1} \bigr) \label{Eq:app:b_main1a} \\
&\geq&0. \label{Eq:app:b_maina}
\end{eqnarray}
%
Equation \eqref{Eq:app:b_main3a} follows from Lemma \ref{Le:app:gol_1}, with
$l=m$, $r=m-t+1\leq m-q \leq m = l$, and $\eta=d$.
Equation \eqref{Eq:app:b_main2a} follows from:
\begin{eqnarray*}
t \leq m-j_k+1 ~~ \Leftrightarrow ~~ j_k+1 \leq m-t+2 ~~ \Rightarrow
~~ \tilde{\gamma}_{m,j_k+1} \geq \tilde{\gamma}_{m,m-t+2}~.
\end{eqnarray*}
Similarly to \eqref{Eq:app:b_main5}, equation \eqref{Eq:app:b_main1a} follows from the fact that:
\begin{eqnarray*}
(m-t+1) \cdot d - x \geq \Bigl[m-(m-j_k+1)+1\Bigr] \cdot d - x = j_k \cdot d -x \geq \tilde{z}_{k-1}(s)~.
\end{eqnarray*}
Finally, \eqref{Eq:app:b_maina} follows once again from $\tilde{c}_k(s) \geq \tilde{\gamma}_{m,j_k+1}$, by construction of $j_k$.
Rearranging (\ref{Eq:app:b_maina}) yields:
\begin{eqnarray} \label{Eq:app:t_comp}
V_m^{{\boldsymbol{\pi}}^{t-1}}(x,s) \geq
V_m^{{\boldsymbol{\pi}}^t}(x,s),~\forall t\in
\left\{q+1,q+2,\ldots,m-j_k,m-j_k+1\right\}.
\end{eqnarray}
Noting that
$V_m^{\bar{{\boldsymbol{\pi}}}}(x,s)=V_m^{{\boldsymbol{\pi}}^{m-j_k+1}}(x,s)$,
(\ref{Eq:app:t_comp1}) and repeated application of
(\ref{Eq:app:t_comp}) imply:
\begin{eqnarray*}
V_m^{\bar{{\boldsymbol{\pi}}}}(x,s)=V_m^{{\boldsymbol{\pi}}^{m-j_k+1}}(x,s)
\leq V_m^{{\boldsymbol{\pi}}^{m-j_k}}(x,s) \leq
\ldots \leq
V_m^{{\boldsymbol{\pi}}^{q+1}}(x,s) \leq
V_m^{{\boldsymbol{\pi}}^{q}}(x,s) \leq
V_m^{{\boldsymbol{\pi}}^{{\prime}}(\epsilon)}(x,s)~,
\end{eqnarray*}
and we conclude $\bar{{\boldsymbol{\pi}}}$ is at least as good as
${\boldsymbol{\pi}}^{\prime}(\epsilon)$.
\medskip

\noindent \underline{Case (c)}: $j_K \cdot d-\tilde{z}_{\max}(s) < x \leq j_K \cdot d-\tilde{z}_{K-1}(s)$ \\
Same as Case (b) with $K$ replacing $k$.

\noindent \underline{Case (d)}: $0 \leq x \leq j_K \cdot d-\tilde{z}_{\max}(s)$ \\ 
${\bar{z}}_m(x,s)=\tilde{z}_{\max}(s)$, the upper bound of the
action space, so it is not feasible to transmit more.
\medskip


\noindent \underline{Part II}: Do not transmit less than suggested by
$\bar{{\boldsymbol{\pi}}}$ at time $m$
\medskip

Let ${\boldsymbol{\pi}}^{\prime \prime}(\epsilon)$ be a feasible strategy
with $z_m^{\prime \prime}= {\bar{z}}_m-\epsilon,$ where $\epsilon>0$, and the optimal transmission policy at times $m-1$,$m-2,\ldots,1$.
To
satisfy feasibility, we require ${\bar{z}}_m-\epsilon \geq
\max(0,d-x)$. Define $\eta:=\epsilon - \left\lfloor \frac{\epsilon}{d} \right\rfloor \cdot d$, and note that $\eta \in [0,d)$. Let ${\boldsymbol{\pi}}_{\theta}^l$ be a strategy that at time $m$
satisfies the demands of periods $m,m-1,\ldots,l$, except for
$\theta$ units of the demand of period $l$, where $0\leq \theta \leq
d$, and behaves optimally in slots $m-1,m-2,\ldots,1$. We consider four exhaustive cases for the current buffer 
level
$x$.
\medskip

\noindent \underline{Case (a)}: $x> j_0 \cdot d$ \\
${\bar{z}}_m(x,s)=0$, the lower bound of the action space, so it
is not feasible to transmit less.
\medskip

\noindent \underline{Case (b)}: $j_{k} \cdot d - \tilde{z}_{k}(s) < x \leq j_{k} \cdot d - \tilde{z}_{k-1}(s),~k \in \{0,1,\ldots,K\}$, where we define 
$\tilde{z}_K(s):=\tilde{z}_{\max}(s)$ \\
Define $q:=m-j_k+1+\left\lfloor \frac{\epsilon}{d} \right\rfloor$.  By the feasibility of ${\boldsymbol{\pi}}^{\prime \prime}(\epsilon)$ and $\epsilon > 0$, we have \\
$q \in \left\{m-j_k+1, m-j_k+2, \ldots, m-2, m-1 \right\}$. Furthermore, we have:
\begin{eqnarray*}
[m-q+1] \cdot d - x = \left[m-\left(m-j_k+1+\left\lfloor \frac{\epsilon}{d} \right\rfloor\right)+1 \right] \cdot d - x \leq j_k \cdot d - x \leq \tilde{z}_k(s)~,
\end{eqnarray*}
which, by the construction of $c(\cdot,s)$, implies:
\begin{eqnarray} \label{Eq:case2b0}
c\Bigl((m-q+1)\cdot d - \eta -x,s\Bigr) - c\Bigl((m-q+1)\cdot d -x,s\Bigr) \geq -\eta \cdot \tilde{c}_k(s)~.
\end{eqnarray}
We now compare ${\boldsymbol{\pi}}_{\eta}^{q}$ and ${\boldsymbol{\pi}}_{0}^{q}$:
\begin{eqnarray}
V_m^{{\boldsymbol{\pi}}_{\eta}^{q}}(x,s) -  V_m^{{\boldsymbol{\pi}}_{0}^{q}}(x,s)
&=& c\Bigl((m-q+1)\cdot d - \eta -x,s\Bigr) - c\Bigl((m-q+1)\cdot d -x,s\Bigr) \nonumber \\
&& -h \cdot \eta \nonumber \\
&& + \alpha \cdot \Expectation\Bigl[V_{m-1}\bigl((m-q)\cdot d - \eta,S\bigr)-V_{m-1}\bigl((m-q) \cdot d,S\bigr) \Bigr] \nonumber \\
&\geq& c\Bigl((m-q+1)\cdot d - \eta -x,s\Bigr) - c\Bigl((m-q+1)\cdot d -x,s\Bigr) \nonumber \\
&&+\eta \cdot \tilde{\gamma}_{m,m-q+1} \label{Eq:case2b1} \\
&\geq& c\Bigl((m-q+1)\cdot d - \eta -x,s\Bigr) - c\Bigl((m-q+1)\cdot d -x,s\Bigr) \nonumber \\
&&+\eta \cdot \tilde{\gamma}_{m,j_k} \label{Eq:case2b2} \\
&\geq&\eta \cdot \Bigl[\tilde{\gamma}_{m,j_k}-\tilde{c}_k(s)\Bigr] \label{Eq:case2b3} \\
&\geq&0. \label{Eq:app:b2_main2}
\end{eqnarray}
Equation \eqref{Eq:case2b1} follows from Lemma \ref{Le:app:gol_2} with $r=m-q+1\leq m=l$ and $\eta =\eta$.
Equation \eqref{Eq:case2b2} follows from:
\begin{eqnarray*}
q\geq m-j_k+1 ~~\Leftrightarrow ~~ m-q+1 \leq j_k ~~ \Rightarrow ~~ \tilde{\gamma}_{m,j_k} \leq \tilde{\gamma}_{m,m-q+1}~.
\end{eqnarray*}
\noindent Equation \eqref{Eq:case2b3} follows from \eqref{Eq:case2b0}. Finally, \eqref{Eq:app:b2_main2} 
follows from $\tilde{c}_k(s) < \tilde{\gamma}_{m,j_k}$. Rearranging (\ref{Eq:app:b2_main2}) yields:
\begin{eqnarray} \label{Eq:app:b2_3}
V_m^{{\boldsymbol{\pi}}_{0}^{q}}(x,s) \leq V_m^{{\boldsymbol{\pi}}_{\eta}^{q}}(x,s)~.
\end{eqnarray}

Next, let $t \in \left\{m-j_k+1,m-j_k+2,\ldots,m-1 \right\}$ be arbitrary. We have:
\begin{eqnarray}
V_m^{{\boldsymbol{\pi}}_{0}^{t+1}}(x,s) - V_m^{{\boldsymbol{\pi}}_{0}^{t}}(x,s) &=& c\Bigl((m-t)\cdot d -x,s\Bigr) - c\Bigl((m-t+1)\cdot d -x,s\Bigr) \nonumber \\
&& - h \cdot d \nonumber \\
&& + \alpha \cdot \Expectation\Bigl[V_{m-1}\bigl((m-t-1)\cdot d,S\bigr)-V_{m-1}\bigl((m-t) \cdot d,S\bigr) \Bigr] \nonumber \\
&\geq& c\Bigl((m-t)\cdot d -x,s\Bigr) - c\Bigl((m-t+1)\cdot d -x,s\Bigr) \nonumber \\
&& + d \cdot \tilde{\gamma}_{m,m-t+1} \label{Eq:case2b4} \\
&\geq& c\Bigl((m-t)\cdot d -x,s\Bigr) - c\Bigl((m-t+1)\cdot d -x,s\Bigr) \nonumber \\
&& + d \cdot \tilde{\gamma}_{m,j_k} \label{Eq:case2b5} \\
&\geq& d \cdot \Bigl[\tilde{\gamma}_{m,j_k}-\tilde{c}_k(s)\Bigr] \label{Eq:case2b6} \\
&\geq&0. \label{Eq:app:b2_main}
\end{eqnarray}
Equation \eqref{Eq:case2b4} follows from Lemma \ref{Le:app:gol_2} with $r=m-t\leq m=l$ and $\eta = d$. 
Equation \eqref{Eq:case2b5}
follows from:
\begin{eqnarray*}
t\geq m-j_k+1 ~~\Leftrightarrow ~~ m-t+1 \leq j_k ~~ \Rightarrow ~~ \tilde{\gamma}_{m,j_k} \leq \tilde{\gamma}_{m-t+1}~.
\end{eqnarray*}
\noindent Equation \eqref{Eq:case2b6} follows from construction of $c(\cdot,s)$ and the fact that:
\begin{eqnarray*}
(m-t+1)\cdot d -x \leq \Bigl(m-(m-j_k+1)+1\Bigr) \cdot d - x = j_k \cdot d - x \leq \tilde{z}_k(s)~.
\end{eqnarray*}
\noindent Finally, \eqref{Eq:app:b2_main} follows from $\tilde{c}_k(s) <  \tilde{\gamma}_{m,j_k}$. Rearranging (\ref{Eq:app:b2_main}) yields:
\begin{eqnarray} \label{Eq:app:b2_2}
V_m^{{\boldsymbol{\pi}}_{0}^{t}}(x,s) \leq
V_m^{{\boldsymbol{\pi}}_{0}^{t+1}}(x,s)~~\forall t \in
\left\{m-j_k+1,m-j_k+2,\ldots,m-1 \right\}~.
\end{eqnarray}

Noting that
$\bar{\boldsymbol{\pi}} = {\boldsymbol{\pi}}_{0}^{m-j_k+1}$, (\ref{Eq:app:b2_3}) and repeated application of (\ref{Eq:app:b2_2}) imply:
\begin{eqnarray} \label{Eq:app:b2_7}
V_m^{\bar{\boldsymbol{\pi}}}(x,s) &=& V_m^{{\boldsymbol{\pi}}_{0}^{m-j_k+1}}(x,s) \nonumber \\
&\leq& V_m^{{\boldsymbol{\pi}}_{0}^{m-j_k+2}}(x,s) \leq \ldots \leq V_m^{{\boldsymbol{\pi}}_{0}^{q}}(x,s) \leq V_m^{{\boldsymbol{\pi}}_{\eta}^{q}}(x,s) = V_m^{{\boldsymbol{\pi}}^{\prime \prime}(\epsilon)}(x,s)~,
\end{eqnarray}
\noindent and we conclude $\bar{{\boldsymbol{\pi}}}$ is at least as good as
${\boldsymbol{\pi}}^{\prime \prime}(\epsilon)$.

\noindent \underline{Case (c)}: $j_{k} \cdot d - \tilde{z}_{k-1}(s) < x \leq j_{k-1} \cdot d - \tilde{z}_{k-1}(s),~k \in \{1,\ldots,K\}$ \\
%
In this case, $\bar{\boldsymbol{\pi}} = {\boldsymbol{\pi}}_{0}^{m+1-\frac{x+\tilde{z}_{k-1(s)}}{d}}$. Define $p:=m+1-\frac{x+\tilde{z}_{k-1(s)}}{d}$, and $q:=p+\left\lfloor \frac{\epsilon}{d} \right\rfloor$.

Again, we start by comparing ${\boldsymbol{\pi}}_{\eta}^{q}$ and ${\boldsymbol{\pi}}_{0}^{q}$:
\begin{eqnarray}
V_m^{{\boldsymbol{\pi}}_{\eta}^{q}}(x,s) -  V_m^{{\boldsymbol{\pi}}_{0}^{q}}(x,s)
&=& c\Bigl(z_m^{\prime \prime}- \eta,s\Bigr) - c\Bigl(z_m^{\prime \prime},s\Bigr) \nonumber \\
&& -h \cdot \eta \nonumber \\
&& + \alpha \cdot \Expectation\Bigl[V_{m-1}\bigl((m-q)\cdot d - \eta,S\bigr)-V_{m-1}\bigl((m-q) \cdot d,S\bigr) \Bigr] \nonumber \\
&\geq& c\Bigl(z_m^{\prime \prime}- \eta,s\Bigr) - c\Bigl(z_m^{\prime \prime},s\Bigr) \nonumber \\
&&+\eta \cdot \tilde{\gamma}_{m,m-q+1} \label{Eq:case2b1f} \\
&\geq& c\Bigl(z_m^{\prime \prime}- \eta,s\Bigr) - c\Bigl(z_m^{\prime \prime},s\Bigr) \nonumber \\
&&+\eta \cdot \tilde{\gamma}_{m,j_{k-1}} \label{Eq:case2b2f} \\
&\geq&\eta \cdot \Bigl[\tilde{\gamma}_{m,j_{k-1}}-\tilde{c}_{k-1}(s)\Bigr] \label{Eq:case2b3f} \\
&\geq&0. \label{Eq:app:b2_main2f}
\end{eqnarray}
Equation \eqref{Eq:case2b1f} follows from Lemma \ref{Le:app:gol_2} with $r=m-q+1\leq m=l$ and $\eta =\eta$.
Equation \eqref{Eq:case2b2f} follows from:
\begin{eqnarray*}
m-q+1 = m-\left(p+\left\lfloor \frac{\epsilon}{d} \right\rfloor\right)+1 = \frac{x+\tilde{z}_{k-1(s)}}{d}-\left\lfloor \frac{\epsilon}{d} \right\rfloor  \leq \frac{x+\tilde{z}_{k-1(s)}}{d} \leq j_{k-1}~, 
\end{eqnarray*}
which implies $\tilde{\gamma}_{m,j_{k-1}} \leq \tilde{\gamma}_{m,m-q+1}$.
Equation \eqref{Eq:case2b3f} follows from $z_m^{\prime \prime} < \tilde{z}_{k-1}(s)$ and the construction of $c(\cdot,s)$. Finally, \eqref{Eq:app:b2_main2f} 
follows from $\tilde{c}_{k-1}(s) < \tilde{\gamma}_{m,j_{k-1}}$. Rearranging (\ref{Eq:app:b2_main2f}) yields:
\begin{eqnarray} \label{Eq:app:b2_3f}
V_m^{{\boldsymbol{\pi}}_{0}^{q}}(x,s) \leq V_m^{{\boldsymbol{\pi}}_{\eta}^{q}}(x,s)~.
\end{eqnarray}

Next, let $\hat{t} \in \left\{p,p+1,\ldots,q-1\right\}$ be arbitrary. We have:
\begin{eqnarray}
V_m^{{\boldsymbol{\pi}}_{0}^{\hat{t}+1}}(x,s) - V_m^{{\boldsymbol{\pi}}_{0}^{\hat{t}}}(x,s) &=& c\Bigl((m-\hat{t})\cdot d -x,s\Bigr) - c\Bigl((m-\hat{t}+1)\cdot d -x,s\Bigr) \nonumber \\
&& - h \cdot d \nonumber \\
&& + \alpha \cdot \Expectation\Bigl[V_{m-1}\bigl((m-\hat{t}-1)\cdot d,S\bigr)-V_{m-1}\bigl((m-\hat{t}) \cdot d,S\bigr) \Bigr] \nonumber \\
&\geq& c\Bigl((m-\hat{t})\cdot d -x,s\Bigr) - c\Bigl((m-\hat{t}+1)\cdot d -x,s\Bigr) \nonumber \\
&& + d \cdot \tilde{\gamma}_{m,m-\hat{t}+1} \label{Eq:case2e4} \\
&\geq& c\Bigl((m-\hat{t})\cdot d -x,s\Bigr) - c\Bigl((m-\hat{t}+1)\cdot d -x,s\Bigr) \nonumber \\
&& + d \cdot \tilde{\gamma}_{m,j_{k-1}} \label{Eq:case2e5} \\
&\geq& d \cdot \Bigl[\tilde{\gamma}_{m,j_{k-1}}-\tilde{c}_{k-1}(s)\Bigr] \label{Eq:case2e6} \\
&\geq&0. \label{Eq:app:b2_maine}
\end{eqnarray}
Equation \eqref{Eq:case2e4} follows from Lemma \ref{Le:app:gol_2} with $r=m-\hat{t}\leq m=l$ and $\eta = d$. 
Equation \eqref{Eq:case2e5}
follows from:
\begin{eqnarray*}
\hat{t}\geq p ~~\Rightarrow ~~ m-\hat{t}+1 \leq m-p+1 = \frac{x+\tilde{z}_{k-1}(s)}{d} \leq j_{k-1} ~~ \Rightarrow ~~ \tilde{\gamma}_{m,j_{k-1}} \leq \tilde{\gamma}_{m-\hat{t}+1}~.
\end{eqnarray*}
\noindent Equation \eqref{Eq:case2e6} follows from construction of $c(\cdot,s)$ and the fact that:
\begin{eqnarray*}
(m-\hat{t}+1)\cdot d -x \leq (m-p+1)\cdot d -x =  \tilde{z}_{k-1}(s)~.
\end{eqnarray*}
\noindent Finally, \eqref{Eq:app:b2_maine} follows from $\tilde{c}_{k-1}(s) <  \tilde{\gamma}_{m,j_{k-1}}$.
Rearranging \eqref{Eq:app:b2_maine} yields:
\begin{eqnarray} \label{Eq:app:b2_8}
V_m^{{\boldsymbol{\pi}}_{0}^{\hat{t}}}(x,s) \leq
V_m^{{\boldsymbol{\pi}}_{0}^{\hat{t}+1}}(x,s)~~\forall \hat{t} \in
\left\{p,p+1,\ldots,q-1 \right\}~.
\end{eqnarray}
Then \eqref{Eq:app:b2_3f}
and repeated application of \eqref{Eq:app:b2_8} yield:
\begin{eqnarray*}
V_m^{\bar{\boldsymbol{\pi}}}(x,s) &=& V_m^{{\boldsymbol{\pi}}_{0}^{p}}(x,s) \\
&\leq& V_m^{{\boldsymbol{\pi}}_{0}^{p+1}}(x,s) \leq \ldots \leq
V_m^{{\boldsymbol{\pi}}_{0}^{q-1}}(x,s) \leq
V_m^{{\boldsymbol{\pi}}_{0}^{q}}(x,s) \leq V_m^{{\boldsymbol{\pi}}_{\eta}^{q}}(x,s) = V_m^{{\boldsymbol{\pi}}^{\prime \prime}(\epsilon)}(x,s)~,
\end{eqnarray*}
\noindent and we conclude $\bar{{\boldsymbol{\pi}}}$ is at least as good as
${\boldsymbol{\pi}}^{\prime \prime}(\epsilon)$.

\noindent \underline{Case (d)}: $0 \leq x \leq j_K \cdot d - \tilde{z}_{\max}$ \\
The same argument as Case (c) applies with $k$ replaced by $K+1$ and $\tilde{z}_K(s)=\tilde{z}_{\max}(s)$.
This completes Part II.

From Parts I and II, we conclude $\bar{{\boldsymbol{\pi}}}$ is optimal if the starting queue level is an integer multiple of the demand $d$. By assumption, the starting queue level $x$ at time $N$ is zero. Thus, $\bar{{\boldsymbol{\pi}}}$ is optimal at time $N$. $z_N^*(x,s)=\bar{z}_N(x,s)$ will also be an integer multiple of demand as  $b_{N,k}(s)$, and $\left\{\tilde{z}_k(s)\right\}_{k=0,1,\ldots,K}$
are all integer multiples of $d$. 
It follows that the queue level at the end of slot $N$ (equal to the queue level at the beginning of slot $N-1$), $z_N^*(x,s)-d$, will also be an integer multiple of $d$. Continuing this logic, if the strategy $\bar{{\boldsymbol{\pi}}}$ is used, the queue level at the beginning of each subsequent time slot will be an integer multiple of demand. Thus, 
$\bar{{\boldsymbol{\pi}}}$ is optimal. \qedsymbol 

\subsection{Proof of Theorem \ref{Th:two_users:properties}}
We prove statements (i)-(v) by joint induction on the time remaining, $n$.\\
\noindent \underline{Base Case}: $n=1$ \\
$V_0(\textbf{x},\textbf{s}_0)=0$, for all $\textbf{s}_0$, so (i) and (ii) hold trivially.
Let ${\textbf{s}}_1 \in {\cal{S}}$ be arbitrary.
$G_1(\textbf{y}_1,\textbf{s}_1) =
\textbf{c}_{\textbf{s}_1}^{\transpose} 
\textbf{y}_1
 +h(\textbf{y}_1-\textbf{d})$,
 which is convex and supermodular.
 Thus, (iii) and (iv) are
 true. Additionally, $G_1(\textbf{y}_1,\textbf{s}_1) =
\sum\limits_{m=1}^2 \left\{c_s^m \cdot y_1^m + h^m\left(y_1^m-d^m\right)\right\}$, so  $\inf \left\{\argmin\limits_{y_1^2 \in [d^2,\infty)} 
\biggl\{G_1\left(y_1^1,y_1^2,s_1^1,s_1^2\right)\biggr\}\right\}$ is independent of $y_1^1$, and vice versa. Thus, (v) is true for $n=1$, completing the base case.

\medskip

\noindent \underline{Induction Step}\\
Assume statements (i)-(v) are true for $n=2,3,\ldots,l-1$. We
want to show they are true for $n=l$. We let $\textbf{s} \in
\cal{S}$ be arbitrary, and proceed in order.
\begin{itemize}
 \item[(i)] Consider two arbitrary points,
$\bar{\textbf{x}}, \tilde{\textbf{x}} \in \Real_+^2$.
Let $\lambda \in [0,1]$ be
arbitrary, and define $\hat{\textbf{x}} := \lambda 
\bar{\textbf{x}} + (1-\lambda) 
\tilde{\textbf{x}}$. Let
${\textbf{y}}^*(\bar{\textbf{x}},\textbf{s})$,
${\textbf{y}}^*(\tilde{\textbf{x}},\textbf{s})$, and
${\textbf{y}}^*(\hat{\textbf{x}},\textbf{s})$ be optimal
buffer levels after transmission in slot $l-1$, for each of the respective
starting points. We have:
\begin{eqnarray}  \label{Eq:mp:llu_conv}
\lambda \cdot V_{l-1}(\bar{\textbf{x}},\textbf{s}) + (1-\lambda) \cdot
V_{l-1}(\tilde{\textbf{x}},\textbf{s}) &=&
-\textbf{c}_{\textbf{s}}^{\transpose} \hat{\textbf{x}}
+ \lambda \cdot
G_{l-1}\bigl(\textbf{y}^*(\bar{\textbf{x}},\textbf{s}),\textbf{s}\bigr) \nonumber \\
&& +~ (1-\lambda) \cdot
G_{l-1}\bigl(\textbf{y}^*(\tilde{\textbf{x}},\textbf{s}),\textbf{s}\bigr) \nonumber \\
& \geq & -\textbf{c}_{\textbf{s}}^{\transpose}
\hat{\textbf{x}} 
+G_{l-1}\bigl(\lambda 
\textbf{y}^*(\bar{\textbf{x}},\textbf{s})+(1-\lambda) 
\textbf{y}^*(\tilde{\textbf{x}},\textbf{s}) ,\textbf{s}\bigr) \nonumber \\
& \geq & -\textbf{c}_{\textbf{s}}^{\transpose}
\hat{\textbf{x}} 
+\min_{\textbf{y} \in \tilde{\cal{A}}^{\textbf{d}}
(\hat{\textbf{x}},\textbf{s})} \left\{
G_{l-1}(\textbf{y},\textbf{s})
\right\} \nonumber \\
&=& V_{l-1}(\hat{\textbf{x}},\textbf{s}) = V_{l-1}(\lambda 
\bar{\textbf{x}} + (1-\lambda) 
\tilde{\textbf{x}},\textbf{s})~,
\end{eqnarray}
where the first inequality follows from the convexity of
$G_{l-1}(\cdot,\textbf{s})$ 
from the induction hypothesis. The second inequality
follows from the following argument.
%
$\textbf{y}^*(\bar{\textbf{x}},\textbf{s}) \in \tilde{\cal{A}}^{\textbf{d}} (\bar{\textbf{x}},\textbf{s})$ implies:
\begin{eqnarray} \label{Eq:mp:bar_hyp}
\textbf{y}^*(\bar{\textbf{x}},\textbf{s})
\succeq \textbf{d} \vee \bar{\textbf{x}}
~\hbox{ and }~
 \textbf{c}_{\textbf{s}}^{\transpose}
\left[\textbf{y}^*(\bar{\textbf{x}},\textbf{s}) -\bar{\textbf{x}}
\right]\leq P~.
\end{eqnarray}
Similarly, $\textbf{y}^*(\tilde{\textbf{x}},\textbf{s}) \in \tilde{\cal{A}}^{\textbf{d}} (\tilde{\textbf{x}},\textbf{s})$ implies:
\begin{eqnarray} \label{Eq:mp:tilde_hyp}
\textbf{y}^*(\tilde{\textbf{x}},\textbf{s})
\succeq \textbf{d} \vee \tilde{\textbf{x}}
~ \hbox{ and }~
 \textbf{c}_{\textbf{s}}^{\transpose}
\left[\textbf{y}^*(\tilde{\textbf{x}},\textbf{s}) -\tilde{\textbf{x}}
\right]\leq P~.
\end{eqnarray}
Multiplying the equations in (\ref{Eq:mp:bar_hyp}) by $\lambda$ and
the equations in (\ref{Eq:mp:tilde_hyp}) by $1-\lambda$, and summing, we have:
\begin{eqnarray} \label{Eq:mp:hat_sides}
\lambda 
{\textbf{y}}^*(\bar{\textbf{x}},\textbf{s}) +
(1-\lambda) 
{\textbf{y}}^*(\tilde{\textbf{x}},\textbf{s}) 
\succeq  \lambda (\textbf{d} \vee \bar{\textbf{x}}) + 
(1-\lambda) (\textbf{d} \vee \tilde{\textbf{x}}) 
\succeq \textbf{d} \vee \hat{\textbf{x}}, 
\end{eqnarray}
and
\begin{align}\label{Eq:mp:hat_hyp}
&\textbf{c}_{\textbf{s}}^{\transpose} \left[\lambda
\textbf{y}^*(\bar{\textbf{x}},\textbf{s})+(1-\lambda) 
\textbf{y}^*(\tilde{\textbf{x}},\textbf{s})
-\hat{\textbf{x}} \right] \nonumber \\
&=\lambda \textbf{c}_{\textbf{s}}^{\transpose} \left[\textbf{y}^*(\bar{\textbf{x}},\textbf{s})-\bar{\textbf{x}}\right]
+(1-\lambda) \textbf{c}_{\textbf{s}}^{\transpose} \left[\textbf{y}^*(\tilde{\textbf{x}},\textbf{s})-\tilde{\textbf{x}}\right]
\leq P~.
\end{align}
From (\ref{Eq:mp:hat_sides}) and (\ref{Eq:mp:hat_hyp}), we
conclude $\lambda 
\textbf{y}^*(\bar{\textbf{x}},\textbf{s})
+ (1-\lambda) 
\textbf{y}^*(\tilde{\textbf{x}},\textbf{s})
\in \tilde{\cal{A}}^{\textbf{d}}(\hat{\textbf{x}},\textbf{s})$, as shown in
Figure \ref{Fig:multi_proof:convexity}. Thus, the value of
$G_{l-1}(\cdot,\textbf{s})$ at this point is greater than or equal to
the minimum of $G_l(\cdot,\textbf{s})$ over the region $\tilde{\cal{A}}^{\textbf{d}}(\hat{\textbf{x}},\textbf{s})$. From
(\ref{Eq:mp:llu_conv}), we conclude $V_{l-1}(\cdot, \textbf{s})$ is convex. This is a similar argument to the one used by Evans to show convexity in \cite{evans}.
\begin{figure}[htbp]
\centering \includegraphics[width=4.5in]{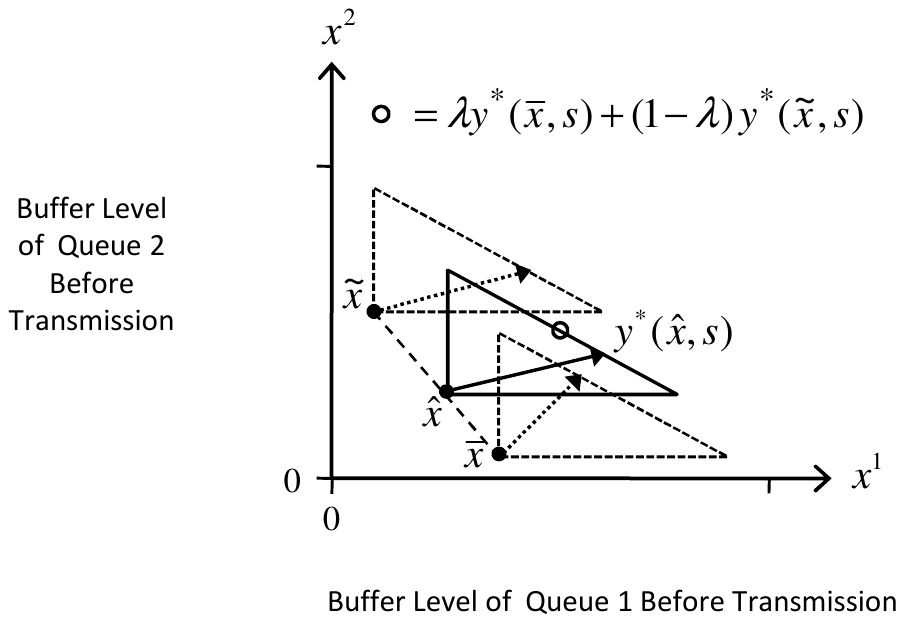} 
\caption{Diagram showing $\lambda 
\textbf{y}^*(\bar{\textbf{x}},\textbf{s})
+ (1-\lambda) 
\textbf{y}^*(\tilde{\textbf{x}},\textbf{s})
\in \tilde{\cal{A}}^{\textbf{d}}(\hat{\textbf{x}},\textbf{s})$ in the proof of the convexity of $V_{l-1}(\cdot,\textbf{s})$. }\label{Fig:multi_proof:convexity}
\end{figure}

\item[(ii)]
Recall that $V_{l-1}(\textbf{x},\textbf{s}) =
-\textbf{c}_{\textbf{s}}^{\transpose} \textbf{x}
+ \min_{\textbf{y} \in
\tilde{\cal{A}}^{\textbf{d}}(\textbf{x},\textbf{s})} \left\{G_{l-1}(\textbf{y},\textbf{s})\right\}$. The first term, $-\textbf{c}_{\textbf{s}}^{\transpose} \textbf{x}$, is clearly supermodular in $\textbf{x}$, so it suffices to show that the second term, $\min_{\textbf{y} \in
\tilde{\cal{A}}^{\textbf{d}}(\textbf{x},\textbf{s})} \left\{G_{l-1}(\textbf{y},\textbf{s})\right\}$, is also supermodular in $\textbf{x}$. Let $\bar{\textbf{x}}, \tilde{\textbf{x}} \in \Real^2$ be arbitrary. We want to show:
\begin{align}\label{Eq:mp:superwts}
&\min_{\textbf{y} \in
\tilde{\cal{A}}^{\textbf{d}}(\bar{\textbf{x}},\textbf{s})} \left\{G_{l-1}(\textbf{y},\textbf{s})\right\}
+\min_{\textbf{y} \in
\tilde{\cal{A}}^{\textbf{d}}(\tilde{\textbf{x}},\textbf{s})} \left\{G_{l-1}(\textbf{y},\textbf{s})\right\} \nonumber \\
&\leq~
\min_{\textbf{y} \in
\tilde{\cal{A}}^{\textbf{d}}(\bar{\textbf{x}} \wedge \tilde{\textbf{x}},\textbf{s})} \left\{G_{l-1}(\textbf{y},\textbf{s})\right\}
+\min_{\textbf{y} \in
\tilde{\cal{A}}^{\textbf{d}}(\bar{\textbf{x}} \vee \tilde{\textbf{x}},\textbf{s})} \left\{G_{l-1}(\textbf{y},\textbf{s})\right\}.
\end{align}
\noindent If $\bar{\textbf{x}}$ and $\tilde{\textbf{x}}$ are
comparable (i.e., $\tilde{x}^1 \geq \bar{x}^1$ and $\tilde{x}^2
\geq \bar{x}^2$ or $\tilde{x}^1 \leq \bar{x}^1$ and $\tilde{x}^2
\leq \bar{x}^2$), then (\ref{Eq:mp:superwts}) is
trivial. So we assume they are not comparable, and also assume
without loss of generality that $\bar{x}^1<\tilde{x}^1$ and
$\tilde{x}^2<\bar{x}^2$. We begin with a quick lemma.
\begin{lemma}
There exist optimal buffer levels after transmission in slot $l-1$, $\textbf{y}^*(\bar{\textbf{x}} \wedge \tilde{\textbf{x}},\textbf{s})$ and $\textbf{y}^*(\bar{\textbf{x}} \vee \tilde{\textbf{x}},\textbf{s})$, such that $\textbf{y}^*(\bar{\textbf{x}} \wedge \tilde{\textbf{x}},\textbf{s}) \nsucc \textbf{y}^*(\bar{\textbf{x}} \vee \tilde{\textbf{x}},\textbf{s})$; i.e., such that ${y}^{*^1}(\bar{\textbf{x}} \wedge \tilde{\textbf{x}},\textbf{s}) \leq {y}^{*^1}(\bar{\textbf{x}} \vee \tilde{\textbf{x}},\textbf{s})$ or ${y}^{*^2}(\bar{\textbf{x}} \wedge \tilde{\textbf{x}},\textbf{s}) \leq {y}^{*^2}(\bar{\textbf{x}} \vee \tilde{\textbf{x}},\textbf{s})$.
\end{lemma}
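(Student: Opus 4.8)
The plan is to establish the lemma by a direct exchange (swap) argument that exploits two features of the action sets $\tilde{\cal A}^{\textbf{d}}(\cdot,\textbf{s})$ defined in \eqref{Eq:two:actionspace}: the ordering $\bar{\textbf{x}} \wedge \tilde{\textbf{x}} \preceq \bar{\textbf{x}} \vee \tilde{\textbf{x}}$ between the two starting points, and the strict positivity of the per-unit power costs $\textbf{c}_{\textbf{s}} \succ \textbf{0}$. Write $\textbf{m} := \bar{\textbf{x}} \wedge \tilde{\textbf{x}}$ and $\textbf{M} := \bar{\textbf{x}} \vee \tilde{\textbf{x}}$, and let $\textbf{a}$ be any minimizer of $G_{l-1}(\cdot,\textbf{s})$ over $\tilde{\cal A}^{\textbf{d}}(\textbf{m},\textbf{s})$ and $\textbf{b}$ any minimizer over $\tilde{\cal A}^{\textbf{d}}(\textbf{M},\textbf{s})$. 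If already $\textbf{a} \nsucc \textbf{b}$, these choices satisfy the claim and there is nothing more to prove, so I would assume the only remaining case, $\textbf{a} \succ \textbf{b}$, and show that I may then select optimizers with the roles of $\textbf{a}$ and $\textbf{b}$ interchanged.

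The heart of the argument is to show that, under the assumption $\textbf{a} \succ \textbf{b}$, the vector $\textbf{a}$ is in fact feasible for the join starting point $\textbf{M}$ and the vector $\textbf{b}$ is feasible for the meet starting point $\textbf{m}$. For the underflow constraints this is immediate: $\textbf{a} \succ \textbf{b} \succeq \textbf{d} \vee \textbf{M}$ gives $\textbf{a} \succeq \textbf{d} \vee \textbf{M}$, while $\textbf{b} \succeq \textbf{d} \vee \textbf{M} \succeq \textbf{d} \vee \textbf{m}$ (using $\textbf{M} \succeq \textbf{m}$) gives $\textbf{b} \succeq \textbf{d} \vee \textbf{m}$. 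The power constraints are where the two structural features enter. Since $\textbf{M} \succeq \textbf{m}$ and $\textbf{c}_{\textbf{s}} \succ \textbf{0}$, one has $\textbf{c}_{\textbf{s}}^{\transpose}[\textbf{a}-\textbf{M}] \leq \textbf{c}_{\textbf{s}}^{\transpose}[\textbf{a}-\textbf{m}] \leq P$, so $\textbf{a} \in \tilde{\cal A}^{\textbf{d}}(\textbf{M},\textbf{s})$; and since $\textbf{b} \prec \textbf{a}$ and $\textbf{c}_{\textbf{s}} \succ \textbf{0}$, one has $\textbf{c}_{\textbf{s}}^{\transpose}[\textbf{b}-\textbf{m}] < \textbf{c}_{\textbf{s}}^{\transpose}[\textbf{a}-\textbf{m}] \leq P$, so $\textbf{b} \in \tilde{\cal A}^{\textbf{d}}(\textbf{m},\textbf{s})$.

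Once feasibility in both directions is in hand, I would close the argument by optimality: because $\textbf{b}$ is feasible for the meet and $\textbf{a}$ minimizes over the meet, $G_{l-1}(\textbf{a},\textbf{s}) \leq G_{l-1}(\textbf{b},\textbf{s})$; because $\textbf{a}$ is feasible for the join and $\textbf{b}$ minimizes over the join, $G_{l-1}(\textbf{b},\textbf{s}) \leq G_{l-1}(\textbf{a},\textbf{s})$. Hence $G_{l-1}(\textbf{a},\textbf{s}) = G_{l-1}(\textbf{b},\textbf{s})$, and both $\textbf{a}$ and $\textbf{b}$ are simultaneously optimal for the meet and for the join. I would then simply relabel, taking $\textbf{y}^*(\textbf{m},\textbf{s}) := \textbf{b}$ and $\textbf{y}^*(\textbf{M},\textbf{s}) := \textbf{a}$; since $\textbf{b} \prec \textbf{a}$, these legitimate optimizers satisfy $\textbf{y}^*(\textbf{m},\textbf{s}) \preceq \textbf{y}^*(\textbf{M},\textbf{s})$, which certainly implies $\textbf{y}^*(\textbf{m},\textbf{s}) \nsucc \textbf{y}^*(\textbf{M},\textbf{s})$.

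I expect the only delicate point to be the verification of the power constraint in the swap, specifically that $\textbf{b}$ remains within budget for the smaller starting point $\textbf{m}$, which carries the tighter effective budget. This is exactly where the strict inequality $\textbf{b} \prec \textbf{a}$ coming from the case assumption $\textbf{a} \succ \textbf{b}$ is indispensable, together with positivity of $\textbf{c}_{\textbf{s}}$; everything else is routine bookkeeping with the lattice operations $\wedge$ and $\vee$. A minor caveat worth recording is that the argument only produces a suitable pair of optimizers and does not assert uniqueness, which is all that the construction of \eqref{Eq:two:keyStep} in the supermodularity step requires.
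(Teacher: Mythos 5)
Your proof is correct and is essentially the paper's own argument: the two cross-feasibility checks (underflow via $\textbf{a} \succ \textbf{b} \succeq \textbf{d} \vee \textbf{M}$ and $\textbf{b} \succeq \textbf{d} \vee \textbf{M} \succeq \textbf{d} \vee \textbf{m}$, power via $\textbf{c}_{\textbf{s}}^{\transpose}[\textbf{a}-\textbf{M}] \leq \textbf{c}_{\textbf{s}}^{\transpose}[\textbf{a}-\textbf{m}] \leq P$ and $\textbf{c}_{\textbf{s}}^{\transpose}[\textbf{b}-\textbf{m}] \leq \textbf{c}_{\textbf{s}}^{\transpose}[\textbf{a}-\textbf{m}] \leq P$) are exactly the paper's equations for this lemma. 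The only difference is presentational: the paper wraps the argument as a proof by contradiction (the join's optimizer turns out to be optimal for the meet, contradicting the assumption that every meet-optimizer strictly dominates it), whereas you conclude directly by swapping labels, which is equally valid.
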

\begin{proof}
Fix a choice of $\textbf{y}^*(\bar{\textbf{x}} \vee \tilde{\textbf{x}},\textbf{s})$ such that $G_{l-1}\bigl(\textbf{y}^*\left(\bar{\textbf{x}} \vee \tilde{\textbf{x}},\textbf{s}\right),\textbf{s}\bigr) = \min\limits_{\textbf{y} \in
\tilde{\cal{A}}^{\textbf{d}}(\bar{\textbf{x}} \vee \tilde{\textbf{x}},\textbf{s})} \left\{G_{l-1}(\textbf{y},\textbf{s})\right\}$. Assume that for all optimal choices of $\textbf{y}^*(\bar{\textbf{x}} \wedge \tilde{\textbf{x}},\textbf{s})$, we have
$\textbf{y}^*(\bar{\textbf{x}} \wedge \tilde{\textbf{x}},\textbf{s}) \succ \textbf{y}^*(\bar{\textbf{x}} \vee \tilde{\textbf{x}},\textbf{s})$. Fix one such choice of $\textbf{y}^*(\bar{\textbf{x}} \wedge \tilde{\textbf{x}},\textbf{s})$, and we have:
\begin{eqnarray}\label{Eq:mp:l1_1}
\textbf{y}^*(\bar{\textbf{x}} \wedge \tilde{\textbf{x}},\textbf{s}) \succ \textbf{y}^*(\bar{\textbf{x}} \vee \tilde{\textbf{x}},\textbf{s}) \succeq \textbf{d} \vee (\bar{\textbf{x}} \vee \tilde{\textbf{x}})~. 
\end{eqnarray}
Further, $\textbf{y}^*(\bar{\textbf{x}} \wedge \tilde{\textbf{x}},\textbf{s}) \in \tilde{\cal{A}}^{\textbf{d}}(\bar{\textbf{x}} \wedge \tilde{\textbf{x}},\textbf{s})$ implies $\textbf{c}_{\textbf{s}}^{\transpose} 
\left[\textbf{y}^*(\bar{\textbf{x}} \wedge \tilde{\textbf{x}},\textbf{s})-\bar{\textbf{x}} \wedge \tilde{\textbf{x}}\right] \leq P$, and thus:
\begin{eqnarray}\label{Eq:mp:l1_2}
\textbf{c}_{\textbf{s}}^{\transpose} 
\left[\textbf{y}^*(\bar{\textbf{x}} \wedge \tilde{\textbf{x}},\textbf{s})-\bar{\textbf{x}} \vee \tilde{\textbf{x}} \right] \leq \textbf{c}_{\textbf{s}}^{\transpose} 
\left[\textbf{y}^*(\bar{\textbf{x}} \wedge \tilde{\textbf{x}},\textbf{s})-\bar{\textbf{x}} \wedge \tilde{\textbf{x}}\right] \leq P~.
\end{eqnarray}
Equations (\ref{Eq:mp:l1_1}) and (\ref{Eq:mp:l1_2}) imply $\textbf{y}^*(\bar{\textbf{x}} \wedge \tilde{\textbf{x}},\textbf{s}) \in \tilde{\cal{A}}^{\textbf{d}}(\bar{\textbf{x}} \vee \tilde{\textbf{x}},\textbf{s})$, and thus:
\begin{eqnarray}\label{Eq:mp:l1_3}
G_{l-1}\bigl(\textbf{y}^*\left(\bar{\textbf{x}} \vee \tilde{\textbf{x}},\textbf{s}\right),\textbf{s}\bigr) = \min_{\textbf{y} \in
\tilde{\cal{A}}^{\textbf{d}}(\bar{\textbf{x}} \vee \tilde{\textbf{x}},\textbf{s})} \left\{G_{l-1}(\textbf{y},\textbf{s})\right\} \leq G_{l-1}\bigr(\textbf{y}^*\left(\bar{\textbf{x}} \wedge \tilde{\textbf{x}},\textbf{s}\right),\textbf{s}\bigr)~.
\end{eqnarray}
However, we also have:
\begin{eqnarray}\label{Eq:mp:l1_4}
\textbf{y}^*(\bar{\textbf{x}} \vee \tilde{\textbf{x}},\textbf{s}) \succeq
\textbf{d} \vee (\bar{\textbf{x}} \vee \tilde{\textbf{x}})
\succeq \textbf{d} \vee (\bar{\textbf{x}} \wedge \tilde{\textbf{x}})~,
\end{eqnarray}
and
\begin{eqnarray}\label{Eq:mp:l1_5}
\textbf{c}_{\textbf{s}}^{\transpose} 
\left[\textbf{y}^*(\bar{\textbf{x}} \vee \tilde{\textbf{x}},\textbf{s})-\bar{\textbf{x}} \wedge \tilde{\textbf{x}}\right] \leq \textbf{c}_{\textbf{s}}^{\transpose} 
\left[\textbf{y}^*(\bar{\textbf{x}} \wedge \tilde{\textbf{x}},\textbf{s})-\bar{\textbf{x}} \wedge \tilde{\textbf{x}} \right] \leq P~.
\end{eqnarray}
Equations (\ref{Eq:mp:l1_4}) and (\ref{Eq:mp:l1_5}) imply $\textbf{y}^*(\bar{\textbf{x}} \vee \tilde{\textbf{x}},\textbf{s}) \in \tilde{\cal{A}}^{\textbf{d}}(\bar{\textbf{x}} \wedge \tilde{\textbf{x}},\textbf{s})$, which, in combination with (\ref{Eq:mp:l1_3}), implies it is optimal to move from $\bar{\textbf{x}} \wedge \tilde{\textbf{x}}$ to $\textbf{y}^*(\bar{\textbf{x}} \vee \tilde{\textbf{x}},\textbf{s})$, contradicting the assumption that $\textbf{y}^*(\bar{\textbf{x}} \wedge \tilde{\textbf{x}},\textbf{s}) \succ \textbf{y}^*(\bar{\textbf{x}} \vee \tilde{\textbf{x}},\textbf{s})$ for all possible choices of $\textbf{y}^*(\bar{\textbf{x}} \wedge \tilde{\textbf{x}},\textbf{s})$.
\end{proof}
\medskip

Now let $\textbf{y}^*(\bar{\textbf{x}} \wedge \tilde{\textbf{x}},\textbf{s})$ and $\textbf{y}^*(\bar{\textbf{x}} \vee \tilde{\textbf{x}},\textbf{s})$ be arbitrary optimal actions such that $\textbf{y}^*(\bar{\textbf{x}} \wedge \tilde{\textbf{x}},\textbf{s}) \nsucc \textbf{y}^*(\bar{\textbf{x}} \vee \tilde{\textbf{x}},\textbf{s})$. We show (\ref{Eq:mp:superwts}) by considering two
exhaustive cases.
\medskip

\noindent \underline{Case 1}: $\textbf{y}^*(\bar{\textbf{x}} \vee \tilde{\textbf{x}},\textbf{s}) \succeq \textbf{y}^*(\bar{\textbf{x}} \wedge \tilde{\textbf{x}},\textbf{s})$ \\
We start with another lemma.
\begin{lemma} \label{Le:mp:parallelogram}
Let $f:[d^1,\infty) \times [d^2,\infty) 
\rightarrow \Real$ be convex and supermodular, let $\sigma, \beta \in [0,1]$ be arbitrary, and let $\textbf{z} = (z_1,z_2) \preceq (\hat{z}_1,\hat{z}_2) = \hat{\textbf{z}}$. Define $\textbf{z}^{\lambda_1,\lambda_2}:=\Bigl(\lambda_1 \hat{z}_1 + (1-\lambda_1) z_1, \lambda_2 \hat{z}_2 + (1-\lambda_2) z_2\Bigr)$. Then
\begin{eqnarray}\label{Eq:sup_lem}
f(\textbf{z}) + f(\hat{\textbf{z}}) \geq  f(\textbf{z}^{\sigma,\beta}) + f(\textbf{z}^{1-\sigma,1-\beta})~.
\end{eqnarray}
\end{lemma}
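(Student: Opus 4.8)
The plan is to reduce the desired inequality \eqref{Eq:sup_lem} to the two defining properties of $f$ — convexity and supermodularity — by writing both ``interior'' points $\textbf{z}^{\sigma,\beta}$ and $\textbf{z}^{1-\sigma,1-\beta}$ as convex combinations of the four corners of the rectangle spanned by $\textbf{z}$ and $\hat{\textbf{z}}$. Introduce the two mixed corners $\textbf{r} := (z_1,\hat{z}_2)$ and $\textbf{t} := (\hat{z}_1,z_2)$ alongside $\textbf{z}=(z_1,z_2)$ and $\hat{\textbf{z}}=(\hat{z}_1,\hat{z}_2)$. Since $\textbf{z} \preceq \hat{\textbf{z}}$ and both lie in $[d^1,\infty) \times [d^2,\infty)$, all four corners lie in the domain of $f$, so every quantity below is well-defined, and no case analysis on whether $\sigma,\beta$ exceed $\tfrac12$ is needed.

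The first step is to verify the two coordinate-by-coordinate identities
\[
\textbf{z}^{\sigma,\beta} = (1-\sigma)(1-\beta)\,\textbf{z} + \sigma\beta\,\hat{\textbf{z}} + (1-\sigma)\beta\,\textbf{r} + \sigma(1-\beta)\,\textbf{t},
\]
\[
\textbf{z}^{1-\sigma,1-\beta} = \sigma\beta\,\textbf{z} + (1-\sigma)(1-\beta)\,\hat{\textbf{z}} + \sigma(1-\beta)\,\textbf{r} + (1-\sigma)\beta\,\textbf{t}.
\]
In each identity the four coefficients are nonnegative and sum to one, so each interior point is a genuine convex combination of the corners. Applying the convexity of $f$ to each identity and adding the two resulting bounds, the coefficients regroup cleanly: writing $\mu := (1-\sigma)(1-\beta) + \sigma\beta$ and $\nu := (1-\sigma)\beta + \sigma(1-\beta)$, one finds that $f(\textbf{z})$ and $f(\hat{\textbf{z}})$ each receive total weight $\mu$ while $f(\textbf{r})$ and $f(\textbf{t})$ each receive total weight $\nu$, giving
\[
f(\textbf{z}^{\sigma,\beta}) + f(\textbf{z}^{1-\sigma,1-\beta}) \leq \mu\bigl[f(\textbf{z}) + f(\hat{\textbf{z}})\bigr] + \nu\bigl[f(\textbf{r}) + f(\textbf{t})\bigr],
\]
where a one-line computation yields $\mu + \nu = 1$ with $\mu,\nu \geq 0$.

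The final step invokes supermodularity exactly once: the incomparable pair $\textbf{r},\textbf{t}$ satisfies $\textbf{r} \wedge \textbf{t} = \textbf{z}$ and $\textbf{r} \vee \textbf{t} = \hat{\textbf{z}}$, so supermodularity gives $f(\textbf{r}) + f(\textbf{t}) \leq f(\textbf{z}) + f(\hat{\textbf{z}})$. Substituting this into the previous display and using $\mu + \nu = 1$ collapses the right-hand side to $f(\textbf{z}) + f(\hat{\textbf{z}})$, which is precisely \eqref{Eq:sup_lem}. The only real obstacle is discovering the two convex-combination representations — in particular, recognizing that the anti-diagonal corners $\textbf{r},\textbf{t}$ carry the \emph{same} total weight $\nu$ for both interior points, which is exactly what lets supermodularity be applied in a single stroke at the end; once these representations are established, the argument is a two-line use of convexity followed by one use of supermodularity.
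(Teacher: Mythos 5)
Your proof is correct, and it takes a genuinely different route from the paper's. You express both interior points as two-dimensional convex combinations of the four corners of the rectangle, $\textbf{z}$, $\hat{\textbf{z}}$, $\textbf{r}=(z_1,\hat{z}_2)$, $\textbf{t}=(\hat{z}_1,z_2)$ (your two identities check out coordinate-by-coordinate, and the coefficients are nonnegative and sum to one), apply Jensen's inequality twice, and observe that after adding, the anti-diagonal corners carry a common weight $\nu = (1-\sigma)\beta+\sigma(1-\beta)$ with $\mu+\nu=1$; a single application of supermodularity to the pair $\textbf{r},\textbf{t}$ (whose meet and join are exactly $\textbf{z}$ and $\hat{\textbf{z}}$) then finishes the argument. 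The paper instead argues in two steps: Step 1 handles $\sigma,\beta\leq\tfrac12$ (assuming WLOG $\sigma\leq\beta$) by applying convexity along the main diagonal to get $f(\textbf{z})+f(\hat{\textbf{z}}) \geq f(\textbf{z}^{\sigma,\sigma})+f(\textbf{z}^{1-\sigma,1-\sigma})$, then convexity along a vertical line and supermodularity on two interior points; Step 2 reduces general $\sigma,\beta$ to Step 1 via $\hat{\sigma}:=\min\{\sigma,1-\sigma\}$, $\hat{\beta}:=\min\{\beta,1-\beta\}$, using the identities $\textbf{z}^{\sigma,\beta}\wedge\textbf{z}^{1-\sigma,1-\beta}=\textbf{z}^{\hat{\sigma},\hat{\beta}}$ and $\textbf{z}^{\sigma,\beta}\vee\textbf{z}^{1-\sigma,1-\beta}=\textbf{z}^{1-\hat{\sigma},1-\hat{\beta}}$ together with a second supermodularity application. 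What your approach buys is the elimination of all case analysis and WLOG reductions: one algebraic decomposition, two uses of convexity, one use of supermodularity applied to fixed corner points rather than to $\sigma,\beta$-dependent interior points. What the paper's approach buys is a more geometric, picture-driven argument (it is accompanied by a figure of the parallelogram of points), where each inequality corresponds to a visible move along a line segment of the rectangle; but as a piece of mathematics your version is shorter and arguably the cleaner proof of the two.
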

\begin{proof}
~ \\
\noindent \underline{Step 1}: Assume $\sigma,\beta \leq \frac{1}{2}$. Assume without loss of generality that $\sigma \leq \beta$.
By the convexity of $f(\cdot)$, we have:
\begin{eqnarray} \label{Eq:mp:par_1}
f(\textbf{z}) + f(\hat{\textbf{z}}) \geq  f(\textbf{z}^{\sigma,\sigma}) + f(\textbf{z}^{1-\sigma,1-\sigma})~,
\end{eqnarray}
and
\begin{eqnarray} \label{Eq:mp:par_2}
f(\textbf{z}^{1-\sigma,1-\sigma}) + f({\textbf{z}}^{1-\sigma,\sigma}) \geq  f(\textbf{z}^{1-\sigma,\beta}) + f(\textbf{z}^{1-\sigma,1-\beta})~.
\end{eqnarray}
By the supermodularity of $f(\cdot)$, we have:
\begin{eqnarray} \label{Eq:mp:par_3}
f(\textbf{z}^{1-\sigma,\beta}) + f({\textbf{z}}^{\sigma,\sigma}) \geq  f(\textbf{z}^{\sigma,\beta}) + f(\textbf{z}^{1-\sigma,\sigma})~.
\end{eqnarray}
Figure \ref{Fig:multi_proof:parallelogram} shows these relationships. Combining (\ref{Eq:mp:par_1})-(\ref{Eq:mp:par_3}), we have:
\begin{eqnarray*}
f(\textbf{z}) + f(\hat{\textbf{z}}) &\geq& f(\textbf{z}^{\sigma,\sigma}) + f(\textbf{z}^{1-\sigma,1-\sigma}) \\
& \geq & f(\textbf{z}^{\sigma,\sigma}) + f(\textbf{z}^{1-\sigma,\beta}) - f(\textbf{z}^{1-\sigma,\sigma}) + f(\textbf{z}^{1-\sigma,1-\beta}) \\
& \geq & f(\textbf{z}^{\sigma,\beta}) + f(\textbf{z}^{1-\sigma,1-\beta})~.
\end{eqnarray*}
\begin{figure}[htbp]
\centering \includegraphics[width=4.5in]{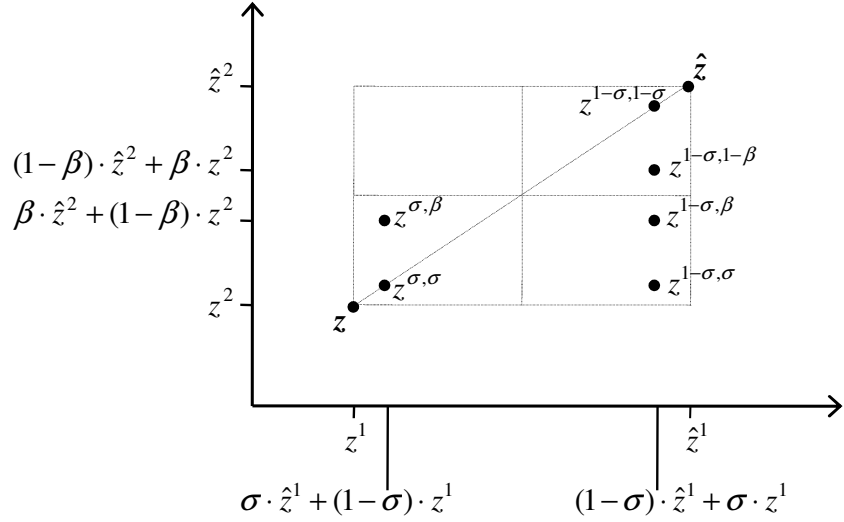} 
\caption{Diagram of the points referred to in Step 1 of the proof of Lemma \ref{Le:mp:parallelogram}}\label{Fig:multi_proof:parallelogram}
\end{figure}

\noindent \underline{Step 2}: Now let $\sigma,\beta \in [0,1]$, and define $\hat{\sigma}:=\min\left\{\sigma,1-\sigma\right\}$ and $\hat{\beta}:=\min\left\{\beta,1-\beta\right\}$. Then $\hat{\sigma},\hat{\beta} \leq \frac{1}{2}$, so by Step 1, we have:
\begin{eqnarray} \label{Eq:mp:par_4}
f(\textbf{z}) + f(\hat{\textbf{z}}) \geq  f(\textbf{z}^{\hat{\sigma},\hat{\beta}}) + f(\textbf{z}^{1-\hat{\sigma},1-\hat{\beta}})~.
\end{eqnarray}
Note that $\textbf{z}^{\sigma,\beta} \wedge \textbf{z}^{1-\sigma,1-\beta} = \textbf{z}^{\hat{\sigma},\hat{\beta}}$, and $\textbf{z}^{\sigma,\beta} \vee \textbf{z}^{1-\sigma,1-\beta} = \textbf{z}^{1-\hat{\sigma},1-\hat{\beta}}$, so by the supermodularity of $f(\cdot)$, we have:
\begin{eqnarray} \label{Eq:mp:par_5}
f(\textbf{z}^{\hat{\sigma},\hat{\beta}}) + f(\textbf{z}^{1-\hat{\sigma},1-\hat{\beta}}) \geq f(\textbf{z}^{\sigma,\beta}) + f(\textbf{z}^{1-\sigma,1-\beta})~.
\end{eqnarray}
Combining (\ref{Eq:mp:par_4}) and (\ref{Eq:mp:par_5}) yields the desire result, (\ref{Eq:sup_lem}).
\end{proof}

Next, define the following points, shown in Figure \ref{Fig:multi_proof:case1}:
\begin{eqnarray*}
\begin{array}{l}
\bar{\textbf{y}} := \left(
\begin{array}{l}
\bar{x}^1+\max\left\{y^{*^1}\left(\bar{\textbf{x}} \wedge \tilde{\textbf{x}},\textbf{s} \right)-\bar{x}^1, y^{*^1}\left(\bar{\textbf{x}} \vee \tilde{\textbf{x}},\textbf{s} \right)-\tilde{x}^1 \right\}, \\
\bar{x}^2+\min\left\{y^{*^2}\left(\bar{\textbf{x}} \wedge \tilde{\textbf{x}},\textbf{s} \right)-\tilde{x}^2, y^{*^2}\left(\bar{\textbf{x}} \vee \tilde{\textbf{x}},\textbf{s} \right)-\bar{x}^2 \right\}
\end{array}
\right),
\hbox{ and } \\
\\
\tilde{\textbf{y}} := \left(
\begin{array}{l}
\tilde{x}^1+\min\left\{y^{*^1}\left(\bar{\textbf{x}} \wedge \tilde{\textbf{x}},\textbf{s} \right)-\bar{x}^1, y^{*^1}\left(\bar{\textbf{x}} \vee \tilde{\textbf{x}},\textbf{s} \right)-\tilde{x}^1 \right\}, \\
\tilde{x}^2+\max\left\{y^{*^2}\left(\bar{\textbf{x}} \wedge \tilde{\textbf{x}},\textbf{s} \right)-\tilde{x}^2, y^{*^2}\left(\bar{\textbf{x}} \vee \tilde{\textbf{x}},\textbf{s} \right)-\bar{x}^2 \right\}
\end{array}
\right).
\end{array}
\end{eqnarray*}
Note that $\bar{\textbf{y}} \succeq \textbf{d} \vee \bar{\textbf{x}}$
and $\tilde{\textbf{y}} \succeq 
\textbf{d} \vee \tilde{\textbf{x}}$.
Furthermore, we have:
\begin{eqnarray*}
\textbf{c}_{\textbf{s}}^{\transpose} 
\left(\bar{\textbf{y}}-\bar{\textbf{x}}\right) &=& \textbf{c}_{\textbf{s}}^{\transpose} 
\left(
\begin{array}{l}
\max\left\{y^{*^1}\left(\bar{\textbf{x}} \wedge \tilde{\textbf{x}},\textbf{s} \right)-\bar{x}^1, y^{*^1}\left(\bar{\textbf{x}} \vee \tilde{\textbf{x}},\textbf{s} \right)-\tilde{x}^1 \right\}, \\
\min\left\{y^{*^2}\left(\bar{\textbf{x}} \wedge \tilde{\textbf{x}},\textbf{s} \right)-\tilde{x}^2, y^{*^2}\left(\bar{\textbf{x}} \vee \tilde{\textbf{x}},\textbf{s} \right)-\bar{x}^2 \right\}
\end{array}
\right) \\
&\leq& \max \left\{
\begin{array}{l}
\textbf{c}_{\textbf{s}}^{\transpose} 
\left(y^{*^1}\left(\bar{\textbf{x}} \wedge \tilde{\textbf{x}},\textbf{s} \right)-\bar{x}^1,y^{*^2}\left(\bar{\textbf{x}} \wedge \tilde{\textbf{x}},\textbf{s} \right)-\tilde{x}^2 \right), \\
\textbf{c}_{\textbf{s}}^{\transpose} 
\left(y^{*^1}\left(\bar{\textbf{x}} \vee \tilde{\textbf{x}},\textbf{s} \right)-\tilde{x}^1, y^{*^2}\left(\bar{\textbf{x}} \vee \tilde{\textbf{x}},\textbf{s} \right)-\bar{x}^2 \right)
\end{array}
\right\} \\
&=& \max \Bigl\{\textbf{c}_{\textbf{s}}^{\transpose} 
\Bigl(\textbf{y}^*\left(\bar{\textbf{x}} \wedge \tilde{\textbf{x}},\textbf{s}\right)-\left(\bar{\textbf{x}} \wedge \tilde{\textbf{x}}\right) \Bigr),\textbf{c}_{\textbf{s}}^{\transpose} 
\Bigl(\textbf{y}^*\left(\bar{\textbf{x}} \vee \tilde{\textbf{x}},\textbf{s}\right)-\left(\bar{\textbf{x}} \vee \tilde{\textbf{x}}\right) \Bigr) \Bigr\} \leq P.
\end{eqnarray*}
By a similar argument, $\textbf{c}_{\textbf{s}}^{\transpose} 
\left(\tilde{\textbf{y}}-\tilde{\textbf{x}}\right) \leq P$, and thus $\bar{\textbf{y}} \in \tilde{\cal{A}}^{\textbf{d}}\left(\bar{\textbf{x}},\textbf{s}\right)$, and $\tilde{\textbf{y}} \in \tilde{\cal{A}}^{\textbf{d}}\left(\tilde{\textbf{x}},\textbf{s}\right)$. So we have:
\begin{eqnarray} \label{Eq:mp:case1_1}
\min_{\textbf{y} \in
\tilde{\cal{A}}^{\textbf{d}}(\bar{\textbf{x}},\textbf{s})} \left\{G_{l-1}(\textbf{y},\textbf{s})\right\}
+\min_{\textbf{y} \in
\tilde{\cal{A}}^{\textbf{d}}(\tilde{\textbf{x}},\textbf{s})} \left\{G_{l-1}(\textbf{y},\textbf{s})\right\}
\leq G_{l-1}(\bar{\textbf{y}},\textbf{s}) + G_{l-1}(\tilde{\textbf{y}},\textbf{s})~.
\end{eqnarray}
\begin{figure}[htbp]
\centering \includegraphics[width=4.5in]{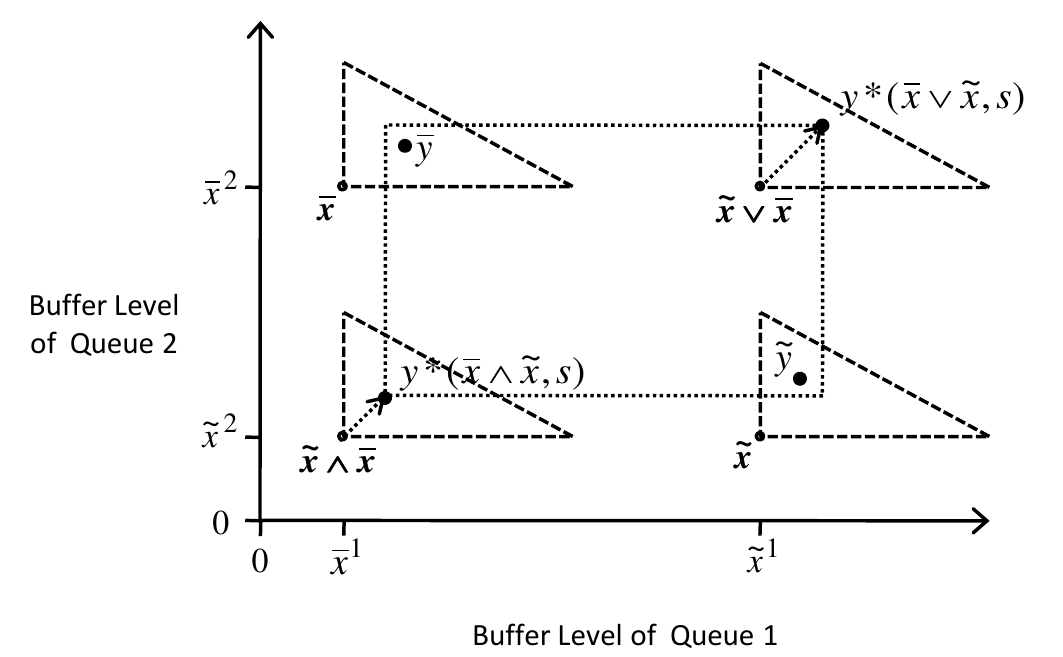} 
\caption{Construction of feasible points $\bar{\textbf{y}}$ and $\tilde{\textbf{y}}$ in Case 1 of the proof of supermodularity of $V_{l-1}(\cdot,\textbf{s})$.}\label{Fig:multi_proof:case1}
\end{figure}

Now define\footnote{If $y^{*^1}(\bar{\textbf{x}} \vee \tilde{\textbf{x}}, \textbf{s})-y^{*^1}(\bar{\textbf{x}} \wedge \tilde{\textbf{x}}, \textbf{s})=0$, let $\sigma$ be arbitrary in $[0,1]$. Similarly, if $y^{*^2}(\bar{\textbf{x}} \vee \tilde{\textbf{x}}, \textbf{s})-y^{*^2}(\bar{\textbf{x}} \wedge \tilde{\textbf{x}}, \textbf{s})=0$, let $\beta$ be arbitrary in $[0,1]$.}:
\begin{eqnarray*}
\begin{array}{l}
\sigma:= \frac{y^{*^1}(\bar{\textbf{x}} \vee \tilde{\textbf{x}}, \textbf{s})-\tilde{y}^1}{y^{*^1}(\bar{\textbf{x}} \vee \tilde{\textbf{x}}, \textbf{s})-y^{*^1}(\bar{\textbf{x}} \wedge \tilde{\textbf{x}}, \textbf{s})}~, \hbox{ and }\\
\\
\beta:=\frac{y^{*^2}(\bar{\textbf{x}} \vee \tilde{\textbf{x}}, \textbf{s})-\tilde{y}^2}{y^{*^2}(\bar{\textbf{x}} \vee \tilde{\textbf{x}}, \textbf{s})-y^{*^2}(\bar{\textbf{x}} \wedge \tilde{\textbf{x}}, \textbf{s})}~.
\end{array}
\end{eqnarray*}
Rearranging the definitions of $\sigma$ and $\beta$ yields:
\begin{eqnarray*}
\tilde{\textbf{y}} = \Bigl(
(1-\sigma) \cdot y^{*^1}(\bar{\textbf{x}} \vee \tilde{\textbf{x}}, \textbf{s}) + \sigma \cdot y^{*^1}(\bar{\textbf{x}} \wedge \tilde{\textbf{x}}, \textbf{s}),
(1-\beta) \cdot y^{*^2}(\bar{\textbf{x}} \vee \tilde{\textbf{x}}, \textbf{s}) + \beta \cdot y^{*^2}(\bar{\textbf{x}} \wedge \tilde{\textbf{x}}, \textbf{s})
\Bigr)~.
\end{eqnarray*}
It is also straightforward to check that:
\begin{eqnarray*}
\bar{\textbf{y}} = \Bigl(
\sigma \cdot y^{*^1}(\bar{\textbf{x}} \vee \tilde{\textbf{x}}, \textbf{s}) + (1-\sigma) \cdot y^{*^1}(\bar{\textbf{x}} \wedge \tilde{\textbf{x}}, \textbf{s}),
\beta \cdot y^{*^2}(\bar{\textbf{x}} \vee \tilde{\textbf{x}}, \textbf{s}) + (1-\beta) \cdot y^{*^2}(\bar{\textbf{x}} \wedge \tilde{\textbf{x}}, \textbf{s})
\Bigr)~.
\end{eqnarray*}
Note also that
\begin{eqnarray*}
y^{*^1}(\bar{\textbf{x}} \wedge \tilde{\textbf{x}}, \textbf{s}) &=& \min\left\{y^{*^1}(\bar{\textbf{x}} \wedge \tilde{\textbf{x}}, \textbf{s}), y^{*^1}(\bar{\textbf{x}} \wedge \tilde{\textbf{x}}, \textbf{s}) + (\tilde{x}^1-\tilde{x}^2) \right\} \\
&\leq & \min\left\{y^{*^1}(\bar{\textbf{x}} \vee \tilde{\textbf{x}}, \textbf{s}), y^{*^1}(\bar{\textbf{x}} \wedge \tilde{\textbf{x}}, \textbf{s}) + (\tilde{x}^1-\tilde{x}^2) \right\} \\
&=& \tilde{y}^1 \\
& \leq & y^{*^1}(\bar{\textbf{x}} \vee \tilde{\textbf{x}}, \textbf{s})~,
\end{eqnarray*}
and thus, $\sigma \in [0,1]$. Similarly, $y^{*^2}(\bar{\textbf{x}} \wedge \tilde{\textbf{x}}, \textbf{s}) \leq \tilde{y}^2 \leq y^{*^2}(\bar{\textbf{x}} \vee \tilde{\textbf{x}}, \textbf{s})$, and thus, $\beta \in [0,1]$.

Since $G_{l-1}(\cdot,\textbf{s})$ is convex and supermodular, we can now apply Lemma \ref{Le:mp:parallelogram}, with $\textbf{y}^*(\bar{\textbf{x}} \wedge \tilde{\textbf{x}}, \textbf{s})$ playing the role of $\textbf{z}$; $\textbf{y}^*(\bar{\textbf{x}} \vee \tilde{\textbf{x}}, \textbf{s})$ the role of $\hat{\textbf{z}}$; $\bar{\textbf{y}}$ the role of $\textbf{z}^{\sigma,\beta}$; and
$\tilde{\textbf{y}}$ the role of $\textbf{z}^{1-\sigma,1-\beta}$, to get:
\begin{eqnarray}\label{Eq:mp:case1_2}
 G_{l-1}(\bar{\textbf{y}},\textbf{s}) + G_{l-1}(\tilde{\textbf{y}},\textbf{s}) & \leq &
 G_{l-1}\Bigl(\textbf{y}^*(\bar{\textbf{x}} \wedge \tilde{\textbf{x}},\textbf{s}),\textbf{s}\Bigr) + G_{l-1}\Bigl(\textbf{y}^*(\bar{\textbf{x}} \vee \tilde{\textbf{x}},\textbf{s}),\textbf{s}\Bigr) \nonumber \\
&=&
\min_{\textbf{y} \in
\tilde{\cal{A}}^{\textbf{d}}(\bar{\textbf{x}} \wedge \tilde{\textbf{x}},\textbf{s})} \left\{G_{l-1}(\textbf{y},\textbf{s})\right\}
+\min_{\textbf{y} \in
\tilde{\cal{A}}^{\textbf{d}}(\bar{\textbf{x}} \vee \tilde{\textbf{x}},\textbf{s})} \left\{G_{l-1}(\textbf{y},\textbf{s})\right\}.
\end{eqnarray}
Combining equations (\ref{Eq:mp:case1_1}) and (\ref{Eq:mp:case1_2}) yields the desired result, (\ref{Eq:mp:superwts}).

\noindent \underline{Case 2}: $\textbf{y}^*(\bar{\textbf{x}} \vee \tilde{\textbf{x}},\textbf{s}) \nsucceq \textbf{y}^*(\bar{\textbf{x}} \wedge \tilde{\textbf{x}},\textbf{s}) \nsucc \textbf{y}^*(\bar{\textbf{x}} \vee \tilde{\textbf{x}},\textbf{s})$ \\
There are two possibilities for this case. The first possibility is that ${y}^{*^1}(\bar{\textbf{x}} \wedge \tilde{\textbf{x}},\textbf{s}) > {y}^{*^1}(\bar{\textbf{x}} \vee \tilde{\textbf{x}},\textbf{s})$ and ${y}^{*^2}(\bar{\textbf{x}} \wedge \tilde{\textbf{x}},\textbf{s}) \leq {y}^{*^2}(\bar{\textbf{x}} \vee \tilde{\textbf{x}},\textbf{s})$. The second possibility is that ${y}^{*^1}(\bar{\textbf{x}} \wedge \tilde{\textbf{x}},\textbf{s}) \leq {y}^{*^1}(\bar{\textbf{x}} \vee \tilde{\textbf{x}},\textbf{s})$ and ${y}^{*^2}(\bar{\textbf{x}} \wedge \tilde{\textbf{x}},\textbf{s}) > {y}^{*^2}(\bar{\textbf{x}} \vee \tilde{\textbf{x}},\textbf{s})$. We show (\ref{Eq:mp:superwts}) under the first possibility, and a symmetric argument can be used to show (\ref{Eq:mp:superwts}) under the second possibility. We have:
\begin{eqnarray} \label{Eq:mp:case2_1a}
{y}^{*^1}(\bar{\textbf{x}} \wedge \tilde{\textbf{x}},\textbf{s}) > {y}^{*^1}(\bar{\textbf{x}} \vee \tilde{\textbf{x}},\textbf{s}) \geq \max\left\{(\bar{\textbf{x}} \vee \tilde{\textbf{x}})^1,d^1\right\} = \max \left\{\tilde{x}^1,d^1\right\}~,
\end{eqnarray}
\begin{eqnarray} \label{Eq:mp:case2_2a}
{y}^{*^2}(\bar{\textbf{x}} \wedge \tilde{\textbf{x}},\textbf{s}) \geq \max\left\{(\bar{\textbf{x}} \wedge \tilde{\textbf{x}})^2,d^2\right\} = \max\left\{\tilde{x}^2,d^2\right\}~,
\end{eqnarray}
and
\begin{eqnarray} \label{Eq:mp:case2_3a}
\textbf{c}_{\textbf{s}}^{\transpose} 
\Bigl[\textbf{y}^*(\bar{\textbf{x}} \wedge \tilde{\textbf{x}},\textbf{s}) - \tilde{\textbf{x}} \Bigr] \leq \textbf{c}_{\textbf{s}}^{\transpose} 
\Bigl[\textbf{y}^*(\bar{\textbf{x}} \wedge \tilde{\textbf{x}},\textbf{s}) - (\bar{\textbf{x}} \wedge \tilde{\textbf{x}}) \Bigr] \leq P~.
\end{eqnarray}
Equations (\ref{Eq:mp:case2_1a}), (\ref{Eq:mp:case2_2a}), and (\ref{Eq:mp:case2_3a}) imply $\textbf{y}^*(\bar{\textbf{x}} \wedge \tilde{\textbf{x}},\textbf{s}) \in \tilde{\cal{A}}^{\textbf{d}}\left(\tilde{\textbf{x}},\textbf{s}\right)$. If it also happens that $\textbf{y}^*(\bar{\textbf{x}} \vee \tilde{\textbf{x}},\textbf{s}) \in \tilde{\cal{A}}^{\textbf{d}}\left(\bar{\textbf{x}},\textbf{s}\right)$, then we have:
\begin{align*}
& \min_{\textbf{y} \in
\tilde{\cal{A}}^{\textbf{d}}(\bar{\textbf{x}},\textbf{s})} \left\{G_{l-1}(\textbf{y},\textbf{s})\right\}
+\min_{\textbf{y} \in
\tilde{\cal{A}}^{\textbf{d}}(\tilde{\textbf{x}},\textbf{s})} \left\{G_{l-1}(\textbf{y},\textbf{s})\right\} \\
& \leq
G_{l-1}\bigl(\textbf{y}^*(\bar{\textbf{x}} \wedge \tilde{\textbf{x}},\textbf{s}), \textbf{s} \bigr) + G_{l-1}\bigl(\textbf{y}^*(\bar{\textbf{x}} \vee \tilde{\textbf{x}},\textbf{s}), \textbf{s} \bigr) \\
&=
\min_{\textbf{y} \in
\tilde{\cal{A}}^{\textbf{d}}(\bar{\textbf{x}} \wedge \tilde{\textbf{x}},\textbf{s})} \left\{G_{l-1}(\textbf{y},\textbf{s})\right\}
+\min_{\textbf{y} \in
\tilde{\cal{A}}^{\textbf{d}}(\bar{\textbf{x}} \vee \tilde{\textbf{x}},\textbf{s})} \left\{G_{l-1}(\textbf{y},\textbf{s})\right\}.
\end{align*}

Otherwise, define:
\begin{eqnarray*}
\gamma := \frac{\textbf{c}_{\textbf{s}}^{\transpose} 
\bigl[\textbf{y}^*(\bar{\textbf{x}} \vee \tilde{\textbf{x}},\textbf{s}) - \bar{\textbf{x}} \bigr]-P}{\textbf{c}_{\textbf{s}}^{\transpose} 
\bigl[\textbf{y}^*(\bar{\textbf{x}} \vee \tilde{\textbf{x}},\textbf{s})-\textbf{y}^*(\bar{\textbf{x}} \wedge \tilde{\textbf{x}},\textbf{s}) \bigr]}~.
\end{eqnarray*}
From $\textbf{y}^*(\bar{\textbf{x}} \vee \tilde{\textbf{x}},\textbf{s}) \notin \tilde{\cal{A}}^{\textbf{d}}\left(\bar{\textbf{x}},\textbf{s}\right)$ and $\textbf{y}^*(\bar{\textbf{x}} \wedge \tilde{\textbf{x}},\textbf{s}) \in \tilde{\cal{A}}^{\textbf{d}}\left(\bar{\textbf{x}} \wedge \tilde{\textbf{x}},\textbf{s}\right)$, we know:
\begin{eqnarray} \label{Eq:mp:case2_1}
\textbf{c}_{\textbf{s}}^{\transpose} 
\textbf{y}^*(\bar{\textbf{x}} \vee \tilde{\textbf{x}},\textbf{s}) > 
\textbf{c}_{\textbf{s}}^{\transpose} 
\bar{\textbf{x}}+P \geq
\textbf{c}_{\textbf{s}}^{\transpose} 
(\bar{\textbf{x}} \wedge \tilde{\textbf{x}})+P \geq
\textbf{c}_{\textbf{s}}^{\transpose} 
\textbf{y}^*(\bar{\textbf{x}} \wedge \tilde{\textbf{x}},\textbf{s})~.
\end{eqnarray}
It is clear from (\ref{Eq:mp:case2_1}) that the numerator 
and
denominator of $\gamma$ are positive,
and $\gamma \in [0,1]$.
Now define:
\begin{eqnarray*}
\begin{array}{l}
\bar{\textbf{y}}:= \gamma 
\textbf{y}^*(\bar{\textbf{x}} \wedge \tilde{\textbf{x}},\textbf{s}) + (1-\gamma) 
\textbf{y}^*(\bar{\textbf{x}} \vee \tilde{\textbf{x}},\textbf{s})~, \hbox{ and } \\
\tilde{\textbf{y}}:= (1-\gamma) 
\textbf{y}^*(\bar{\textbf{x}} \wedge \tilde{\textbf{x}},\textbf{s}) + \gamma 
\textbf{y}^*(\bar{\textbf{x}} \vee \tilde{\textbf{x}},\textbf{s})~.
\end{array}
\end{eqnarray*}
It is somewhat tedious but straightforward to show that $\bar{\textbf{y}} \in \tilde{\cal{A}}^{\textbf{d}}\left(\bar{\textbf{x}},\textbf{s}\right)$, and $\tilde{\textbf{y}} \in \tilde{\cal{A}}^{\textbf{d}}\left(\tilde{\textbf{x}},\textbf{s}\right)$. Thus, we have:
\begin{eqnarray} \label{Eq:mp:case2_2}
\min_{\textbf{y} \in
\tilde{\cal{A}}^{\textbf{d}}(\bar{\textbf{x}},\textbf{s})} \left\{G_{l-1}(\textbf{y},\textbf{s})\right\}
+\min_{\textbf{y} \in
\tilde{\cal{A}}^{\textbf{d}}(\tilde{\textbf{x}},\textbf{s})} \left\{G_{l-1}(\textbf{y},\textbf{s})\right\}
\leq
G_{l-1}(\bar{\textbf{y}}, \textbf{s}) + G_{l-1}(\tilde{\textbf{y}}, \textbf{s})~.
\end{eqnarray}
In Figure \ref{Fig:multi_proof:case2}, $\bar{\textbf{y}}$ is the point where the line segment connecting $\textbf{y}^*(\bar{\textbf{x}} \wedge \tilde{\textbf{x}},\textbf{s})$ and $\textbf{y}^*(\bar{\textbf{x}} \vee \tilde{\textbf{x}},\textbf{s})$ intersects the budget constraint (hypotenuse) of $\tilde{\cal{A}}^{\textbf{d}}\left(\bar{\textbf{x}},\textbf{s}\right)$, and $\tilde{\textbf{y}}$ is a point along this line segment the same distance from $\textbf{y}^*(\bar{\textbf{x}} \wedge \tilde{\textbf{x}},\textbf{s})$ as $\bar{\textbf{y}}$ is from $\textbf{y}^*(\bar{\textbf{x}} \vee \tilde{\textbf{x}},\textbf{s})$. By the convexity of $G_{l-1}(\cdot, \textbf{s})$ along this line segment, we have:
 \begin{eqnarray}\label{Eq:mp:case2_3}
 G_{l-1}(\bar{\textbf{y}}, \textbf{s}) + G_{l-1}(\tilde{\textbf{y}}, \textbf{s}) &\leq & G_{l-1}\Bigl(\textbf{y}^*(\bar{\textbf{x}} \wedge \tilde{\textbf{x}},\textbf{s}), \textbf{s}\Bigr)+G_{l-1}\Bigl(\textbf{y}^*(\bar{\textbf{x}} \vee \tilde{\textbf{x}},\textbf{s}), \textbf{s}\Bigr) \nonumber \\
 &=&
 \min_{\textbf{y} \in
\tilde{\cal{A}}^{\textbf{d}}(\bar{\textbf{x}} \wedge \tilde{\textbf{x}},\textbf{s})} \left\{G_{l-1}(\textbf{y},\textbf{s})\right\}
+\min_{\textbf{y} \in
\tilde{\cal{A}}^{\textbf{d}}(\bar{\textbf{x}} \vee \tilde{\textbf{x}},\textbf{s})} \left\{G_{l-1}(\textbf{y},\textbf{s})\right\}.
 \end{eqnarray}
Combining (\ref{Eq:mp:case2_2}) and (\ref{Eq:mp:case2_3}) yields the desired result, (\ref{Eq:mp:superwts}).
\begin{figure}[htbp]
\centering \includegraphics[width=3.5in]{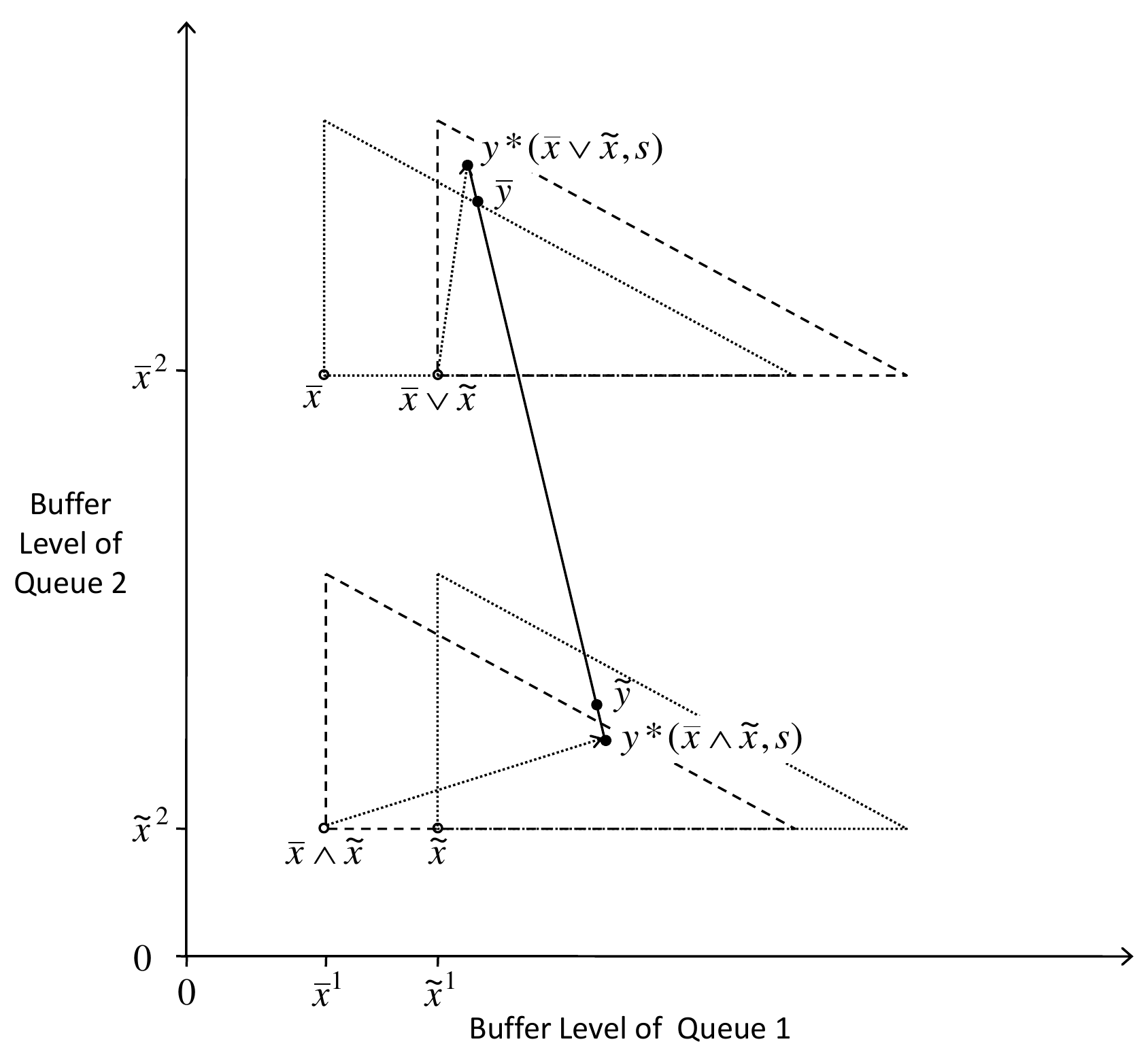} 
\caption{Construction of feasible points $\bar{\textbf{y}}$ and $\tilde{\textbf{y}}$ in Case 2 of the proof of supermodularity of $V_{l-1}(\cdot,\textbf{s})$.}\label{Fig:multi_proof:case2}
\end{figure}

\item[(iii)] $G_l(\textbf{y},\textbf{s})=
\textbf{c}_{\textbf{s}}^{\transpose}
\textbf{y}
+h(\textbf{y}-\textbf{d})
 +\alpha \cdot \Expectation
 \bigl[V_{l-1}(\textbf{y}-\textbf{d},\textbf{S})\bigr].$ By (i),
 for all $\textbf{s}$,
 $V_{l-1}(\textbf{x},\textbf{s})$ is convex in $\textbf{x}$; thus,
$V_{l-1}(\textbf{y}-\textbf{d},\textbf{s})$ is convex in
$\textbf{y}$ as it is the composition of a convex function with an
affine function. $\Expectation
 \bigl[V_{l-1}(\textbf{y}-\textbf{d},\textbf{S})\bigr]$ is also convex
 as it is the nonnegative weighted sum/integral of convex functions. It follows that $G_l(\textbf{y},\textbf{s})$, the sum
 of convex functions, is convex in $\textbf{y}$.

\item[(iv)] Supermodularity of $G_l(\textbf{y},\textbf{s})$ follows from the same series of arguments as (iii), because, like convexity, supermodularity is preserved under addition and scalar multiplication (Smith and McCardle refer to these as \emph{closed convex cone properties} \cite{mccardle}).

 \item[(v)] This step basically follows from Topkis' Theorem 2.8.1 \cite[pg. 76]{topkis}, but, for the reader's benefit, we reproduce the proof here with our notation. Let $y^2, \hat{y}^2\in [d^2,\infty)$ 
     be arbitrary
 with $y^2<\hat{y}^2$. Let $\bar{y}^1 \in \argmin_{y^1\in 
 [d^1,\infty)}
 \bigl\{G_l\left(y^1,y^2,\textbf{s}\right)\bigr\}$
and $\tilde{y}^1 \in \argmin_{y^1 \in 
[d^1,\infty)}
\bigl\{G_l\left(y^1,\hat{y}^2,\textbf{s}\right)\bigr\}$
be arbitrary. We want to show:
\begin{eqnarray*}
\bar{y}^1 \wedge \tilde{y}^1 
\in \argmin_{y^1 \in 
[d^1,\infty)}
\bigl\{G_l\left(y^1,\hat{y}^2,\textbf{s}\right)\bigr\}~.
\end{eqnarray*}
\noindent If $\tilde{y}^1 \leq \bar{y}^1$, this is trivial, so we
check that it is true for $\tilde{y}^1 > \bar{y}^1$. Since
$\bar{y}^1$ is a minimizer of
$G_l\left(\cdot,y^2,\textbf{s}\right)$, we have:
\begin{eqnarray} \label{Eq:multi_proof:super_mono_1}
G_l\left(\bar{y}^1,y^2,\textbf{s}\right)\leq
G_l\left(\tilde{y}^1,y^2,\textbf{s}\right)~,
\end{eqnarray}
and since $\tilde{y}^1$ is a minimizer of
$G_l\left(\cdot,\hat{y}^2,\textbf{s}\right)$, we have:
\begin{eqnarray} \label{Eq:multi_proof:super_mono_2}
G_l\left(\tilde{y}^1,\hat{y}^2,\textbf{s}\right)\leq
G_l\left(\bar{y}^1,\hat{y}^2,\textbf{s}\right)~.
\end{eqnarray}
By the supermodularity of $G_l(\cdot,\textbf{s})$, we have:
\begin{eqnarray*}
G_l\left(\tilde{y}^1,y^2,\textbf{s}\right) +
G_l\left(\bar{y}^1,\hat{y}^2,\textbf{s}\right) &\leq&
G_l\left(\tilde{y}^1 \wedge \bar{y}^1,y^2 \wedge
\hat{y}^2,\textbf{s}\right) + G_l\left(\tilde{y}^1 \vee
\bar{y}^1,y^2 \vee \hat{y}^2,\textbf{s}\right) \\
&=& G_l\left(\bar{y}^1,y^2,\textbf{s}\right) +
G_l\left(\tilde{y}^1,\hat{y}^2,\textbf{s}\right)~,
\end{eqnarray*}
or, rearranging terms:
\begin{eqnarray} \label{Eq:multi_proof:super_mono_3}
G_l\left(\tilde{y}^1,y^2,\textbf{s}\right) -
G_l\left(\bar{y}^1,y^2,\textbf{s}\right) \leq
G_l\left(\tilde{y}^1,\hat{y}^2,\textbf{s}\right) -
G_l\left(\bar{y}^1,\hat{y}^2,\textbf{s}\right)~.
\end{eqnarray}
Combining (\ref{Eq:multi_proof:super_mono_1}),
(\ref{Eq:multi_proof:super_mono_2}), and
(\ref{Eq:multi_proof:super_mono_3}) yields:
\begin{eqnarray} \label{Eq:multi_proof:super_mono_4}
0 \leq G_l\left(\tilde{y}^1,y^2,\textbf{s}\right) -
G_l\left(\bar{y}^1,y^2,\textbf{s}\right) \leq
G_l\left(\tilde{y}^1,\hat{y}^2,\textbf{s}\right) -
G_l\left(\bar{y}^1,\hat{y}^2,\textbf{s}\right) \leq 0~.
\end{eqnarray}
So (\ref{Eq:multi_proof:super_mono_4}) holds with equality
throughout, implying
$G_l\left(\tilde{y}^1,\hat{y}^2,\textbf{s}\right) =
G_l\left(\bar{y}^1,\hat{y}^2,\textbf{s}\right)$, and we conclude:
\begin{eqnarray*}
\tilde{y}^1 \wedge \bar{y}^1 = \bar{y}^1 \in \argmin_{y^1 \in 
[d^1,\infty)}
\bigl\{G_l\left(y^1,\hat{y}^2,\textbf{s}\right)\bigr\}.
\end{eqnarray*}
\noindent Since $\bar{y}^1$ and $\tilde{y}^1$ were chosen
arbitrarily, we have:
\begin{eqnarray*}
\inf \left\{\argmin_{y_n^1 \in 
[d^1,\infty)}
\biggl\{G_n\left(y_n^1,y_n^2,s^1,s^2\right)\biggr\}
\right\} \geq \inf \left\{\argmin_{y_n^1 \in 
[d^1,\infty)}
\biggl\{G_n\left(y_n^1,{\hat{y}_n}^2,s^1,s^2\right)\biggr\}
\right\}.
\end{eqnarray*}
\noindent The first implication in (v) follows from a symmetric
argument.
\qedsymbol
\end{itemize}
\subsection{Proof of Theorem \ref{Th:two_users:structure}}
Let $n \in \{1,2,\ldots,N\}$ and $\textbf{s} \in {\cal S}$ be arbitrary. We start by proving \eqref{Eq:two:y_star}. First, let $\textbf{x} \in  {\mathcal R}_{I}(n,\textbf{s})$ and $\hat{\textbf{y}} \in \tilde{\cal{A}}^{\textbf{d}}({\textbf{x}},\textbf{s})$ be arbitrary. We know from Theorem \ref{Th:two_users:properties} that $G_n(\cdot,\textbf{s})$ is convex on $[d^1,\infty) \times [d^2,\infty)$, which implies that $G_n(\cdot,\textbf{s})$ is also convex on any line segment in $[d^1,\infty) \times [d^2,\infty)$ (see, e.g., \cite[Theorem 4.1]{rockafellar}). Specifically, by the convexity of $G_n(\cdot,\textbf{s})$ along the line $y^1=\hat{y}^1$ and the fact that $\hat{y}^2 \geq x^2 \geq f_n^2(\hat{y}^1,\textbf{s})$, we have:
\begin{eqnarray} \label{Eq:twos:vert}
G_n(\hat{\textbf{y}},\textbf{s}) \geq  G_n\bigl((\hat{y}^1,x^2),\textbf{s}\bigr) \geq G_n\Bigl(\bigl(\hat{y}^1,f_n^2(\hat{y}^1,\textbf{s})\bigr),\textbf{s}\Bigr)~.
\end{eqnarray}
Similarly, by the convexity of $G_n(\cdot,\textbf{s})$ along the line $y^2=x^2$ and the fact that $\hat{y}^1 \geq x^1 \geq f_n^1(x^2,\textbf{s})$, we have:
\begin{eqnarray} \label{Eq:twos:hor}
G_n\bigl((\hat{y}^1,x^2),\textbf{s}\bigr) \geq G_n(\textbf{x},\textbf{s}) \geq G_n\Bigl(\bigl(f_n^1(x^2,\textbf{s}),x^2\bigr),\textbf{s}\Bigr)~.
\end{eqnarray}
Combining \eqref{Eq:twos:vert} and \eqref{Eq:twos:hor} yields:
\begin{eqnarray*}
G_n(\hat{\textbf{y}},\textbf{s}) \geq G_n\Bigl(\bigl(\hat{y}^1,x^2\bigr),\textbf{s}\Bigr) \geq G_n(\textbf{x},\textbf{s})~,
\end{eqnarray*}
and we conclude $G_n(\textbf{x},\textbf{s}) = \min_{\textbf{y} \in \tilde{\cal{A}}^{\textbf{d}}({\textbf{x}},\textbf{s})} \left\{G_n(\textbf{y},\textbf{s}) \right\}$.

Second, let $\textbf{x} \in  {\mathcal R}_{II}(n,\textbf{s})$ be arbitrary. Then $\textbf{b}_n(\textbf{s}) \in \tilde{\cal{A}}^{\textbf{d}}({\textbf{x}},\textbf{s})$ and $\textbf{b}_n(\textbf{s})$ is a global minimizer of $G_n(\cdot,\textbf{s})$, so it is clearly optimal to transmit to bring the receivers' buffer levels up to $\textbf{b}_n(\textbf{s})$.

Next, let $\textbf{x} \in {\mathcal R}_{III-A}(n,\textbf{s})$ and $\tilde{\textbf{y}} \in \tilde{\cal{A}}^{\textbf{d}}({\textbf{x}},\textbf{s})$ be arbitrary. By definition of $f_n^1(\cdot,\textbf{s})$, we have:
\begin{eqnarray} \label{Eq:twos:iiia1}
G_n(\tilde{\textbf{y}},\textbf{s}) \geq  G_n\Bigl(\bigl(f_n^1(\tilde{y}^2,\textbf{s}),\tilde{y}^2 \bigr),\textbf{s}\Bigr)~.
\end{eqnarray}
Furthermore, the function $\min_{y^1 \in [d^1,\infty)}\left\{G_n\Bigl((y^1,y^2),\textbf{s}\Bigr)\right\}$ is convex in $y^2$ since $[d^1,\infty)$ is a convex set (see, e.g., \cite[pp. 101-102]{boyd}).
Thus, $\tilde{y}^2 \geq x^2 \geq b_n^2(\textbf{s})$ implies:
\begin{align} \label{Eq:twos:iiia2}
&G_n\Bigl(\bigl(f_n^1(\tilde{y}^2,\textbf{s}),\tilde{y}^2\bigr),\textbf{s}\Bigr) \nonumber \\
&\geq
G_n\Bigl(\bigl(f_n^1(x^2,\textbf{s}),x^2\bigr),\textbf{s}\Bigr)  \\
&\geq
G_n\Bigl(\bigl(f_n^1(b_n^2(\textbf{s}),\textbf{s}),b_n^2(\textbf{s})\bigr),\textbf{s}\Bigr) \nonumber \\
&=G_n\Bigl(\textbf{b}_n(\textbf{s}),\textbf{s}\Bigr)~. \nonumber
\end{align}
Combining \eqref{Eq:twos:iiia1} and \eqref{Eq:twos:iiia2} yields:
\begin{eqnarray*}
G_n(\tilde{\textbf{y}},\textbf{s}) \geq G_n\Bigl(\bigl(f_n^1(x^2,\textbf{s}),x^2\bigr),\textbf{s}\Bigr)~,
\end{eqnarray*}
and $\textbf{x} \in {\mathcal R}_{III-A}(n,\textbf{s})$ implies $\Bigl(f_n^1(x^2,\textbf{s}),x^2\Bigr) \in \tilde{\cal{A}}^{\textbf{d}}({\textbf{x}},\textbf{s})$. Since $\tilde{\textbf{y}} \in \tilde{\cal{A}}^{\textbf{d}}({\textbf{x}},\textbf{s})$ was arbitrary, we conclude $\textbf{y}_n^*(\textbf{x},\textbf{s}) = \Bigl(f_n^1(x^2,\textbf{s}),x^2\Bigr)$ is optimal.

The optimality of $\textbf{y}_n^*(\textbf{x},\textbf{s})=\Bigl(x^1 ,f_n^2(x^1,\textbf{s}) \Bigr)$ for $\textbf{x} \in  {\mathcal R}_{III-B}(n,\textbf{s})$ follows from a symmetric argument, using the convexity of $G_n(\cdot,\textbf{s})$ along the curve $\Bigl(x^1,f_n^2(x^1,\textbf{s})\Bigr)$.

Finally, we prove \eqref{Eq:two:full}. Define:
\begin{eqnarray*}
{\cal{H}}^{\textbf{d}}({\textbf{x}},\textbf{s}):= \Bigl\{\textbf{y} \in [d^1,\infty) \times [d^2,\infty)~:~\textbf{y} \succeq \textbf{x} \hbox{ and } \textbf{c}_{\textbf{s}}^{\transpose}\left[{\textbf{y}} - \textbf{x}\right] = P\Bigr\} \subset \tilde{\cal{A}}^{\textbf{d}}({\textbf{x}},\textbf{s})~.
\end{eqnarray*}
First, let $\textbf{x} \in {\mathcal R}_{IV-B}(n,\textbf{s})$ and $\hat{\textbf{y}} \in \tilde{\cal{A}}^{\textbf{d}}({\textbf{x}},\textbf{s})$ be arbitrary such that $\textbf{c}_{\textbf{s}}^{\transpose}\left[\hat{\textbf{y}} - \textbf{x} \right] < P$.
Define
\begin{eqnarray*}
\lambda_0 := \frac{\textbf{c}_{\textbf{s}}^{\transpose}\textbf{b}_n(\textbf{s})-\textbf{c}_{\textbf{s}}^{\transpose}\textbf{x}-P}{\textbf{c}_{\textbf{s}}^{\transpose}\textbf{b}_n(\textbf{s})-\textbf{c}_{\textbf{s}}^{\transpose}\hat{\textbf{y}}}~.
\end{eqnarray*}
Note that $\textbf{c}_{\textbf{s}}^{\transpose}\left[\hat{\textbf{y}} - \textbf{x} \right] < P$ and $\textbf{c}_{\textbf{s}}^{\transpose}\left[\textbf{b}_n(\textbf{s}) - \textbf{x} \right] > P$ imply $\lambda_0 \in (0,1)$. Then define:
\begin{eqnarray*}
\tilde{\textbf{y}} := \lambda_0\hat{\textbf{y}} + (1-\lambda_0)\textbf{b}_n(\textbf{s})~.
\end{eqnarray*}
By the convexity of $G_n(\cdot,\textbf{s})$ along the line segment from $\hat{\textbf{y}}$ to $\textbf{b}_n(\textbf{s})$, we have:
\begin{eqnarray*}
G_n(\hat{\textbf{y}},\textbf{s}) \geq G_n(\tilde{\textbf{y}},\textbf{s}) \geq G_n\Bigl(\textbf{b}_n(\textbf{s}),\textbf{s}\Bigr)~.
\end{eqnarray*}
Since $\hat{\textbf{y}} \in \tilde{\cal{A}}^{\textbf{d}}({\textbf{x}},\textbf{s})$ was arbitrary, we conclude:
\begin{eqnarray*}
\min_{\textbf{y} \in \tilde{\cal{A}}^{\textbf{d}}({\textbf{x}},\textbf{s})} \Bigl\{G_n(\textbf{y},\textbf{s})\Bigr\} = \min_{\textbf{y} \in {\cal{H}}^{\textbf{d}}({\textbf{x}},\textbf{s})}\Bigl\{G_n(\textbf{y},\textbf{s})\Bigr\}~.
\end{eqnarray*}
Next, let $\textbf{x} \in {\mathcal R}_{IV-C}(n,\textbf{s})$ and $\hat{\textbf{y}} \in \tilde{\cal{A}}^{\textbf{d}}({\textbf{x}},\textbf{s})$ be arbitrary such that $\textbf{c}_{\textbf{s}}^{\transpose}\left[\hat{\textbf{y}} - \textbf{x} \right] < P$. We consider two exhaustive cases, and
for each case, we construct a $\tilde{\textbf{y}} \in {\cal{H}}^{\textbf{d}}({\textbf{x}},\textbf{s})$ such that $G_n\left(\tilde{\textbf{y}},\textbf{s}\right) \leq G_n\left(\hat{\textbf{y}},\textbf{s}\right)$.
\medskip

\noindent \underline{Case 1}: $\hat{{y}}^2 < {f}_n^2\left(\hat{y}^1,\textbf{s}\right)$ and $\bar{\textbf{y}}:=\Bigl(\hat{y}^1, {f}_n^2\left(\hat{y}^1,\textbf{s}\right)\Bigr) \notin \tilde{\cal{A}}^{\textbf{d}}({\textbf{x}},\textbf{s})$  \\
Let $\tilde{\textbf{y}}:= \Bigl(\hat{y}^1, x^2 + \frac{P-c_{s^1}\cdot[\hat{y}^1-x^1]}{c_{s^2}} \Bigr)$. Then, by the convexity of $G_n(\cdot,\textbf{s})$ along $y^1=\hat{y}^1$, the definition of ${f}_n^2\left(\hat{y}^1,\textbf{s}\right)$, and $\hat{y}^2 \leq \tilde{y}^2 \leq {f}_n^2\left(\hat{y}^1,\textbf{s}\right)$, we have:
\begin{eqnarray*}
G_n\left(\bar{\textbf{y}},\textbf{s}\right)= G_n\Bigl(\bigl(\hat{y}^1, {f}_n^2(\hat{y}^1,\textbf{s})\bigr),\textbf{s}\Bigr) \leq  G_n\left(\tilde{\textbf{y}},\textbf{s}\right) \leq G_n\left(\hat{\textbf{y}},\textbf{s}\right)~.
 \end{eqnarray*}
 It is also straightforward to check that $\tilde{\textbf{y}} \in {\cal{H}}^{\textbf{d}}({\textbf{x}},\textbf{s})$, as desired.

 \noindent \underline{Case 2}: All other $\hat{\textbf{y}} \in \tilde{\cal{A}}^{\textbf{d}}({\textbf{x}},\textbf{s})$ such that $\textbf{c}_{\textbf{s}}^{\transpose}\left[\hat{\textbf{y}} - \textbf{x} \right] < P$ \\
By the definition of $f_n^2\left(\hat{y}^1,\textbf{s}\right)$,
 we have:
 \begin{eqnarray} \label{Eq:twos:lower}
 G_n\left(\hat{\textbf{y}},\textbf{s}\right) \geq G_n\Bigl(\bigl(\hat{y}^1,f_n^2(\hat{y}^1,\textbf{s})\bigr),\textbf{s}\Bigr)~.
 \end{eqnarray}
Define:
 \begin{align*}
& \tilde{y}^1 := \sup\left\{
  y^1 \in \left[x^1,\hat{y}^1\right):
\textbf{c}_{\textbf{s}}^{\transpose}\Bigl({y}^1, f_n^2\left({y}^1,\textbf{s}\right)\Bigr) \geq \textbf{c}_{\textbf{s}}^{\transpose}\textbf{x} + P
 \right\}, \hbox{ and } \\
 & \tilde{y}^2 := \frac{P-c_{s^1}\cdot\left[\tilde{y}^1-x^1\right]}{c_{s^2}}~.
 \end{align*}
 By the convexity of $G_n(\cdot,\textbf{s})$ along $\Bigl(y^1, f_n^2\left({y}^1,\textbf{s}\right)\Bigr)$, 
 we have:
  \begin{eqnarray} \label{Eq:twos:mid}
G_n\Bigl(\bigl(\hat{y}^1,f_n^2(\hat{y}^1,\textbf{s})\bigr),\textbf{s}\Bigr) \geq G_n\Bigl(\bigl(\tilde{y}^1,f_n^2(\tilde{y}^1,\textbf{s})\bigr),\textbf{s}\Bigr) ~.
 \end{eqnarray}
 Furthermore, we have:
 \begin{eqnarray} \label{Eq:twos:equal}
G_n\Bigl(\bigl(\tilde{y}^1,f_n^2(\tilde{y}^1,\textbf{s})\bigr),\textbf{s}\Bigr) =
G_n\Bigl(\bigl(\tilde{y}^1,\tilde{y}^2\bigr),\textbf{s}\Bigr)= G_n\left(\tilde{\textbf{y}},\textbf{s}\right)~.
 \end{eqnarray}
 If $\tilde{y}^2 = f_n^2\left(\tilde{y}^1,\textbf{s}\right)$, \eqref{Eq:twos:equal} is trivial. Otherwise, there is a discontinuity in $f_n^2(\cdot,\textbf{s})$ at $\tilde{y}^1$, and we have:
 \begin{eqnarray} \label{Eq:twos:stara}
 \lim\limits_{y^1 \nearrow \tilde{y}^1}f_n^2\left({y}^1,\textbf{s}\right) \geq \tilde{y}^2 \geq \lim\limits_{y^1 \searrow \tilde{y}^1}f_n^2\left({y}^1,\textbf{s}\right)~,
 \end{eqnarray}
 with at least one of the inequalities being strict. Nonetheless,  $G_n\Bigl(\bigl(y^1,f_n^2(y^1,\textbf{s})\bigr),\textbf{s}\Bigr)$ is a continuous function of $y^1$, and therefore: 
\begin{eqnarray}  \label{Eq:twos:starb}
G_n\Bigl(\bigl(\tilde{y}^1, \lim\limits_{y^1 \nearrow \tilde{y}^1}f_n^2\left({y}^1,\textbf{s}\right)\bigr) ,\textbf{s}\Bigr) 
= G_n\Bigl(\bigl(\tilde{y}^1, \lim\limits_{y^1 \searrow \tilde{y}^1}f_n^2\left({y}^1,\textbf{s}\right) \bigr) ,\textbf{s}\Bigr) 
= G_n\Bigl(\bigl(\tilde{y}^1,f_n^2\left(\tilde{y}^1,\textbf{s}\right) \bigr) ,\textbf{s}\Bigr).
\end{eqnarray}
The convexity of $G_n(\cdot,\textbf{s})$ along the line $y^1=\tilde{y}^1$ and \eqref{Eq:twos:starb} imply:
\begin{eqnarray*}
G_n\Bigl(\bigl(\tilde{y}^1,y^2 \bigr) ,\textbf{s}\Bigr) = G_n\Bigl(\bigl(\tilde{y}^1,f_n^2\left(\tilde{y}^1,\textbf{s}\right) \bigr) ,\textbf{s}\Bigr)~,~\forall y^2 \in \left[ \lim\limits_{y^1 \searrow \tilde{y}^1}f_n^2\left({y}^1,\textbf{s}\right), \lim\limits_{y^1 \nearrow \tilde{y}^1}f_n^2\left({y}^1,\textbf{s}\right)\right]~,
\end{eqnarray*}
which in combination with \eqref{Eq:twos:stara} implies \eqref{Eq:twos:equal}.
 Combining \eqref{Eq:twos:lower}-\eqref{Eq:twos:equal} yields the desired result: $G_n\left(\tilde{\textbf{y}},\textbf{s}\right) \leq G_n\left(\hat{\textbf{y}},\textbf{s}\right)$ for a $\tilde{\textbf{y}} \in {\cal{H}}^{\textbf{d}}({\textbf{x}},\textbf{s})$.

The validity of \eqref{Eq:two:full} for $\textbf{x} \in {\mathcal R}_{IV-A}(n,\textbf{s})$ follows from a symmetric argument, completing the proof of \eqref{Eq:two:full} and Theorem \ref{Th:two_users:structure}.
\qedsymbol

\section{Appendix B - Infinite Horizon Discounted Expected Cost Proofs}\label{Se:appendix2}

\subsection{Proof of Theorem \ref{Th:one:infinite}}
Our line of analysis is similar in spirit to \cite{fedzipII}, \cite{iglehart}, and \cite[Chapter 8]{heyman_sobel}. 
Let $x \in \Real_+$ and $s \in {\cal{S}}$ be
arbitrary. First, we show inductively that $V_1(x,s) \leq V_2(x,s) \leq
\ldots \leq V_n(x,s) \leq V_{n+1}(x,s) \leq \ldots$.

\noindent \underline{Base Case}: $n=1$
\begin{eqnarray*}
V_1(x,s) &=&
\min\limits_{\bigl\{\max(0,d-x) \leq z \leq \tilde{z}_{\max}(s)\bigr\}}
\left\{
c(z,s) + h(x+z-d)
\right\} \\
&\leq&
\min\limits_{\bigl\{\max(0,d-x) \leq z \leq \tilde{z}_{\max}(s)\bigr\}}
\left\{
\begin{array}{l}
c(z,s) + h(x+z-d) \\
+ \alpha \cdot \Expectation \bigl[V_{1}(x+z-d,S_{1}) \bigm| S_2 =s\bigr]
\end{array}
\right\} \\
&=&
V_2(x,s)~,
\end{eqnarray*}
\noindent where the inequality follows from $V_1(x,s) \geq 0,~\forall x, \forall s$.
\medskip

\noindent \underline{Induction Step}: 
 Assume $V_n(x,s) \leq V_{n+1}(x,s)$ for $n=1,2,\ldots,m-1$. We show it is true for $n=m$:
\begin{eqnarray*}
V_m(x,s)
&=&
\min\limits_{\bigl\{\max(0,d-x) \leq z \leq \tilde{z}_{\max}(s)\bigr\}}
\left\{
\begin{array}{l}
c(z,s) + h(x+z-d) \\
+ \alpha \cdot \Expectation \bigl[V_{m-1}(x+z-d,S_{m-1}) \bigm| S_m =s\bigr]
\end{array}
\right\} \\
&\leq&
\min\limits_{\bigl\{\max(0,d-x) \leq z \leq \tilde{z}_{\max}(s)\bigr\}}
\left\{
\begin{array}{l}
c(z,s) + h(x+z-d) \\
+ \alpha \cdot \Expectation \bigl[V_{m}(x+z-d,S_{m}) \bigm| S_{m+1} =s\bigr]
\end{array}
\right\} \\
&=&
V_{m+1}(x,s)~,
\end{eqnarray*}
\noindent where the inequality follows from the induction hypothesis and the homogeneity of the Markov process representing the channel condition. So, for every $x \in \Real_+$ and $s \in {\cal{S}}$, $\left\{V_n(x,s)\right\}_{n=1,2,\ldots}$ is a nondecreasing sequence.

Next, consider a policy ${\boldsymbol{\pi}}^d$ transmitting $d$ packets in every slot, regardless of channel condition. Define:
\begin{eqnarray}\label{Eq:ctildemaxdef}
\tilde{c}_{\max}:=\sup\limits_{\substack{s \in {\cal S} \\ k \in \{0,1,\ldots,K\}}}\left\{\tilde{c}_k(s) \right\}<\infty~.
\end{eqnarray}
Then we have:
\begin{eqnarray*}
V_n(x,s) \leq
V_n^{{\boldsymbol{\pi}}^d}(x,s) \leq \Bigl(\tilde{c}_{\max} \cdot d + h(x) \Bigr) \frac{1-\alpha^n}{1-\alpha} \leq \Bigl(\tilde{c}_{\max} \cdot d + h(x) \Bigr) \frac{1}{1-\alpha} 
< \infty~,
\end{eqnarray*}
\noindent so $\left\{V_n(x,s)\right\}_{n=1,2,\ldots}$ is a bounded nondecreasing
sequence, implying 
$\lim_{n \rightarrow \infty}V_n(x,s)$ exists
and is finite, $\forall x \in \Real_+, \forall s \in {\cal{S}}$.

We now move on to part (b). 
Recall from Section \ref{Se:fin_proof} that $V_n(x,s)$ is convex in $x$,
for all $n$ and all $s$. Define $V_{\infty}(x,s):=\lim_{n \rightarrow \infty} V_n(x,s)$. Let $s \in
{\cal{S}}$ be arbitrary, but fixed.
$V_{\infty}(x,s)=\sup_{n \in \Nat} V_n(x,s)$, so $V_{\infty}(x,s)$ is convex in $x$
as it is the pointwise supremum of the convex functions
$\left\{V_n(x,s)\right\}_{n=1,2,\ldots}$.

Define $\tilde{g}_{\infty}: [d,\infty) \times {\cal{S}} \rightarrow \Real_+$ by
\begin{eqnarray} \label{Eq:app:mct}
\tilde{g}_{\infty}(y,s) &:=& 
h(y-d) + \alpha \cdot
\Expectation \left[V_{\infty}(y-d,S^{\prime}) \mid S = s \right] \nonumber \\
&=& 
h(y-d) + \alpha \cdot
\Expectation \left[\lim\limits_{n \rightarrow \infty}V_{n}(y-d,S^{\prime}) \mid S = s \right] \nonumber \\
&=& 
h(y-d) + \alpha \cdot
\lim\limits_{n \rightarrow \infty}\Expectation \left[V_{n}(y-d,S^{\prime}) \mid S = s \right] \\
&=& \lim\limits_{n \rightarrow \infty} \tilde{g}_n(y,s)~, \nonumber
\end{eqnarray}
\noindent where (\ref{Eq:app:mct}) follows from the homogeneity of the Markov process representing the channel condition and the Monotone Convergence Theorem. 
Furthermore, for each $s \in {\cal{S}}$, $\tilde{g}_{\infty}(y,s)$ is convex in $y$ 
and $\lim\limits_{y
\rightarrow \infty}\tilde{g}_{\infty}(y,s) \geq \lim\limits_{y \rightarrow
\infty} h(y-d) 
= \infty$. Thus, for every $s$, at least
one finite number achieves the global minimum of
$\tilde{g}_{\infty}(y,s)$.

Next, we proceed to part (d), and let $s \in {\cal S}$ be arbitrary.
Define $b_{\infty,-1}(s) := \infty$ and
\begin{eqnarray*}
b_{\infty,k}(s) :=
\max\Bigl\{d, \inf\bigl\{b \bigm| \tilde{g}_{\infty}^{\prime+}(b,s) \geq -\tilde{c}_k(s) \bigr\} \Bigr\}~,~\forall k \in \{0,1,\ldots,K\}~.
\end{eqnarray*}
Clearly, $b_{\infty,-1}(s) = \lim\limits_{n\rightarrow\infty} b_{n,-1}(s)$, as $b_{n,-1}(s):=\infty$ for every $n$. Let $k \in \{0,1,\ldots,K\}$ be arbitrary. We want to show:
\begin{eqnarray*}
\lim\limits_{n\rightarrow\infty}
b_{n,k}(s)&=&\lim\limits_{n\rightarrow\infty} \max\Bigl\{d, \inf\bigl\{b \bigm| \tilde{g}_{n}^{\prime+}(b,s) \geq -\tilde{c}_k(s) \bigr\} \Bigr\} \nonumber \\
&=& \max\Bigl\{d, \inf\bigl\{b \bigm| \tilde{g}_{\infty}^{\prime+}(b,s) \geq -\tilde{c}_k(s) \bigr\} \Bigr\} :=
b_{\infty,k}(s)~.
\end{eqnarray*}
By the continuity of $\max\{d,\cdot\}$, it suffices to show:
\begin{eqnarray}\label{Eq:inf_proof:crits}
\lim\limits_{n\rightarrow\infty} \Bigl\{\inf\bigl\{b \bigm| \tilde{g}_{n}^{\prime+}(b,s) \geq -\tilde{c}_k(s) \bigr\}\Bigr\}
= \inf\bigl\{b \bigm| \tilde{g}_{\infty}^{\prime+}(b,s) \geq -\tilde{c}_k(s) \bigr\}~.
\end{eqnarray}
Before proceeding to show \eqref{Eq:inf_proof:crits}, 
we present
a lemma due to Sobel \cite[Lemma 3,~pg.~732]{sobel_71},
which is also presented in \cite[Lemma 8-5,~pg.~425]{heyman_sobel}.
\medskip


\begin{lemma}[Sobel, 1971] \label{Le:inf:sobel}
Let $g,g_1,g_2,\ldots$ be convex functions on an open convex subset $X$ of $\Real$ such that  $g_n(x) \rightarrow g(x)$ as $n \rightarrow \infty$ and  $g_n(x)\leq g_{n+1}(x)$ for all $n$ and $x$. Let $g_n^{\prime-}(x)$ and $g^{\prime-}(x)$ denote derivatives from the left and $g_n^{\prime+}(x)$ and $g^{\prime+}(x)$ denote derivatives from the right. Then for all $x \in X$:
\begin{eqnarray} \label{Eq:sobel}
g^{\prime-}(x) \leq \liminf\limits_{n \rightarrow \infty}g_n^{\prime-}(x) \leq \limsup\limits_{n \rightarrow \infty}g_n^{\prime+}(x) \leq g^{\prime+}(x)~.
\end{eqnarray}
\end{lemma}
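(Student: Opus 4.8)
The plan is to prove the three inequalities separately, relying only on the elementary theory of one-sided derivatives of convex functions. Since each $g_n$ is convex on the open convex set $X$, both one-sided derivatives $g_n^{\prime-}$ and $g_n^{\prime+}$ exist and are finite at every interior point, and the pointwise limit $g(x)=\lim_{n\to\infty} g_n(x)$ is itself convex (a pointwise limit of convex functions is convex), so $g^{\prime-}$ and $g^{\prime+}$ likewise exist everywhere on $X$. The middle inequality is immediate: for each fixed $n$, convexity gives $g_n^{\prime-}(x)\leq g_n^{\prime+}(x)$, whence $\liminf_n g_n^{\prime-}(x)\leq \liminf_n g_n^{\prime+}(x)\leq \limsup_n g_n^{\prime+}(x)$. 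Thus the work is entirely in the two outer inequalities.

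For the leftmost inequality, I would fix an arbitrary $w\in X$ with $w<x$ (such a point exists because $X$ is open). The key tool is the monotone-difference-quotient characterization of convexity: for each $n$,
\[
\frac{g_n(x)-g_n(w)}{x-w}\leq g_n^{\prime-}(x).
\]
Taking the $\liminf$ over $n$ on both sides and using pointwise convergence at the two points $w$ and $x$, the left side converges to $\tfrac{g(x)-g(w)}{x-w}$, giving
\[
\frac{g(x)-g(w)}{x-w}\leq \liminf_{n\to\infty} g_n^{\prime-}(x).
\]
Letting $w\uparrow x$, the left side increases to $g^{\prime-}(x)$ by definition of the left derivative of the convex function $g$, yielding $g^{\prime-}(x)\leq \liminf_n g_n^{\prime-}(x)$. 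The rightmost inequality is proved symmetrically: I would fix $v\in X$ with $v>x$, use the bound $g_n^{\prime+}(x)\leq \tfrac{g_n(v)-g_n(x)}{v-x}$, take the $\limsup$ over $n$ to obtain $\limsup_n g_n^{\prime+}(x)\leq \tfrac{g(v)-g(x)}{v-x}$, and then let $v\downarrow x$ so that the right side decreases to $g^{\prime+}(x)$.

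The only real subtlety, and the step to be careful about, is the order of the two limiting operations: the secant slope must first be bounded by the one-sided derivative for each fixed $n$ and fixed auxiliary point ($w$ or $v$), then the limit in $n$ is taken with that auxiliary point held fixed, and only afterward is the auxiliary point sent to $x$. It is worth noting that the monotonicity hypothesis $g_n\leq g_{n+1}$ plays no role in these derivative bounds; pointwise convergence of $g_n$ at the relevant points, together with the openness of $X$ (to supply interior points on both sides of $x$) and convexity, is all the argument requires. Monotonicity and finiteness of the limit serve only to guarantee that $g$ is a well-defined finite convex function, which is exactly the setting in which the lemma is invoked in the proof of Theorem~\ref{Th:one:infinite}.
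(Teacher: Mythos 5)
Your proof is correct. There is, however, nothing in the paper to compare it against: the paper never proves this lemma, but quotes it verbatim from Sobel (1971) (also presented in Heyman and Sobel's book) and uses it as a black box in the proofs of Theorems \ref{Th:one:infinite} and \ref{Th:two:infd}. What you have supplied is therefore a self-contained elementary proof of a cited result, and it is the standard one: bound the secant slope by the one-sided derivative for each fixed $n$ (convexity of $g_n$), pass to the limit in $n$ with the auxiliary point $w$ or $v$ held fixed (pointwise convergence at two points), and only then send the auxiliary point to $x$ (definition of the one-sided derivatives of the convex limit $g$). You execute these steps in the correct order, which is indeed the only delicate point. Your closing observation is also accurate and worth retaining: the monotonicity hypothesis $g_n \leq g_{n+1}$ is never used, so the conclusion holds under pointwise convergence alone; in the paper, monotonicity matters upstream of the lemma, where (together with the Monotone Convergence Theorem and the uniform bound on the value functions) it guarantees that the limit function $\tilde{g}_{\infty}(\cdot,s)$ exists, is finite, and is convex --- i.e., it places the application within the lemma's hypotheses rather than being needed inside the lemma itself.
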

~\\
We now prove \eqref{Eq:inf_proof:crits} by contradiction. Define:
\begin{eqnarray*}
\hat{b}_{n,k}(s)&:=&\inf\bigl\{b \bigm| \tilde{g}_{n}^{\prime+}(b,s) \geq -\tilde{c}_k(s) \bigr\}, \hbox{ and} \\
\hat{b}_{\infty,k}(s)&:=&\inf\bigl\{b \bigm| \tilde{g}_{\infty}^{\prime+}(b,s) \geq -\tilde{c}_k(s) \bigr\}~. 
\end{eqnarray*}
First, assume $\liminf\limits_{n \rightarrow \infty} \hat{b}_{n,k}(s) < \hat{b}_{\infty,k}(s)$, so there exists an $x_0 \in \Real_+$ such that $d < x_0 < \hat{b}_{\infty,k}(s)$, and a sequence $\left\{n_i\right\}_{i=1,2,\ldots}$ such that
$\lim\limits_{i \rightarrow \infty} \hat{b}_{n_i,k}(s) = x_0$.
Then we have:
\begin{eqnarray}
-\tilde{c}_k(s) &\leq& \lim_{i \rightarrow \infty} \tilde{g}_{n_i}^{\prime+}(x_0,s) \label{Eq:bconv:2a}\\
&\leq& \limsup_{n \rightarrow \infty} \tilde{g}_{n}^{\prime+}(x_0,s) \nonumber \\  
&\leq& \tilde{g}_{\infty}^{\prime+}(x_0,s)~. \label{Eq:bconv:2c}
\end{eqnarray}
Here, \eqref{Eq:bconv:2a} follows from $\lim\limits_{i \rightarrow \infty} \hat{b}_{n_i,k}(s) = x_0$, and the fact that $\tilde{g}_{n}^{\prime+}(\cdot,s)$ is continuous from the right. Equation \eqref{Eq:bconv:2c} follows from Lemma \ref{Le:inf:sobel}.
Yet, $\tilde{g}_{\infty}^{\prime+}(x_0,s) \geq -\tilde{c}_k(s)$ implies $\hat{b}_{\infty,k}(s) \leq x_0$, which is a contradiction. We conclude:
\begin{eqnarray} \label{Eq:bconv:conc1}
\liminf\limits_{n \rightarrow \infty} \hat{b}_{n,k}(s) \geq \hat{b}_{\infty,k}(s).
\end{eqnarray}

Next, assume $\limsup\limits_{n \rightarrow \infty} \hat{b}_{n,k}(s) > \hat{b}_{\infty,k}(s) \geq d$, 
and define: 
\begin{eqnarray*}
x_1:=\frac{\limsup\limits_{n \rightarrow \infty} \hat{b}_{n,k}(s) + \hat{b}_{\infty,k}(s)}{2}~.
\end{eqnarray*}
Then we have:
\begin{eqnarray}
-\tilde{c}_k(s)&\leq&\tilde{g}_{\infty}^{\prime+}\bigl(\hat{b}_{\infty,k}(s),s \bigr) \label{Eq:bconv:1a} \\
&\leq& \tilde{g}_{\infty}^{\prime-}\bigl(x_1,s \bigr) \label{Eq:bconv:1b} \\
&\leq& \liminf_{n \rightarrow \infty}\tilde{g}_{n}^{\prime-}\bigl(x_1,s \bigr)  \label{Eq:bconv:1c} \\
&\leq& \liminf_{n \rightarrow \infty}\tilde{g}_{n}^{\prime+}\bigl(x_1,s \bigr) \label{Eq:bconv:1d}
\end{eqnarray}
Here, \eqref{Eq:bconv:1a} follows from the fact that $\tilde{g}_{\infty}^{\prime+}(\cdot,s)$ is continuous from the right; \eqref{Eq:bconv:1c} follows from Lemma \ref{Le:inf:sobel}\footnote{One hypothesis of Lemma \ref{Le:inf:sobel} is that all functions are defined on an \emph{open} convex subset of $\Real$. While our functions $\tilde{g}_{\infty}(\cdot,s)$ and $\left\{\tilde{g}_{n}(\cdot,s)\right\}_{n \in \Nat}$ are defined on $[d,\infty)$, we only apply Lemma \ref{Le:inf:sobel} at the points $x_0,x_1 \in (d,\infty)$. Thus, equations \eqref{Eq:bconv:2c} and \eqref{Eq:bconv:1c} follow from the application of Lemma \ref{Le:inf:sobel} to the restrictions of the functions $\tilde{g}_{\infty}(\cdot,s)$ and $\left\{\tilde{g}_{n}(\cdot,s)\right\}_{n \in \Nat}$ to the domain of $(d,\infty)$.}; and \eqref{Eq:bconv:1b} and \eqref{Eq:bconv:1d} follow from the fact (see, e.g., \cite[pg.~228]{rockafellar}) that for a proper convex function $f$ on $\Real$, $z_1 < x < z_2$ implies:
\begin{eqnarray*}
f^{\prime+}(z_1) \leq f^{\prime-}(x) \leq f^{\prime+}(x) \leq f^{\prime-}(z_2)~.
\end{eqnarray*}
$\liminf\limits_{n \rightarrow \infty}\tilde{g}_{n}^{\prime+}\bigl(x_1,s \bigr) \geq -\tilde{c}_k(s)$ implies that for every sequence $\left\{n_j\right\}_{j=1,2,\ldots}$, we have:
\begin{eqnarray*}
\lim_{j \rightarrow \infty}\tilde{g}_{n_j}^{\prime+}\bigl(x_1,s \bigr) \geq -\tilde{c}_k(s)~,
\end{eqnarray*}
and, in turn:
\begin{eqnarray*}
\lim_{j \rightarrow \infty}\hat{b}_{n_j,k}(s) \leq x_1~.
\end{eqnarray*}
Therefore, $\limsup\limits_{n \rightarrow \infty}\hat{b}_{n,k}(s) \leq x_1$, which is a contradiction. We conclude:
\begin{eqnarray}\label{Eq:bconv:conc2}
\limsup\limits_{n \rightarrow \infty} \hat{b}_{n,k}(s) \leq \hat{b}_{\infty,k}(s)~.
\end{eqnarray}
Equations \eqref{Eq:bconv:conc1} and \eqref{Eq:bconv:conc2} imply \eqref{Eq:inf_proof:crits}.

We are now ready to prove parts (e) and (f) of Theorem \ref{Th:one:infinite}. Define
\begin{eqnarray*}
z_{\infty}^*(x,s) := \left\{
\begin{array}{ll}
   \tilde{z}_{k-1}(s) , & \mbox{if } ~b_{\infty,k}(s)-\tilde{z}_{k-1}(s) < x \leq b_{\infty,k-1}(s)-\tilde{z}_{k-1}(s)~, \\
   &~~~~~~~~~~~~~~~~~~~~~~~~~~~~~~~~~~~~~~~~~~~k \in \{0,1,\ldots,K\} \\
   b_{\infty,k}(s)-x , & \mbox{if } ~b_{\infty,k}(s)-\tilde{z}_k(s) < x \leq b_{\infty,k}(s)-\tilde{z}_{k-1}(s)~,\\
   &~~~~~~~~~~~~~~~~~~~~~~~~~~~~~~~~~~~~~~k \in \{0,1,\ldots,K-1\} \\
   b_{\infty,K}(s)-x , & \mbox{if } ~b_{\infty,K}(s)-\tilde{z}_{\max}(s) < x \leq b_{\infty,K}(s)-\tilde{z}_{K-1}(s) \\
   \tilde{z}_{\max}(s) , & \mbox{if } ~0 \leq x \leq b_{\infty,K}(s)-\tilde{z}_{\max}(s)
   \end{array} \right.
\end{eqnarray*}
%
Clearly, $\lim\limits_{n \rightarrow \infty} b_{n,k}(s) = b_{\infty,k}(s)$
implies $\lim\limits_{n \rightarrow \infty} z_n^*(x,s) =
z_{\infty}^*(x,s),~\forall x \in \Real+,~\forall s \in {\cal{S}}$.
Furthermore, $\tilde{g}_n(y,s) \rightarrow \tilde{g}_{\infty}(y,s)$ and
$z_n^*(x,s) \rightarrow z_{\infty}^*(x,s)$ as $n \rightarrow \infty$ imply:
\begin{eqnarray} \label{Eq:convergence}
\lim\limits_{n \rightarrow
\infty}\tilde{g}_n\bigl(x+z_n^*(x,s)\bigr)=\tilde{g}_{\infty}\bigl(x+z_{\infty}^*(x,s)\bigr),~\forall
x \in \Real+,~\forall s \in {\cal{S}}~.
\end{eqnarray}
So for all $x \in \Real+$
and $s \in {\cal{S}}$, we have:
\begin{eqnarray}
V_{\infty}(x,s) &=& \lim_{n \rightarrow \infty} V_n(x,s) \nonumber \\
&=& \lim_{n \rightarrow \infty}
\min\limits_{\bigl\{\max(0,d-x) \leq z \leq \tilde{z}_{\max}(s)\bigr\}}
\Bigl\{c(z,s) + \tilde{g}_n(x+z,s) \Bigr\} \nonumber \\
&=& \lim_{n \rightarrow \infty}
\Bigl\{c\bigl(z_n^*(x,s),s\bigr) + \tilde{g}_n\bigl(x+z_n^*(x,s),s\bigr) \Bigr\} \label{Eq:finite_used} \\
&=&
c\bigl(z_{\infty}^*(x,s),s\bigr) + \tilde{g}_{\infty}\bigl(x+z_{\infty}^*(x,s),s\bigr)  \label{Eq:convergence_used} \\
&=& \min\limits_{\bigl\{\max(0,d-x) \leq z \leq \tilde{z}_{\max}(s)\bigr\}}
\Bigl\{
c(z,s) + \tilde{g}_{\infty}(x+z,s)
\Bigr\} \label{Eq:back_used} \\
&=&
\min\limits_{\bigl\{\max(0,d-x) \leq z \leq \tilde{z}_{\max}(s)\bigr\}}
\left\{
\begin{array}{l}
c(z,s) + h(x+z-d) \\
+ \alpha \cdot \Expectation \bigl[V_{\infty}(x+z-d,S^{\prime}) \bigm| S =s\bigr]
\end{array}
\right\}~. \nonumber
\end{eqnarray}
Equation \eqref{Eq:finite_used} follows from Theorem
\ref{Th:str:finite}, and \eqref{Eq:convergence_used} follows from \eqref{Eq:convergence} and the continuity of $c(\cdot,s)$. Equation \eqref{Eq:back_used} follows from the same line of analysis as part (ii) of the induction step in the proof of Theorem \ref{Th:pwl:fin}, with $\tilde{g}_{\infty}(\cdot,s)$, $b_{\infty,k}(s)$, and $z_{\infty}^*(\cdot,s)$ replacing $\tilde{g}_{m}(\cdot,s)$, $b_{m,k}(s)$, and $z_{m}^*(\cdot,s)$, respectively.
Thus, $V_{\infty}(\cdot,\cdot)$, the limit of the finite horizon value functions, satisfies the $\alpha$-DCOE
(\ref{Eq:pwl:inf_functional}) and is also equal to the infinite horizon discounted expected cost-to-go resulting from the stationary policy
\noindent $\boldsymbol{{\pi}_{\infty}^*}:=\left(z_{\infty}^*,z_{\infty}^*,\ldots\right)$. We conclude $\boldsymbol{{\pi}_{\infty}^*}$, the natural extension of the finite horizon optimal policy, is optimal for the infinite horizon problem (see, for example, \cite[Propositions 9.12 and 9.16]{shreve}). \qedsymbol

%

\subsection{Proof of Theorem \ref{Th:two:infd}}
We follow the same line of analysis as the proof of
Theorem
\ref{Th:one:infinite}. 
Let $\textbf{x} \in \Real_+^2$ and $\textbf{s} \in {\cal{S}}$ be
arbitrary. First, we show inductively that $V_1(\textbf{x},\textbf{s}) \leq V_2(\textbf{x},\textbf{s}) \leq
\ldots \leq V_n(\textbf{x},\textbf{s}) \leq V_{n+1}(\textbf{x},\textbf{s}) \leq \ldots$.

\noindent \underline{Base Case}: $n=1$
\begin{eqnarray*}
V_1(\textbf{x},\textbf{s}) &=&
\min_{\textbf{z} \in {\cal A}^{\textbf{d}}(\textbf{x},\textbf{s})}
\left\{
\textbf{c}_{\textbf{s}}^{\transpose} \textbf{z} +{h}(\textbf{x}+\textbf{z}-\textbf{d})
\right\} \\
&\leq&
\min_{\textbf{z} \in {\cal A}^{\textbf{d}}(\textbf{x},\textbf{s})}
\left\{
\begin{array}{l}
\textbf{c}_{\textbf{s}}^{\transpose} \textbf{z} +{h}(\textbf{x}+\textbf{z}-\textbf{d}) \\
 +\alpha \cdot \Expectation \bigl[V_{1}(\textbf{x}+\textbf{z}-\textbf{d},\textbf{S}_{1})\bigm| \textbf{S}_2=\textbf{s}\bigr]
\end{array}
\right\} \\
&=&
V_2(\textbf{x},\textbf{s})~,
\end{eqnarray*}
\noindent where the inequality follows from $V_1(\textbf{x},\textbf{s}) \geq 0,~\forall \textbf{x}, \forall \textbf{s}$.
\medskip

\noindent \underline{Induction Step}: 
 Assume $V_n(\textbf{x},\textbf{s}) \leq V_{n+1}(\textbf{x},\textbf{s})$ for $n=1,2,\ldots,m-1$. We show it is true for $n=m$:
\begin{eqnarray*}
V_m(\textbf{x},\textbf{s})
&=&
\min_{\textbf{z} \in {\cal A}^{\textbf{d}}(\textbf{x},\textbf{s})}
\left\{
\begin{array}{l}
\textbf{c}_{\textbf{s}}^{\transpose} \textbf{z} +{h}(\textbf{x}+\textbf{z}-\textbf{d}) \\
 +\alpha \cdot \Expectation \bigl[V_{m-1}(\textbf{x}+\textbf{z}-\textbf{d},\textbf{S}_{m-1})\bigm| \textbf{S}_m=\textbf{s}\bigr]
\end{array}
\right\} \\
&\leq&
\min_{\textbf{z} \in {\cal A}^{\textbf{d}}(\textbf{x},\textbf{s})}
\left\{
\begin{array}{l}
\textbf{c}_{\textbf{s}}^{\transpose} \textbf{z} +{h}(\textbf{x}+\textbf{z}-\textbf{d}) \\
 +\alpha \cdot \Expectation \bigl[V_{m}(\textbf{x}+\textbf{z}-\textbf{d},\textbf{S}_{m})\bigm| \textbf{S}_{m+1}=\textbf{s}\bigr]
\end{array}
\right\} \\
&=&
V_{m+1}(\textbf{x},\textbf{s})~,
\end{eqnarray*}
\noindent where the inequality follows from the induction hypothesis and the homogeneity of the Markov process representing the channel condition. So, for every $\textbf{x} \in \Real_+^2$ and $\textbf{s} \in {\cal{S}}$, $\left\{V_n(\textbf{x},\textbf{s})\right\}_{n=1,2,\ldots}$ is a nondecreasing sequence.

Next, consider a policy ${\boldsymbol{\pi}}^{\textbf{d}}$ transmitting $d^1$ packets to user 1 and $d^2$ packets to user 2 in every slot, regardless of channel condition. Define:
\begin{eqnarray} \label{Eq:ctmax}
\textbf{c}_{\max}^{\transpose}:=\left(c_{\max}^1,c_{\max}^2\right)^{\transpose},\hbox{ where }c_{\max}^i:=\sup\limits_{s^i \in {\cal S}^i }\left\{{c}_{s^i} \right\}<\infty~.
\end{eqnarray}
Then we have:
\begin{eqnarray*}
V_n(\textbf{x},\textbf{s}) \leq
V_n^{{\boldsymbol{\pi}}^{\textbf{d}}}(\textbf{x},\textbf{s}) \leq \Bigl(\textbf{c}_{\max}^{\transpose}\textbf{d} + h(\textbf{x}) \Bigr) \frac{1-\alpha^n}{1-\alpha} \leq \Bigl(\textbf{c}_{\max}^{\transpose}\textbf{d} + h(\textbf{x}) \Bigr) \frac{1}{1-\alpha} 
< \infty~,
\end{eqnarray*}
\noindent so $\left\{V_n(\textbf{x},\textbf{s})\right\}_{n=1,2,\ldots}$ is a bounded nondecreasing
sequence, implying 
$\lim_{n \rightarrow \infty}V_n(\textbf{x},\textbf{s})$ exists
and is finite, $\forall \textbf{x} \in \Real_+^2, \forall \textbf{s} \in {\cal{S}}$.


Next, recall from Theorem \ref{Th:two_users:properties} that 
$V_n(\textbf{x},\textbf{s})$ is convex and supermodular in $\textbf{x}$,
for all $n$ and all $\textbf{s}$. Define $V_{\infty}(\textbf{x},\textbf{s}):=\lim_{n \rightarrow \infty} V_n(\textbf{x},\textbf{s})$. Let $\textbf{s} \in
{\cal{S}}$ be arbitrary, but fixed.
$V_{\infty}(\textbf{x},\textbf{s})=\sup_{n \in \Nat} V_n(\textbf{x},\textbf{s})$, so $V_{\infty}(\textbf{x},\textbf{s})$ is convex in $\textbf{x}$
as it is the pointwise supremum of the convex functions
$\left\{V_n(\textbf{x},\textbf{s})\right\}_{n=1,2,\ldots}$. Furthermore, the pointwise limit of supermodular functions is supermodular (see, e.g., \cite[Lemma 2.6.1]{topkis}), so $V_{\infty}(\textbf{x},\textbf{s})$ is also supermodular in $\textbf{x}$.

Define $G_{\infty}: [d^1,\infty) \times [d^2,\infty) \times {\cal{S}} \rightarrow \Real_+$ by
\begin{eqnarray} \label{Eq:app2:mct}
G_{\infty}(\textbf{y},\textbf{s})&:=& \textbf{c}_{\textbf{s}}^{\transpose} \textbf{y} +{h}(\textbf{y}-\textbf{d})+\alpha \cdot \Expectation \bigl[V_{\infty}(\textbf{y}-\textbf{d},\textbf{S}^{\prime})\bigm| \textbf{S}=\textbf{s}\bigr] \nonumber \\
&=& \textbf{c}_{\textbf{s}}^{\transpose} \textbf{y} +{h}(\textbf{y}-\textbf{d})+\alpha \cdot \Expectation \bigl[\lim_{n \rightarrow \infty}V_{n}(\textbf{y}-\textbf{d},\textbf{S}^{\prime})\bigm| \textbf{S}=\textbf{s}\bigr] \nonumber \\
&=& \textbf{c}_{\textbf{s}}^{\transpose} \textbf{y} +{h}(\textbf{y}-\textbf{d})+\alpha \cdot \lim_{n \rightarrow \infty} \Expectation \bigl[V_{n}(\textbf{y}-\textbf{d},\textbf{S}^{\prime})\bigm| \textbf{S}=\textbf{s}\bigr] \\
&=&\lim_{n \rightarrow \infty} G_n(\textbf{y},\textbf{s})~,
\end{eqnarray}
\noindent where (\ref{Eq:app2:mct}) follows from the homogeneity of the Markov process representing the channel condition and the Monotone Convergence Theorem. 
Furthermore, for each $\textbf{s} \in {\cal{S}}$, $G_{\infty}(\textbf{y},\textbf{s})$ is convex and supermodular in $\textbf{y}$ as it is the sum of
an affine function of $\textbf{y}$,
a convex separable function of $\textbf{y}-\textbf{d}$ and a weighted sum of 
the convex supermodular functions $V_{\infty}\left(\textbf{y}-\textbf{d},\textbf{s}^{\prime}\right)$. Additionally, $\lim\limits_{||\textbf{y}||
\rightarrow \infty}G_{\infty}(y,s) 
\geq \lim\limits_{||\textbf{y}||
\rightarrow \infty}\textbf{c}_{\textbf{s}}^{\transpose} \textbf{y} = \infty$. Thus, for every $\textbf{s}$, at least
one finite vector achieves the global minimum of
$G_{\infty}(\textbf{y},\textbf{s})$; ${\mathcal B}_{\infty}(\textbf{s})$ is a nonempty closed convex set; and $\textbf{b}_{\infty}(\textbf{s})$, $f_{\infty}^1(\cdot,\textbf{s})$, and $f_{\infty}^2(\cdot,\textbf{s})$ are well-defined. The structure of the optimal policy outlined in (b) then follows from the same line of analysis used to prove the the structure of the optimal policy in the induction step of Theorem \ref{Th:two_users:structure}.

Moreover, since for a fixed $\textbf{s} \in {\cal S}$ and $x^2 \in [d^2,\infty)$,
\begin{align*}
f_n^1(x^2,\textbf{s}):= \inf \left\{\argmin_{y^1 \in
[d^1,\infty)}
\biggl\{G_n\left(y^1,x^2,s^1,s^2\right)\biggr\}
\right\}
=
\inf\bigl\{b^1 \bigm| G_{n}^{\prime+}\left(b^1,x^2,s^1,s^2\right) \geq 0 \bigr\}~,
\end{align*}
the convergence of $f_n^1(x^2,\textbf{s})$ to $f_{\infty}^1(x^2,\textbf{s})$ follows from the same argument used to show
\eqref{Eq:inf_proof:crits}. The convergence of $f_n^2(x^1,\textbf{s})$ to $f_{\infty}^2(x^1,\textbf{s})$ follows from a symmetric argument.

For all $\textbf{s} \in {\cal S}$ and $x^1 \in [d^1,\infty)$, define:
\begin{align*}
&\Psi_n(x^1,\textbf{s}):=\min\limits_{x^2 \in [d^2,\infty)}\left\{G_n(x^1,x^2,s^1,s^2)\right\}=G_n\bigl(x^1,f_n^2(x^1,\textbf{s}),s^1,s^2\bigr)~,~\forall n\in \Nat, \\ 
&~~\hbox{    and} \\
&\Psi_{\infty}(x^1,\textbf{s}):=\min\limits_{x^2 \in [d^2,\infty)}\left\{G_{\infty}(x^1,x^2,s^1,s^2)\right\}=G_{\infty}\bigl(x^1,f_{\infty}^2(x^1,\textbf{s}),s^1,s^2\bigr)~. 
\end{align*}
For fixed but arbitrary $x^1$ and $\textbf{s}$, $f_n^2(x^1,\textbf{s})$ converges to $f_{\infty}^2(x^1,\textbf{s})$, and, by Dini's Theorem, $G_n(x^1,\cdot,\textbf{s})$ converges to $G_{\infty}(x^1,\cdot,\textbf{s})$ uniformly on a compact interval containing $f_{\infty}^2(x^1,\textbf{s})$. Thus, $\Psi_n(x^1,\textbf{s})$ converges pointwise to $\Psi_{\infty}(x^1,\textbf{s})$. Moreover, for every $\textbf{s}$, $\{\Psi_n(x^1,\textbf{s})\}_{n \in \Nat}$ and $\Psi_{\infty}(x^1,\textbf{s})$ are all convex in $x^1$ with the limit as $x^1$ approaches infinity equal to infinity. Therefore, by the same argument used to show \eqref{Eq:inf_proof:crits}, $b_n^1(\textbf{s})$ converges pointwise to $b_{\infty}^1(\textbf{s})$.

For all $\textbf{s} \in {\cal S}$ and $x^2 \in [d^2,\infty)$, define:
\begin{align*}
&\tilde{\Psi}_n(x^2\textbf{s}):=G_n\bigl(b_n^1(\textbf{s}),x^2,s^1,s^2\bigr)~,~\forall n\in \Nat, \\ 
&~~\hbox{    and} \\
&\tilde{\Psi}_n(x^2\textbf{s}):=G_{\infty}\bigl(b_{\infty}^1(\textbf{s}),x^2,s^1,s^2\bigr)~. 
\end{align*}
For fixed but arbitrary $x^2$ and $\textbf{s}$, $b_n^1(\textbf{s})$ converges to $b_{\infty}^1(\textbf{s})$, and, by Dini's Theorem, $G_n(\cdot,x^2,\textbf{s})$ converges to $G_{\infty}(\cdot,x^2,\textbf{s})$ uniformly on a compact interval around $b_{\infty}^1(\textbf{s})$. Thus, $\tilde{\Psi}_n(x^2,\textbf{s})$ converges pointwise to $\tilde{\Psi}_{\infty}(x^2,\textbf{s})$. Moreover, for every $\textbf{s}$, $\{\tilde{\Psi}_n(x^2,\textbf{s})\}_{n \in \Nat}$ and $\tilde{\Psi}_{\infty}(x^2,\textbf{s})$ are all convex in $x^2$ with the limit as $x^2$ approaches infinity equal to infinity. Therefore, by the same argument used to show \eqref{Eq:inf_proof:crits}, $b_n^2(\textbf{s})$ converges pointwise to $b_{\infty}^2(\textbf{s})$, and we conclude $\textbf{b}_{\infty}(\textbf{s})=\lim\limits_{n \rightarrow \infty}\textbf{b}_n(\textbf{s})$.
\qedsymbol

\section{Appendix C - Infinite Horizon Average Expected Cost Proofs}\label{Se:appendix3}
In this section, we prove Theorem \ref{Th:two:infa} using the vanishing discount approach (see, e.g., \cite{lerma2}).
The proof of Theorem \ref{Th:pwl:infa} is nearly identical, and we note the few key differences.
\subsection{Proof of Theorem \ref{Th:two:infa}}
Substituting \eqref{Eq:mdef} and \eqref{Eq:wdef} into the $\alpha$-DCOE \eqref{Eq:adcoe} and rearranging yields:
\begin{align} \label{Eq:adcoere}
&(1-\alpha) \cdot m_{\infty,\alpha} + w_{\infty,\alpha}(\textbf{x},\textbf{s}) \nonumber \\ 
&=\min_{\textbf{y} \in \tilde{\cal A}^{\textbf{d}}(\textbf{x},\textbf{s})}
\Bigl\{
\textbf{c}_{\textbf{s}}^{\transpose}[\textbf{y}-\textbf{x}]
+\textbf{h}\left(\textbf{y}-\textbf{d}\right) 
 +\alpha \cdot \Expectation \bigl[w_{\infty,\alpha}(\textbf{y}-\textbf{d},\textbf{S}^{\prime})\bigm| \textbf{S}=\textbf{s}\bigr]
\Bigr\},~\forall \textbf{x} \in \Real_+^2,\forall
\textbf{s} \in {\cal{S}}~.
\end{align}
The main idea of the vanishing discount approach is to take the limit as $\alpha$ goes to 1, and show that \eqref{Eq:adcoere} converges to the ACOE \eqref{Eq:inf_acoe}.

We start by presenting five conditions from the literature on the vanishing discount approach.
\begin{conditiong}
$\rho:=\inf\limits_{\boldsymbol{\pi} \in \boldsymbol{\Pi}}\inf\limits_{\substack{\textbf{x} \in {\Real_+^2}\\{\textbf{s} \in {\cal S}}}} \left\{ \limsup\limits_{N \rightarrow \infty} \frac{1}{N} V^{\boldsymbol{\pi}}_{N,1}(\textbf{x},\textbf{s}) \right\}<\infty.$
\end{conditiong}

\begin{conditionw}
\begin{itemize}
\item[(i)] The state space ${\Real_+^2} \times {\cal S}$ is a locally compact space with countable base.
\item[(ii)] The action space $\tilde{\cal A}^{\textbf{d}}(\textbf{x},\textbf{s})$ is a nonempty compact subset of the state space ${\Real_+^2} \times {\cal S}$, and the multifunction $\phi:(\textbf{x},\textbf{s}) \mapsto \tilde{\cal A}^{\textbf{d}}(\textbf{x},\textbf{s})$ is upper semicontinuous; that is, $\phi^{-1}(F)$ is closed in ${\Real_+^2} \times {\cal S}$ for every closed set $F \subset {\Real_+^2}$.
    \item[(iii)] The transition law is weakly continuous (see, e.g., \cite[Appendix C]{lerma2}).
    \item[(iv)] The one-stage cost $c(\textbf{z},\textbf{s})+h(\textbf{x}+\textbf{z}-\textbf{d})$ is lower semicontinuous and nonnegative.
\end{itemize}
\end{conditionw}

\begin{conditionb}
$\sup\limits_{\alpha < 1} w_{\infty,\alpha}(\textbf{x},\textbf{s}) < \infty$ for all $\textbf{x} \in {\Real_+^2}$ and $\textbf{s} \in {\cal S}$.
\end{conditionb}

\begin{conditionb2} There is a measurable function $\bar{\kappa}: {\Real_+^2} \times {\cal S} \rightarrow \Real_+$ such that $\bar{\kappa} \geq w_{\infty,\alpha}$ for all $\alpha \in [0,1)$, and:
\begin{eqnarray} \label{Eq:b2cond}
\Expectation\bigl[\bar{\kappa}(\textbf{y}-\textbf{d},\textbf{S}^{\prime}) \bigm| \textbf{S}=\textbf{s}\bigr] < \infty,~\forall (\textbf{x},\textbf{s}) \in {\Real_+^2} \times {\cal S},~\forall \textbf{y} \in \tilde{\cal A}^{\textbf{d}}(\textbf{x},\textbf{s})~.
\end{eqnarray}
\end{conditionb2}

\begin{conditione}
For every
increasing sequence of discount factors $\{\alpha(l)\}_{l=1,2,\ldots}$ approaching 1, the sequence $\left\{ w_{\infty,\alpha(l)}\right\}_{l=1,2,\ldots}$ is equicontinuous.
\end{conditione}
We show below that our model satisfies these five conditions, but first we show how they lead to Theorem \ref{Th:two:infa}. Parts (b), (c), and (e) of Theorem \ref{Th:two:infa} follow directly from the following theorem due to Sch\"{a}l \cite[Theorem 3.8]{schal2} and adapted to our notation.
\begin{theorem}[Sch\"{a}l, 1993] \label{Th:sprop}
Suppose conditions (G), (W), and (B) hold. Then the minimum average cost $\rho^*= \inf\limits_{\boldsymbol{\pi} \in \boldsymbol{\Pi}}\inf\limits_{\substack{\textbf{x} \in {\Real_+^2}\\{\textbf{s} \in {\cal S}}}} \left\{ \limsup\limits_{N \rightarrow \infty} \frac{1}{N} V^{\boldsymbol{\pi}}_{N,1}(\textbf{x},\textbf{s}) \right\}=\lim\limits_{\alpha \nearrow 1} (1-\alpha) \cdot m_{\infty,\alpha}$.
Moreover, there exists an optimal selector $\textbf{y}_{\infty,1}^*(\cdot,\cdot)$ such that:
\begin{align} \label{Eq:pwl:inf_acoe_half}
\rho^* + w_{\infty,1}(\textbf{x},\textbf{s})
&\geq \min_{\textbf{y} \in \tilde{\cal A}^{\textbf{d}}(\textbf{x},\textbf{s})}
\left\{
\begin{array}{l}
\textbf{c}_{\textbf{s}}^{\transpose}[\textbf{y}-\textbf{x}]
+\textbf{h}\left(\textbf{y}-\textbf{d}\right) \\
 +\Expectation \bigl[w_{\infty,1}(\textbf{y}-\textbf{d},\textbf{S}^{\prime})\bigm| \textbf{S}=\textbf{s}\bigr]
 \end{array}
\right\} \\
&= \textbf{c}_{\textbf{s}}^{\transpose}\Bigl[\textbf{y}_{\infty,1}^*(\textbf{x},\textbf{s})-\textbf{x}\Bigr] + \textbf{h}\Bigl(\textbf{y}_{\infty,1}^*(\textbf{x},\textbf{s})-\textbf{d}\Bigr) \nonumber \\
&~~+\Expectation \left[w_{\infty,1}\Bigl(\textbf{y}_{\infty,1}^*(\textbf{x},\textbf{s})-\textbf{d},\textbf{S}^{\prime}\Bigr) \middle| \textbf{S} =\textbf{s}\right],~\forall \textbf{x} \in \Real_+^2,~\forall \textbf{s} \in {\cal S}, \nonumber
\end{align}
where for every $(\textbf{x},\textbf{s}) \in \Real_+^2 \times {\cal S}$ and any increasing sequence of discount factors $\{\alpha(l)\}_{l=1,2,\ldots}$ approaching 1,
 \begin{eqnarray} \label{Eq:wall}
 w_{\infty,1}(\textbf{x},\textbf{s}):= \liminf\limits_{l \rightarrow \infty}w_{\infty,\alpha(l)}(\textbf{x},\textbf{s})~.
 \end{eqnarray}
Furthermore, for every $(\textbf{x},\textbf{s}) \in \Real_+^2 \times {\cal S}$ and any increasing sequence of discount factors $\{\alpha(l)\}_{l=1,2,\ldots}$ approaching 1, there exists a subsequence $\{\alpha(l_i)\}_{i=1,2,\ldots}$ approaching 1 and a sequence $\{\textbf{x}(i)\}_{i=1,2,\ldots}$ approaching $\textbf{x}$ such that:
\begin{eqnarray*}
    \textbf{y}_{\infty,1}^*(\textbf{x},\textbf{s})&=\lim\limits_{i \rightarrow \infty} \textbf{y}_{\infty,\alpha(l_i)}^*(\textbf{x}(i),\textbf{s})~.
\end{eqnarray*}
\end{theorem}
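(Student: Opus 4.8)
The final statement is Theorem \ref{Th:sprop} itself, Sch\"{a}l's theorem, which we are invoking as a black-box result from \cite{schal2}. However, to make the excerpt self-contained I would prove it directly from Conditions (G), (W), and (B) via the vanishing discount approach. The plan is to establish, from the $\alpha$-DCOE \eqref{Eq:adcoere}, a one-sided (inequality) version of the average cost optimality equation by taking limits along a well-chosen subsequence of discount factors, and then to extract the optimal selector as a limit of the discounted-optimal selectors. The relative value function is defined by the liminf in \eqref{Eq:wall}, and the whole argument hinges on passing the limit inside the minimization in \eqref{Eq:adcoere}.

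\textbf{Key steps in order.}
First I would record that Condition (W) guarantees, for each fixed $\alpha$, the existence of a measurable minimizing selector $\textbf{y}_{\infty,\alpha}^*(\cdot,\cdot)$ in \eqref{Eq:adcoere}, by the measurable selection theorem for lower semicontinuous one-stage costs over upper semicontinuous compact-valued action multifunctions (the relevant selection condition is cited as condition 3.3.3 of \cite{lerma2} in the paper). Second, Conditions (G) and (B) together yield that $(1-\alpha)\cdot m_{\infty,\alpha}$ remains bounded as $\alpha \nearrow 1$ and that $w_{\infty,\alpha}(\textbf{x},\textbf{s})$ stays finite pointwise; here I would show $\rho^* = \lim_{\alpha \nearrow 1}(1-\alpha)\cdot m_{\infty,\alpha}$ equals the optimal average cost $\rho$ of Condition (G), using that $(1-\alpha)m_{\infty,\alpha}$ is sandwiched between the discounted and average-cost optimal values. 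Third, I would fix an arbitrary increasing sequence $\{\alpha(l)\}$ approaching $1$ and define $w_{\infty,1}$ by the liminf \eqref{Eq:wall}. Applying Fatou's lemma to the expectation term of \eqref{Eq:adcoere} along the sequence, together with the lower semicontinuity supplied by Condition (W)(iv) and the weak continuity of the transition law in (W)(iii), gives the \emph{inequality} \eqref{Eq:pwl:inf_acoe_half}: passing the liminf through the $\min$ can only decrease the right-hand side, which produces the $\geq$ direction.

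\textbf{Extracting the selector and the main obstacle.}
To obtain the optimal selector $\textbf{y}_{\infty,1}^*(\textbf{x},\textbf{s})$ and the accompanying equality in \eqref{Eq:pwl:inf_acoe_half}, I would fix $(\textbf{x},\textbf{s})$, pass to a subsequence $\{\alpha(l_i)\}$ along which $w_{\infty,\alpha(l_i)}(\textbf{x},\textbf{s})$ converges to the liminf, and use the compactness of the action set $\tilde{\cal A}^{\textbf{d}}(\textbf{x},\textbf{s})$ from (W)(ii) to extract a convergent subsequence of the discounted minimizers $\textbf{y}_{\infty,\alpha(l_i)}^*(\textbf{x}(i),\textbf{s})$, where $\textbf{x}(i)$ is a sequence approaching $\textbf{x}$ needed to accommodate the upper semicontinuity of the multifunction $\phi$. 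The limit point is declared $\textbf{y}_{\infty,1}^*(\textbf{x},\textbf{s})$, and the achievement of the minimum follows since the one-stage cost is lower semicontinuous. \emph{The hardest part} is justifying the interchange of limit and expectation in the RHS of \eqref{Eq:adcoere}: without a uniform integrability or domination hypothesis, Fatou's lemma only yields the one-sided inequality, which is exactly why the statement asserts $\geq$ rather than equality in \eqref{Eq:pwl:inf_acoe_half}. Establishing the reverse inequality (and thus the true ACOE) is precisely what Conditions (B2) and (E) are later used for in the full proof of Theorem \ref{Th:two:infa}, so I would explicitly flag that Theorem \ref{Th:sprop} deliberately stops at the one-sided relation and defer the equicontinuity-based upgrade to the subsequent application.
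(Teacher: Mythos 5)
Before addressing your argument: the paper itself supplies \emph{no proof} of this statement. Theorem \ref{Th:sprop} is Sch\"{a}l's Theorem 3.8, imported from \cite{schal2} with notation adapted, and invoked as a black box; the paper's own contribution is the verification that Conditions (G), (W), (B) (and later (B2), (E)) hold for its model, which appears in Appendix C. So your decision to sketch a self-contained proof is by construction a different route, and your closing remarks correctly reconstruct how the paper uses the theorem: the one-sided relation \eqref{Eq:pwl:inf_acoe_half} comes from (G), (W), (B) alone, while the reverse inequality --- hence the full ACOE --- is obtained separately via (B2), (E), Arzel\'{a}--Ascoli, and dominated convergence inside the proof of Theorem \ref{Th:two:infa}.

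That said, your sketch has a concrete directional error at its central step. Writing $F_l(\textbf{y}) := \textbf{c}_{\textbf{s}}^{\transpose}[\textbf{y}-\textbf{x}] + \textbf{h}(\textbf{y}-\textbf{d}) + \alpha(l)\cdot\Expectation\bigl[w_{\infty,\alpha(l)}(\textbf{y}-\textbf{d},\textbf{S}^{\prime})\bigm|\textbf{S}=\textbf{s}\bigr]$, the elementary interchange runs the wrong way: for any fixed $\textbf{y}^{\prime}$ one has $\min_{\textbf{y}} F_l(\textbf{y}) \leq F_l(\textbf{y}^{\prime})$, so taking $\liminf_l$ and then the infimum over $\textbf{y}^{\prime}$ gives $\liminf_l \min_{\textbf{y}} F_l(\textbf{y}) \leq \min_{\textbf{y}} \liminf_l F_l(\textbf{y})$, i.e., ``passing the liminf through the min'' \emph{decreases the left-hand side}, whereas \eqref{Eq:pwl:inf_acoe_half} requires $\rho^* + w_{\infty,1}(\textbf{x},\textbf{s}) = \liminf_l \min_{\textbf{y}} F_l(\textbf{y}) \geq \min_{\textbf{y}} G(\textbf{y})$ with $G(\textbf{y}) := \textbf{c}_{\textbf{s}}^{\transpose}[\textbf{y}-\textbf{x}] + \textbf{h}(\textbf{y}-\textbf{d}) + \Expectation[w_{\infty,1}(\textbf{y}-\textbf{d},\textbf{S}^{\prime})\mid\textbf{S}=\textbf{s}]$. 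Fatou plus this interchange therefore proves nothing in the needed direction. The correct mechanism --- which Sch\"{a}l uses, and which you invoke only afterwards for the selector --- must carry the whole inequality: take minimizers $\textbf{y}_l^*$ of $F_l$, pass to a subsequence along which $w_{\infty,\alpha(l_i)}(\textbf{x},\textbf{s})$ attains the liminf and $\textbf{y}_{l_i}^*$ converges (compactness from (W)(ii)), and apply a \emph{generalized} Fatou lemma in which both the function index and the argument vary, giving $\liminf_i \Expectation[w_{\infty,\alpha(l_i)}(\textbf{y}_{l_i}^*-\textbf{d},\textbf{S}^{\prime})] \geq \Expectation[w_{\infty,1}(\textbf{y}^*-\textbf{d},\textbf{S}^{\prime})]$. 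This is precisely why Sch\"{a}l defines the relative value function as a liminf over pairs $(\alpha,\textbf{y}) \rightarrow (1,\textbf{x})$ rather than the purely pointwise liminf of \eqref{Eq:wall} --- with only the pointwise definition this step can fail absent extra regularity (it is what the sequence $\{\textbf{x}(i)\}$ in the selector-convergence clause is compensating for, and what convexity of $w_{\infty,\alpha}(\cdot,\textbf{s})$ plus finiteness of ${\cal S}$ salvages in this model, as Condition (E) records). Finally, your ``sandwich'' argument for $\rho^* = \lim_{\alpha \nearrow 1}(1-\alpha)\cdot m_{\infty,\alpha} = \rho$ is incomplete as stated: the Abelian bound gives only $\limsup_{\alpha \nearrow 1}(1-\alpha)\cdot m_{\infty,\alpha} \leq \rho$, while the reverse inequality $\rho \leq \rho^*$ comes from the ACOI itself (the stationary policy built from the selector has average cost at most $\rho^*$), so it cannot logically precede \eqref{Eq:pwl:inf_acoe_half} as your step ordering suggests.
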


To get the opposite inequality from \eqref{Eq:pwl:inf_acoe_half}, we use a method from \cite{fernandez} and \cite[Theorem 4.1]{montes} (which is presented in \cite[Section 5.5]{lerma2}). Namely, for every $\textbf{x} \in \Real_+^2$, $\textbf{s}\in {\cal S}$, $\textbf{y} \in \tilde{\cal A}^{\textbf{d}}(\textbf{x},\textbf{s})$, and $\alpha(l)$ from \eqref{Eq:wall}, \eqref{Eq:adcoere} implies:
\begin{align} \label{Eq:adcoereind}
&(1-\alpha(l)) \cdot m_{\infty,\alpha(l)} + w_{\infty,\alpha(l)}(\textbf{x},\textbf{s}) \nonumber \\
&\leq
\textbf{c}_{\textbf{s}}^{\transpose}[\textbf{y}-\textbf{x}]
+\textbf{h}\left(\textbf{y}-\textbf{d}\right) 
 +\alpha(l) \cdot \Expectation \bigl[w_{\infty,\alpha(l)}(\textbf{y}-\textbf{d},\textbf{S}^{\prime})\bigm| \textbf{S}=\textbf{s}\bigr]~.
\end{align}
Furthermore, in combination with Conditions (B) and (E), the Arzel\'{a}-Ascoli Theorem implies there exists a subsequence $\{\alpha(l_i)\}_{i=1,2,\ldots}$ of $\{\alpha(l)\}_{l=1,2,\ldots}$ such that:
\begin{eqnarray} \label{Eq:wsubascoli}
w_{\infty,1}(\textbf{x},\textbf{s})= \lim\limits_{i \rightarrow \infty}w_{\infty,\alpha(l_i)}(\textbf{x},\textbf{s})~,
\forall \textbf{x} \in \Real_+^2,~\forall \textbf{s} \in {\cal S}~.
\end{eqnarray}
Then, taking the limit of \eqref{Eq:adcoereind} as $\alpha$ goes to 1 along the sequence $\{\alpha(l_i)\}_{i=1,2,\ldots}$, \eqref{Eq:rho_conv}, \eqref{Eq:wsubascoli}, Condition (B2), and the Lebesgue Dominated Convergence Theorem imply:
\begin{align*}
\rho^* + w_{\infty,1}(\textbf{x},\textbf{s})
\leq
&\textbf{c}_{\textbf{s}}^{\transpose}[\textbf{y}-\textbf{x}]
+\textbf{h}\left(\textbf{y}-\textbf{d}\right) 
 +\Expectation \bigl[w_{\infty,1}(\textbf{y}-\textbf{d},\textbf{S}^{\prime})\bigm| \textbf{S}=\textbf{s}\bigr]~, \\
 &~~~~~~~~~~~~~~~\forall \textbf{x} \in \Real_+^2,~\forall \textbf{s} \in {\cal S},~\forall \textbf{y} \in \tilde{\cal A}^{\textbf{d}}(\textbf{x},\textbf{s})~,
\end{align*}
which implies:
\begin{align} \label{Eq:pwl:inf_acoe_other_half}
\rho^* + w_{\infty,1}(\textbf{x},\textbf{s})
&\leq \min_{\textbf{y} \in \tilde{\cal A}^{\textbf{d}}(\textbf{x},\textbf{s})}
\left\{
\begin{array}{l}
\textbf{c}_{\textbf{s}}^{\transpose}[\textbf{y}-\textbf{x}]
+\textbf{h}\left(\textbf{y}-\textbf{d}\right) \\
 +\Expectation \bigl[w_{\infty,1}(\textbf{y}-\textbf{d},\textbf{S}^{\prime})\bigm| \textbf{S}=\textbf{s}\bigr]
 \end{array}
\right\}~,~\forall \textbf{x} \in \Real_+^2,~\forall \textbf{s} \in {\cal S}~.
\end{align}
Equations \eqref{Eq:pwl:inf_acoe_half} and \eqref{Eq:pwl:inf_acoe_other_half} yield the ACOE \eqref{Eq:inf_acoe}. Moreover, from \eqref{Eq:wsubascoli}
and the fact that convexity and supermodularity are preserved under pointwise limits, we conclude that for every $\textbf{s} \in {\cal S}$, $w_{\infty,1}(\textbf{x},\textbf{s})$ is convex and supermodular in $\textbf{x}$. Then, by the same argument as the one used in Theorems \ref{Th:two_users:structure} and \ref{Th:two:infd}, there exists an optimal stationary policy with the same structure as statement (b) in Theorem \ref{Th:two:infd} that minimizes the right hand side of the ACOE. 

Thus, it just remains to show our model satisfies the five conditions. We proceed in order, beginning with Condition (G).
Consider again the policy ${\boldsymbol{\pi}}^{\textbf{d}}$ transmitting $d^1$ packets to user 1 and $d^2$ packets to user 2 in every slot, regardless of channel condition. Let the initial vector of buffer levels $\textbf{x}_0 = (0,0)$, and let the initial vector of channel conditions $\textbf{s}_0$ be arbitrary. Then we have:
\begin{eqnarray*}
\rho:=\inf\limits_{\boldsymbol{\pi} \in \boldsymbol{\Pi}}\inf\limits_{\substack{x \in {\Real_+^2}\\{s \in {\cal S}}}} \left\{ \limsup\limits_{N \rightarrow \infty} \frac{1}{N} V^{\boldsymbol{\pi}}_{N,1}(\textbf{x},\textbf{s}) \right\} \leq \limsup\limits_{N \rightarrow \infty} \frac{1}{N} V^{\boldsymbol{\pi}^{\textbf{d}}}_{N,1}(\textbf{x}_0,\textbf{s}_0)\leq \textbf{c}_{\max}^{\transpose}\textbf{d}<\infty~,  
\end{eqnarray*}
where $\textbf{c}_{\max}^{\transpose}$ is defined in \eqref{Eq:ctmax}.\footnote{For the proof of Theorem \ref{Th:pwl:infa}, we use $\tilde{c}_{\max}$ defined in \eqref{Eq:ctildemaxdef} instead.}

The only nontrivial statement in Condition (W) is the weak continuity of the transition law. Let $\{\textbf{x}_i\}_{i=1,2,\ldots}, \{\textbf{s}_i\}_{i=1,2,\ldots}$, and $\{\textbf{y}_i\}_{i=1,2,\ldots}$ be sequences approaching $\textbf{x}, \textbf{s}$, and $\textbf{y}$, respectively, and let $\Gamma$ be a bounded, continuous function on $\Real_+^2 \times {\cal S}$. We need to show:
\begin{eqnarray*} 
\lim\limits_{i \rightarrow \infty}\Expectation\left[\Gamma\left(\textbf{X}^{\prime},\textbf{S}^{\prime}\right) \middle|\textbf{X}=\textbf{x}_i,\textbf{S}=\textbf{s}_i,\textbf{Y}=\textbf{y}_i \right] = \Expectation\left[\Gamma\left(\textbf{X}^{\prime},\textbf{S}^{\prime}\right) \middle|\textbf{X}=\textbf{x},\textbf{S}=\textbf{s},\textbf{Y}=\textbf{y} \right]~.
\end{eqnarray*}
This is true, as 
\begin{align*} 
&\lim\limits_{i \rightarrow \infty}\Expectation\left[\Gamma\left(\textbf{X}^{\prime},\textbf{S}^{\prime}\right) \middle|\textbf{X}=\textbf{x}_i,\textbf{S} =\textbf{s}_i,\textbf{Y}=\textbf{y}_i \right] \\
&= \lim\limits_{i \rightarrow \infty}\sum\limits_{\textbf{s}^{\prime} \in {\cal S}} \hbox{Pr}\left(\textbf{S}^{\prime}=\textbf{s}^{\prime}\mid \textbf{S}=\textbf{s}_i\right) \cdot \Gamma\left(\textbf{y}_i-\textbf{d},\textbf{s}^{\prime}\right)  \\
&= \sum\limits_{\textbf{s}^{\prime} \in {\cal S}} \left[\lim\limits_{i \rightarrow \infty}\hbox{Pr}\left(\textbf{S}^{\prime}=\textbf{s}^{\prime}\mid \textbf{S}=\textbf{s}_i\right)\right] \cdot \left[\lim\limits_{i \rightarrow \infty}\Gamma\left(\textbf{y}_i-\textbf{d},\textbf{s}^{\prime}\right)\right]  \\
&= \sum\limits_{\textbf{s}^{\prime} \in {\cal S}} \hbox{Pr}\left(\textbf{S}^{\prime}=\textbf{s}^{\prime}\mid \textbf{S}=\textbf{s}\right) \cdot \Gamma\left(\textbf{y}-\textbf{d},\textbf{s}^{\prime}\right) \\
&= \Expectation\left[\Gamma\left(\textbf{X}^{\prime},\textbf{S}^{\prime}\right) \middle|\textbf{X}=\textbf{x},\textbf{S}=\textbf{s},\textbf{Y}=\textbf{y} \right]~.
\end{align*}

Next, we prove Conditions (B) and (B2). Let $\alpha \in [0,1)$ be arbitrary. For every $\textbf{s} \in {\cal S}$, $V_{\infty,\alpha}(\textbf{x},\textbf{s})$ is convex in $\textbf{x}$, and
\begin{eqnarray*}
\lim\limits_{||\textbf{x}||\rightarrow \infty} V_{\infty,\alpha}(\textbf{x},\textbf{s}) \geq \lim\limits_{||\textbf{x}||\rightarrow \infty} h(\textbf{x}-\textbf{d}) =\infty~,
 \end{eqnarray*}
 so there exists an $\textbf{x}^*(\textbf{s}) \in {\Real_+^2}$ such that:
 \begin{eqnarray*}
 \min\limits_{\textbf{x} \in {\Real_+^2}}\left\{V_{\infty,\alpha}(\textbf{x},\textbf{s})\right\}=V_{\infty,\alpha}\bigl(\textbf{x}^*(\textbf{s}),\textbf{s}\bigr)~.
 \end{eqnarray*}
Define:
\begin{eqnarray*}
\textbf{s}^*:=\argmin\limits_{\textbf{s} \in {\cal S}}\left\{V_{\infty,\alpha}(\textbf{x}^*(\textbf{s}),\textbf{s}) \right\}~,
\end{eqnarray*}
so that
\begin{eqnarray*}
m_{\infty,\alpha} = V_{\infty,\alpha}(\textbf{x}^*(\textbf{s}^*),\textbf{s}^*)~.
\end{eqnarray*}
Define also the stationary policy $\breve{\boldsymbol{\pi}}=\left(\breve{\textbf{y}},\breve{\textbf{y}},\ldots \right)$, where:
$\breve{\textbf{y}}(\textbf{x},\textbf{s}):=\Bigl(\breve{y}^1(x^1,s^1),\breve{y}^2(x^2,s^2)\Bigr)$, and for $m\in\{1,2\}$,
\begin{eqnarray*}
\breve{y}^m(x^m,s^m):=\left\{
\begin{array}{ll}
   x^m , & \mbox{if } ~ x^{m^*}(\textbf{s}^*)+d^m \leq x^m \\
   x^{m^*}(\textbf{s}^*)+d^m  , & \mbox{if } ~ x^{m^*}(\textbf{s}^*)+d^m-\frac{\frac{P}{2}}{c_{s^m}} \leq x^m < x^{m^*}(\textbf{s}^*)+d^m  \\
   x^m+\frac{\frac{P}{2}}{c_{s^m}} , & \mbox{if } ~ x^m < x^{m^*}(\textbf{s}^*)+d^m-\frac{\frac{P}{2}}{c_{s^m}}
\end{array} \right..
\end{eqnarray*}
The stationary policy $\breve{\boldsymbol{\pi}}$ calls for the scheduler to allocate at most $\frac{P}{2}$ units of power for transmission to each user, and tries to bring receiver $m$'s buffer towards $x^{m^*}(\textbf{s}^*)+d^m$ (before transmission), regardless of the random channel conditions.\footnote{For the proof of Theorem \ref{Th:pwl:infa}, the policy $\breve{\boldsymbol{\pi}}$ calls for the scheduler to allocate the full $P$ units of power for transmission to the single receiver when its buffer is below $x^*(s^*)+d$. The bounds are adjusted accordingly.}
For $m \in \{1,2\}$, let $\tau^m(x^m,s^m)$ be the random number of time slots until receiver $m$'s buffer level at the beginning of a slot reaches $x^{m^*}(\textbf{s}^*)$ under policy $\breve{\boldsymbol{\pi}}$, starting from state $(x^m,s^m)$. Define also $\tau_{\max}(\textbf{x},\textbf{s}):=\max\left\{\tau^1(x^1,s^1),\tau^2(x^2,s^2)\right\}$, and $\tau_{\min}:=\min\left\{\tau^1(x^1,s^1),\tau^2(x^2,s^2)\right\}$.
Note that if $x^m > x^{m^*}(\textbf{s}^*)$, then $\tau^m(x^m,s^m) = \left\lceil\frac{x-x^{m^*}(\textbf{s}^*)}{d^m}\right\rceil$, and the total discounted expected transmission and holding cost associated with receiver $m$ for the first $\tau^m(x^m,s^m)$ slots is upper bounded by:
\begin{align}\label{Eq:bound1}
&\alpha^{\tau^m(x^m,s^m)-1}\cdot c_{\max}^m \cdot d^m + \sum\limits_{t=1}^{\tau^m(x^m,s^m)} \alpha^{t-1} \cdot h^m\left(x-t\cdot d^m\right) \nonumber \\
&\leq c_{\max}^m \cdot d^m + \sum\limits_{t=1}^{\left\lceil\frac{x-x^{m^*}(\textbf{s}^*)}{d^m}\right\rceil} h^m\left(x-t\cdot d^m\right)~.
\end{align}
On the other hand, if $x^m \leq x^{m^*}(\textbf{s}^*)$, $\Expectation[\tau^m(x^m,s^m)]$ is finite.\footnote{In order to guarantee $\Expectation[\tau^m(x^m,s^m)]$ is finite, we actually need an additional assumption that Pr$\left(\frac{\frac{P}{2}}{c_{\max}^m}= d^m\right) <1$. However, this assumption is harmless, for if it is not true, the channel condition does not vary over time, a scenario outside of our scope of interest.}
Therefore, by Wald's Lemma, the total discounted expected transmission and holding cost associated with receiver $m$ for the first $\tau^m(x^m,s^m)$ slots is upper bounded by:
\begin{eqnarray}\label{Eq:bound2}
\sum\limits_{t=1}^{\tau^m(x^m,s^m)} \alpha^{t-1} \cdot \left[\frac{P}{2}+h^m\bigl(x^{m^*}(\textbf{s}^*)\bigr)\right] \leq \Expectation[\tau^m(x^m,s^m)] \cdot \left[\frac{P}{2}+h^m\bigl(x^{m^*}(\textbf{s}^*)\bigr)\right]~.
\end{eqnarray}
So for $m \in \{1,2\}$, we define:
\begin{eqnarray*}
\bar{\kappa}^m(x^m,s^m):=\left\{
\begin{array}{ll}
   c_{\max}^m \cdot d^m + \sum\limits_{t=1}^{\left\lceil\frac{x-x^{m^*}(\textbf{s}^*)}{d^m}\right\rceil} h^m\left(x-t\cdot d^m\right) , & \mbox{if } ~ x^{m^*}(\textbf{s}^*) < x^m \\
   \Expectation[\tau^m(x^m,s^m)] \cdot \left[\frac{P}{2}+h^m\bigl(x^{m^*}(\textbf{s}^*)\bigr)\right]  , & \mbox{if } ~ x^m \leq x^{m^*}(\textbf{s}^*)
\end{array} \right..
\end{eqnarray*}
Next, let $\tau_{\switch}(\textbf{x},\textbf{s})$ be the random number of time slots until the state $\bigl(\textbf{x}^*(\textbf{s}^*),\textbf{s}^*\bigr)$ is reached at the beginning of a slot under policy $\breve{\boldsymbol{\pi}}$, starting from state $(\textbf{x},\textbf{s})$. We define a new policy
$\bar{\boldsymbol{\pi}}$ that follows $\breve{\boldsymbol{\pi}}$ for $\tau_{\switch}(\textbf{x},\textbf{s})$ slots (a random stopping time), and then behaves optimally.
Then we have:
\begin{eqnarray}\label{Eq:firstv}
V_{\infty,\alpha}(\textbf{x},\textbf{s})\leq V_{\infty,\alpha}^{\bar{\boldsymbol{\pi}}}(\textbf{x},\textbf{s}) \leq \bar{\kappa}(\textbf{x},\textbf{s}) + V_{\infty,\alpha}\bigl(\textbf{x}^*(\textbf{s}^*),\textbf{s}^*\bigr)~,
\end{eqnarray}
where
\begin{align}\label{Eq:barkappa}
\bar{\kappa}(\textbf{x},\textbf{s}):=& \bar{\kappa}^1(x^1,s^1)+\bar{\kappa}^2(x^2,s^2) \nonumber \\
&+\Expectation\bigl[\tau_{\switch}(\textbf{x},\textbf{s})-\tau_{\min}(\textbf{x},\textbf{s})\bigr]\cdot \left[\textbf{c}_{\max}^{\transpose}\textbf{d}+h^1\bigl(x^{1^*}(\textbf{s}^*)\bigr)+h^2\bigl(x^{2^*}(\textbf{s}^*)\bigr)\right]~.
\end{align}
The third term in \eqref{Eq:barkappa} is an upper bound on the transmission and holding costs required to keep the vector of buffer levels at $\textbf{x}^*(\textbf{s}^*)$ while waiting for the vector of channel condition realizations to reach $\textbf{s}^*$. Since the vector of channel conditions is a finite-state ergodic Markov process, this quantity is finite.
Equation \eqref{Eq:firstv} implies:
\begin{eqnarray*}
w_{\infty,\alpha}(\textbf{x},\textbf{s}) &=& V_{\infty,\alpha}(\textbf{x},\textbf{s}) - m_{\infty,\alpha} \\
&=& V_{\infty,\alpha}(\textbf{x},\textbf{s}) - V_{\infty,\alpha}(\textbf{x}^*(\textbf{s}^*),\textbf{s}^*) \\
&\leq& \bar{\kappa}(\textbf{x},\textbf{s})<\infty~.
\end{eqnarray*}
The important thing to note here is that the bounding function $\bar{\kappa}(\textbf{x},\textbf{s})$ is independent of $\alpha$, so Condition (B) holds. The function $\bar{\kappa}(\textbf{x},\textbf{s})$ is also measurable and satisfies \eqref{Eq:b2cond}, so Condition (B2) also holds.

Finally, Condition (E) follows from the fact that for every $l \in \{1,2,\ldots\}$ and $\textbf{s} \in {\cal S}$, $w_{\infty,\alpha(l)}(\cdot,\textbf{s})$ is convex. Thus, by the finiteness of ${\cal S}$ and essentially the same argument used by Fern\'{a}ndez-Gaucherand, Marcus, and Arapostathis in \cite[pp. 178-179]{fernandez}, $\left\{w_{\infty,\alpha(l)}(\cdot,\cdot)\right\}_{l=1,2,\ldots}$ is locally equi-Lipschitzian and equicontinuous.


\bibliographystyle{IEEEtr}
\bibliography{alloc}

\begin{thebibliography}{10}

\bibitem{andrews1}
M.~Andrews, K.~Kumaran, K.~Ramanan, A.~Stolyar, P.~Whiting, and R.~Vijayakumar,
  ``{Providing quality of service over a shared wireless link},'' {\em IEEE
  Communications Magazine}, vol.~39, pp.~150--154, February 2001.

\bibitem{viswanath}
P.~Viswanath, D.~N.~C. Tse, and R.~Laroia, ``Opportunistic beamforming using
  dumb antennas,'' {\em IEEE Transactions on Information Theory}, vol.~48,
  pp.~1277--1294, June 2002.

\bibitem{knopp}
R.~Knopp and P.~A. Humblet, ``Information capacity and power control in
  single-cell multiuser communications,'' in {\em Proceedings of the
  International Conference on Communications}, vol.~1, (Seattle, WA),
  pp.~331--335, June 1995.

\bibitem{berggren}
F.~Berggren and R.~J{\"{a}}ntti, ``Asymptotically fair transmission scheduling
  over fading channels,'' {\em IEEE Transactions on Wireless Communications},
  vol.~3, pp.~326--336, January 2004.

\bibitem{shroff2}
X.~Liu, E.~K.~P. Chong, and N.~B. Shroff, ``{A framework for opportunistic
  scheduling in wireless networks},'' {\em Computer Networks}, vol.~41,
  pp.~451--474, March 2003.

\bibitem{holtzman}
J.~M. Holtzman, ``{CDMA} forward link waterfilling power control,'' in {\em
  Proceedings of the IEEE Vehicular Technology Conference}, vol.~3, (Tokyo,
  Japan), pp.~1663--1667, May 2000.

\bibitem{borst}
S.~Borst and P.~Whiting, ``Dynamic channel-sensitive scheduling algorithms for
  wireless data throughput optimization,'' {\em IEEE Transactions on Vehicular
  Technology}, vol.~52, pp.~569--586, May 2003.

\bibitem{andrews2}
M.~Andrews, K.~Kumaran, K.~Ramanan, A.~Stolyar, R.~Vijayakumar, and P.~Whiting,
  ``{Scheduling in a queueing system with asynchronously varying service
  rates},'' {\em Probability in the Engineering and Informational Sciences},
  vol.~18, pp.~191--217, 2004.

\bibitem{tassiulas}
L.~Tassiulas and A.~Ephremides, ``Dynamic server allocation to parallel queues
  with randomly varying connectivity,'' {\em IEEE Transactions on Information
  Theory}, vol.~39, pp.~466--478, March 1993.

\bibitem{shakkottai}
S.~Shakkottai, R.~Srikant, and A.~Stolyar, ``{Pathwise optimality of the
  exponential scheduling rule for wireless channels},'' {\em Advances in
  Applied Probability}, vol.~36, no.~4, pp.~1021--1045, 2004.

\bibitem{neely_stable}
M.~J. Neely, E.~Modiano, and C.~E. Rohrs, ``Dynamic power allocation and
  routing for time varying wireless networks,'' in {\em Proceedings of IEEE
  INFOCOM}, vol.~1, (San Francisco, CA), pp.~745--755, April 2003.

\bibitem{cruz3}
B.~E. Collins and R.~L. Cruz, ``Transmission policies for time varying channels
  with average delay constraints,'' in {\em Proceedings of the Allerton
  Conference on Communication, Control, and Computing}, (Monticello, IL), 1999.

\bibitem{berry}
R.~A. Berry and R.~G. Gallager, ``Communication over fading channels with delay
  constraints,'' {\em IEEE Transactions on Information Theory}, vol.~48,
  pp.~1135--1149, May 2002.

\bibitem{goyal}
M.~Goyal, A.~Kumar, and V.~Sharma, ``Power constrained and delay optimal
  policies for scheduling transmission over a fading channel,'' in {\em
  Proceedings of the {IEEE INFOCOM}}, (San Francisco, CA), pp.~311--320,
  March-April 2003.

\bibitem{wang_thesis}
H.~Wang, {\em Opportunistic Transmission of Wireless Data Over Fading Channels
  Under Energy and Delay Constraints}.
\newblock PhD thesis, Rutgers University, 2003.

\bibitem{rajan}
D.~Rajan, A.~Sabharwal, and B.~Aazhang, ``Delay-bounded packet scheduling of
  bursty traffic over wireless channels,'' {\em IEEE Transactions on
  Information Theory}, vol.~50, pp.~125--144, January 2004.

\bibitem{berry_yeh}
R.~A. Berry and E.~M. Yeh, ``Cross-layer wireless resource allocation,'' {\em
  IEEE Signal Processing Magazine}, vol.~21, pp.~59--69, September 2004.

\bibitem{djonin}
D.~V. Djonin and V.~Krishnamurthy, ``Structural results on the optimal
  transmission scheduling policies and costs for correlated sources and
  channels,'' in {\em Proceedings of the IEEE Conference on Decision and
  Control}, (Seville, Spain), pp.~3231--3236, December 2005.

\bibitem{bhorkar}
A.~Bhorkar, A.~Karandikar, and V.~S. Borkar, ``Power optimal opportunistic
  scheduling,'' in {\em Proceedings of the IEEE Global Telecommunications
  Conference {(GLOBECOM)}}, (San Francisco, CA), November 2006.

\bibitem{kittipiyakul1}
S.~Kittipiyakul and T.~Javidi, ``Resource allocation in {OFDMA} with
  time-varying channel and bursty arrivals,'' {\em IEEE Communications
  Letters}, vol.~11, pp.~708--710, September 2007.

\bibitem{agarwal}
M.~Agarwal, V.~S. Borkar, and A.~Karandikar, ``Structural properties of optimal
  transmission policies over a randomly varying channel,'' {\em IEEE
  Transactions on Automatic Control}, vol.~53, pp.~1476--1491, July 2008.

\bibitem{goyal08}
M.~Goyal, A.~Kumar, and V.~Sharma, ``Optimal cross-layer scheduling of
  transmissions over a fading multiaccess channel,'' {\em IEEE Transactions on
  Information Theory}, vol.~54, pp.~3518--3537, August 2008.

\bibitem{kittipiyakul2}
S.~Kittipiyakul and T.~Javidi, ``Delay-optimal server allocation in multi-queue
  multi-server systems with time-varying connectivities,'' {\em IEEE
  Transactions on Information Theory}, to appear.

\bibitem{chen_neely2}
W.~Chen, U.~Mitra, and M.~J. Neely, ``Energy-efficient scheduling with
  individual delay constraints over a fading channel,'' in {\em Proceedings of
  the International Symposium on Modeling and Optimization in Mobile, Ad Hoc,
  and Wireless Networks}, (Limassol, Cyprus), April 2007.

\bibitem{chen_neely}
W.~Chen, U.~Mitra, and M.~J. Neely, ``Energy-efficient scheduling with
  individual delay constraints over a fading channel,'' {\em Wireless
  Networks}, vol.~15, pp.~601--618, July 2009.

\bibitem{fu_conf}
A.~Fu, E.~Modiano, and J.~Tsitsiklis, ``Optimal energy allocation for
  delay-constrained data transmission over a time-varying channel,'' in {\em
  Proceedings of the IEEE Infocom}, vol.~2, (San Francisco, CA),
  pp.~1095--1105, April 2003.

\bibitem{fu}
A.~Fu, E.~Modiano, and J.~N. Tsitsiklis, ``Optimal transmission scheduling over
  a fading channel with energy and deadline constraints,'' {\em IEEE
  Transactions on Wireless Communications}, vol.~5, pp.~630--641, March 2006.

\bibitem{jindal3}
J.~Lee and N.~Jindal, ``Energy-efficient scheduling of delay constrained
  traffic over fading channels,'' in {\em Proceedings of the IEEE International
  International Symposium on Information Theory}, (Toronto, Canada), July 2008.

\bibitem{jindal_gen}
J.~Lee and N.~Jindal, ``Energy-efficient scheduling of delay constrained
  traffic over fading channels,'' {\em IEEE Transactions on Wirless
  Communications}, vol.~8, pp.~1866--1875, April 2009.

\bibitem{jindal}
J.~Lee and N.~Jindal, ``Delay constrained scheduling over fading channels:
  Optimal policies for monomial energy-cost functions,'' in {\em Proceedings of
  the IEEE International Conference on Communications}, (Dresden, Germany),
  June 2009.

\bibitem{jindal4}
J.~Lee and N.~Jindal, ``Asymptotically optimal policies for hard-deadline
  scheduling over fading channels,'' {\em submitted to IEEE Transactions on
  Information Theory}, 2009.

\bibitem{uysal_04}
E.~Uysal-Biyikoglu and A.~E. Gamal, ``On adaptive transmission for energy
  efficiency in wireless data networks,'' {\em IEEE Transactions on Information
  Theory}, vol.~50, pp.~3081--3094, December 2004.

\bibitem{tarello}
A.~Tarello, J.~Sun, M.~Zafer, and E.~Modiano, ``Minimum energy transmission
  scheduling subject to deadline constraints,'' {\em ACM Wireless Networks},
  vol.~14, pp.~633--645, October 2008.

\bibitem{chapter}
D.~Shuman and M.~Liu, ``Opportunistic scheduling with deadline constraints in
  wireless networks,'' in {\em Performance Models and Risk Management in
  Communication Systems} (N.~Gulpinar, P.~Harrison, and B.~Rustem, eds.),
  Springer, to appear.

\bibitem{shroff1}
X.~Liu, E.~K.~P. Chong, and N.~B. Shroff, ``{Optimal opportunistic scheduling
  in wireless networks},'' in {\em Proceedings of the Vehicular Technology
  Conference}, vol.~3, (Orlando, FL), pp.~1417--1421, October 2003.

\bibitem{shroff3}
X.~Liu, N.~B. Shroff, and E.~K.~P. Chong, ``Opportunistic scheduling: An
  illustration of cross-layer design,'' {\em Telecommunications Review},
  vol.~14, pp.~947--959, December 2004.

\bibitem{girod}
B.~Girod, M.~Kalman, Y.~J. Liang, and R.~Zhang, ``{Advances in channel-adaptive
  video streaming},'' in {\em Proceedings of the International Conference on
  Image Processing}, vol.~1, (Rochester, NY), pp.~9--12, September 2002.

\bibitem{bambos2}
Y.~Li and N.~Bambos, ``{Power-controlled wireless links for media streaming
  applications},'' in {\em Proceedings of the Wireless Telecommunications
  Symposium}, (Pomona, CA), pp.~102--111, May 2004.

\bibitem{bambos1}
Y.~Li, A.~Markopoulou, N.~Bambos, and J.~Apostolopoulos, ``{Joint power-playout
  control for media streaming over wireless links},'' {\em IEEE Transactions on
  Multimedia}, vol.~8, pp.~830--843, August 2006.

\bibitem{luna}
C.~E. Luna, Y.~Eisenberg, R.~Berry, T.~N. Pappas, and A.~K. Katsaggelos,
  ``Joint source coding and data rate adaptation for energy efficient wireless
  video streaming,'' {\em IEEE Journal on Selected Areas in Communications},
  vol.~21, pp.~1710--1720, December 2003.

\bibitem{rust}
J.~Rust, ``Using randomization to break the curse of dimensionality,'' {\em
  Econometrica}, vol.~65, pp.~487--516, May 1997.

\bibitem{chow}
C.-S. Chow and J.~N. Tsitsiklis, ``The complexity of dynamic programming,''
  {\em Journal of Complexity}, vol.~5, pp.~466--488, December 1989.

\bibitem{lerma2}
O.~Hern\'{a}ndez-Lerma and J.~B. Lasserre, {\em Discrete-Time {M}arkov Control
  Processes}.
\newblock Springer-Verlag, 1996.

\bibitem{shreve}
D.~Bertsekas and S.~E. Shreve, {\em Stochastic Optimal Control: The
  Discrete-Time Case}.
\newblock Athena Scientific, 1996.

\bibitem{fabian}
T.~Fabian, J.~L. Fisher, M.~W. Sasieni, and A.~Yardeni, ``Purchasing raw
  material on a fluctuating market,'' {\em Operations Research}, vol.~7,
  pp.~107--122, January-February 1959.

\bibitem{kalymon}
B.~Kalymon, ``Stochastic prices in a single-item inventory purchasing model,''
  {\em Operations Research}, vol.~19, pp.~1434--1458, October 1971.

\bibitem{kingsman}
B.~G. Kingsman, ``Commodity purchasing,'' {\em Operational Research Quarterly},
  vol.~20, pp.~59--80, 1969.

\bibitem{kingsmanThesis}
B.~G. Kingsman, {\em Commodity Purchasing in Uncertain Fluctuating Price
  Markets}.
\newblock PhD thesis, University of Lancaster, 1969.

\bibitem{magirou}
V.~F. Magirou, ``Stockpiling under price uncertainty and storage capacity
  constraints,'' {\em European Journal of Operational Research}, vol.~11,
  pp.~233--246, 1982.

\bibitem{magirouLet}
V.~F. Magirou, ``Comments on `{On Optimal Inventory Policies when Ordering
  Prices are Random' by Kamal Golabi},'' {\em Operations Research}, vol.~35,
  pp.~930--931, November-December 1987.

\bibitem{golabi82}
K.~Golabi, ``A single-item inventory model with stochastic prices,'' in {\em
  Proceedings of the Second International Symposium on Inventories}, (Budapest,
  Hungary), pp.~687--697, 1982.

\bibitem{golabi85}
K.~Golabi, ``Optimal inventory policies when ordering prices are random,'' {\em
  Operations Research}, vol.~33, pp.~575--588, May-June 1985.

\bibitem{fedzipII}
A.~Federgruen and P.~Zipkin, ``An inventory model with limited production
  capacity and uncertain demands {II. T}he discounted-cost criterion,'' {\em
  Mathematics of Operations Research}, vol.~11, pp.~208--215, May 1986.

\bibitem{tayur}
S.~Tayur, ``Computing the optimal policy for capacitated inventory models,''
  {\em Communications in Statistics. Stochastic Models}, vol.~9, no.~4,
  pp.~585--598, 1993.

\bibitem{sobel_70}
M.~J. Sobel, ``Making short-run changes in production when the employment level
  is fixed,'' {\em Operations Research}, vol.~18, pp.~35--51, January-February
  1970.

\bibitem{bensoussan_book}
A.~Bensoussan, M.~Crouhy, and J.-M. Proth, {\em Mathematical Theory of
  Production Planning}.
\newblock Elsevier Science, 1983.

\bibitem{zahrn}
F.~C. Zahrn, {\em Studies of Inventory Control and Capacity Planning with
  Multiple Sources}.
\newblock PhD thesis, Georgia Institute of Technology, August 2009.

\bibitem{porteus_90}
E.~L. Porteus, ``Stochastic inventory theory,'' in {\em Stochastic Models}
  (D.~P. Heyman and M.~J. Sobel, eds.), pp.~605--652, Elsevier Science, 1990.

\bibitem{karlin58}
S.~Karlin, ``Optimal inventory policy for the {A}rrow{-H}arris{-M}arschak
  dyanmic model,'' in {\em Studies in the Mathematical Theory of Inventory and
  Production} (K.~J. Arrow, S.~Karlin, and H.~Scarf, eds.), pp.~135--154,
  Stanford University Press, 1958.

\bibitem{evans}
R.~Evans, ``Inventory control of a multiproduct system with a limited
  production resource,'' {\em Naval Research Logistics Quarterly}, vol.~14,
  no.~2, pp.~173--184, 1967.

\bibitem{decroix}
G.~A. DeCroix and A.~Arreola-Risa, ``Optimal production and inventory policy
  for multiple products under resource constraints,'' {\em Management Science},
  vol.~44, pp.~950--961, July 1998.

\bibitem{shaoxiang}
S.~X. Chen, ``The optimality of hedging point policies for stochastic
  two-product flexible manufacturing systems,'' {\em Operations Research},
  vol.~52, pp.~312--322, March-April 2004.

\bibitem{janakiraman}
G.~Janakiraman, M.~Nagarajan, and S.~Veeraraghavan, ``Simple policies for
  capacitated multi-product inventory systems,'' {\em submitted to Management
  Science}, August 2008.

\bibitem{glicksberg}
R.~Bellman, I.~Glicksberg, and O.~Gross, ``On the optimal inventory equation,''
  {\em Management Science}, vol.~2, pp.~83--104, October 1955.

\bibitem{hawkins}
J.~T. Hawkins, {\em A {L}agrangian Decomposition Approach to Weakly Coupled
  Dynamic Optimization Problems and its Applications}.
\newblock PhD thesis, Massachusetts Institute of Technology, 2003.

\bibitem{adelman_weak}
D.~Adelman and A.~J. Mersereau, ``Relaxations of weakly coupled stochastic
  dynamic programs,'' {\em Operations Research}, vol.~56, pp.~712--727,
  May-June 2008.

\bibitem{antoniadou_thesis}
E.~Antoniadou, {\em Lattice Programming and Economic Optimization}.
\newblock PhD thesis, Stanford University, 1996.

\bibitem{antoniadou}
E.~Antoniadou, ``Comparative statics for the consumer problem,'' {\em Economic
  Theory}, vol.~31, no.~1, pp.~189--203, 2007.

\bibitem{shuman_thesis}
D.~Shuman, {\em From Sleeping to Stockpiling: Energy Conservation via
  Stochastic Scheduling in Wireless Networks}.
\newblock PhD thesis, University of Michigan, Ann Arbor, 2010.

\bibitem{brown}
D.~B. Brown, J.~E. Smith, and P.~Sun, ``Information relaxations and duality in
  stochastic dynamic programs,'' {\em Operations Research}, to appear.

\bibitem{seidmann}
P.~J. Schweitzer and A.~Seidmann, ``Generalized polynomial approximatins in
  {M}arkovian decision processes,'' {\em Journal of Mathematical Analysis and
  Applications}, vol.~110, pp.~568--582, September 1985.

\bibitem{defarias}
D.~P. {de Farias} and B.~{Van Roy}, ``The linear programming approach to
  approximate dynamic programming,'' {\em Operations Research}, vol.~51,
  pp.~850--865, November-December 2003.

\bibitem{karush}
W.~Karush, ``A theorem in convex programming,'' {\em Naval Research Logistics
  Quarterly}, vol.~6, pp.~245--260, September 1959.

\bibitem{porteus}
E.~L. Porteus, {\em Foundations of Stochastic Inventory Theory}.
\newblock Stanford University Press, 2002.

\bibitem{boyd}
S.~Boyd and L.~Vandenberghe, {\em Convex Optimization}.
\newblock Cambridge University Press, 2004.

\bibitem{topkis}
D.~M. Topkis, {\em Supermodularity and Complementarity}.
\newblock Princeton University Press, 1998.

\bibitem{rockafellar}
R.~T. Rockafellar, {\em Convex Analysis}.
\newblock Princeton University Press, 1970.

\bibitem{mccardle}
J.~E. Smith and K.~F. McCardle, ``Structural properties of stochastic dynamic
  programs,'' {\em Operations Research}, vol.~50, no.~5, pp.~796--809, 2002.

\bibitem{iglehart}
D.~L. Iglehart, ``Optimality of (s,{S}) policies in the infinite horizon
  dynamic inventory problem,'' {\em Management Science}, vol.~9, pp.~259--267,
  January 1963.

\bibitem{heyman_sobel}
D.~P. Heyman and M.~J. Sobel, {\em Stochastic Models in Operations Research},
  vol.~II.
\newblock McGraw-Hil, 1984.

\bibitem{sobel_71}
M.~J. Sobel, ``Production smoothing with stochastic demand {II}: {I}nfinite
  horizon case,'' {\em Management Science}, vol.~17, pp.~724--735, July 1971.

\bibitem{schal2}
M.~Sch\"{a}l, ``Average optimality in dynamic programming with general state
  space,'' {\em Mathematics of Operations Research}, vol.~18, pp.~163--172,
  February 1993.

\bibitem{fernandez}
E.~Fern\'{a}ndez-Gaucherand, S.~I. Marcus, and A.~Arapostathis, ``Structured
  solutions for stochastic control problems,'' in {\em Stochastic Theory and
  Adaptive Control}, pp.~172--184, Springer, 1992.

\bibitem{montes}
R.~{Montes-de-Oca}, ``The average cost optimality equation for {M}arkov control
  processes on {B}orel spaces,'' {\em Systems and Control Letters}, vol.~22,
  pp.~351--357, 1994.

\end{thebibliography}

\end{document}